\newtoks\prt
\newtheorem{thm}{Theorem}[section]
\newtheorem{ques}[thm]{Question}
\newtheorem{lemma}[thm]{Lemma}
\newtheorem{prop}[thm]{Proposition}
\newtheorem{cor}[thm]{Corollary}
\newtheorem{obs}[thm]{Observation}
\newtheorem{example}[thm]{Example}
\newtheorem{fact}[thm]{Fact}
\theoremstyle{definition}
\newtheorem{example2}[thm]{Example}
\newtheorem{remark}[thm]{Remark}
\def\eqn#1$$#2$${\begin{equation}\label#1#2\end{equation}}
\def\1{\boldsymbol{1}}
\def\F{\mathcal F}
\def\ce{\mathbb C}
\def\co{\operatorname{conv}}
\def\ep{\varepsilon}
\def\en{\mathbb N}
\def\er{\mathbb R}
\def\ef{\mathbb F}
\def\TT{\mathbb T}
\def\Im{\operatorname{Im}}
\def\bnd{\operatorname{bnd}}
\def \Ch {\operatorname{Ch}}
\def \Inv {\operatorname{inv}}
\def \ext {\operatorname{ext}}
\def\span{\operatorname{span}}
\def \reg {\partial _{\kern1pt\text{reg}}}
\def\iff{\Longleftrightarrow}
\def\di{\,\mbox{\rm d}}
\newcommand{\norm}[1]{\left\|#1\right\|}
\renewcommand{\Re}{\operatorname{Re}}
\newcommand{\wscl}[1]{\overline{#1}^{w^*}}
\newcommand{\abs}[1]{\left|#1\right|}
\newcommand{\setsep}{;\,}
\numberwithin{equation}{section}
\definecolor{green}{rgb}{0,0.5,0}
\title[Simpliciality without constants]{On simpliciality of function spaces not containing constants}
\author{Ond\v{r}ej F.K. Kalenda and Ji\v r\'\i\ Spurn\'y}
\address{Ondřej F.K. Kalenda\\
Charles University\\
Faculty of Mathematics and Physics\\
Department of Mathematical Analysis \\
Sokolovsk\'{a} 83, 186 \ 75\\Praha 8, Czech Republic}
\email{kalenda@karlin.mff.cuni.cz}
\address{Ji\v r\'\i\ Spurn\'y\\
Charles University\\
Faculty of Mathematics and Physics\\
Department of Mathematical Analysis \\
Sokolovsk\'{a} 83, 186 \ 75\\Praha 8, Czech Republic}
\email{spurny@karlin.mff.cuni.cz}
\keywords{function space not containing constants, simplicial function space, functionally simplicial function space, boundary measure, $L^1$-predual, abstract Dirichlet problem}
\subjclass[2010]{46A55; 46B04; 28C05}
\thanks{Our research was partially supported by the Research grant GA\v{C}R 23-04776S}
\begin{document}
\begin{abstract}
    We investigate simpliciality of function spaces without constants. We prove, in particular, that several properties characterizing simpliciality in the classical case differ in this new setting.
    We also show that it may happen that a given point is not represented by any measure pseudosupported by the Choquet boundary, illustrating so limits of possible generalizations of the representation theorem. Moreover, we address the abstract Dirichlet problem in the new setting
    and establish some common points and nontrivial differences with the classical case.
\end{abstract}

\maketitle
\section{Introduction}
Choquet simplices form a distinguished class of compact convex sets as they form a natural class of sets generalizing the classical notion of finite-dimensional simplices. Let us recall \cite[Definition 3.1]{fonf} saying that a convex set $X$ in a vector space $E$ is a simplex if the cone $P=\{(\lambda x,\lambda)\setsep x\in X,\lambda\ge0\}$ defines a lattice order on $P-P\subset E\oplus\er$.

A compact simplex $X$ is called a Bauer simplex provided the set of extreme points is closed. An example of Bauer simplex is the space $M_1(K)$ of all Radon probability measures on a  compact Hausdorff space $K$ and any Bauer simplex is affinely homeomorphic to  $M_1(K)$ for a suitable $K$. Interesting examples of infinite dimensional Bauer simplices are exhibited in \cite[Theorems 2.4, 2.5 and 2.6]{fonf}. Another  example of a Bauer simplex related to the theory of harmonic functions can be found  in \cite{effros-kazdan} and \cite{bliedtner-hansen}, see also \cite[Theorem 3.17]{fonf}.

Less transparent examples of compact simplices are those with the set of extreme points non-closed. A distinguished example of such  a simplex is the Poulsen simplex $S$ which is a metrizable simplex whose set of extreme points is dense in $S$. A remarkable feature of $S$ is its uniqueness and universality (every metrizable compact simplex is affinely homeomorphic to a face of $S$). 
It turns out that the Poulsen simplex appears naturally in the investigation of ergodic measures, see e.g. \cite[Theorem 3.8 and page 618]{fonf}. 

The structure of a compact convex set $X$ is in a way determined by the Banach space $A_c(X,\er)$ of all continuous affine real functions on $X$, more precisely, $X$ is a simplex if and only if $A_c(X,\er)$ is an $L_1$-predual, see \cite[Chapter 7, \S 19, Theorem 2]{lacey}. The structure of real or complex $L_1$-preduals is closely related to the structure of simplices, see \cite[Chapter 7, \S 21, Theorem 7 and \S 23, Theorem 5]{lacey} (see also \cite{lazar} and \cite{effros}).

A natural way how to construct simplices is to consider a suitable function space $H\subset C(K,\er)$ on a compact Hausdorff space $K$ such that $H$ contains constant functions and separates points of $K$. Then the state space of $H$ is a compact convex set  which, for a well chosen space $H$, is a simplex, see \cite[Chapter 6]{lmns}. The compact space $K$ is then homeomorphically mapped into the state space and those points, which are mapped into the set of extreme points of the state space, form the so-called Choquet boundary.

If $X$ is a compact convex set in a Hausdorff locally convex space, it is a simplex if and only if for every point $x\in X$ there exists a unique maximal probability measure $\mu_x$ on $X$ such that $\mu_x$ represents $x$ (i.e., $\int_X f\di\mu_x=f(x)$ for every continuous real affine function $f$ on $X$, see \cite[Chapter 7, \S 20, Theorem 3]{lacey}). We recall that the maximality of $\mu_x$ is with respect to the Choquet ordering on probability measures on $X$. Any maximal measure is `pseudo-supported' by the set of extreme points, i.e., any maximal measure vanishes on any Baire set disjoint from the set of extreme points.

Thus also for  a function space $H$ the question of representation of points and  functionals on $H$ by measures close to the `boundary of $H$', namely to the Choquet boundary, is important. The uniqueness of such a representation is then a natural feature of the function space which asserts that $H$ is `simplicial'.
By the previous considerations, the function space $A_c(X,\er)$ on a compact convex set $X$ is simplicial if and only if $X$ is a simplex.

All the phenomena related to the function space $A_c(X,\er)$ on a compact convex set $X$ can be successfully transferred to the framework of function spaces with constants, see \cite[Chapter 6]{lmns}, \cite{fuhr-phelps} and Section~\ref{sec:constants} below. The main aim of the presented paper is to investigate possible notions of simpliciality for function spaces that need not contain constant functions. In this framework, the situation is much less satisfactory as Theorem~\ref{T:bez1} below shows. 

The paper is organized as follows. After the introduction of necessary notion in Section~\ref{sec:prel} we present a variant of the proof of the representation theorem for function spaces without constants. Then we consider several notions of simpliciality and show their satisfactory behaviour for the function spaces with constants. The main results are contained in Section~\ref{sec:without}. Theorem~\ref{T:bez1} summarizes the relations between various notions of simpliciality. The rest of the section is devoted to the proofs of the assertions in Theorem~\ref{T:bez1}. In Section~\ref{sec:dirichlet} we address the abstract Dirichlet problem for spaces without constants and point out some common points and differences with the classical case.
In the last section we present some remarks and problems in the topic.

\section{Preliminaries}\label{sec:prel}

In this section we collect basic facts on compact convex sets, Choquet simplices and $L^1$-preduals which we will use and develop in the sequel.

We start by pointing out that we will work both with real and complex spaces, the field will be denoted by $\ef$ (which, of course, means $\er$ or $\ce$). If $K$ is a compact Hausdorff space, by $C(K,\ef)$ we denote the Banach space of $\ef$-valued continuous functions on $K$ equipped with the sup-norm. The dual to $C(K,\ef)$ is canonically identified (using the Riesz theorem) with the space of $\ef$-valued Radon measures on $K$ equipped with the total variation norm, denoted by $M(K,\ef)$. By $M_+(K)$ we denote the cone of non-negative Radon measures on $K$ and by $M_1(K)$ the set of all Radon probability measures on $K$ equipped with the weak$^*$ topology.

By a \emph{compact convex set} we mean a nonempty compact convex subset of a Hausdorff locally convex space. If $X$ is compact convex set, the symbol $\ext X$ denotes the set of all \emph{extreme points} of $X$, i.e., points which cannot be found as interior points of a non-degenerate segment in $X$. More generally, a \emph{face} of $X$ is a convex set $F\subset X$ such that $a,b\in F$ whenever $\frac12(a+b)\in F$. By $A_c(X,\ef)$ we denote the space of all $\ef$-valued affine continuous functions on $X$ considered as a (closed) subspace of $C(X,\ef)$. 

Important examples of compact convex sets include unit balls of dual Banach spaces. If $X=(B_{E^*},w^*)$ is the dual unit ball of $E$, where $E$ is a Banach space  over $\ef$ and $w^*$ denotes the weak$^*$ topology, there are two distinguished classes of functions on $X$: A function $f:X\to\ef$ is called
\begin{itemize}
    \item \emph{$\ef$-invariant} if $f(\alpha x)=f(x)$ for each $x\in X$ and $\alpha\in S_{\ef}$;
    \item \emph{$\ef$-homogeneous} if $f(\alpha x)=\alpha f(x)$ for each $x\in X$ and $\alpha\in S_{\ef}$.
\end{itemize}
By $S_{\ef}$ we denote the sphere of $\ef$, i.e., $S_{\er}=\{-1,1\}$ and $S_{\ce}=\TT$, the unit circle. If $\ef=\er$, the above-defined classes are usually called \emph{even} and \emph{odd} functions.

If $X$ is a compact convex set and $\mu\in M_1(X)$, the symbol $r(\mu)$ denotes the \emph{barycenter} of $\mu$, i.e., the unique point $x\in X$
satisfying
$$\forall f\in A_c(X,\er)\colon f(x)=\int f\di\mu.$$
Conversely, if $x=r(\mu)$, we say that $\mu$ \emph{represents} $x$, or, that $\mu$ is a \emph{representing measure} of $x$. It is a consequence of the Krein-Milman theorem that any $x\in X$ is represented by a probability measure supported by $\overline{\ext X}$.

The \emph{Choquet ordering} on $M_+(X)$ is defined by
$$\mu\prec\nu \ \equiv^{\mbox{\small def}}\   \forall f\colon X\to\er\mbox{ convex continuous}\colon \int f\di\mu\le\int f\di\nu.$$
By a \emph{maximal measure} we mean a non-negative measure maximal with respect to the ordering $\prec$. By the Choquet-Bishop-de Leeuw theorem (see \cite[Theorem I.4.8]{alfsen}) we know that any $x\in X$ is represented by a maximal probability measure. If $X$ is metrizable, then $\ext X$ is a $G_\delta$-set and maximal measures are exactly the measures carried by $\ext X$ (see \cite[Corollary I.4.4 and p. 35]{alfsen}). In the non-metrizable case the situation is more difficult, an efficient characterization of maximal measures is provided using upper envelopes. If $f\colon X\to\er$ is a bounded function (or, at least bounded above), its \emph{upper envelope} is defined by
$$f^*(x)=\inf \{ u(x)\setsep u\in A_c(X,\er), u\ge f \},\quad x\in X.$$
This notion is closely related to maximal measures. In particular, we have the following Mokobodzki maximality criterion:

\begin{fact}\label{f:mokobodzki}
    Let $X$ be a compact convex set and let $\mu\in M_+(X)$. Then the following assertions are valid:
    \begin{enumerate}[$(a)$]
        \item $\mu$ is maximal if and only if $\int f\di\mu=\int f^*\di\mu$ for each $f\in C(X,\er)$.
        \item It is enough to test the equality from $(a)$ for convex continuous functions.
        \item If $X=(B_{E^*},w^*)$ where $E$ is a Banach space over $\ef$, it is enough to test the equality from $(a)$ for $\ef$-invariant convex functions.
    \end{enumerate}
\end{fact}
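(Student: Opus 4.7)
The plan is to prove (a) by a Hahn-Banach extension in one direction and a direct computation in the other, obtain (b) as a byproduct of that computation, and deduce (c) from (b) via an $S_\ef$-averaging argument.

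For the forward direction of (a), assume $\mu$ is maximal and, toward a contradiction, pick $f_0\in C(X,\er)$ with $\int f_0\di\mu<\int f_0^*\di\mu$. The map $f\mapsto f^*$ is subadditive and positively homogeneous, so $p(f):=\int f^*\di\mu$ is sublinear on $C(X,\er)$; since $h^*=h$ for $h\in A_c(X,\er)$, it agrees there with $f\mapsto\int f\di\mu$. Using that $(-f_0)^*+f_0^*\ge0$ (as $f_0+(-f_0)=0$), extend $\int\cdot\di\mu$ from $A_c(X,\er)$ first to $A_c(X,\er)+\er f_0$ by declaring $L(f_0)=p(f_0)$, then via Hahn-Banach to a linear $L\le p$ on $C(X,\er)$. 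Positivity of $L$ follows from $p(f)\le0$ for $f\le0$, so $L=\int\cdot\di\nu$ for some $\nu\in M_+(X)$. For convex continuous $g$ the function $-g$ is concave upper semicontinuous, and by Hahn-Banach support $(-g)^*=-g$, which gives $L(-g)\le\int(-g)\di\mu$, i.e.\ $\int g\di\nu\ge\int g\di\mu$, so $\mu\prec\nu$; combined with $\int f_0\di\nu=p(f_0)>\int f_0\di\mu$ this contradicts maximality.

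For the reverse direction of (a) and for (b) simultaneously, assume $\int f\di\mu=\int f^*\di\mu$ for every convex continuous $f$ and let $\mu\prec\nu$. Applied to $\pm h$ for $h\in A_c(X,\er)$, the ordering yields $\int h\di\mu=\int h\di\nu$, so for any $g\in C(X,\er)$ and any such $h\ge g$ one has $\int g\di\nu\le\int h\di\nu=\int h\di\mu$; taking the infimum over $h$ and invoking the standard identity $\int g^*\di\mu=\inf\{\int h\di\mu\setsep h\in A_c(X,\er),\,h\ge g\}$ gives $\int g\di\nu\le\int g^*\di\mu$. For convex continuous $g$ the hypothesis then produces $\int g\di\nu\le\int g\di\mu$, while $\mu\prec\nu$ yields the reverse inequality, so $\int g\di\mu=\int g\di\nu$ for every convex continuous $g$. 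Since the linear span of convex continuous functions contains the algebra generated by $A_c(X,\er)$, which is uniformly dense in $C(X,\er)$ by Stone-Weierstrass, $\mu=\nu$. This proves maximality under the weaker hypothesis stated in (b); combining (b) with the forward direction of (a) then recovers the reverse of (a) for all continuous $f$.

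For (c), by (b) it suffices to derive maximality from the hypothesis that (ii) holds on every $\ef$-invariant convex continuous $f$. Let $\mu\prec\nu$ and set $\sigma=\nu-\mu$. Restricting the computation of the previous paragraph to $\ef$-invariant test functions yields $\int g\di\sigma=0$ for every $\ef$-invariant convex continuous $g$, and Stone-Weierstrass applied to the subalgebra of $\ef$-invariant continuous functions on $X$ (which is separated by moduli of elements of $A_c(X,\ef)$) extends this to every $\ef$-invariant continuous $g$; in particular, for every $h\in C(X,\er)$ the Haar average $\tilde h(x)=\int_{S_\ef}h(\alpha x)\di\alpha$ satisfies $\int\tilde h\di\sigma=0$, so by Fubini the averaged signed measure $\tilde\sigma=\int_{S_\ef}\alpha_*\sigma\di\alpha$ vanishes. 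On the other hand, the $S_\ef$-action on $X=(B_{E^*},w^*)$ is by affine homeomorphisms and hence preserves Choquet ordering, so $\alpha_*\mu\prec\alpha_*\nu$ and $\int h\di(\alpha_*\sigma)\ge0$ for every convex continuous $h$ and every $\alpha\in S_\ef$; integrating over $\alpha$ gives $\int h\di\tilde\sigma=0$, so the nonnegative function $\alpha\mapsto\int h(\alpha x)\di\sigma(x)$ vanishes Haar-a.e.\ and, by the joint continuity of $(\alpha,x)\mapsto\alpha x$ on $S_\ef\times X$, for every $\alpha\in S_\ef$. Specialising to $\alpha=1$ gives $\int h\di\sigma=0$ for every convex continuous $h$, whence $\sigma=0$ and $\mu=\nu$.

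The main obstacle is part (c): the hypothesis provides only the ``invariant'' amount of information, and the trick is to exploit both the $S_\ef$-equivariance of the Choquet ordering and the joint continuity of the group action to convert the vanishing of the Haar-averaged measure back into the vanishing of $\sigma$ itself.
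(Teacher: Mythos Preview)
Your overall architecture is sound, but there are two genuine gaps.

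\textbf{In the reverse direction of (a)/(b).} The ``standard identity'' you invoke,
\[
\int g^*\di\mu=\inf\Bigl\{\int h\di\mu\setsep h\in A_c(X,\er),\ h\ge g\Bigr\},
\]
is false: the family $\{h\in A_c(X,\er): h\ge g\}$ is \emph{not} downward directed, so one cannot interchange $\inf$ and $\int$. A concrete failure is $X=[0,1]^2$, $g(x,y)=xy$, $\mu=\tfrac12(\ep_{(1,0)}+\ep_{(0,1)})$: here $g^*(1,0)=g^*(0,1)=0$, but any affine $h\ge g$ satisfies $h(1,0)+h(0,1)=h(0,0)+h(1,1)\ge 1$, so the right-hand infimum is $\ge\tfrac12$. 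Thus your chain only yields $\int g\di\nu\le \inf_h\int h\di\mu$, which lies \emph{above} $\int g^*\di\mu$ and does not give the inequality you need. The fix is immediate: replace affine majorants by \emph{concave continuous} majorants $k\ge g$; this family is downward directed with pointwise infimum $g^*$, so Lemma~\ref{L:monotone nets} gives $\int g^*\di\mu=\inf_k\int k\di\mu$, and for each such $k$ one has $\int g\di\nu\le\int k\di\nu\le\int k\di\mu$ (the second inequality because $-k$ is convex and $\mu\prec\nu$).

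\textbf{In part (c).} Your Stone--Weierstrass step, passing from invariant convex test functions to \emph{all} invariant continuous test functions, is not justified: the linear span of invariant convex functions is not obviously a subalgebra, and Stone--Weierstrass does not apply as stated. Fortunately the detour is unnecessary. For any \emph{convex} continuous $h$, the Haar average $\tilde h(x)=\int_{S_\ef}h(\alpha x)\di\alpha$ is itself convex (an average of compositions of $h$ with affine maps), invariant and continuous, so your step~1 already gives $\int\tilde h\di\sigma=0$. Fubini then turns this into $\int_{S_\ef}\bigl(\int h\di\alpha_*\sigma\bigr)\di\alpha=0$; since $\alpha_*\mu\prec\alpha_*\nu$ makes the integrand nonnegative and the action is jointly continuous, the integrand vanishes identically, and $\alpha=1$ gives $\int h\di\sigma=0$ for every convex continuous $h$, whence $\sigma=0$.

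For comparison, the paper does not argue these points at all: it simply cites \cite[Proposition~I.4.5]{alfsen} for (a)--(b) and \cite[Lemma~4.1]{effros} for (c). With the two corrections above, your self-contained argument is essentially the standard one behind those references.
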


\begin{proof}
    Assertions $(a)$ and $(b)$ follow from  \cite[Proposition I.4.5]{alfsen}. Assertion $(c)$ follows from \cite[Lemma 4.1]{effros} in the complex case, the real case is analogous.
\end{proof}

An $\ef$-valued measure $\mu$ on $X$ is called \emph{boundary} if $\abs{\mu}$ is maximal. It is clear that the Mokobodzki criterion may be used also to characterize boundary measures.

A compact convex set $X$ is said to be a \emph{Choquet simplex} (or just a \emph{simplex}) if for any $x\in X$ there is a unique maximal probability measure representing $x$. This is equivalent to the algebraic notion of a simplex given in the introduction:

\begin{fact}
  Let $X$ be a compact convex subset of a real Hausdorff locally convex space $E$. Then $X$ is a Choquet simplex if and only if the cone 
  $\{(tx,t)\setsep t\ge 0, x\in X\}$ is a lattice in its natural order.
\end{fact}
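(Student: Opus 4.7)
The equivalence is classical (Choquet--Meyer); my plan is to deduce it from the $L^1$-predual characterization of simplices already cited in the introduction. Specifically, by \cite[Chapter 7, \S 19, Theorem 2]{lacey}, $X$ is a Choquet simplex (unique maximal representing measure for each point) if and only if $A_c(X,\er)$ is an $L^1$-predual, so it suffices to show that the cone $P=\{(tx,t)\setsep t\ge 0, x\in X\}$ is a lattice in $P-P$ if and only if $A_c(X,\er)$ is an $L^1$-predual.

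First I would set up the dictionary between $P-P$ and the dual of $A_c(X,\er)$. Each element $(tx,t)\in P$ with $t\ge 0$ defines a positive functional $f\mapsto t\,f(x)$ on $A_c(X,\er)$, and these are exactly the functionals induced by non-negative Radon measures on $X$ via the barycenter map: two such measures of equal total mass and equal barycenter produce the same functional on $A_c(X,\er)$. Every continuous linear functional on $A_c(X,\er)$ extends via Hahn--Banach to an element of $C(X,\er)^*$, hence to a signed Radon measure, and splits into its Jordan parts, so it lies in $P-P$. Under this identification the dual norm on $A_c(X,\er)^*$ coincides with the unique norm on $P-P$ whose restriction to $P$ is $(tx,t)\mapsto t$; in particular, the norm is additive on $P$.

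Given this dictionary, the equivalence follows from Kakutani's characterization of $AL$-spaces: a Banach lattice is an $L^1$-space if and only if its norm is additive on the positive cone. Since norm-additivity on $P$ holds automatically, the lattice property of $P$ becomes equivalent to $A_c(X,\er)^*$ being an $AL$-space, i.e.\ to $A_c(X,\er)$ being an $L^1$-predual.

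The main obstacle is the dictionary itself: one must verify that the quotient norm on $M(X,\er)$ modulo the kernel of the ``mass-and-barycenter'' map reproduces the dual norm on $A_c(X,\er)^*$, and that this kernel is exactly the space of signed measures with zero total mass and barycenter at the origin (after embedding $X$ affinely in $E\oplus\er$ at height $1$). This is a routine Hahn--Banach/Riesz argument, but it is the technical heart of the identification; once it is in place, the equivalence with the lattice property of $P$ becomes formal.
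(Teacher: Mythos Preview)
The paper gives no proof of this Fact; it is stated as classical, and the relevant reference is already mentioned in the introduction as \cite[Chapter~7, \S 20, Theorem~3]{lacey}, which is precisely the Choquet--Meyer equivalence between the lattice definition and the unique-maximal-measure definition.

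Your route through the $L^1$-predual characterization has a circularity problem. According to the paper's own introduction, the result you cite as \cite[Chapter~7, \S 19, Theorem~2]{lacey} is formulated with the \emph{lattice} definition of simplex (the paper recalls that definition from \cite{fonf} and then cites \S 19, Theorem~2 for the $L^1$-predual characterization, reserving \S 20, Theorem~3 for the equivalence with uniqueness of maximal measures). So your first step, ``$X$ is a Choquet simplex (unique maximal representing measure) iff $A_c(X,\er)$ is an $L^1$-predual'', already presupposes the Fact you are trying to prove. What you then propose to show via Kakutani---lattice cone iff $L^1$-predual---is exactly the content of \S 19, Theorem~2 as Lacey states it, so you would be re-deriving the cited result rather than deducing the Fact from it.

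Even setting the circularity aside, the Kakutani argument as sketched only gives one direction cleanly: if $P$ is a lattice, then $(A_c(X,\er)^*,P)$ is a Banach lattice with norm additive on the positive cone, hence an $AL$-space. For the converse you need that \emph{any} isometry $A_c(X,\er)^*\cong L^1(\mu)$ carries the natural order coming from $P$ to the pointwise order on $L^1(\mu)$; this is true but is an additional step (essentially a uniqueness-of-predual-order argument), and your proposal does not address it. If you want a self-contained proof, the standard direct argument (via the Riesz decomposition property of the cone and Cartier--Fell--Meyer dilations, as in \cite[Theorem~II.3.6]{alfsen} or \cite[Chapter~7, \S 20]{lacey}) avoids both issues.
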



Following \cite{phelps-complex} we say that a convex set is said to be a \emph{simplexoid} if each of its proper faces is a simplex. It is not hard to show that any simplex is a simplexoid. But the converse is not true, since, for example, the closed unit ball of a Hilbert space is trivially a simplexoid. Further examples of simplexoids are unit balls of $L^1$-spaces:

\begin{fact}\label{f:L1-simplexoid}
    Let $\mu$ be a non-negative measure (not necessarily finite). Then the unit ball of $L_1(\mu)$ is a simplexoid.
\end{fact}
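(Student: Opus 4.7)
The plan is to identify every proper face $F$ of the unit ball $B:=B_{L^1(\mu)}$ affinely with a face of the ``probability set'' $\Delta:=\{h\in L^1(\mu)\colon h\ge0,\ \|h\|_1=1\}$, and then to observe that $\Delta$ is a simplex while faces of simplices are simplices.

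First I would note that every $f\in F$ must satisfy $\|f\|_1=1$; otherwise $f$ would be an interior point of $B$ and the face condition would force $F=B$. For $f_1,f_2\in F$, the midpoint $\tfrac12(f_1+f_2)\in F$ also has norm one, which saturates the $L^1$ triangle inequality and yields $|f_1+f_2|=|f_1|+|f_2|$ $\mu$-a.e., i.e.\ $f_1$ and $f_2$ carry a common phase on the intersection of their supports. This makes the modulus map $\Phi\colon F\to L^1(\mu)$, $\Phi(f)=|f|$, both affine (by the above equality) and injective (two elements of $F$ with the same modulus share the same support and the same phase there, hence coincide), so $F$ is affinely isomorphic to $C:=\Phi(F)\subseteq\Delta$.

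The crucial step is to verify that $C$ is a face of $\Delta$. Given $h_1,h_2\in\Delta$ with $\tfrac12(h_1+h_2)=|f_0|$ for some $f_0\in F$, non-negativity of the $h_i$ forces each $h_i$ to vanish outside $\{f_0\ne0\}$, so the formula $f_i:=(f_0/|f_0|)\,h_i$, understood as $0$ on $\{f_0=0\}$, defines elements of $L^1(\mu)$ with $|f_i|=h_i$, $\|f_i\|_1=1$, and $\tfrac12(f_1+f_2)=f_0$. The face property of $F$ then gives $f_i\in F$, so $h_i=\Phi(f_i)\in C$.

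It remains to check that $\Delta$ is a simplex and that faces of simplices are simplices. For the former, the cone $P=\{(\lambda h,\lambda):h\in\Delta,\lambda\ge0\}$ identifies with the graph of $h\mapsto\int h\di\mu$ over the positive cone of $L^1_\er(\mu)$, and the order on $P-P$ induced by $P$ is the pointwise a.e.\ order on the vector lattice $L^1_\er(\mu)$. For the latter, if $C$ is a face of a simplex with cone $P$, the associated cone $P_C$ is a hereditary subcone of $P$: writing $a\in P_C-P_C$ as $a=p_1-p_2$ with $p_i\in P_C$, the lattice parts computed in $P-P$ satisfy $a^+\le p_1$ and $a^-\le p_2$, hence $a^\pm\in P_C$ by heredity, which makes $P_C-P_C$ a sublattice of $P-P$. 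Consequently $C$, and thus $F\cong C$, is a simplex. The main delicacy lies in the face verification in the third paragraph, where the unimodular phase of the fixed $f_0\in F$ is essential for lifting nonnegative data back to $F$; everything else is routine.
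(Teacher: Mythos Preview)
Your proof is correct. Both your argument and the paper's rest on the same preliminary observation: since every element of the proper face $F$ has norm one, the equality $|f_1+f_2|=|f_1|+|f_2|$ holds $\mu$-a.e.\ for $f_1,f_2\in F$, i.e., elements of $F$ share a common phase on overlapping supports. From there the two proofs diverge.

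The paper works directly with the cone $C$ generated by $F$: using the face property it shows that $uf\in C$ whenever $f\in C$ and $0\le u\le 1$ is measurable (by writing $f=uf+(1-u)f$, normalizing, and invoking additivity of the norm on the cone), and concludes that $C$ is a lattice under pointwise operations. Your route instead passes through the probability simplex $\Delta$: you observe that the modulus map $\Phi(f)=|f|$ is an affine injection of $F$ onto a face of $\Delta$, and then invoke two general facts --- that $\Delta$ is a simplex, and that faces of simplices are simplices. The key step, lifting $h_1,h_2\in\Delta$ with midpoint $|f_0|$ back to $F$ via the phase $f_0/|f_0|$, plays exactly the role of the paper's ``multiply by $u$'' trick.

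What each approach buys: the paper's argument is self-contained and exhibits the lattice operations concretely (they are pointwise). Yours is more modular, cleanly separating the $L^1$-specific content (the modulus map and its properties) from purely order-theoretic facts about simplices, which you supply in your final paragraph. One small point worth making explicit: when you argue that $P_C-P_C$ is a sublattice, you are implicitly using that the order on $P_C-P_C$ induced by $P_C$ agrees with the restriction of the order induced by $P$; this follows from the hereditary property you established, but deserves a word.
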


\begin{proof}
    This is probably a folklore fact, used in \cite{phelps-complex} without explicit mentioning. Since we have not found a proper reference, we include a proof for completeness:

    Let $F$ be a proper face of $B_{L^1(\mu)}$. Since it is included on the sphere and it is convex, we deduce that
   $\abs{f+g}=\abs{f}+\abs{g}$ $\mu$-a.e. for each $f,g\in F$. I.e., given $f,g\in F$, then for $\mu$-almost all $\omega$ we have $f(\omega)=0$ or $g(\omega)=0$ or $\frac{f(\omega)}{g(\omega)}$ is a positive number. This property passes to the cone $C$ generated by $F$. Next observe that $uf\in C$ whenever $f\in C$ and $0\le u\le1$ is
   a measurable function. Indeed, $f=uf+(1-u)f$ and all three functions belong to $L^1(\mu)$, so they are positive mutliples of functions of norm one, say $f=t_1g_1$, $uf=t_2g_2$ and $(1-u)f=t_3g_3$. Since the norm is clearly additive on the cone generated by $g_1,g_2,g_3$, we deduce that $t_1=t_2+t_3$, i.e., $g_1$ is a convex combination of $g_2$ and $g_3$. Since $g_1\in F$ and $F$ is a face, we deduce that $g_2,g_3\in F$, in particular $uf\in C$.
   Now it easily follows that the cone $C$ is a lattice (and the operations are pointwise). We conclude that $F$ is a simplex.
\end{proof}

Further, it turns out that simplexoid dual unit balls of Banach spaces admit a characterization using uniqueness of certain representing measures. It is contained in the following fact which follows from the proof of \cite[Theorem 3.11]{fuhr-phelps}.

 \begin{fact}\label{f:simplexoid}
     Let $E$ be a Banach space. The following assertions are equivalent:
     \begin{enumerate}[$(1)$]
         \item The dual unit ball $B_{E^*}$ is a simplexoid.
         \item For any $\varphi\in B_{E^*}$ with $\norm{\varphi}=1$ there is a unique maximal probability on $(B_{E^*},w^*)$ representing $\varphi$.
     \end{enumerate}
 \end{fact}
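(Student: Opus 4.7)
The plan is to reduce the uniqueness question on $B_{E^*}$ to the same question on a closed proper face containing $\varphi$. I will use two standard facts about a closed proper face $F$ of a compact convex set $X$:
(a) every probability measure on $X$ whose barycenter lies in $F$ is supported on $F$, and
(b) a probability measure supported on $F$ is maximal in the Choquet ordering on $X$ if and only if it is maximal in the Choquet ordering on $F$.
Fact (a) is a consequence of extremality of closed faces (in effect, writing $F$ as the intersection of closed half-spaces and applying the equality case in Jensen). Fact (b) follows from the Mokobodzki criterion (Fact~\ref{f:mokobodzki}) together with (a): (a) ensures that any dilation on $X$ evaluated at a point of $F$ stays within $F$, so dilations on $X$ and on $F$ become interchangeable for measures supported on $F$.

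For $(1)\Rightarrow(2)$: given $\varphi\in B_{E^*}$ with $\|\varphi\|=1$, choose a closed proper face $F\subseteq B_{E^*}$ containing $\varphi$; a natural choice is the smallest closed face containing $\varphi$, which is proper because proper closed faces of $B_{E^*}$ are characterised as those contained in $S_{E^*}$. By hypothesis $(1)$, $F$ is a Choquet simplex. By (a), any maximal probability on $B_{E^*}$ representing $\varphi$ is supported on $F$, and by (b) it is maximal in $F$ as well. Uniqueness within the simplex $F$ therefore transfers to uniqueness in $B_{E^*}$.

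For $(2)\Rightarrow(1)$: let $F$ be a proper closed face of $B_{E^*}$. A short argument shows $F\subseteq S_{E^*}$: if $\psi\in F$ had $\|\psi\|<1$, then for any unit $\xi$ and small $\lambda>0$ one could write $\psi=\tfrac12((\psi+\lambda\xi)+(\psi-\lambda\xi))$ with both endpoints in $B_{E^*}$, whence they belong to $F$; iterating the face property forces $F$ to contain points of arbitrarily small norm, and eventually $0$, contradicting properness. Consequently every $\varphi\in F$ satisfies $\|\varphi\|=1$, and $(2)$ yields a unique maximal representing measure $\mu$ on $B_{E^*}$. By (a) it is supported on $F$, and by (b) it is also maximal in $F$. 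Any other maximal representing measure of $\varphi$ within $F$ would, by (b), be maximal on $B_{E^*}$ and would therefore coincide with $\mu$. Hence $F$ is a simplex.

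The main obstacle is (b): while maximality in $X$ obviously restricts to maximality in $F$ (test functions on $X$ restrict to convex continuous functions on $F$), the converse is subtle since not every continuous convex function on $F$ extends to one on $X$. The cleanest route is through the dilation formulation of the Mokobodzki criterion, where (a) allows one to transport dilations between $X$ and $F$; the algebraic simplex condition on $F$ (lattice cone) then follows from the uniqueness of maximal representing measures via \cite[Chapter 7, \S 20, Theorem 3]{lacey}.
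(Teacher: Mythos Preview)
Your argument for $(1)\Rightarrow(2)$ has a genuine gap: a norm-one functional need not lie in any proper \emph{closed} face of $B_{E^*}$. Take $E=c_0$, so $E^*=\ell^1$, and let $\varphi=(2^{-n})_{n\ge1}\in S_{\ell^1}$. For each $k$ one has $\varphi=2^{-k}e_k+(1-2^{-k})\psi_k$ with $\psi_k\in S_{\ell^1}$, so every face containing $\varphi$ contains every $e_k$; since $e_k\to0$ weak$^*$, any weak$^*$-closed face containing $\varphi$ contains $0$ and therefore equals $B_{\ell^1}$. Thus the ``smallest closed face containing $\varphi$'' is the whole ball, and your reduction never gets off the ground. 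The characterisation ``a closed face is proper iff it lies in $S_{E^*}$'' is correct but irrelevant here: it does not force the smallest closed face containing $\varphi$ to lie in $S_{E^*}$. The object that \emph{is} always proper when $\norm{\varphi}=1$ is the algebraic face $F_\varphi$ generated by $\varphi$, but then your facts (a) and (b) for closed faces are unavailable; one has to argue differently, using the lattice property of the cone over $F_\varphi$ directly. This is essentially what the argument in \cite[Theorem~3.11]{fuhr-phelps} (to which the paper defers) does.

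There is a parallel issue in $(2)\Rightarrow(1)$: the simplexoid condition asks that \emph{every} proper face be a simplex, while you only treat closed ones, and (by the same example) a proper face need not sit inside any proper closed face. A clean repair is available here: given a proper face $F\subset S_{E^*}$, the map $\varphi\mapsto\mu_\varphi$ (unique maximal representing measure) is affine on $F$, so $(t\varphi,t)\mapsto t\mu_\varphi$ embeds the cone $P_F$ additively into $M_+(B_{E^*})$. Using that maximal measures form a hereditary subcone of $M_+$ (if $0\le\nu\le\mu$ and $\mu$ is maximal then $\int(f^*-f)\di\nu\le\int(f^*-f)\di\mu=0$) together with the face property of $F$, one checks that the image of $P_F$ is itself hereditary, hence a sublattice; this yields the lattice property for $P_F$ without ever passing to a closed face.
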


A natural class of Banach spaces connecting simplices and $L^1$-space is that of $L^1$-preduals. Recall that a (real or complex) Banach space $E$ is called an \emph{$L^1$-predual} if $E^*$ is isometric to $L^1(\mu)$ for a non-negative measure $\mu$. If $E$ is an $L^1$-predual, then $B_{E^*}$ is a simplexoid (by Fact~\ref{f:L1-simplexoid}) and hence condition $(2)$ from Fact~\ref{f:simplexoid} holds. In fact, $L^1$-preduals may be characterized using a uniqueness property of some representation measures.
To formulate it we recall some notation.

Let $E$ be a Banach space over $\ef$ and $X=(B_{E^*},w^*)$. For $g\in C(X,\ef)$ we set 
$$(\hom g)(x)=\int_{S_\ef} \alpha^{-1}g(\alpha x)\di \alpha, \quad x\in X,$$ 
where $\di\alpha$ is the Haar probability measure on $S_\ef$. Then $\hom$ is a norm-one linear projection of $C(X,\ef)$ onto the subspace formed by $\ef$-homogeneous functions. We denote by the same symbol the dual operator on $M(X,\ef)$, i.e.
$$\int g\di(\hom \mu)=\int(\hom g)\di\mu,\quad g\in C(X,\ef), \mu\in M(X,\ef).$$ 
The measures satisfying $\mu=\hom\mu$ are called \emph{$\ef$-antihomogeneous} in \cite{l1pred} (as they satisfy $\mu(\alpha B)=\overline{\alpha}\mu(B)$ for $B\subset X$ Borel).
The mentioned characterization of $L^1$-preduals reads as follows:

\begin{fact}\label{f:l1pred-char} Let $E$ be a Banach space over $\ef$. The following assertions are equivalent:
\begin{enumerate}[$(1)$]
    \item $E$ is an $L^1$-predual.
    \item If $x^*\in B_{E^*}$ and $\mu,\nu$ are two maximal probabilities on $B_{E^*}$ representing $x^*$, then $\hom\mu=\hom\nu$.
    \item For any $x^*\in B_{E^*}$ there is a unique boundary $\ef$-antihomogeneous measure $\mu$ on $B_{E^*}$ such that $\norm{\mu}\le 1$ and $x^*(x)=\int y^*(x)\di\mu(y^*)$ for each $x\in E$.
\end{enumerate}
    \end{fact}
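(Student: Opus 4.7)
I would establish the three equivalences by proving (2) $\Leftrightarrow$ (3) directly via the averaging projection $\hom$, and then deducing (1) $\Leftrightarrow$ (3) from the simplexoid structure of unit balls of $L^1$-preduals (Fact~\ref{f:L1-simplexoid}).

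For (3) $\Rightarrow$ (2), let $\mu,\nu$ be maximal probabilities on $X=(B_{E^*},w^*)$ representing $x^*$. The measures $\hom\mu$ and $\hom\nu$ are $\ef$-antihomogeneous with norm at most one, since $\hom$ is a norm-one projection onto antihomogeneous measures. Each represents $x^*$ in the sense of (3): for $x\in E$ the evaluation $a_x\colon y^*\mapsto y^*(x)$ is continuous and $\ef$-homogeneous on $X$, hence fixed by $\hom$, giving
$$\int a_x\di(\hom\mu)=\int\hom(a_x)\di\mu=\int a_x\di\mu=x^*(x).$$
The substantive step is that $\hom\mu$ is a boundary measure. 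Writing $\hom\mu=\int_{S_\ef}\alpha^{-1}(T_\alpha\mu)\di\alpha$ with $T_\alpha$ the pushforward under $x\mapsto\alpha x$, one obtains $|\hom\mu|\le\tilde\mu$, where $\tilde\mu=\int_{S_\ef}T_\alpha\mu\di\alpha$ is the $S_\ef$-symmetrization of $\mu$; in particular $|\hom\mu|$ has a Radon--Nikodym density in $[0,1]$ with respect to $\tilde\mu$. Each $T_\alpha\mu$ is maximal as an affine image of $\mu$, so $\tilde\mu$ is maximal by averaging the Mokobodzki equality (which can be restricted to $\ef$-invariant convex functions by Fact~\ref{f:mokobodzki}(c)). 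Since maximality transfers to measures of the form $h\sigma$ with $0\le h\le 1$, the measure $|\hom\mu|$ is maximal, i.e. $\hom\mu$ is boundary. Uniqueness in (3) then forces $\hom\mu=\hom\nu$.

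For (2) $\Rightarrow$ (3), existence follows by applying $\hom$ to any maximal probability representing $x^*$ (available by the Choquet--Bishop--de Leeuw theorem). For uniqueness, given two antihomogeneous boundary measures $\lambda_1,\lambda_2$ of norm at most one representing $x^*$, I would polar-decompose each as $\lambda_i=h_i\cdot|\lambda_i|$ with $h_i$ $\ef$-homogeneous and unimodular on $\spt|\lambda_i|$, then lift each $|\lambda_i|$ to a maximal probability $\sigma_i$ representing $x^*$ (adding a compensating mass at a suitably symmetric point if $\|\lambda_i\|<1$) in such a way that $\hom\sigma_i=\lambda_i$; hypothesis (2) then gives $\lambda_1=\hom\sigma_1=\hom\sigma_2=\lambda_2$.

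The equivalence (1) $\Leftrightarrow$ (3) is the classical characterization of $L^1$-preduals via unique antihomogeneous boundary representations, for which I would invoke \cite{l1pred}. Concretely, (1) $\Rightarrow$ (3) uses that $B_{E^*}$ is a simplexoid (Fact~\ref{f:L1-simplexoid}) together with the disintegration structure of $L^1(\mu)$; the converse realizes $E^*$ as an $L^1$-space via the canonical assignment $x^*\mapsto\mu_{x^*}$, whose linearity and isometry are forced by uniqueness. The principal obstacle I anticipate is the preservation of the boundary property by $\hom$: translating the Choquet ordering through the averaging operator only becomes tractable because the Mokobodzki test reduces to $\ef$-invariant convex functions (Fact~\ref{f:mokobodzki}(c)), and without that reduction the domination of $|\hom\mu|$ by the symmetrized measure $\tilde\mu$ would not yield maximality.
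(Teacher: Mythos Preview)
The paper does not prove this statement at all: it is recorded as a \emph{Fact} collecting known results, and the paper's proof consists solely of citations---$(1)\iff(2)$ to Lacey's monograph and $(1)\iff(3)$ to \cite{l1pred}. Your proposal instead attempts a direct argument for $(2)\iff(3)$, which is a genuinely different (and more self-contained) route.

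Your $(3)\Rightarrow(2)$ argument is correct and well organized: the key observation that $|\hom\mu|\le\int_{S_\ef}T_\alpha\mu\di\alpha$ together with Fact~\ref{f:mokobodzki}$(c)$ does yield that $\hom\mu$ is boundary, and the rest is routine. Your $(2)\Rightarrow(3)$ sketch is on the right track but the lifting step is underspecified. Concretely, given an antihomogeneous boundary $\lambda=h\,|\lambda|$ with $|h|=1$, the pushforward $\sigma_0$ of $|\lambda|$ under $y^*\mapsto h(y^*)y^*$ is a positive measure with $\hom\sigma_0=\lambda$ and barycenter~$x^*$; maximality of $\sigma_0$ follows from Fact~\ref{f:mokobodzki}$(c)$ because $S_\ef$-invariant functions satisfy $f(h(y^*)y^*)=f(y^*)$. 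If $\norm{\lambda}<1$ you must add $(1-\norm{\lambda})$ times the Haar probability on an orbit $S_\ef\cdot e^*$ for some $e^*\in\ext B_{E^*}$: this addition is maximal, has barycenter $0$, and is annihilated by $\hom$, so the sum is a maximal probability representing $x^*$ with $\hom$ equal to $\lambda$. Your phrase ``adding a compensating mass at a suitably symmetric point'' hides exactly these three verifications (maximality, correct barycenter, annihilation by $\hom$), and without them the argument is incomplete. For $(1)\iff(3)$ you ultimately invoke the same reference \cite{l1pred} as the paper, so there is no difference there.
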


\begin{proof}
    Equivalence $(1)\iff(2)$ follows from \cite[\S21, Theorem 7]{lacey} in the real case and from \cite[\S23, Theorem 5]{lacey}. Equivalence $(1)\iff(3)$ is proved in \cite[Fact 1.1]{l1pred}.
\end{proof}

We finish this section by a version of monotone convergence theorem for nets which we will use several times.

\begin{lemma}\label{L:monotone nets}
    Let $K$ be a compact Hausdorff space and let $\mu$ be a nonnegative Radon measure on $K$. Let $\F$ be a family of real-valued continuous functions on $K$ such that
    \begin{enumerate}[$(i)$]
        \item $\F$ is downward directed;
        \item $\F$ is bounded below, i.e., there is $c\in\er$ such that $f\ge c$ for each $f\in \F$.
    \end{enumerate}
    Then
    $$\int (\inf\F) \di\mu =\inf_{f\in\F} \int f\di\mu.$$
\end{lemma}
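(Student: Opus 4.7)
The plan is to dispatch the trivial inequality and then reduce the reverse one to a Dini-type compactness argument. The inequality $\int(\inf\F)\di\mu\le\inf_{f\in\F}\int f\di\mu$ is immediate, since $\inf\F\le f$ pointwise on $K$ for each $f\in\F$; the substance of the proof lies in the opposite direction.

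First I would fix any $f_0\in\F$ and pass to the cofinal subfamily $\F_0=\{f\in\F\setsep f\le f_0\}$, which by downward directedness has the same pointwise infimum and the same infimum of integrals as $\F$. Writing $g=\inf\F$, the function $f_0-g$ is a bounded nonnegative lower semicontinuous function on $K$, realized as the pointwise supremum of the upward directed family $\{f_0-f\setsep f\in\F_0\}$ of nonnegative continuous functions. The desired equality thus reduces to the following auxiliary claim: if $(\psi_\alpha)$ is an upward directed net of nonnegative continuous functions on $K$ with pointwise supremum $\psi$, then $\int\psi\di\mu=\sup_\alpha\int\psi_\alpha\di\mu$.

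For this auxiliary claim I would invoke the standard consequence of inner regularity of a Radon measure: for any bounded nonnegative lower semicontinuous function $\psi$ on the compact Hausdorff space $K$,
$$\int\psi\di\mu=\sup\left\{\int h\di\mu\setsep h\in C(K,\er),\;0\le h\le\psi\right\}.$$
It then suffices to fix such an $h$ and an arbitrary $\varepsilon>0$ and to find some index $\alpha$ with $\int\psi_\alpha\di\mu\ge\int h\di\mu-\varepsilon\mu(K)$. For each $x\in K$, pointwise convergence gives $\alpha_x$ with $\psi_{\alpha_x}(x)>h(x)-\tfrac\varepsilon2$, and continuity of $\psi_{\alpha_x}-h$ then produces an open neighborhood $U_x$ of $x$ on which $\psi_{\alpha_x}>h-\varepsilon$. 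A finite subcover $U_{x_1},\ldots,U_{x_n}$ of $K$, combined with the upward directedness of $(\psi_\alpha)$, yields a single $\alpha$ with $\psi_\alpha\ge\psi_{\alpha_{x_j}}$ for every $j$, and hence $\psi_\alpha>h-\varepsilon$ on all of $K$; integrating completes the step.

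The main obstacle—really the only nontrivial input—will be the Radon approximation formula displayed above; I would quote it as a standard consequence of the inner regularity of $\mu$ rather than reprove it. Everything else is the cofinality reduction in the first paragraph followed by the Dini-type compactness argument, both of which are routine.
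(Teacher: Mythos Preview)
Your proof is correct and is essentially the standard Dini-plus-regularity argument that the paper defers to by citing \cite[Lemma 10.2]{phelps-choquet}; the paper gives no independent proof. Your passage to the upward-directed family $\{f_0-f\setsep f\in\F_0\}$ is a cosmetic repackaging of the same idea, and the one nontrivial input you flag---the inner-regularity approximation $\int\psi\di\mu=\sup\{\int h\di\mu\setsep h\in C(K,\er),\ 0\le h\le\psi\}$ for bounded lower semicontinuous $\psi$---is indeed the standard ingredient used in Phelps' argument.
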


A special case of this statement (where $K$ is a compact convex set, $f\in C(K,\er)$ and $\F$ is the family of concave continuous functions greater or equal to $f$) is proved in \cite[Lemma 10.2]{phelps-choquet}.
The same proof works in the more general case formulated above.
    
\section{Function spaces without constants -- basic facts}

Let $K$ be a (nonempty) compact Hausdorff space. By a \emph{function space on $K$} we will mean a linear subspace $H\subset C(K,\ef)$ separating points of $K$. We stress that $H$ need not contain constant functions. The research of such spaces started in \cite[Section 7]{fuhr-phelps}. We combine the notions from \cite{fuhr-phelps} with the nowadays standard approach from \cite[Chapter 3]{lmns}. 
We start by the following easy lemma on the evaluation functional.

\begin{lemma}\label{L:evaluace}
    Let $H$ be a (real or complex) function space on $K$. 
   The evaluation mapping $\phi:K\to H^*$ defined by
        $$\phi(x)(f)=f(x), \quad x\in K, f\in H,$$
        is a homeomorphic injection of $K$ into $(B_{H^*},w^*)$, the closed unit ball of $H^*$ equipped with the weak$^*$ topology.
\end{lemma}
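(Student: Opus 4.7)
The proof is essentially a routine check; I would organize it into four short steps.

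First, I would verify that $\phi(x) \in B_{H^*}$ for every $x \in K$. The functional $\phi(x)$ is clearly linear in $f$, and $|\phi(x)(f)| = |f(x)| \le \norm{f}_\infty$, so $\norm{\phi(x)} \le 1$.

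Second, I would observe that $\phi$ is injective. This is immediate from the assumption that $H$ separates points of $K$: if $x \neq y$ then there is some $f \in H$ with $f(x) \neq f(y)$, i.e., $\phi(x)(f) \neq \phi(y)(f)$.

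Third, I would check continuity of $\phi \colon K \to (B_{H^*}, w^*)$. By the definition of the weak$^*$ topology, this reduces to showing that for each fixed $f \in H$ the map $x \mapsto \phi(x)(f) = f(x)$ is continuous on $K$, which holds since $f \in C(K,\ef)$.

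Finally, to promote $\phi$ to a homeomorphism onto its image, I would use the standard fact that a continuous injection from a compact space into a Hausdorff space is a homeomorphism onto its image: $K$ is compact, $(B_{H^*},w^*)$ is Hausdorff, so $\phi$ is a closed map onto $\phi(K)$, hence a homeomorphism. There is no real obstacle here; the only thing to keep in mind is that nothing in the argument uses that $H$ contains constants, which is precisely the point of recording the lemma in the present setting.
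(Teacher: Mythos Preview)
Your proof is correct and follows essentially the same approach as the paper: verify that $\phi(x)$ lies in $B_{H^*}$, use continuity of each $f\in H$ for weak$^*$-continuity, and use that $H$ separates points for injectivity. The paper's version is slightly terser in the last step (it simply asserts that a continuous injection from a compact space is a homeomorphic injection), whereas you spell out the compact-to-Hausdorff argument explicitly.
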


\begin{proof}
    Given $x\in K$, $\phi(x)$ is clearly a linear functional on $H$ satisfying $\norm{\phi(x)}\le1$. Further, for any $f\in H$ the function
   $$x\mapsto \phi(x)(f) \ (=f(x))$$
   is continuous on $K$, so $\phi$ is continuous with respect to the weak$^*$ topology. Since $H$ separates points of $K$, $\phi$ is one-to-one, so it is a homeomorphic injection.
 \end{proof}

We continue by pointing out which measures play the role of `representing measures':
Let $H$ be a (real or complex) function space on $K$. 
 For $\varphi\in H^*$ set
$$M_\varphi(H)=\left\{\mu\in M(K,\ef)\setsep \norm{\mu}=\norm{\varphi}\ \&\ \forall f\in H\colon \varphi(f)=\int_K f\di\mu\right\}.$$
We observe that $M_\varphi(H)\ne\emptyset$ by the Hahn-Banach extension theorem (combined with the Riesz representation theorem). Moreover, $M_0(H)=\{0\}$ and 
$$M_{c\varphi}(H)=cM_{\varphi}(H),\quad \varphi\in H^*, c\in\ef\setminus\{0\}.$$
If $x\in K$, we set $M_x(H)=M_{\phi(x)}(H)$, i.e., 
 $$M_x(H)=\left\{\mu\in M(K,\ef)\setsep \norm{\mu}=\norm{\phi(x)}\ \&\ \forall h\in H\colon h(x)=\int h\di\mu\right\}.$$

The next step is the definition of the Choquet boundary and analogues of maximal and boundary measures.
Following \cite[Definition 7.1]{fuhr-phelps} we define the \emph{Choquet boundary} of $H$ by
$$\Ch_HK=\{x\in K\setsep \phi(x)\in\ext B_{H^*}\}.$$
Further, a measure $\mu\in M(K,\ef)$ is said to be \emph{$H$-boundary} (or \emph{$H$-maximal}) if the image measure $\phi(\mu)$ is a boundary (or maximal) measure on $(B_{H^*},w^*)$.

\begin{remark}\label{rem:s1} Assume that $H$ contains the constant functions. Then $\norm{\phi(x)}=1$ for each $x\in K$ (as $\phi(x)(1)=1$). Hence any element of $M_x(H)$ is a probability measure, so $M_x(H)$ in this case coincides with the classical set of representing measures defined in \cite[Definition 3.3]{lmns}. If $\ef=\er$, by \cite[Proposition 4.26(d)]{lmns} the Choquet boundary coincides with classical notion, i.e., with the set of those $x\in K$ for which $M_x=\{\ep_x\}$ (cf. \cite[Definition 3.4]{lmns}), and by \cite[Proposition 4.28]{lmns} $H$-maximal and $H$-boundary measures coincide with the notions from \cite[Definition 3.57]{lmns}. Thus the presented theory is indeed a generalization of the classical one.  
\end{remark}

The evaluation mapping $\phi$ is naturally accompanied by another mapping (a kind of extension): 
Define the mapping $\theta:S_{\ef}\times K\to H^*$ by
$$\theta(\alpha,x)=\alpha\phi(x),\quad \alpha\in S_{\ef}, x\in K.$$
The properties of $\theta$ are summarized in the following lemma.

\begin{lemma}\label{L:theta}
    \begin{enumerate}[$(a)$]
        \item $\theta$ is a continuous mapping of $S_{\ef}\times K$ into $(B_{H^*},w^*)$;
        \item $\theta(S_{\ef}\times\Ch_HK)=\ext B_{H^*}$, in particular, $\Ch_HK\ne\emptyset$;
        \item $\theta^{-1}(\ext B_{H^*})=S_{\ef}\times\Ch_HK$.
            \end{enumerate}
\end{lemma}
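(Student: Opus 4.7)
The plan is to handle (a), (b), (c) in order, treating (c) as essentially a corollary of (b). Throughout, the underlying reason the lemma works is that $\ext B_{H^*}$ is preserved by the natural $S_\ef$-action $\psi\mapsto\alpha\psi$.

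For (a), $\phi$ is weak-$*$ continuous by Lemma~\ref{L:evaluace}, and for every $f\in H$ the scalar-valued map $(\alpha,\psi)\mapsto\alpha\psi(f)$ is jointly continuous on $S_\ef\times (B_{H^*},w^*)$. Composing these two facts shows that $\theta$ is continuous into $(H^*,w^*)$, and the bound $\|\theta(\alpha,x)\|\le|\alpha|\cdot\|\phi(x)\|\le 1$ places its image in $B_{H^*}$.

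For (b), I would first observe that $\ext B_{H^*}$ is invariant under the $S_\ef$-action: any decomposition $\alpha\psi=\tfrac12(\psi_1+\psi_2)$ in $B_{H^*}$ gives $\psi=\tfrac12(\bar\alpha\psi_1+\bar\alpha\psi_2)$, so extremality of $\psi$ forces $\psi_1=\psi_2$. This immediately yields $\theta(S_\ef\times\Ch_H K)\subset\ext B_{H^*}$. For the opposite inclusion, take $\psi\in\ext B_{H^*}$ and consider the weak-$*$ compact convex fibre
$$V=\{\tilde\psi\in B_{C(K,\ef)^*}\setsep \tilde\psi|_H=\psi\},$$
which is nonempty by the Hahn-Banach theorem. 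Apply Krein--Milman to pick $\tilde\psi_0\in\ext V$; a standard two-step argument shows $\tilde\psi_0\in\ext B_{C(K,\ef)^*}$, because any decomposition of $\tilde\psi_0$ in $B_{C(K,\ef)^*}$ restricts to one of $\psi$ in $B_{H^*}$, forcing both restricted summands to equal $\psi$ and so to lie in $V$, where extremality of $\tilde\psi_0$ finishes the job. But the extreme points of $B_{C(K,\ef)^*}$ are exactly the measures $\alpha\ep_x$ with $\alpha\in S_\ef$ and $x\in K$ (a classical theorem of Arens--Kelley), so $\tilde\psi_0=\alpha\ep_x$ and hence $\psi=\alpha\phi(x)$. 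Applying the $S_\ef$-invariance once more gives $\phi(x)=\bar\alpha\psi\in\ext B_{H^*}$, i.e.\ $x\in\Ch_H K$, and $\psi=\theta(\alpha,x)$ as required. The non-emptiness of $\Ch_H K$ then follows because $\ext B_{H^*}\ne\emptyset$ by Krein--Milman applied to the weak-$*$ compact convex set $B_{H^*}$.

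Assertion (c) is then immediate. One inclusion is already contained in (b). For the converse, if $\alpha\phi(x)\in\ext B_{H^*}$ then the $S_\ef$-invariance of $\ext B_{H^*}$ gives $\phi(x)=\bar\alpha(\alpha\phi(x))\in\ext B_{H^*}$, so $x\in\Ch_H K$. The main obstacle in the whole argument is the reverse inclusion in (b), which requires lifting extremality from $B_{H^*}$ to $B_{C(K,\ef)^*}$; this is achieved by the intermediate extreme-point selection inside the Hahn--Banach fibre $V$, combined with the Arens--Kelley description of $\ext B_{C(K,\ef)^*}$.
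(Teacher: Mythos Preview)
Your proof is correct. The difference from the paper is localized in the reverse inclusion of (b): the paper observes that $\phi(K)$ is $1$-norming for $H$, applies the bipolar theorem to get $B_{H^*}=\wscl{\co\,\theta(S_\ef\times K)}$, and then invokes Milman's theorem (together with compactness of $\theta(S_\ef\times K)$) to conclude $\ext B_{H^*}\subset\theta(S_\ef\times K)$. You instead lift an extreme point $\psi\in\ext B_{H^*}$ through the restriction map by choosing an extreme point of the Hahn--Banach fibre $V\subset B_{C(K,\ef)^*}$, show it is extreme in the full dual ball, and then appeal to Arens--Kelley. Both routes are standard; the paper's is a couple of lines shorter, while yours is more explicit about how an individual extreme functional acquires the form $\alpha\phi(x)$ and avoids citing Milman's theorem. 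For (a) and (c) your arguments coincide with the paper's (which records the chain $\theta(\alpha,x)\in\ext B_{H^*}\Leftrightarrow\phi(x)\in\ext B_{H^*}\Leftrightarrow x\in\Ch_HK$ exactly as you do via $S_\ef$-invariance).
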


\begin{proof} Assertion $(a)$ is obvious. To prove $(b)$ and $(c)$  observe that for $(\alpha,x)\in S_{\ef}\times K$ we have
$$\theta(\alpha,x)\in \ext B_{H^*}\Longleftrightarrow \alpha\phi(x)\in\ext B_{H^*} \Longleftrightarrow\phi(x)\in\ext B_{H^*}\Longleftrightarrow x\in\Ch_HK.$$
Let us further observe that for any $f\in H$ we have
$$\norm{f}=\sup\{\abs{f(x)}\setsep x\in K\}=\sup\{\abs{\phi(x)(f)}\setsep x\in K\},$$
i.e., $\phi(K)$ is a $1$-norming subspace of $H^*$. By the bipolar theorem we deduce that
$$B_{H^*}=\wscl{\operatorname{aco}\phi(K)}=\wscl{\co\theta(S_{\ef}\times K)}.$$
By Milman's theorem we conclude that $\ext B_{H^*}\subset\theta(S_{\ef}\times K)$ and the argument is complete.
\end{proof}

We continue by presenting some easy examples showing that the spaces without constants have different behaviour than the classical spaces.

\begin{example2}\label{ex:easyspaces}
(1) Let $K=[0,1]$ and 
$$H=\{f\in C(K,\ef)\setsep f(0)=0\}.$$
Then $H$ is a function space, $\Ch_HK=(0,1]$ and $\phi(0)=0$. In particular, $M_0=\{0\}$ and the mapping $\theta$ is not one-to-one (as $\theta(1,0)=\theta(-1,0)=0$).

(2) Let $K=[0,1]$ and let $\alpha\in S_{\ef}\setminus\{1\}$ be given (for example $\alpha=-1$). Then
$$H=\{f\in C(K,\ef)\setsep f(1)=\alpha f(0)\}$$
is a function space and $\Ch_HK=K$. Further, $\phi(1)=\alpha\phi(0)$ and hence the mapping $\theta$ is not one-to-one. It follows that $M_1(H)$ contains two different measures $\ep_1$ and $\alpha\ep_0$ (and their convex combinations) although $1\in\Ch_HK$. Therefore the equivalence
$$x\in \Ch_HK\iff M_x(H)=\{\ep_x\}$$
fails for function spaces without constants.

(3) Let $K=[0,1]$ and let $\beta\in \ef$ be such that $0<\abs{\beta}<1$. Then
$$H=\{f\in C(K,\ef)\setsep f(1)=\beta f(0)\}.$$
is a function space, $\Ch_HK=[0,1)$ and the mapping $\theta$ is one-to-one. Further, $\phi(1)=\frac12\phi(0)$ and $M_1(H)=\{\beta\ep_0\}$.
\end{example2}

A variant of the Choquet-Bishop-de Leeuw theorem in this context reads as follows:

\begin{prop}\label{P:exist-bd}
    Let $\varphi\in H^*$ be arbitrary. Then $M_\varphi(H)$ contains an $H$-boundary measure.
\end{prop}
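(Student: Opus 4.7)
The plan is to transfer a maximal representing measure from the dual unit ball $X:=(B_{H^*},w^*)$ back to $K$ by means of the canonical surjection $\theta\colon S_\ef\times K\to X$ furnished by Lemma~\ref{L:theta}.

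If $\varphi=0$ the assertion is trivial since $M_0(H)=\{0\}$. Otherwise, by rescaling I may assume $\|\varphi\|=1$, so that $\varphi\in X$. The Choquet--Bishop--de Leeuw theorem applied to the compact convex set $X$ supplies a maximal probability $\sigma\in M_1(X)$ with barycenter $\varphi$. Because $\sigma$ is maximal, it is pseudo-supported on $\ext X=\theta(S_\ef\times\Ch_HK)$ by Lemma~\ref{L:theta}(b), and in particular essentially concentrated on the compact set $\theta(S_\ef\times K)$. Using Hahn--Banach I extend the positive linear functional $g\circ\theta\mapsto\int g\di\sigma$ from the subspace $\{g\circ\theta\setsep g\in C(\theta(S_\ef\times K),\er)\}\subset C(S_\ef\times K,\er)$, dominated by the supremum, to a positive linear functional on all of $C(S_\ef\times K,\er)$; the Riesz theorem then produces $\tilde\sigma\in M_+(S_\ef\times K)$ with $\theta_*\tilde\sigma=\sigma$ and $\|\tilde\sigma\|=1$.

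Define $\mu\in M(K,\ef)$ by the disintegration formula
$$\int_K f\di\mu := \int_{S_\ef\times K}\alpha f(x)\di\tilde\sigma(\alpha,x),\quad f\in C(K,\ef).$$
Membership $\mu\in M_\varphi(H)$ is then immediate: for $f\in H$ the canonical continuous affine extension $\hat f(\psi)=\psi(f)$ of $f$ to $X$ satisfies $\hat f\circ\theta(\alpha,x)=\alpha f(x)$, hence
$$\int_K f\di\mu=\int_{S_\ef\times K}\hat f\circ\theta\di\tilde\sigma=\int_X\hat f\di\sigma=\hat f(\varphi)=\varphi(f),$$
and $\|\varphi\|\le\|\mu\|\le\|\tilde\sigma\|=1=\|\varphi\|$ pins down $\|\mu\|=\|\varphi\|$.

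The crux---and the main obstacle---is verifying that $\phi(|\mu|)$ is maximal on $X$, i.e.\ that $\mu$ is $H$-boundary. By Fact~\ref{f:mokobodzki}(c), maximality may be tested on $\ef$-invariant continuous convex functions $f$ on $X$; for such $f$ the composition $f\circ\theta$ depends only on the $K$-coordinate (because $f(\alpha y)=f(y)$), while a standard symmetrisation (replacing an affine majorant $h$ of $f$ by the $S_\ef$-average of $h\circ\alpha$) shows that $f^*$ is again $\ef$-invariant. Consequently $\int f\di\sigma$ and $\int f^*\di\sigma$ equal the respective integrals of $f\circ\phi$ and $f^*\circ\phi$ against $\pi_{K*}\tilde\sigma$. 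The remaining technical step is to arrange $|\mu|=\pi_{K*}\tilde\sigma$---equivalently, to choose $\tilde\sigma$ supported on the graph of a universally measurable section of $\theta$, supplied by a Jankov--von Neumann type selection for the continuous surjection $\theta$---so that the Mokobodzki identity $\int f\di\sigma=\int f^*\di\sigma$ for $\sigma$ transfers to $\int f\di\phi(|\mu|)=\int f^*\di\phi(|\mu|)$, completing the plan.
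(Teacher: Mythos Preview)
Your overall strategy coincides with the paper's: pick a maximal probability $\sigma$ on $B_{H^*}$ with barycenter $\varphi$, lift it to $\tilde\sigma\in M_1(S_\ef\times K)$ via Hahn--Banach and Riesz, and apply the Hustad map to produce $\mu$. The verification that $\mu\in M_\varphi(H)$ is correct.

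The gap is in the final step. You correctly recognise that the Mokobodzki test for $\ef$-invariant convex functions reduces the problem to showing $|\mu|=\pi_{K*}\tilde\sigma$, but your proposed fix---going back and replacing $\tilde\sigma$ by a lift supported on the graph of a Jankov--von~Neumann section of $\theta$---is problematic: such selection theorems require Polish (or at least Souslin) structure, which is absent for general compact Hausdorff $K$. More importantly, no selection is needed. The equality $|\mu|=\pi_{K*}\tilde\sigma$ is \emph{automatic} from $\|\mu\|=1$, for \emph{any} lift $\tilde\sigma$. Writing $\di\mu=h\di|\mu|$ with $h\colon K\to S_\ef$ Borel (Radon--Nikod\'ym), one has
\[
1=\|\mu\|=\int_K\overline{h}\di\mu=\int_{S_\ef\times K}\alpha\,\overline{h(y)}\di\tilde\sigma(\alpha,y),
\]
and since the integrand has modulus $1$ and $\tilde\sigma$ is a probability, $\alpha\overline{h(y)}=1$ $\tilde\sigma$-a.e. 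Then for every Borel $A\subset K$,
\[
|\mu|(A)=\int_A\overline{h}\di\mu=\int_{S_\ef\times A}\alpha\,\overline{h(y)}\di\tilde\sigma(\alpha,y)=\tilde\sigma(S_\ef\times A)=\pi_{K*}\tilde\sigma(A),
\]
which is exactly what you need. This is precisely the content of Lemma~\ref{L:prenosnaK}(iv) in the paper; with it your argument is complete and matches the paper's proof.
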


This proposition is proved in \cite[Theorem 7.3]{fuhr-phelps}. We present here a slight modification
of the proof because it is based on a construction we will use to understand variants of simpliciality.
The construction is described by the following lemma:

\begin{lemma}\label{L:prenosnaK}
    Let $\nu\in M_1(B_{H^*})$ be a probability measure carried by $S_{\ef}\phi(K)=\theta(S_{\ef}\times K)$. Let $\widetilde{\nu}\in M_1(S_{\ef}\times K)$ be such that $\theta(\widetilde{\nu})=\nu$. Further, by $\widetilde{\mu}$ denote the projection of $\widetilde{\nu}$ on $K$ and  define an $\ef$-valued measure $\mu$ on $K$ by 
$$\mu(A)=\int_{S_{\ef}\times A} \alpha\di\widetilde{\nu}(\alpha,y), \quad A\subset K\mbox{ Borel}.$$
Then the following assertions are valid:
\begin{enumerate}[$(i)$]
    \item $\int f\di\mu=r(\nu)(f)$ for $f\in H$.
    \item $\norm{r(\nu)}\le\norm{\mu}\le 1$.
    \item $\widetilde{\mu}$ is $H$-boundary if and only if $\nu$ is maximal.
    \item $\mu$ is absolutely continuous with respect to $\widetilde{\mu}$. 
    If $\norm{\mu}=1$, then $\widetilde{\mu}=\abs{\mu}$.
    \item  If $\nu$ is maximal, then $\mu$ is $H$-boundary.
    If $\norm{\mu}=1$, then $\mu$ is $H$-boundary if and only if $\nu$ is maximal.
\end{enumerate}
\end{lemma}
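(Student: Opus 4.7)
The plan is to verify the five assertions in order, since (iv) and (v) build on the earlier parts. Assertions (i), (ii) and (iv) follow from unraveling definitions. For (i), the barycenter formula together with the hypothesis $\theta(\widetilde{\nu})=\nu$ gives
$$r(\nu)(f)=\int_{B_{H^*}}\psi(f)\di\nu(\psi)=\int_{S_\ef\times K}\theta(\alpha,y)(f)\di\widetilde{\nu}(\alpha,y)=\int_{S_\ef\times K}\alpha f(y)\di\widetilde{\nu}(\alpha,y)$$
for $f\in H$, and this equals $\int f\di\mu$ by the very definition of $\mu$. For (ii) the bound $\norm{r(\nu)}\le\norm{\mu}$ is immediate from $|r(\nu)(f)|=|\int f\di\mu|\le\norm{\mu}$ whenever $\norm{f}\le 1$, while the estimate $|\mu(A)|\le\widetilde{\nu}(S_\ef\times A)=\widetilde{\mu}(A)$ shows the pointwise domination $|\mu|\le\widetilde{\mu}$, and in particular $\norm{\mu}\le\widetilde{\mu}(K)=1$. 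The same domination yields (iv): if $\widetilde{\mu}(A)=0$ then $\widetilde{\nu}(S_\ef\times A)=0$ and hence $\mu(B)=0$ for every Borel $B\subset A$, so $\mu\ll\widetilde{\mu}$; moreover, when $\norm{\mu}=1$ the inequality $|\mu|\le\widetilde{\mu}$ combined with $|\mu|(K)=1=\widetilde{\mu}(K)$ forces equality $|\mu|=\widetilde{\mu}$.

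The main step is (iii). I plan to test maximality of both $\phi(\widetilde{\mu})$ and $\nu$ on $B_{H^*}$ via the $\ef$-invariant form of the Mokobodzki criterion in Fact~\ref{f:mokobodzki}(c). The key observation is that for any bounded Borel $\ef$-invariant function $h$ on $B_{H^*}$ the image-measure formula gives
$$\int h\di\phi(\widetilde{\mu})=\int_{S_\ef\times K}h(\phi(y))\di\widetilde{\nu}(\alpha,y)\quad\text{and}\quad\int h\di\nu=\int_{S_\ef\times K}h(\alpha\phi(y))\di\widetilde{\nu}(\alpha,y),$$
and these coincide precisely by the $\ef$-invariance of $h$. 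A short check then shows that if $h$ is continuous and $\ef$-invariant then its upper envelope $h^*$ is also $\ef$-invariant, since for any $\alpha\in S_\ef$ the map $x\mapsto\alpha x$ is a real-affine homeomorphism of $B_{H^*}$ and therefore permutes the family $\{u\in A_c(B_{H^*},\er)\setsep u\ge h\}$ defining $h^*$. Because $h^*$ is upper semicontinuous, and hence bounded Borel, the displayed equality applies to both $h$ and $h^*$, so $\int(h^*-h)\di\phi(\widetilde{\mu})=\int(h^*-h)\di\nu$. By Fact~\ref{f:mokobodzki}(c) the vanishing of this common quantity for all $\ef$-invariant convex continuous $h$ characterizes simultaneously the maximality of $\phi(\widetilde{\mu})$ and of $\nu$, which is exactly (iii).

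For (v) I would combine (iii) and (iv). If $\nu$ is maximal, then by (iii) $\phi(\widetilde{\mu})$ is maximal, and pushing $|\mu|\le\widetilde{\mu}$ forward under $\phi$ gives $0\le\phi(|\mu|)\le\phi(\widetilde{\mu})$. A standard argument shows that any non-negative measure dominated by a maximal one is itself maximal: for every convex continuous $h$ on $B_{H^*}$ the bounds $0\le\int(h^*-h)\di\phi(|\mu|)\le\int(h^*-h)\di\phi(\widetilde{\mu})=0$ together with Fact~\ref{f:mokobodzki}(b) force maximality of $\phi(|\mu|)$, so $\mu$ is $H$-boundary. When $\norm{\mu}=1$, part (iv) gives $|\mu|=\widetilde{\mu}$, so $\mu$ is $H$-boundary exactly when $\widetilde{\mu}$ is, which by (iii) is exactly when $\nu$ is maximal. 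The main obstacle is the $\ef$-invariance check for $h^*$ in (iii); once that is in place, everything else reduces to straightforward applications of Fact~\ref{f:mokobodzki}.
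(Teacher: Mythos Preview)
Your proof is correct and follows essentially the same route as the paper: the same image-measure computations for (i) and (ii), the $\ef$-invariant Mokobodzki criterion for (iii), and the domination $\abs{\mu}\le\widetilde{\mu}$ for (iv) and (v). Your treatment of the case $\norm{\mu}=1$ in (iv) is in fact slightly cleaner than the paper's, which goes through a Radon--Nikod\'ym density and the identity $\alpha\overline{h(y)}=1$ $\widetilde{\nu}$-a.e.\ (an identity the paper needs later for Lemma~\ref{L:prenoszpet}), whereas you observe directly that $\abs{\mu}\le\widetilde{\mu}$ together with equality of total masses forces $\abs{\mu}=\widetilde{\mu}$.
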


\begin{proof}
   By the construction we have
$$\int_{B_{H^*}} F\di\nu= \int_{S_{\ef}\times K} F(\alpha\phi(y))\di\widetilde{\nu}(\alpha,y)\mbox{ for any bounded Borel function }F \text{ on }B_{H^*}.$$
Hence, given $f\in H$ we get
$$\int_K f\di\mu= \int_{S_{\ef}\times K} \alpha f(y)\di\widetilde{\nu}(\alpha,y)=\int_{B_{H^*}} \varphi(f)\di\nu(\varphi)=r(\nu)(f).$$
This proves $(i)$. Assertion $(ii)$ follows easily from $(i)$.

Let us continue by proving $(iii)$. We will use Mokobodzki's criterion 
described above in Fact~\ref{f:mokobodzki}$(c)$:
 So, fix any $S_{\ef}$-invariant convex continuous function $F\colon B_{H^*}\to \er$. Then $F^*$ is also $S_{\ef}$-invariant (and upper semicontinuous, hence Borel). Hence if $g=F$ or $g=F^*$, we have
$$\begin{aligned}
   \int_{B_{H^*}} g\di\phi(\widetilde{\mu})&=\int_K g(\phi(y))\di\widetilde{\mu}(y)=\int_{S_{\ef}\times K} g(\phi(y))\di\widetilde{\nu}(\alpha,y)
\\&=\int_{S_{\ef}\times K} g(\alpha\phi(y))\di\widetilde{\nu}(\alpha,y)
=\int_{B_{H^*}} g\di \nu,
\end{aligned}$$
where the third equality follows from the $S_{\ef}$-invariance of $g$ (and the remaining ones by the rule of integration with respect to the image of a measure). We deduce that
$$\int F\di\phi(\widetilde{\mu})=\int F^*\di\phi(\widetilde{\mu})  \Longleftrightarrow  \int F\di\nu=\int F^*\di\nu,
$$
which completes the argument.

$(iv)$: Let $A\subset K$ be a Borel set. Then
$$\abs{\mu(A)}=\abs{\int_{S_{\ef}\times A} \alpha\di\widetilde{\nu}(\alpha,y)}\le \int_{S_{\ef}\times A} \abs{\alpha}\di\widetilde{\nu}(\alpha,y) =\widetilde{\mu}(A),$$
which proves the absolute continuity. Next assume that $\norm{\mu}=1$. By the Radon-Nikod\'ym theorem there is a Borel function $h:K\to S_{\ef}$ such that $\di\mu=h\di\abs{\mu}$.  Then
$$1=\abs{\mu}(K)=\int_K \overline{h}\di\mu=\int_{S_{\ef}\times K} \alpha \overline{h(y)}\di\widetilde{\nu}(\alpha,y).$$
We deduce that $\alpha\overline{h(y)}=1$ $\widetilde{\nu}$-a.e. Therefore,
$$\abs{\mu}(A)=\int_A \overline{h}\di\mu=\int_{S_{\ef}\times A}\alpha\overline{h(y)}\di\widetilde{\nu}(\alpha,y)
=\int_{S_{\ef}\times A}1\di\widetilde{\nu}=\widetilde{\nu}(S_{\ef}\times A)=\widetilde{\mu}(A)$$
for any $A\subset K$ Borel. This completes the argument. 

$(v)$: This follows from $(iii)$ and $(iv)$.
\end{proof}

Now we are ready to prove the announced analogue of the Choquet-Bishop-de Leeuw theorem.

\begin{proof}[Proof of Proposition~\ref{P:exist-bd}.]
    If $\varphi=0$, take $\mu=0$. If $\varphi\ne0$, we may without loss of generality assume that $\norm{\varphi}=1$. Let $\nu$ be a maximal probability measure on $B_{H^*}$ representing $\varphi$. Take $\widetilde{\nu}$ and $\mu$ as in Lemma~\ref{L:prenosnaK}. By assertions $(i)$ and $(ii)$ of the quoted lemma we deduce that $\mu\in M_\varphi(H)$, assertion $(iv)$  then shows that $\mu$ is $H$-boundary.
\end{proof}

\begin{remark}
    The procedure used in Lemma~\ref{L:prenosnaK}  goes back to \cite{hustad71} where this approach is addressed to complex function spaces containing constants. The assignment $\widetilde{\nu}\mapsto\mu$ is called the \emph{Hustad mapping} in \cite{fuhr-phelps,phelps-complex}. The proof of assertion $(iii)$ essentially follows \cite{hirsberg72}. In the context of function spaces without constants this method is used in \cite{fuhr-phelps}, where $\widetilde{\nu}$ is obtained using a selection of the inverse of $\theta$. We do not precise the choice of $\widetilde{\nu}$, we just use its existence which follows by combining the Riesz representation theorem and the Hahn-Banach extension theorem.
\end{remark}

We continue by another lemma focused on the inverse procedure to that given by Lemma~\ref{L:prenosnaK}.

\begin{lemma}\label{L:prenoszpet}
    Let $\mu$ be an $\ef$-valued measure on $K$ with $\norm{\mu}=1$. Let $h: K\to S_{\ef}$ be a Borel function satisfying $\di\mu=h\di\abs{\mu}$ provided by the Radon-Nikod\'ym theorem. Let $\widetilde{\nu_0}$ be the probability on $S_{\ef}\times K$ obtained as the image of $\abs{\mu}$ under the mapping $x\mapsto (h(x),x)$ and let $\nu_0=\theta(\widetilde{\nu_0})$. Then the following assertions are valid:
    \begin{enumerate}[$(i)$]
        \item $\nu_0,\widetilde{\nu_0},\mu$ is a triple fitting to the scheme of Lemma~\ref{L:prenosnaK}.    
        \item If $\nu,\widetilde{\nu}$ and $\mu$ are as in Lemma~\ref{L:prenosnaK}, then $\widetilde{\nu_0}=\widetilde{\nu}$ and $\nu_0=\nu$.
    \end{enumerate}
\end{lemma}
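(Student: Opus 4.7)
For part $(i)$, I plan to verify directly that $\widetilde{\nu_0}$ and $\nu_0$ fit the data of Lemma~\ref{L:prenosnaK} and that the scheme applied to them returns the original $\mu$. Since $\norm{\mu}=1$, the total variation $\abs{\mu}$ is a probability on $K$, so its image $\widetilde{\nu_0}$ under the Borel mapping $x\mapsto(h(x),x)$ is a probability on $S_{\ef}\times K$, and therefore $\nu_0=\theta(\widetilde{\nu_0})$ is a probability on $(B_{H^*},w^*)$ carried by $\theta(S_{\ef}\times K)=S_{\ef}\phi(K)$. By construction the projection of $\widetilde{\nu_0}$ onto $K$ is $\abs{\mu}$. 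Finally, by the change-of-variable formula,
$$\int_{S_{\ef}\times A}\alpha\di\widetilde{\nu_0}(\alpha,y)=\int_A h(x)\di\abs{\mu}(x)=\mu(A)$$
for every Borel $A\subset K$, showing that the $\ef$-valued measure produced by the scheme from $\widetilde{\nu_0}$ is precisely $\mu$.

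For part $(ii)$, the plan is to exploit the fact that the proof of Lemma~\ref{L:prenosnaK}(iv) already contains the whole argument in disguise. Under the assumption $\norm{\mu}=1$ it established, on the one hand, that the projection $\widetilde{\mu}$ of $\widetilde{\nu}$ onto $K$ coincides with $\abs{\mu}$, and, on the other hand, the identity
$$\int_{S_{\ef}\times K}\alpha\overline{h(y)}\di\widetilde{\nu}(\alpha,y)=1.$$
Since $\abs{\alpha\overline{h(y)}}\le 1$ everywhere and $\widetilde{\nu}$ is a probability, this forces $\alpha\overline{h(y)}=1$ for $\widetilde{\nu}$-almost every $(\alpha,y)$, i.e.\ $\alpha=h(y)$ $\widetilde{\nu}$-almost everywhere. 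Then, for any Borel $B\subset S_{\ef}\times K$, one may substitute $\alpha$ by $h(y)$ under the integral and invoke the projection identity to obtain
$$\widetilde{\nu}(B)=\int_{S_{\ef}\times K}\1_B(h(y),y)\di\widetilde{\nu}(\alpha,y)=\int_K\1_B(h(y),y)\di\abs{\mu}(y)=\widetilde{\nu_0}(B),$$
whence $\widetilde{\nu}=\widetilde{\nu_0}$; applying $\theta$ then yields $\nu=\nu_0$.

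The only genuinely subtle point is extracting the $\widetilde{\nu}$-a.e.\ identity $\alpha=h(y)$ from the proof of Lemma~\ref{L:prenosnaK}(iv). Once this is in hand, both assertions become routine pushforward computations, with no hard estimate to carry out; the rest is simply bookkeeping with images of measures under Borel maps. I expect no further obstacles.
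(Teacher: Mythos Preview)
Your proposal is correct and follows essentially the same route as the paper's own proof: both parts reduce to the pushforward identity and the $\widetilde{\nu}$-a.e.\ equality $\alpha=h(y)$ extracted from the computation in the proof of Lemma~\ref{L:prenosnaK}$(iv)$. Your write-up of $(ii)$ is slightly more explicit (computing $\widetilde{\nu}(B)$ directly), whereas the paper phrases the same step as ``$\widetilde{\nu}$ is carried by the graph of $h$ and its $K$-projection is $\abs{\mu}$, hence $\widetilde{\nu}$ is the image of $\abs{\mu}$ under $y\mapsto(h(y),y)$''; these are the same argument.
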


\begin{proof}
    $(i)$ By the construction we have $\nu_0=\theta(\widetilde{\nu_0})$. Moreover, for $A\subset K$ Borel we have
    $$\int_{S_{\ef}\times A} \alpha\di\widetilde{\nu_0}(\alpha,y)=\int_{\{(\alpha,y)\setsep y\in A,\alpha=h(y)\}} \alpha \di\widetilde{\nu_0}(\alpha,y)=\int_A h\di\abs{\mu}=\mu(A),$$
    which completes the proof.

    $(ii)$ It follows from the proof of assertion $(iii)$ of Lemma~\ref{L:prenosnaK} that
    $\alpha\overline{h(y)}=1$ $\widetilde{\nu}$-a.e. In other words, $h(y)=\alpha$ $\widetilde{\nu}$-a.e.,
    i.e., $\widetilde{\nu}$ is carried by the graph of $h$. Since $\abs{\mu}$ is the projection of $\widetilde{\nu}$ (by Lemma~\ref{L:prenosnaK}$(iii)$) we deduce that $\widetilde{\nu}$ is the image of $\abs{\mu}$ under the mapping $y\mapsto (h(y),y)$, i.e., $\widetilde{\nu}=\widetilde{\nu_0}$. Then clearly $\nu=\nu_0$.
 \end{proof}

As mentioned above, Proposition~\ref{P:exist-bd} is an analogue of the Choquet-Bishop-de Leeuw theorem. But it uses
the notion of `$H$-boundary measures' whose definition is not very descriptive. In the classical setting boundary measures are `pseudo-supported' by the Choquet boundary (see, e.g., \cite[Theorem 3.79]{lmns}). It turns out that in our case the situation is different. Let us first point out which properties remain to be true.
 
\begin{obs}\label{obs:metriz}
    Assume that $K$ is metrizable. Then $\Ch_HK$ is a $G_\delta$-subset of $K$ and a measure $\mu\in M(K,\ef)$ is $H$-boundary if and only if it is carried by $\Ch_HK$.
\end{obs}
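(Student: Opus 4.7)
The plan is to transfer the statement to the classical setting on the compact convex set $(B_{H^*},w^*)$ via the evaluation map $\phi$, and then invoke the standard metrizable Choquet theory cited just before Fact~\ref{f:mokobodzki}. A preliminary observation is that $(B_{H^*},w^*)$ is metrizable: since $K$ is compact metrizable, $C(K,\ef)$ is separable, hence so is its subspace $H$, and this forces $(B_{H^*},w^*)$ to be a metrizable compact convex set. The classical theory then gives that $\ext B_{H^*}$ is a $G_\delta$ subset of $B_{H^*}$ and that maximal measures on $B_{H^*}$ are precisely those carried by $\ext B_{H^*}$.

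For the first assertion I would invoke Lemma~\ref{L:evaluace} to know that $\phi$ is continuous, so
\[
\Ch_H K=\phi^{-1}(\ext B_{H^*})
\]
is a $G_\delta$ subset of $K$, being the preimage of a $G_\delta$ set under a continuous map.

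For the second assertion I would string together a chain of equivalences. By definition, $\mu\in M(K,\ef)$ is $H$-boundary iff $\abs{\mu}$ is $H$-maximal, iff $\phi(\abs{\mu})$ is maximal on $B_{H^*}$, iff $\phi(\abs{\mu})$ is carried by $\ext B_{H^*}$. Since $\phi(K)$ is compact, the measure $\phi(\abs{\mu})$ is automatically carried by $\phi(K)$, and by the very definition of the Choquet boundary $\phi(K)\cap\ext B_{H^*}=\phi(\Ch_H K)$. Hence the previous condition is equivalent to $\phi(\abs{\mu})$ being carried by $\phi(\Ch_H K)$, and since $\phi$ is a homeomorphic injection, this translates back to $\abs{\mu}$ being carried by $\Ch_H K$, which is the same as $\mu$ itself being carried by $\Ch_H K$. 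No genuine obstacle arises; the only technical checkpoint is the weak$^*$ metrizability of $B_{H^*}$, which is what unlocks the classical characterization of maximal measures through their supports.
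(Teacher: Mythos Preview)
Your proof is correct and follows essentially the same approach as the paper: both establish weak$^*$ metrizability of $B_{H^*}$ from separability of $C(K,\ef)$, and then transfer the classical characterization of maximal measures on a metrizable compact convex set back to $K$ via the homeomorphic injection $\phi$. The paper's proof is terser (it simply cites \cite[Corollary~3.62]{lmns} and the continuity and injectivity of $\phi$), whereas you spell out the chain of equivalences explicitly; the one step you label ``by definition'' (that $\mu$ is $H$-boundary iff $\abs{\mu}$ is $H$-maximal) actually uses that $\abs{\phi(\mu)}=\phi(\abs{\mu})$, which holds precisely because $\phi$ is injective, but this is a harmless elision.
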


\begin{proof}
    If $K$ is metrizable, the space $C(K,\ef)$ is separable. Thus $H$ is also separable and therefore $(B_{H^*},w^*)$ is metrizable. Hence the assertion follows from the definitions using \cite[Corollary 3.62]{lmns} and the fact that $\phi$ is continuous and one-to-one.
\end{proof}

\begin{obs}\label{obs:pseudo} 
Let $\mu$ be any $H$-boundary measure on $K$. Then:
\begin{enumerate}[$(a)$]
    \item $\mu(\{x\})=0$ for each $x\in K\setminus\Ch_HK$.
    \item If $\Ch_HK$ is Lindel\"of, then $\abs{\mu}_*(K\setminus\Ch_HK)=0$. 
\end{enumerate}
\end{obs}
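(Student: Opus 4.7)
The first step is to transfer the hypothesis to $(B_{H^*},w^*)$. Since $\phi\colon K\to B_{H^*}$ is a homeomorphic injection by Lemma~\ref{L:evaluace}, the pushforward satisfies $\abs{\phi(\mu)}=\phi(\abs{\mu})$; hence $\mu$ being $H$-boundary is equivalent to $\phi(\abs{\mu})$ being a maximal non-negative Radon measure on $B_{H^*}$. Both assertions will follow from the fact that $\phi(K\setminus\Ch_H K)$ is disjoint from $\ext B_{H^*}$ by Lemma~\ref{L:theta}$(c)$.

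For $(a)$, fix $x\in K\setminus\Ch_H K$ and set $\xi=\phi(x)$, so that $\xi\in B_{H^*}\setminus\ext B_{H^*}$. Injectivity of $\phi$ gives $\phi(\abs{\mu})(\{\xi\})=\abs{\mu}(\{x\})$, so it suffices to show this quantity vanishes. Suppose on the contrary that $c:=\abs{\mu}(\{x\})>0$; write $\xi=\tfrac12(a+b)$ for some $a\neq b$ in $B_{H^*}$ and decompose $\phi(\abs{\mu})=c\ep_\xi+\sigma$ with $\sigma\in M_+(B_{H^*})$. Convexity yields $\ep_\xi\prec\tfrac12(\ep_a+\ep_b)$, while these two measures are manifestly distinct (pick any continuous linear functional separating $a$ from $b$ and square its absolute value), so $\tfrac{c}{2}(\ep_a+\ep_b)+\sigma$ strictly dominates $\phi(\abs{\mu})$ in the Choquet order, contradicting maximality. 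Hence $\abs{\mu(\{x\})}\le\abs{\mu}(\{x\})=0$.

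For $(b)$, by inner regularity of $\abs{\mu}$ it suffices to prove $\abs{\mu}(C)=0$ for each compact $C\subset K\setminus\Ch_H K$. The plan is to enclose $\phi(C)$ inside a zero (hence Baire) subset of $B_{H^*}$ that is still disjoint from $\ext B_{H^*}$, and to invoke the classical fact that maximal non-negative measures vanish on Baire sets disjoint from the extreme boundary. The Lindel\"of hypothesis enters precisely here: by Lemma~\ref{L:theta}$(b)$ we have $\ext B_{H^*}=\theta(S_\ef\times\Ch_H K)$, which is the continuous image of the Lindel\"of space $S_\ef\times\Ch_H K$ (compact times Lindel\"of is Lindel\"of), hence itself Lindel\"of. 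For each $\eta\in\ext B_{H^*}$, normality of $B_{H^*}$ and Urysohn's lemma furnish $g_\eta\in C(B_{H^*},[0,1])$ with $g_\eta(\eta)=1$ and $g_\eta\equiv 0$ on $\phi(C)$; then $V_\eta=\{g_\eta>0\}$ is a cozero neighborhood of $\eta$ disjoint from $\phi(C)$. Extracting a countable subcover $\{V_{\eta_n}\}$ of $\ext B_{H^*}$ and setting $V=\{\sum_n 2^{-n}g_{\eta_n}>0\}$, the complement $Z=B_{H^*}\setminus V$ is a zero set containing $\phi(C)$ and disjoint from $\ext B_{H^*}$. Maximality of $\phi(\abs{\mu})$ therefore yields $\phi(\abs{\mu})(Z)=0$, and by injectivity of $\phi$ we conclude $\abs{\mu}(C)=\phi(\abs{\mu})(\phi(C))\le\phi(\abs{\mu})(Z)=0$.

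The main obstacle is the construction of the separating Baire set in $(b)$: general maximal measures pseudo-support themselves only via Baire sets, and it is precisely the Lindel\"of property that allows the uncountable extreme boundary to be covered by countably many cozero neighborhoods avoiding the compact set $\phi(C)$.
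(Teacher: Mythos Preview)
Your proof is correct and follows essentially the same route as the paper's: transfer everything to $B_{H^*}$ via $\phi$, use that $\ext B_{H^*}$ is Lindel\"of as the continuous image of $S_\ef\times\Ch_HK$, and then separate $\phi(C)$ from $\ext B_{H^*}$ by a Baire set on which maximal measures vanish. The only difference is that the paper outsources both steps to the literature (the singleton case in $(a)$ is treated as folklore, and in $(b)$ the separation argument is replaced by a direct citation of \cite[Theorem 3.79(b)]{lmns}), whereas you unpack these arguments explicitly---constructing the strictly dominating measure in $(a)$ and the covering zero set in $(b)$ by hand.
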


\begin{proof}
    $(a)$: If $x\in K\setminus\Ch_HK$, then $\phi(x)\in B_{H^*}\setminus\ext B_{H^*}$ and so $\mu(\{x\})=\phi(\mu)(\{\phi(x)\})=0$.
    
    $(b)$: Without loss of generality assume that $\mu\ge0$. Let $F\subset K\setminus\Ch_HK$ be a compact set. Our aim is to prove that $\mu(F)=0$.

    Since $\Ch_HK$ is Lindel\"of, $S_{\ef}\times\Ch_HK$ is Lindel\"of as well (cf. \cite[Corollary 3.8.10]{engelking}). By Lemma~\ref{L:theta} we see that $\ext B_{H^*}$ is Lindel\"of (as a continuous image of a Lindel\"of space). Since $\phi(F)\cap\ext B_{H^*}=0$, \cite[Theorem 3.79(b)]{lmns} shows that $\phi(\mu)(\phi(F))=0$, thus $\mu(F)=0$ and the proof is complete.
\end{proof}
    
We continue by formulating two examples showing that $H$-boundary measures may have quite a strange behaviour in comparison with standard boundary measures.

\begin{example}
    There is a (non-metrizable) compact space $K$, a closed self-adjoint function space $H\subset C(K,\ef)$ not containing constant functions and a non-zero $H$-boundary measure $\mu$ on $K$ such that $\abs{\mu}(\overline{\Ch_HK})=0$.
\end{example}

\begin{example}
    There is a (non-metrizable) compact space $K$, a closed self-adjoint function space $H\subset C(K,\ef)$ not containing constant functions such that $\Ch_HK$ is dense in $K$ and there is a non-zero $H$-boundary measure $\mu$ on $K$ supported by a closed $G_\delta$-subset of $K$ disjoint from $\Ch_HK$.
\end{example}

These two statements are proved in  Examples~\ref{Ex:L1prednefsimplicial} and~\ref{Ex:L1prednefsimplicial-huste}  below. In view of Proposition~\ref{P:exist-bd} and the two above examples the following question seems to be natural and interesting.

\begin{ques}\label{q:reprez}
    Let $H$ be a (real or complex) function space on a compact space $K$. Let $\varphi\in H^*$. Does there exist a measure $\mu\in M_\varphi(H)$ which is $H$-boundary and also pseudosupported by the Choquet boundary (i.e., $\abs{\mu}(F)=0$ for any closed $G_\delta$-set $F\subset K$ disjoint from $\Ch_HK$)?
\end{ques}

The answer is positive if $K$ is metrizable (by Observation~\ref{obs:metriz}). By the classical results it is also positive if $H$ contains constant functions. For complex spaces not containing constants the answer is negative (under the continuum hypothesis, see Example~\ref{exam} below). Its validity for real spaces seems to be open. The answer is also positive assuming that the mapping $\theta$ is one-to-one:

\begin{prop}
    Let $H$ be a (real or complex) function space on a compact space $K$. Assume that the mapping $\theta$  is one-to-one. Then any $H$-boundary measure on $K$ is pseudosupported by $\Ch_HK$.
\end{prop}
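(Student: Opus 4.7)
The plan is to reduce the claim to the classical pseudo-support property of maximal measures on the ambient ball $(B_{H^*}, w^*)$. The hypothesis that $\theta$ is one-to-one promotes it, by the standard compact-to-Hausdorff argument, to a homeomorphism from $S_{\ef}\times K$ onto $Y:=\theta(S_{\ef}\times K)$, which is closed in $B_{H^*}$. Since $\phi$ is also a homeomorphic injection, a short computation using the polar decomposition of $\mu$ gives $\abs{\phi(\mu)}=\phi(\abs{\mu})$ for any $\mu\in M(K,\ef)$; thus if $\mu$ is $H$-boundary then $\phi(\abs{\mu})$ is a maximal positive measure on $B_{H^*}$.

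Fix a closed $G_\delta$ set $F\subset K$ with $F\cap \Ch_H K=\emptyset$. Since $K$ is compact Hausdorff, $F$ is in fact a zero set, so $F=g^{-1}(0)$ for some continuous $g\colon K\to[0,1]$. Define $G\in C(S_{\ef}\times K,[0,1])$ by $G(\alpha,x):=g(x)$, so that $G^{-1}(0)=S_{\ef}\times F$. Transfer $G$ to $Y$ by setting $G':=G\circ\theta^{-1}$, and extend $G'$ by Tietze to $\widetilde G\in C(B_{H^*},[0,1])$. Let $Z:=\widetilde G^{-1}(0)$; this is a zero set and hence a Baire subset of $B_{H^*}$.

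Two things remain to check: first, $\phi(F)\subset Z$, which is immediate since $\phi(x)=\theta(1,x)$ and $\widetilde G(\theta(1,x))=g(x)=0$ for $x\in F$; second, $Z\cap\ext B_{H^*}=\emptyset$. For the latter, Lemma~\ref{L:theta} gives $\ext B_{H^*}\subset Y$, so any extreme point $e$ satisfies $\widetilde G(e)=G'(e)$; if this vanished, then $e=\theta(\alpha,x)$ with $x\in F$, and the same lemma would force $x\in\Ch_H K$, contradicting $F\cap \Ch_H K=\emptyset$. The pseudo-support theorem for maximal measures then yields $\phi(\abs{\mu})(Z)=0$, whence $\abs{\mu}(F)=\phi(\abs{\mu})(\phi(F))\le\phi(\abs{\mu})(Z)=0$, as required. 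The main subtlety — and the precise place where the injectivity of $\theta$ is used — is this disjointness: a Tietze extension built only from $\phi(K)$ could well vanish at extreme points of the form $\alpha\phi(x)$ with $\alpha\ne 1$, whereas the injectivity of $\theta$ allows us to perform the extension from the strictly larger closed set $Y$ which already captures all of $\ext B_{H^*}$.
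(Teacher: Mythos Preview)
Your proof is correct. Both your argument and the paper's ultimately reduce to the Choquet--Bishop--de Leeuw pseudo-support property of maximal measures on $B_{H^*}$, and both use the injectivity of $\theta$ in the same essential way: it makes $Y=\theta(S_{\ef}\times K)$ a homeomorphic closed copy of $S_{\ef}\times K$ that contains all of $\ext B_{H^*}$. The packaging differs: the paper introduces an auxiliary classical function space $\widetilde H=\{f|_{Y}\setsep f\in A_c(B_{H^*},\ef)\}$ with constants on $Y$, identifies its Choquet boundary with $\theta(S_{\ef}\times\Ch_HK)$, and then cites the known pseudo-support result for function spaces with constants (\cite[Theorem~3.79(a)]{lmns}) applied to the closed $G_\delta$ set $\theta(S_{\ef}\times F)\subset Y$. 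You bypass the auxiliary function space and work directly on $B_{H^*}$: you write $F$ as a zero set in $K$ (using normality), push the defining function through $\theta^{-1}$ to $Y$, Tietze-extend to $B_{H^*}$, and apply the Baire-set form of Choquet--Bishop--de Leeuw to the resulting zero set $Z$. Your route is a shade more elementary in that it avoids the extra function-space layer, while the paper's route makes the reduction to the classical theory more transparent; both rely on the same two ingredients, namely $\abs{\phi(\mu)}=\phi(\abs{\mu})$ (which you correctly justify via the homeomorphism $\phi$) and the fact that the relevant set in $B_{H^*}$ is Baire and misses $\ext B_{H^*}$.
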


\begin{proof}
    Assume $\theta$ is one-to-one. By Lemma~\ref{L:theta}$(a)$ we deduce that $\theta$ is a homeomorphic injection of $S_\ef\times K$ into $B_{H^*}$. Let $\widetilde{K}=\theta(S_{\ef}\times K)$ and 
    $$\widetilde{H}=\{ f|_{\widetilde{K}}\setsep f\in A_c(B_{H^*},\ef)\}.$$
    Then $\widetilde{H}$ is a closed self-adjoint function space on $\widetilde{K}$ containing constants. Moreover,
    the state space of $\widetilde{H}$ is canonically affinely homeomorphic to $B_{H^*}$. In particular,
    $$\Ch_{\widetilde H}\widetilde{K}=\widetilde{K}\cap \ext B_{H^*}=\theta(S_{\ef}\times\Ch_HK),$$
    where the last equality follows from Lemma~\ref{L:theta}$(b)$.

    Assume $\mu$ is an $H$-boundary measure on $K$ and $F\subset K$ is a closed $G_\delta$-subset disjoint from $\Ch_HK$. Then $\phi(\mu)$ is a boundary measure on $B_{H^*}$ and thus it is an $\widetilde{H}$-boundary measure on $\widetilde{K}$. Further, $\theta(S_{\ef}\times F)$ is a closed $G_\delta$-subset of $\widetilde{K}$ disjoint from $\Ch_{\widetilde H}\widetilde{K}$. Thus $\abs{\phi(\mu)}(\theta(S_{\ef}\times F))=0$ by \cite[Theorem 3.79(a)]{lmns}. Therefore
    $$\abs{\mu}(F)=\phi(\abs{\mu})(\phi(F))=\abs{\phi(\mu)}(\theta(\{1\}\times F))=0.$$
    This completes the proof.
\end{proof}

In general we get a weaker version obtained by permuting quantifiers:

\begin{lemma}
     Let $H$ be a (real or complex) function space on a compact space $K$. Let $\varphi\in H^*$ and $G\subset K$
     be a $G_\delta$-set disjoint from $\Ch_HK$. Then there exists $\mu\in M_\varphi(H)$ which is $H$-boundary 
     and $\abs{\mu}(G)=0$.
\end{lemma}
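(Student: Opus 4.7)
The case $\varphi=0$ is immediate (take $\mu=0$), so I assume $\|\varphi\|=1$. The plan is to apply Lemma~\ref{L:prenosnaK} to a maximal probability $\nu$ on $B_{H^*}$ representing $\varphi$ together with a carefully chosen probability lift $\widetilde\nu$ on $S_\ef\times K$ satisfying $\theta(\widetilde\nu)=\nu$ and $\widetilde\nu(S_\ef\times G)=0$. The resulting $\mu$ will then lie in $M_\varphi(H)$ by parts $(i)$--$(ii)$, be $H$-boundary by part $(v)$ (since $\nu$ is maximal), and satisfy $|\mu|(G)\le\widetilde\mu(G)=\widetilde\nu(S_\ef\times G)=0$ by part $(iv)$ (noting $\|\mu\|=1$ forces $|\mu|=\widetilde\mu$).

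Write $G=\bigcap_n U_n$ with $U_n\supset U_{n+1}$ open in $K$. Then $\theta(S_\ef\times(K\setminus G))=\bigcup_n\theta(S_\ef\times(K\setminus U_n))$ is $F_\sigma$ in $B_{H^*}$ (each summand is compact), so its complement $E$ is $G_\delta$. By Lemma~\ref{L:theta}$(c)$, every extreme point of $B_{H^*}$ is of the form $\theta(\alpha,x)$ with $x\in\Ch_HK\subset K\setminus G$, hence $E\cap\ext B_{H^*}=\emptyset$. Since $\theta^{-1}(E)\subset S_\ef\times G$, any lift $\widetilde\nu$ of $\nu$ satisfies $\widetilde\nu(S_\ef\times G)\ge\nu(E)$, so the existence of the desired lift is equivalent to $\nu(E)=0$. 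Given $\nu(E)=0$, $\nu$ is concentrated on $\bigcup_n\theta(S_\ef\times(K\setminus U_n))$, and $\widetilde\nu$ is then assembled by a countable Hahn--Banach glueing on the compact pieces $S_\ef\times(K\setminus U_n)$, applying on each the extension argument used in the proof of Proposition~\ref{P:exist-bd}, and summing the resulting positive lifts (whose total mass equals $\nu(\theta(S_\ef\times(K\setminus G)))=1$).

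The main obstacle is therefore to exhibit a maximal probability $\nu$ representing $\varphi$ with $\nu(E)=0$. I plan to mimic the preceding proposition by passing to $\widetilde K:=\theta(S_\ef\times K)$ and the enlarged function space $\widetilde H=\{f|_{\widetilde K}\colon f\in A_c(B_{H^*},\ef)\}$, which contains constants and has Choquet boundary $\ext B_{H^*}$. Since $\nu$ is carried by $\overline{\ext B_{H^*}}\subset\widetilde K$, it is an $\widetilde H$-boundary measure on $\widetilde K$, so the classical pseudosupport result \cite[Theorem~3.79]{lmns} yields $\nu(F)=0$ for every closed $G_\delta$-subset $F$ of $\widetilde K$ disjoint from $\ext B_{H^*}$. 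The delicate point is that $E\cap\widetilde K$ is only $G_\delta$ in $\widetilde K$ and need not be closed nor Baire. To overcome this, I intend to use the flexibility in the choice of $\nu$ (recall that here we need only one such measure, in contrast to the universal pseudosupport required in the preceding proposition) combined with inner regularity of Radon measures and the $F_\sigma$-exhaustion of $\theta(S_\ef\times(K\setminus G))$ by compact sets, reducing the question to closed $G_\delta$-pieces where the classical theorem applies directly.
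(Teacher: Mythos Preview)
Your overall strategy matches the paper's: choose a maximal probability $\nu$ on $B_{H^*}$ with barycenter $\varphi$, lift it to $\widetilde\nu$ on $S_{\ef}\times K$ carried by $S_{\ef}\times(K\setminus G)$, and apply Lemma~\ref{L:prenosnaK}. Your countable Hahn--Banach glueing for the lift is correct, though the paper dispatches this step with a single citation to \cite[Corollary 432G]{fremlin4}.

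The gap lies in what you call ``the main obstacle'', namely $\nu(E)=0$. You have already observed that $B_{H^*}\setminus E=\theta(S_{\ef}\times(K\setminus G))$ is an $F_\sigma$ set containing $\ext B_{H^*}$. At this point the conclusion is immediate from a standard fact of Choquet theory: every maximal measure on a compact convex set is carried by every $F_\sigma$ set containing the extreme points (this is \cite[Theorem 3.79(c)]{lmns}, which the paper invokes directly). Hence $\nu(E)=0$ for \emph{every} maximal $\nu$, and no ``flexibility in the choice of $\nu$'' is needed.

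Your proposed workaround---invoking only the closed-$G_\delta$ pseudosupport theorem and then ``reducing to closed $G_\delta$-pieces'' via inner regularity---does not work as written: a compact subset $C$ of the $G_\delta$ set $E$ need not itself be $G_\delta$, so the closed-$G_\delta$ pseudosupport statement does not apply to $C$. The $F_\sigma$-carrying property is the correct tool here and is not a formal consequence of the closed-$G_\delta$ version you cite.
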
 

\begin{proof}
    Let $F=K\setminus G$. Then $F$ is an $F_\sigma$-set containing $\Ch_HK$. So, $\theta(S_{\ef}\times F)$ is an $F_\sigma$-set containing $\ext B_{H^*}$ (by Lemma~\ref{L:theta}). Fix any $\varphi\in H^*$. Similarly as in the proof of Proposition~\ref{P:exist-bd} we may assume without loss of generality that $\norm{\varphi}=1$. Let $\nu$ be a maximal probability on $B_{H^*}$ representing $\varphi$. Then $\nu$ is carried by $\theta(S_{\ef}\times F)$ (by \cite[Theorem 3.79(c)]{lmns}). By \cite[Corollary 432G]{fremlin4} there is a Radon probability $\widetilde{\nu}$ on $S_{\ef}\times K$ carried by $S_{\ef}\times F$ such that $\theta(\widetilde{\nu})=\nu$. Take $\mu$ as in Lemma~\ref{L:prenosnaK}.
    In the same way as in the proof of Proposition~\ref{P:exist-bd} we deduce that $\mu$ is an $H$-boundary measure from $M_\varphi(H)$. Moreover, by the construction we deduce that $\mu$ is carried by $F$, i.e.,  $\abs{\mu}(G)=0$.
\end{proof}

This lemma does not provide an answer to Question~\ref{q:reprez}, but it easily yields 
the following corollary (which may be applied namely for simplicial function spaces, see below):

\begin{cor}\label{cor:uniqueness}
 Let $H$ be a (real or complex) function space on a compact space $K$.
 Let $\varphi\in H^*$. If $M_\varphi(H)$ contains only one $H$-boundary measure, this unique measure is pseudosupported by the Choquet boundary.
\end{cor}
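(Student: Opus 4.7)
The plan is to derive this as an immediate consequence of the preceding Lemma, which already does all the heavy lifting: for any $G_\delta$-set $G$ disjoint from $\Ch_H K$, it produces \emph{some} $H$-boundary measure in $M_\varphi(H)$ that vanishes on $G$. The uniqueness hypothesis then forces this measure to coincide with the unique $H$-boundary element of $M_\varphi(H)$, so the vanishing is transferred to that specific measure.

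More concretely, I would denote by $\mu_0$ the unique $H$-boundary measure in $M_\varphi(H)$, and fix an arbitrary closed $G_\delta$-set $F\subset K$ with $F\cap \Ch_HK=\emptyset$. Since $F$ is in particular a $G_\delta$-set disjoint from $\Ch_HK$, the preceding Lemma produces $\mu\in M_\varphi(H)$ which is $H$-boundary and satisfies $\abs{\mu}(F)=0$. By the uniqueness assumption, $\mu=\mu_0$, and hence $\abs{\mu_0}(F)=0$. Since $F$ was arbitrary, $\mu_0$ is pseudosupported by $\Ch_HK$ in the sense specified in Question~\ref{q:reprez}.

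There is essentially no obstacle to overcome, the corollary is a one-line bookkeeping consequence of the preceding Lemma once one reads the definition of pseudosupport correctly. The only thing to double-check is that the Lemma's conclusion is stated for arbitrary $G_\delta$-sets (and hence applies in particular to closed $G_\delta$-sets), so that the class of test sets in the Lemma contains the class of test sets in the definition of pseudosupport.
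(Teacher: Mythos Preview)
Your proof is correct and is exactly the argument the paper has in mind: the corollary is stated immediately after the preceding Lemma with the remark that it ``easily yields'' it, and your derivation via uniqueness is precisely that easy step. There is nothing to add.
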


\section{Notions of simpliciality}

In this section we define two natural notions of simpliciality and establish their basic properties. Throughout this section $K$ will be a compact Hausdorff space and $H\subset C(K,\ef)$ a function space. 

The space $H$ is said to be \emph{simplicial} if for each $x\in K$ the set $M_x(H)$ contains only one $H$-boundary measure. This definition provides a direct generalization of the classical notion from \cite[Definition 6.1]{lmns} (by Remark~\ref{rem:s1}).

We further define a stronger notion -- $H$ is said to be \emph{functionally simplicial} if for each $\varphi\in H^*$ the set $M_\varphi(H)$ contains only one $H$-boundary measure. In \cite{phelps-complex} it is said that in such a case \emph{uniqueness holds for $H$}. 

It is easy to check that the spaces described in parts (1) and (3) of Example~\ref{ex:easyspaces} are simplicial, while the space from part (2) of that example is not simplicial. We also point out that the feature illustrated by the quoted part (2) cannot happen for simplicial spaces
as witnessed by the following observation.

\begin{obs}
    Assume that $H$ is simplicial. Let $x\in K$. Then 
    $$x\in\Ch_HK\iff M_x(H)=\{\ep_x\}.$$
\end{obs}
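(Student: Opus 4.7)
The plan is to prove the two implications separately. The reverse direction $(\Leftarrow)$ does not even require simpliciality: if $M_x(H)=\{\ep_x\}$, then Proposition~\ref{P:exist-bd} says this unique element must be $H$-boundary, so $\phi(\ep_x)=\ep_{\phi(x)}$ is a maximal probability on $B_{H^*}$. A Dirac measure on a compact convex set is maximal precisely when it is concentrated at an extreme point (the nontrivial direction uses the standard fact that an extreme point of a compact convex set is represented only by the Dirac at itself). Hence $\phi(x)\in\ext B_{H^*}$, i.e., $x\in\Ch_HK$.

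For the forward direction $(\Rightarrow)$, assume $x\in\Ch_HK$ and fix an arbitrary $\mu\in M_x(H)$; then $\norm{\mu}=\norm{\phi(x)}=1$. I would apply Lemma~\ref{L:prenoszpet} to obtain the probability $\widetilde{\nu_0}$ on $S_{\ef}\times K$ (the image of $\abs{\mu}$ under $y\mapsto(h(y),y)$) and the push-forward $\nu_0=\theta(\widetilde{\nu_0})$ on $B_{H^*}$. By Lemma~\ref{L:prenosnaK}$(i)$ together with Lemma~\ref{L:prenoszpet}$(i)$ the barycenter of $\nu_0$ is $\phi(x)$. Since $\phi(x)$ is extreme in $B_{H^*}$ it is represented uniquely by the Dirac at itself, so $\nu_0=\ep_{\phi(x)}$; consequently $\widetilde{\nu_0}$ is concentrated on $\theta^{-1}(\{\phi(x)\})=\{(\alpha,y)\in S_{\ef}\times K\setsep \alpha\phi(y)=\phi(x)\}$.

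The heart of the argument is to exploit simpliciality in order to collapse this preimage to the single point $(1,x)$. For any $(\alpha,y)$ in the $\widetilde{\nu_0}$-support one has $\alpha\phi(y)=\phi(x)\in\ext B_{H^*}$, so $\phi(y)\in\ext B_{H^*}$ and $y\in\Ch_HK$. The signed measure $\alpha\ep_y$ then lies in $M_x(H)$ (indeed, $\alpha f(y)=\alpha\phi(y)(f)=\phi(x)(f)=f(x)$ for every $f\in H$, and $\norm{\alpha\ep_y}=1$) and is $H$-boundary, because $\abs{\alpha\ep_y}=\ep_y$ pushes forward under $\phi$ to the Dirac at the extreme point $\phi(y)$. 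Since $\ep_x$ is likewise an $H$-boundary element of $M_x(H)$, simpliciality forces $\alpha\ep_y=\ep_x$, whence $y=x$ and $\alpha=1$. It follows that $\widetilde{\nu_0}$ is concentrated at $(1,x)$, its projection $\abs{\mu}$ equals $\ep_x$, and so $\mu=h(x)\ep_x$ for some $h(x)\in S_{\ef}$. The relation $\int f\di\mu=f(x)$ for $f\in H$, combined with the existence of some $f\in H$ with $f(x)\ne 0$ (guaranteed by $\phi(x)\ne 0$), yields $h(x)=1$ and hence $\mu=\ep_x$. The main obstacle is precisely this bookkeeping step: translating the extremality of $\phi(x)$, via the Hustad-type correspondence of Lemma~\ref{L:prenoszpet}, into a concrete support constraint on $\widetilde{\nu_0}$ that simpliciality is strong enough to resolve down to the point $(1,x)$.
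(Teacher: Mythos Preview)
Your proof is correct, and the reverse direction matches the paper's argument essentially verbatim (the paper cites Observation~\ref{obs:pseudo}$(a)$ rather than arguing directly that a maximal Dirac must sit at an extreme point, but this is the same content).

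For the forward direction you take a more circuitous route than the paper. Both arguments begin identically: from $\nu_0=\ep_{\phi(x)}$, since $\phi(x)$ is extreme. At this point the paper observes that $\nu_0$ is therefore maximal and invokes Lemma~\ref{L:prenosnaK}$(v)$ directly (using $\norm{\mu}=1$) to conclude that $\mu$ itself is $H$-boundary; simpliciality is then applied once to the pair $\mu,\ep_x\in M_x(H)$, giving $\mu=\ep_x$ in one stroke. You instead unpack the support of $\widetilde{\nu_0}$, produce for each fibre point $(\alpha,y)$ an auxiliary $H$-boundary measure $\alpha\ep_y\in M_x(H)$, apply simpliciality to each of these to collapse the fibre to $(1,x)$, and then reconstruct $\mu$ from $\widetilde{\nu_0}=\ep_{(1,x)}$. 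This works, but the detour through the support analysis and the final bookkeeping with $h(x)$ is unnecessary: the single appeal to Lemma~\ref{L:prenosnaK}$(v)$ already delivers ``$\mu$ is $H$-boundary'' globally, which is exactly what simpliciality needs.
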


\begin{proof}
    $\impliedby$: This implication holds always, even without assuming simpliciality. Indeed, assume $M_x(H)=\{\ep_x\}$. Then $\ep_x$ is $H$-boundary by Proposition~\ref{P:exist-bd} and so $x\in\Ch_HK$ by Observation~\ref{obs:pseudo}$(a)$.

    $\implies$: Assume $x\in \Ch_HK$ and $\mu\in M_x(H)$. Since $\phi(x)$, being an extreme point of $B_{H^*}$, has norm one, necessarily $\norm{\mu}=1$. Let $h$, $\widetilde{\nu_0}$ and $\nu_0$ be as in Lemma~\ref{L:prenoszpet}. By Lemma~\ref{L:prenoszpet}$(i)$ and Lemma~\ref{L:prenosnaK}$(i)$ we deduce that $r(\nu_0)=\phi(x)$. Since $\phi(x)\in\ext B_{H^*}$, we get $\nu_0=\ep_{\phi(x)}$. So, $\nu_0$ is maximal and hence by Lemma~\ref{L:prenosnaK}$(v)$ we deduce that $\mu$ is $H$-boundary. Since $\ep_x$ is also $H$-boundary and belongs to $M_x(H)$, the assumption of simpliciality yields $\mu=\ep_x$ and the proof is complete.
\end{proof}

We further note that functional simpliciality is strictly stronger than simpliciality even in the classical setting
of function spaces with constants. This is illustrated by \cite[Exercise 6.79]{lmns} or by the following easy example.
Let
$$H=\{f\in C([0,3])\setsep f(0)+f(1)=f(2)+f(3)\}.$$
Indeed, it is easy to see that the Choquet boundary is the set $[0,3]$, hence $H$ is simplicial. However, if
$\varphi(f)=f(0)+f(1)$, $f\in H$, then $M_\varphi(H)$ contains two different $H$-boundary measures $\ep_0+\ep_1$ and $\ep_2+\ep_3$.

As a generalization of the classical notion from \cite[Definition 3.8]{lmns} we set
$$A_c(H)=\left\{f\in C(K,\ef)\setsep \forall \mu\in M_x(H)\colon f(x)=\int f\di\mu\right\}.$$
Then $A_c(H)$ is a closed function space containing $H$.
To analyze the relationship of $H$ and $A_c(H)$ we  consider the following diagrams:
   \begin{equation}\label{eq:diagram} 
    \xymatrix{B_{H^*} & B_{A_c(H)^*} \ar[l]_{\pi} & B_{C(K,\ef)^*} \ar[l]_{\rho} \\
   & K \ar[ul]^{\phi_1} \ar[u]_{\phi_2} \ar[ur]_{\ep} &
   } \qquad \xymatrix{S_{\ef}\times K \ar[d]_{\theta_2} \ar[dr]^{\theta_1} \\ B_{A_c(H)^*} \ar[r]^{\pi} & B_{H^*}}
   \end{equation}
  Here $\phi_1$ and $\phi_2$ are the respective evaluation maps and $\pi$ is the restriction map $\varphi\mapsto \varphi|_H$. The mapping $\ep$ assigns to each $x\in K$ the Dirac measure $\ep_x$ and $\rho$ is also the restriction map. In the second diagram $\theta_1$ and $\theta_2$ are the respective variants of the map $\theta$ defined before Lemma~\ref{L:theta} above.

\begin{lemma}\label{L:diagram}
  \begin{enumerate}[$(a)$]
      \item The diagrams \eqref{eq:diagram} are commutative.
      \item The mappings $\pi$ and $\rho$ are continuous surjections.
      \item The mapping $\pi$ maps $\phi_2(K)$ homeomorphically onto $\phi_1(K)$.
      \item $\norm{\phi_1(x)}=\norm{\phi_2(x)}$ for $x\in K$.
      \item $M_x(A_c(H))=M_x(H)$ for $x\in K$.
       \item $A_c(A_c(H))=A_c(H)$.
    \item The mapping $\pi$ maps $\theta_2(S_{\ef}\times K)$ homeomorphically onto
$\theta_1(S_{\ef}\times K)$.
  \end{enumerate}
\end{lemma}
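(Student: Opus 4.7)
The plan is to treat (a)--(f) as routine consequences of unwinding definitions, Hahn--Banach, and the characterization of $A_c(H)$, and to reserve the real work for the injectivity in (g).

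First I would verify (a) by direct computation: for $x\in K$ and $f\in H$ one has $\pi(\phi_2(x))(f)=f(x)=\phi_1(x)(f)$, and for $g\in A_c(H)$ one has $\rho(\ep_x)(g)=g(x)=\phi_2(x)(g)$; the second diagram commutes because $\pi$ is linear and $\theta_i(\alpha,x)=\alpha\phi_i(x)$. For (b), $\pi$ and $\rho$ are the weak$^*$-continuous adjoints of the inclusions $H\hookrightarrow A_c(H)\hookrightarrow C(K,\ef)$, norm-nonincreasing, and surjective on the unit balls by Hahn--Banach together with the Riesz representation theorem.

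For (d), the estimate $\norm{\phi_1(x)}\le\norm{\phi_2(x)}$ is immediate from the norm-nonincreasing property of $\pi$ and the identity $\pi\circ\phi_2=\phi_1$. For the reverse I would apply Hahn--Banach to extend $\phi_1(x)$ to a functional on $C(K,\ef)$ of the same norm; the corresponding Radon measure $\mu$ satisfies $\norm{\mu}=\norm{\phi_1(x)}$ and represents $\phi_1(x)$ on $H$, so $\mu\in M_x(H)$. The defining property of $A_c(H)$ then forces $\int g\di\mu=g(x)$ for every $g\in A_c(H)$, whence $\norm{\phi_2(x)}\le\norm{\mu}=\norm{\phi_1(x)}$. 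The same observation yields (e): any $\mu\in M_x(H)$ automatically integrates every $g\in A_c(H)$ to $g(x)$ by the definition of $A_c(H)$, and the norm condition coincides by (d), so $M_x(H)=M_x(A_c(H))$. Assertion (c) follows because $\phi_1=\pi\circ\phi_2$ and both $\phi_1,\phi_2$ are homeomorphic injections on the compact space $K$ (Lemma~\ref{L:evaluace}), hence $\pi|_{\phi_2(K)}$ is a continuous bijection between compact Hausdorff spaces, so a homeomorphism. Assertion (f) is then immediate: the equality $M_x(A_c(H))=M_x(H)$ makes the defining conditions for $A_c(A_c(H))$ and $A_c(H)$ coincide.

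The main content is (g). Continuity and surjectivity of $\pi|_{\theta_2(S_\ef\times K)}$ onto $\theta_1(S_\ef\times K)$ are free from (a) and compactness, so the whole issue is injectivity: if $\alpha\phi_1(x)=\beta\phi_1(y)$ with $\alpha,\beta\in S_\ef$ and $x,y\in K$, I must show $\alpha\phi_2(x)=\beta\phi_2(y)$. If $\phi_1(x)=0$, then $\phi_1(y)=0$ as well (since $\beta\ne 0$), and (d) forces $\phi_2(x)=\phi_2(y)=0$, so the identity is trivial. Otherwise set $\gamma=\alpha\overline{\beta}\in S_\ef$, so that $\phi_1(y)=\gamma\phi_1(x)$. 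Picking any $\mu\in M_x(H)$, the scaled measure $\gamma\mu$ has the same total-variation norm as $\mu$ (which equals $\norm{\phi_1(x)}=\norm{\phi_1(y)}$ by (d)), and for every $f\in H$ satisfies $\int f\di(\gamma\mu)=\gamma f(x)=f(y)$; hence $\gamma\mu\in M_y(H)$. Invoking (e) we obtain $\mu\in M_x(A_c(H))$ and $\gamma\mu\in M_y(A_c(H))$, so that for any $g\in A_c(H)$ we get $g(y)=\int g\di(\gamma\mu)=\gamma g(x)$; that is, $\phi_2(y)=\gamma\phi_2(x)$, which rearranges to $\alpha\phi_2(x)=\beta\phi_2(y)$. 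A continuous bijection between compact Hausdorff spaces then completes (g).

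The step I expect to be the main obstacle is the injectivity argument in (g); everything else flows from definitions once one has the key device of transporting a representing measure by a unimodular scalar and using the equality $M_x(A_c(H))=M_x(H)$ from (e).
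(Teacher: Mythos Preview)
Your proof is correct and follows essentially the same approach as the paper's: parts (a)--(f) are unwound from the definitions and Hahn--Banach exactly as the authors do, and your injectivity argument for (g) via transporting a single measure $\mu\in M_x(H)$ by the unimodular scalar $\gamma$ is precisely the paper's step ``$\alpha M_x(H)=\beta M_y(H)$'' stated for one measure (which is all that is needed). Your explicit treatment of the degenerate case $\phi_1(x)=0$ is a small clarification absent from the paper but handled there implicitly via (d).
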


\begin{proof}
    Assertions $(a)$ and $(b)$ are obvious.

    $(c)$: This follows from $(a)$ and $(b)$ because $H$ separates points of $K$.

    $(d)$ We have $\phi_1(x)=\pi(\phi_2(x))$, which proves `$\le$'. To prove the converse, fix any $\mu\in M_x(H)$ and $f\in A_c(H)$. Then
    $$\abs{\phi_2(x)(f)}=\abs{f(x)}=\abs{\int f\di\mu}\le \norm{f}\cdot\norm{\mu}=\norm{f}\cdot\norm{\phi_1(x)}.$$

    $(e)$: This follows from the definitions using $(d)$.

    $(f)$: This follows from $(e)$.

    $(g)$: Since $\pi(\alpha\phi_2(x))=\alpha\phi_1(x)$ for each $x\in K$ and each number $\alpha$ with $\abs{\alpha}=1$ (as $\pi$ is a restriction of a linear mapping), it is enough to show that $\pi$ is one-to-one on $\theta_2(S_{\ef}\times K)$. So, assume $\pi(\alpha\phi_2(x))=\pi(\beta\phi_2(y))$. Then $\alpha\phi_1(x)=\beta\phi_1(y)$. We deduce that $\norm{\phi_1(x)}=\norm{\phi_1(y)}$. Further, 
    $$\forall f\in H\colon\alpha f(x)=\beta f(y).$$
    It follows that $\alpha M_x(H)=\beta M_y(H)$, so
    $$\forall f\in A_c(H)\colon\alpha f(x)=\beta f(y),$$
    i.e., $\alpha\phi_2(x)=\beta\phi_2(y)$.
\end{proof}

We continue by a pair of easy lemmata:

\begin{lemma}\label{L:tezistenakouli}
    Let $H$ be closed. Let $\psi\in B_{H^*}$ and $\mu\in M_1(B_{H^*})$. Then $r(\mu)=\psi$ if and only if
    $$\forall h\in H\colon \psi(h)=\int \varphi(h)\di\mu(\varphi).$$
    In particular, $r(\mu)=\phi(x)$ (where $x\in K$) if and only if
     $$\forall h\in H\colon h(x)=\int \varphi(h)\di\mu(\varphi).$$
\end{lemma}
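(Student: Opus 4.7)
The approach is to reduce the defining condition of the barycenter — which a priori quantifies over the full space $A_c(B_{H^*},\er)$ of continuous real affine functions on $(B_{H^*},w^*)$ — to the much smaller family of evaluation functionals $\{F_h:\varphi\mapsto\varphi(h)\setsep h\in H\}$, exploiting the fact that these functionals generate the $w^*$-topology on $H^*$.

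For the forward implication, I fix $h\in H$ and observe that $F_h$ is $w^*$-continuous and $\ef$-linear, so its real part $\Re F_h$ (and, when $\ef=\ce$, its imaginary part $\Im F_h$) belong to $A_c(B_{H^*},\er)$. Applying the defining property $f(r(\mu))=\int f\di\mu$ of the barycenter to $f=\Re F_h$ (and, in the complex case, to $f=\Im F_h$) and combining gives the desired equality $\psi(h)=\int\varphi(h)\di\mu(\varphi)$ for every $h\in H$.

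For the converse, I would argue by uniqueness rather than by identifying $A_c(B_{H^*},\er)$ explicitly. The barycenter $r(\mu)$ is a well-defined element of $B_{H^*}$ and, by the forward implication already proved, satisfies the system of equations $r(\mu)(h)=\int\varphi(h)\di\mu(\varphi)$ for every $h\in H$. Since the family $\{F_h\setsep h\in H\}$ induces the $w^*$-topology on $H^*$ and this topology is Hausdorff, the $F_h$ separate points of $B_{H^*}$; therefore at most one $\psi\in B_{H^*}$ can satisfy the given system of equations, forcing $\psi=r(\mu)$. The \emph{in particular} clause follows by specializing $\psi=\phi(x)$ and noting that $\phi(x)(h)=h(x)$ by the definition of the evaluation map from Lemma~\ref{L:evaluace}.

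There is no serious obstacle; the only mildly delicate point is that the defining property of $r(\mu)$ uses real-valued affine functions, so in the complex case one must match real and imaginary parts separately. This is handled by the fact that $\ef$-linearity of $F_h$ already provides both $\Re F_h$ and $\Im F_h=\Re F_{-ih}$ as elements of $A_c(B_{H^*},\er)$, so the real-affine test functions suffice to recover the full $\ef$-valued identity.
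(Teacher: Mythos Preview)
Your proof is correct and takes a genuinely different route from the paper's. For the `if' direction the paper proceeds by explicitly describing every $F\in A_c(B_{H^*},\ef)$: in the real case $F(\varphi)=c+\varphi(h)$ for some $c\in\er$ and $h\in H$, and in the complex case $F(\varphi)=c+\varphi(h_1)+\overline{\varphi(h_2)}$ for some $c\in\ce$ and $h_1,h_2\in H$ (invoking \cite[Lemma 3.14(a)]{l1pred}); from this structural description the identity $\int F\di\mu=F(\psi)$ is immediate. You instead bypass any description of $A_c(B_{H^*})$ by a uniqueness argument: $r(\mu)$ exists, satisfies the same system of equations by the forward implication, and the evaluations $\{F_h\setsep h\in H\}$ separate points of $H^*$, so $\psi=r(\mu)$. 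Your approach is more self-contained---it needs no external reference in the complex case and does not rely on identifying the $w^*$-continuous affine functions---while the paper's argument has the side benefit of recording that identification explicitly. Either way the hypothesis that $H$ is closed is what makes $(B_{H^*},w^*)$ a bona fide compact convex set and guarantees both the existence of $r(\mu)$ and (for the paper's route) the representation of $w^*$-continuous linear functionals by elements of $H$.
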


\begin{proof} The `in particular' part is clearly a special case of the first statement.
    The `only if' part is obvious. Let us prove the `if' part. To this end assume that $F\in A_c(B_{H^*})$.
    If $\ef=\er$, then there is a constant $c\in\er$ and some $h\in H$ such that $F(\varphi)=c+\varphi(h)$ for $\varphi\in B_{H^*}$. If $\ef=\ce$, then  there is a constant $c\in\ce$ and some $h_1,h_2\in H$ such that $F(\varphi)=c+\varphi(h_1)+\overline{\varphi(h_2)}$ for $\varphi\in B_{H^*}$ (cf. \cite[Lemma 3.14(a)]{l1pred}). In both cases we see that $\int F\di\mu=F(\psi)$.
\end{proof}

\begin{lemma}\label{L:maximalnasfere}
    Let $E$ be a nontrivial Banach space. Then any maximal probability measure on $B_{E^*}$ is carried by the unit sphere.
\end{lemma}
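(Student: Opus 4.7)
The plan is to apply Mokobodzki's maximality criterion (Fact~\ref{f:mokobodzki}$(b)$) to the function $f\colon B_{E^*}\to\er$ defined by $f(\varphi)=\norm{\varphi}$. This function is convex and continuous, so if we can show that its upper envelope $f^*$ is identically equal to $1$ on $B_{E^*}$, then for any maximal probability $\mu$ we obtain $\int f\di\mu=\int f^*\di\mu=1$. Combined with the pointwise estimate $f\le 1$, this forces $f=1$ $\mu$-almost everywhere, i.e., $\mu$ is carried by the sphere $S_{E^*}$.

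To compute $f^*$, observe first that the constant function $1$ belongs to $A_c(B_{E^*},\er)$ and satisfies $1\ge f$, so $f^*\le 1$. For the reverse inequality, I would fix $\varphi\in B_{E^*}$; if $\norm{\varphi}=1$, then already $f^*(\varphi)\ge f(\varphi)=1$. Otherwise $\norm{\varphi}<1$, and since $E$ is nontrivial, the Hahn--Banach theorem yields some $e^*\in S_{E^*}$. The map $t\mapsto\norm{\varphi+te^*}$ is continuous on $\er$, equals $\norm{\varphi}<1$ at $t=0$, and tends to $+\infty$ as $\abs{t}\to\infty$. Consequently there exist $a<0<b$ with $\norm{\varphi+ae^*}=\norm{\varphi+be^*}=1$, and
$$\varphi=\frac{b}{b-a}(\varphi+ae^*)+\frac{-a}{b-a}(\varphi+be^*)$$
realizes $\varphi$ as a convex combination of two points of $S_{E^*}$. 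Hence for any $u\in A_c(B_{E^*},\er)$ with $u\ge f$, convexity of the affine combination and $u\ge f=1$ at those sphere points give $u(\varphi)\ge 1$. Taking the infimum over such $u$ yields $f^*(\varphi)\ge 1$, so $f^*\equiv 1$.

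With the identity $f^*\equiv 1$ established, the conclusion follows from Fact~\ref{f:mokobodzki}$(b)$: since $\mu$ is maximal and $f$ is convex continuous,
$$\int_{B_{E^*}} \norm{\varphi}\di\mu(\varphi)=\int_{B_{E^*}} f^*\di\mu=\mu(B_{E^*})=1.$$
As $\norm{\cdot}\le 1$ pointwise on $B_{E^*}$ and $\mu$ is a probability, this equality forces $\mu(\{\varphi\in B_{E^*}\setsep\norm{\varphi}<1\})=0$, so $\mu$ is carried by $S_{E^*}$.

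There is no real obstacle here; the only point requiring mild care is the convex-combination step, which is where the nontriviality hypothesis enters (through the existence of a norming functional $e^*\in S_{E^*}$). Note also that the same argument works equally in the real and complex case, since the upper envelope and Mokobodzki's criterion are formulated for real-valued convex continuous functions.
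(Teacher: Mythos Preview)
Your argument has a gap: the norm $f(\varphi)=\norm{\varphi}$ is \emph{not} weak$^*$-continuous on $B_{E^*}$ when $E$ is infinite-dimensional---it is only weak$^*$-lower semicontinuous (for instance, the standard unit vectors in $\ell^2$ converge weak$^*$ to $0$ while having norm $1$). Since the compact convex set $B_{E^*}$ carries the weak$^*$ topology, Fact~\ref{f:mokobodzki} does not apply to $f$. Your computation that $f^*\equiv 1$ is correct, as is the convex-combination step showing every point of the open ball lies on a segment with endpoints on the sphere; what is unjustified is the key equality $\int f\di\mu=\int f^*\di\mu$.

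This gap is not easily repaired. Approximating $f$ from below by the continuous convex functions $g_F(\varphi)=\max_{x\in F}\abs{\Re\varphi(x)}$ for finite $F\subset B_E$ gives, via Lemma~\ref{L:monotone nets} and Mokobodzki, $\int f\di\mu=\sup_F\int g_F\di\mu=\sup_F\int g_F^*\di\mu$, but showing the last supremum equals $1$ just restates the problem (the envelopes $g_F^*$ are merely upper semicontinuous, so one cannot interchange $\sup_F$ with the integral). The paper avoids the issue by arguing directly from the Choquet order: assuming $\mu(rB_{E^*})>0$ for some $r\in(0,1)$, it isolates a slice $F\subset rB_{E^*}$ of positive measure and replaces $\mu|_F$ by the average of two small translates $T_{\pm\varepsilon x^*}(\mu|_F)$ (which remain inside $B_{E^*}$), producing $\nu\ne\mu$ with $\mu\prec\nu$, contradicting maximality.
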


\begin{proof}
    Assume that $\mu$ is a maximal probability measure on $B_{E^*}$ and there is some $r\in(0,1)$ such that
    $\mu(rB_{E^*})>0$. Fix $x\in E$ of norm one and find $x^*\in B_{E^*}$ with $x^*(x)=1$. Set
    $$s=\sup \{t\in [-r,r]\setsep \mu(rB_{E^*}\cap \{y^*\in E^*\setsep \Re y^*(x)\ge t\})>0\}.$$
    Clearly $s\in (-r,r]$. Fix $0<\ep<\frac{1-r}{4}$ and let
    $$F=\{y^*\in r B_{E^*}\setsep s-\ep\le \Re y^*(x)\le s\}.$$
    By the construction we have $\mu(F)>0$. 

    For $y^*\in E^*$ denote by $T_{y^*}$ the translation operator $z^*\mapsto z^*+y^*$. Consider the measure 
    $$\nu=\mu - \mu|_F  +\tfrac12(T_{\ep x^*}(\mu|_F)+T_{-\ep x^*}(\mu|_F)).$$
    Then $\nu$ is a probability measure on $B_{E^*}$ and $\nu\ne\mu$.
    Moreover, $\mu\prec\nu$ as for any convex continuous function $f:B_{E^*}\to\er$ we have
    $$\begin{aligned}
  \int f\di\tfrac12(T_{\ep x^*}(\mu|_F)&+T_{-\ep x^*}(\mu|_F))=
    \tfrac12\left(\int f \di T_{\ep x^*}(\mu|_F)+\int f \di T_{-\ep x^*}(\mu|_F)\right)\\&=\tfrac12\left(\int f(z^*+\ep x^*)\di\mu|_F(z^*)+\int f(z^*-\ep x^*)\di\mu|_F(z^*)\right)\\&\ge\int f(z^*)\di\mu|_F(z^*).        
    \end{aligned}$$ This contradicts the maximality of $\mu$.   
\end{proof}

We continue by relating the Choquet boundaries and boundary measures with respect to $H$ and $A_c(H)$. In case $H$ contains constants assertion $(i)$ of the following proposition follows directly from Lemma~\ref{L:diagram}$(e)$ and assertion $(ii)$ follows from \cite[Proposition 3.67]{lmns}. Without constants the proof is more complicated.

\begin{prop}\label{P:prenossimpliciality}
 \begin{enumerate}[$(i)$]
        \item $\Ch_HK=\Ch_{A_c(H)}K$.
        \item Let $\mu\in M(K,\ef)$. Then $\mu$ is $H$-boundary if and only if it is $A_c(H)$-boundary.
    \end{enumerate}    
\end{prop}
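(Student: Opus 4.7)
The plan is to handle (i) with a direct Hahn--Banach argument based on Lemma~\ref{L:diagram}$(e)$ (the identity $M_x(H)=M_x(A_c(H))$), and to deduce (ii) via a sharp description of the fibers of $\pi$ over the unit sphere of $H^*$. A key preliminary, which I will establish first and reuse throughout, is that $\pi^{-1}(\phi_1(x))=\{\phi_2(x)\}$ whenever $\norm{\phi_1(x)}=1$: any $\Phi$ in the fiber has $\norm\Phi\ge\norm{\Phi|_H}=1$ and hence $\norm\Phi=1$, so Hahn--Banach and Riesz produce $\sigma\in M(K,\ef)$ with $\norm\sigma=1$ and $\Phi(f)=\int f\,d\sigma$ for $f\in A_c(H)$; the restriction $\Phi|_H=\phi_1(x)$ then places $\sigma\in M_x(H)=M_x(A_c(H))$, forcing $\Phi=\phi_2(x)$.

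For (i) both inclusions follow the same template. To show $\Ch_HK\subseteq\Ch_{A_c(H)}K$, take $x\in\Ch_HK$ and a decomposition $\phi_2(x)=\tfrac12(\Psi_1+\Psi_2)$ in $B_{A_c(H)^*}$. Applying $\pi$ and using extremality of $\phi_1(x)$ forces $\pi\Psi_i=\phi_1(x)$, and since $\norm{\Psi_i}=1$ the fiber observation gives $\Psi_i=\phi_2(x)$. For the reverse inclusion, given $x\in\Ch_{A_c(H)}K$ and $\phi_1(x)=\tfrac12(\psi_1+\psi_2)$, represent each $\psi_i$ by $\sigma_i\in M(K,\ef)$ of norm $1$ and define $\Psi_i(f):=\int f\,d\sigma_i$ for $f\in A_c(H)$. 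Then $\Psi_i\in B_{A_c(H)^*}$, and since $\sigma:=\tfrac12(\sigma_1+\sigma_2)$ lies in $M_x(H)=M_x(A_c(H))$ one reads off $\tfrac12(\Psi_1+\Psi_2)=\phi_2(x)$; extremality of $\phi_2(x)$ then yields $\Psi_1=\Psi_2$, so $\psi_1=\psi_2$.

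For (ii) I reduce to $\lambda:=|\mu|$ scaled to be a probability (since $\phi_i$ is injective on $K$, $|\phi_i(\mu)|=\phi_i(|\mu|)$), and must show $\phi_1(\lambda)$ is maximal on $B_{H^*}$ iff $\phi_2(\lambda)$ is maximal on $B_{A_c(H)^*}$. For the implication $\phi_1(\lambda)$ maximal $\Rightarrow$ $\phi_2(\lambda)$ maximal, take $\tau'\succ\phi_2(\lambda)$. Since $\pi$ is continuous affine it preserves the Choquet order, so $\phi_1(\lambda)\prec\pi(\tau')$ and maximality forces $\pi(\tau')=\phi_1(\lambda)$. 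By Lemma~\ref{L:maximalnasfere} this image is carried by $\phi_1(\{x:\norm{\phi_1(x)}=1\})$, and the fiber observation forces $\tau'$ to live on $\phi_2(K)$; since $\pi|_{\phi_2(K)}$ is a homeomorphism (Lemma~\ref{L:diagram}$(c)$), $\tau'$ is determined by $\pi(\tau')$, giving $\tau'=\phi_2(\lambda)$.

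For the converse, test maximality of $\phi_1(\lambda)$ against an arbitrary convex continuous $g$ on $B_{H^*}$. With $G:=g\circ\pi$, maximality of $\phi_2(\lambda)$ gives $\int g\,d\phi_1(\lambda)=\int G\,d\phi_2(\lambda)=\int G^*\,d\phi_2(\lambda)$. The trivial bound $G^*\le g^*\circ\pi$ becomes equality at $\phi_2(x)$ whenever $\norm{\phi_1(x)}=1$: any probability $\sigma$ on $B_{H^*}$ with $r(\sigma)=\phi_1(x)$ admits a Radon probability lift $\tau$ on $B_{A_c(H)^*}$ with $\pi(\tau)=\sigma$ (Hahn--Banach plus Riesz, using that the pullback $C(B_{H^*})\hookrightarrow C(B_{A_c(H)^*})$ along $\pi$ is an isometric unital positive embedding), and the fiber observation forces $r(\tau)=\phi_2(x)$, so $\int G\,d\tau=\int g\,d\sigma$ shows $G^*(\phi_2(x))\ge g^*(\phi_1(x))$. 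Since $\phi_2(\lambda)$ is carried by $\{\norm{\phi_2}=1\}=\{\norm{\phi_1}=1\}$, this equality holds $\phi_2(\lambda)$-a.e., and the chain above gives $\int g\,d\phi_1(\lambda)=\int g^*\,d\phi_1(\lambda)$, so $\phi_1(\lambda)$ is maximal. The main subtlety is exactly the identification $G^*=g^*\circ\pi$ on the support of $\phi_2(\lambda)$: it would fail in general, and is rescued by the singleton fiber property, which is where Lemma~\ref{L:diagram}$(e)$ does the essential work.
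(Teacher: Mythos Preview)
Your proof is correct and takes a genuinely different route from the paper's. The paper derives (i) from (ii) and proves (ii) in both directions via Mokobodzki's criterion: for the ``only if'' part it uses a Tietze extension to transfer a convex function on $B_{A_c(H)^*}$ to $B_{H^*}$ and then a chain of envelope inequalities together with Lemma~\ref{L:monotone nets}; for the ``if'' part it establishes $f^*(\phi_1(x))\le (f\circ\pi)^*(\phi_2(x))$ by passing through the Hustad mapping (Lemmata~\ref{L:prenosnaK} and~\ref{L:prenoszpet}). Your argument instead isolates the single structural fact that $\pi^{-1}(\phi_1(x))=\{\phi_2(x)\}$ whenever $\norm{\phi_1(x)}=1$, which is an immediate consequence of $M_x(H)=M_x(A_c(H))$, and then leverages it throughout: part (i) is done directly by extremality, the ``only if'' direction of (ii) is handled by the bare definition of maximality (no Mokobodzki needed), and in the ``if'' direction the required inequality $G^*(\phi_2(x))\ge g^*(\phi_1(x))$ follows from a straightforward Hahn--Banach lift of representing probabilities along the surjection $\pi$ rather than from the Hustad machinery. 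Your approach is more elementary and makes transparent that Lemma~\ref{L:diagram}$(e)$ is the sole substantive input; the paper's approach, while heavier, connects the result more directly to the envelope calculus that is reused elsewhere in the article.
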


\begin{proof} Assertion $(i)$ easily follows from $(ii)$, so we will prove $(ii)$. It is clearly enough to prove it for positive measures. 
   
To prove the `only if' part assume that $\mu$ is $H$-boundary, i.e., $\phi_1(\mu)$ is a maximal measure. To prove $\phi_2(\mu)$ is maximal as well, we will use Mokobodzki's test (see Fact~\ref{f:mokobodzki}). Let $f$ be any real-valued continuous convex function
    on $B_{A_c(H)^*}$. Then $f\circ\phi_2\in C(K,\er)$. Hence $f\circ\phi_2\circ (\phi_1)^{-1}$ is a continuous function on $\phi_1(K)$. By the Tietze extension theorem this function may be extended to a function $g\in C(B_{H^*},\er)$. Then
    $$\begin{aligned}
     \int f\di \phi_2(\mu)&= \int   \left(f\circ\phi_2\circ (\phi_1)^{-1}\right) \di\phi_1(\mu)=\int g\di\phi_1(\mu)
     =\int g^*\di\phi_1(\mu) \\
     &= \int \inf \{ a\in A_c(B_{H^*},\er) \setsep a\ge g\} \di\phi_1(\mu)
     \\&= \inf \{\int \left(a_1\wedge\dots\wedge a_n\right)\di\phi_1(\mu)\setsep a_j\in A_c(B_{H^*},\er), a_j\ge g \mbox{ for }j=1,\dots,n\} 
     \\&= \inf \{\int \left(a_1\circ\pi\wedge\dots\wedge a_n\circ\pi\right)\di\phi_2(\mu)\setsep 
   \\&\qquad\qquad  
     a_j\in A_c(B_{H^*},\er), a_j\ge g \mbox{ for }j=1,\dots,n\}
     \\&\ge \inf  \{\int \left(b_1\wedge\dots\wedge b_n\right)\di\phi_2(\mu)\setsep  \\&\qquad\qquad b_j\in A_c(B_{A_c(H)^*},\er), b_j\ge f \mbox{ for }j=1,\dots,n\} 
     \\ & =\int f^* \di\phi_2(\mu)\ge\int f\di\phi_2(\mu).
    \end{aligned}$$ 
The first two equalities follow from definitions, the third one follows from Fact~\ref{f:mokobodzki} as $\phi_1(\mu)$ is maximal. The fourth equality is just an application of the definition of the upper envelope.
The fifth one follows from Lemma~\ref{L:monotone nets} (note that by $\wedge$ we denote pointwise minimum). The sixth equality follows from diagram \eqref{eq:diagram}. The following inequality is obvious. The next equality follows again from Lemma~\ref{L:monotone nets}. The last inequality is obvious. Hence the equalities hold and by Fact~\ref{f:mokobodzki} we deduce that $\phi_2(\mu)$ is maximal.

To prove the `if' part,
assume $\phi_2(\mu)$ is maximal. To show that $\phi_1(\mu)$ is also maximal we will use again Mokobodzki's test (Fact~\ref{f:mokobodzki}$(b)$).  To this end we fix $f$, a convex continuous function on $B_{H^*}$    and show that $\int f\di\phi_1(\mu)=\int f^*\di\phi_1(\mu)$.   
    First observe that $f\circ\pi$ is a convex continuous function on $B_{A_c(H)^*}$. 
 Fix $x\in K$ such that $\norm{\phi_1(x)}=1$. By \cite[Corollary I.3.6]{alfsen} we have
    $$\begin{aligned}
        f^*(\phi_1(x))&=\sup\left\{\int f\di\nu \setsep \nu\in M_{\phi_1(x)}(A_c(B_{H^*}))\mbox{ maximal}\right\}
           \\ &\le \sup\left\{\int f\di\nu \setsep \nu\in M_{\phi_1(x)}(A_c(B_{H^*})),\nu(S_{\ef}\phi_1(K))=1\right\}
    \end{aligned}$$
Let $\nu$ be a measure from the description of the last set. Consider the measure $\pi^{-1}(\nu)$ (carried by $S_{\ef}\phi_2(K)$ -- we use Lemma~\ref{L:diagram}$(g)$). Let $\widetilde{\nu}\in M_1(S_{\ef}\times K)$ be such that $\theta_2(\widetilde{\nu})=\pi^{-1}(\nu)$. Let $\sigma$ be the measure on $K$ obtained from $\widetilde{\nu}$ by the Hustad mapping (the formula for $\mu$ in Lemma~\ref{L:prenosnaK}). Since $\theta_1(\widetilde{\nu})=\pi(\theta_2(\widetilde{\nu}))=\nu$ and $r(\nu)=\phi_1(x)$ has norm one, Lemma~\ref{L:prenosnaK} shows that $\sigma\in M_x(H)$. By Lemma~\ref{L:diagram}$(e)$ we deduce that $\sigma\in M_x(A_c(H))$ and so, by Lemma~\ref{L:prenoszpet} applied to $A_c(H)$ we deduce that $r(\pi^{-1}(\nu))=\phi_2(x)$.

 This (together with the above formula) implies that
 $$\begin{aligned}
        f^*(\phi_1(x))&\le \sup\{\int f\di\pi(\sigma) \setsep \sigma\in M_{\phi_2(x)}(A_c(B_{A_c(H)^*}))\}
        \\ &=  \sup\{\int f\circ\pi\di\sigma \setsep \sigma\in M_{\phi_2(x)}(A_c(B_{A_c(H)^*}))\} =(f\circ \pi)^*(\phi_2(x)),
    \end{aligned}$$
where the last equality follows from  \cite[Corollary I.3.6]{alfsen}.
Therefore
 $$\begin{aligned}
       \int_{B_{H^*}} f\di\phi_1(\mu)&=\int_{B_{A_c(H)^*}} f\circ\pi\di\phi_2(\mu)
              =\int_{B_{A_c(H)^*}} (f\circ\pi)^*\di\phi_2(\mu) \\&= \int_{S_{A_c(H)^*}} (f\circ\pi)^*\di\phi_2(\mu)
              = \int_{\{x\in K\setsep \norm{\phi_2(x)}=1\}}  (f\circ\pi)^*\circ\phi_2\di\mu
       \\& \ge \int_{\{x\in K\setsep \norm{\phi_1(x)}=1\}}  f^*\circ\phi_1\di\mu=\int_{S_{H^*}} f^* \di\phi_1(\mu)
       =\int_{B_{H^*}} f^* \di\phi_1(\mu)\\&\ge \int_{B_{H^*}} f \di\phi_1(\mu).
          \end{aligned}$$
Here the first equality follows from the rule of integration with respect to the image of a measure, the second one follows from Fact~\ref{f:mokobodzki} (as $\phi_2(\mu)$ is maximal). The third equality follows from the that $\phi_2(\mu)$, being maximal, is carried by the sphere (Lemma~\ref{L:maximalnasfere}), the fourth one is again an application of integration with respect to an image measure. The following inequality follows from the above computations together with Lemma~\ref{L:diagram}$(d)$. The next equality follows again by integration with respect to an image measure and the following one follows from the fact that $\phi_1(\mu)$ is also carried by the sphere (being the image of $\phi_2(\mu)$ by $\pi^{-1}$, cf. Lemma~\ref{L:diagram}$(g)$). The last inequality is obvious. 

Thus the equalities hold and the proof is complete.

\end{proof}

As a corollary we get the following equivalence extending \cite[Lemma 6.3(b)]{lmns}.

\begin{prop}
     $H$ is simplicial if and only if $A_c(H)$ is simplicial.
\end{prop}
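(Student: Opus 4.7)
The plan is to derive the proposition directly by combining two facts already established: Lemma~\ref{L:diagram}$(e)$, which says that $M_x(A_c(H))=M_x(H)$ for every $x\in K$, and Proposition~\ref{P:prenossimpliciality}$(ii)$, which says that $H$-boundary and $A_c(H)$-boundary measures on $K$ are exactly the same objects. Once both identifications are in hand, the statement becomes essentially tautological.

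More precisely, fix $x\in K$. On the one hand, by Lemma~\ref{L:diagram}$(e)$, the set $M_x(H)$ coincides with $M_x(A_c(H))$ as subsets of $M(K,\ef)$. On the other hand, by Proposition~\ref{P:prenossimpliciality}$(ii)$, for any measure $\mu\in M(K,\ef)$ the property of being $H$-boundary is equivalent to being $A_c(H)$-boundary. Therefore the set of $H$-boundary elements of $M_x(H)$ equals the set of $A_c(H)$-boundary elements of $M_x(A_c(H))$. Hence the first set is a singleton if and only if the second one is, which is exactly the claim.

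The main conceptual point is simply to recognize that both ingredients are already available; no new estimate or construction is required. There is no genuine obstacle, but one should mention that existence of at least one such boundary measure is guaranteed by Proposition~\ref{P:exist-bd} applied to $\phi(x)\in H^*$ (respectively to its image in $A_c(H)^*$), so that the word ``unique'' is not vacuous on either side. With these observations, the equivalence follows in a few lines.
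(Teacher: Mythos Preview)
Your proof is correct and matches the paper's approach: the paper presents this proposition explicitly as a corollary of Proposition~\ref{P:prenossimpliciality}, and the only additional ingredient needed is Lemma~\ref{L:diagram}$(e)$, exactly as you use it. Your remark on Proposition~\ref{P:exist-bd} ensuring non-vacuity of ``unique'' is a nice touch, though not strictly required for the equivalence itself.
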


\section{Characterizations of simpliciality of spaces containing constants}
\label{sec:constants}

In this section we collect results on function spaces containing constants. Most of these
results are known, we present them mainly to illustrate the contrast between this classical setting
and the spaces without constants addressed in the following section. In the classical setting a key role is played by the state space. Let us recall its definition:

Let $H$ be a (real or complex) function space on $K$ containing constants. Then
$$S(H)=\{ \varphi\in H^*\setsep \norm{\varphi}=\varphi(1)=1\}$$
is the \emph{state space} of $H$. It is a compact convex set when equipped with the weak$^*$-topology.

\begin{lemma}\label{L:eval-s-c}
    Let $H$ be a (real or complex) function space on $K$ containing constants. Then we have the following.
    \begin{enumerate}[$(a)$]
        \item The evaluation mapping $\phi$ maps $K$ into $S(H)$ and the mapping $\theta:S_{\ef}\times K\to B_{H^*}$ is one-to-one.
        \item For each $f\in H$ define a function on $S(H)$ by
        $$\Phi(f)(\varphi)=\varphi(f),\quad \varphi\in S(H).$$
        Then $\Phi$ is a linear isometric embedding of $H$ into $A_c(S(H),\ef)$.
        \item $\Phi(f)\circ\phi=f$ for each $f\in H$.
        \item Let $f,g\in H$. Then $g=\overline{f}$ if and only if $\Phi(g)=\overline{\Phi(f)}$.
        \item $\Phi$ is surjective if and only if $H$ is closed and self-adjoint.
    \end{enumerate}
\end{lemma}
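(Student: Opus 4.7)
My plan is to address the five parts essentially in the stated order, since (a), (b), (c) are unwrappings of the definitions, (d) supplies the genuine input (states are real-valued on real elements of $H$), and (e) combines everything.

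For (a), the inclusion $\phi(K)\subset S(H)$ is immediate from $1\in H$: the identity $\phi(x)(1)=1$ forces $\norm{\phi(x)}\ge 1$, which together with Lemma~\ref{L:evaluace} yields equality. Injectivity of $\theta$ follows by evaluating any relation $\alpha\phi(x)=\beta\phi(y)$ at the constant function $1$ (so $\alpha=\beta$) and then invoking the separation property of $H$. I would prove (c) before (b), as it is a one-line tautology that feeds into the isometry claim of (b): one direction follows from $S(H)\subset B_{H^*}$, the other from $\phi(K)\subset S(H)$ combined with (c). Affinity and weak$^*$-continuity of each $\Phi(f)$ are built into the definition.

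The main content is (d). The direction $\Phi(g)=\overline{\Phi(f)}\Longrightarrow g=\overline{f}$ is trivial by restriction to $\phi(K)$ via (c). For the converse, the standard device is: if $h\in H$ is real-valued and $\varphi\in S(H)$, then $h+it\cdot 1\in H$ and the estimate $\abs{\varphi(h)+it}^2\le\norm{h+it\cdot 1}^2=\norm{h}^2+t^2$ valid for every real $t$ forces, by a quadratic-in-$t$ argument, $\Im\varphi(h)=0$. Applied to the real-valued elements $f+g=2\Re f$ and $i(f-g)=-2\Im f$ of $H$ (which is the only place where $g=\overline{f}$ enters), this yields $\varphi(f)+\varphi(g)\in\er$ and $\varphi(f)-\varphi(g)\in i\er$, and solving these two linear relations produces $\varphi(g)=\overline{\varphi(f)}$.

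For (e), the forward implication is routine: $\Phi$ is an isometric embedding onto $\Phi(H)$, so surjectivity onto the closed space $A_c(S(H),\ef)$ forces $H$ to be closed; and given $f\in H$, the function $\overline{\Phi(f)}\in A_c(S(H),\ef)$ has a preimage $g\in H$, which by (d) equals $\overline{f}$. For the reverse implication I would take $F\in A_c(S(H),\ef)$, extend by Hahn--Banach to $\tilde F\in A_c(B_{H^*},\ef)$, and invoke the structural description recalled in Lemma~\ref{L:tezistenakouli}: $\tilde F(\varphi)=c+\varphi(h)$ in the real case, and $\tilde F(\varphi)=c+\varphi(h_1)+\overline{\varphi(h_2)}$ in the complex case. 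On $S(H)$ the constant absorbs into $\varphi(c\cdot 1)$ thanks to $\varphi(1)=1$, and in the complex case self-adjointness of $H$ combined with (d) rewrites $\overline{\varphi(h_2)}$ as $\varphi(\overline{h_2})$ for every $\varphi\in S(H)$; assembling the pieces exhibits $F$ as the image under $\Phi$ of an element of $H$. The main obstacle I anticipate is (d), as it is the only step where the hypothesis $1\in H$ is used non-algebraically, and it is precisely this fact that bridges conjugation on $S(H)$ back to conjugation in $H$ in the complex surjectivity argument.
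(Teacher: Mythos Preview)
Your proofs of (a)--(c) match the paper's. For (d), your argument via the quadratic estimate $|\varphi(h)+it|^2\le\|h\|^2+t^2$ (the standard ``states are hermitian'' trick) is correct and differs from the paper's, which instead extends $\varphi\in S(H)$ by Hahn--Banach to a norm-one functional on $C(K,\ef)$, identifies it with a probability measure $\mu$ via Riesz, and then reads off $\varphi(\overline f)=\overline{\varphi(f)}$ directly from $\int\overline f\,d\mu=\overline{\int f\,d\mu}$. Your route is more self-contained; the paper's is shorter once Riesz is available.

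There is, however, a genuine gap in your reverse implication of (e). You write ``extend by Hahn--Banach to $\tilde F\in A_c(B_{H^*},\ef)$'', but Hahn--Banach extends linear functionals dominated by a sublinear functional, not affine continuous functions from one compact convex set to a larger one. It is not automatic that every $F\in A_c(S(H),\ef)$ extends to $A_c(B_{H^*},\ef)$; indeed, once the structural description of $A_c(B_{H^*})$ is granted, this extension claim is essentially equivalent to the surjectivity you are trying to prove. The extension \emph{can} be justified---e.g.\ set $G(\psi)=\psi(1)\,F(\psi/\psi(1))$ on the cone $\er_{\ge0}S(H)$, check additivity, extend linearly to $H^*$ (using that every $\varphi\in H^*$ decomposes as a difference of elements of the cone via Hahn--Banach and Riesz on $C(K)$), and then verify weak$^*$-continuity on $B_{H^*}$ by a compactness/subnet argument exploiting $B_{H^*}=\operatorname{co}(S(H)\cup(-S(H)))$---but this is real work, not a one-line appeal to Hahn--Banach. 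The paper sidesteps the extension entirely: assuming $\Phi(H)\subsetneq A_c(S(H),\ef)$, it picks a real-valued $h$ in the difference (using self-adjointness of $\Phi(H)$ via (d)), separates by Hahn--Banach to obtain $\psi$ annihilating $\Phi(H)$ with $\psi(h)=1$, represents $\psi$ by a real measure $\mu$ on $S(H)$, normalizes $\mu^\pm$ to probabilities (since $\int 1\,d\mu=0$), and reaches a contradiction because their barycenters in $S(H)$ must coincide (as $\Phi(H)$ separates points of $S(H)$) while the affine function $h$ distinguishes them.
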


\begin{proof}
   $(a)$: Let $x\in K$. By Lemma~\ref{L:evaluace} we have $\norm{\phi(x)}\le1$. Since clearly $\phi(x)(1)=1$, we deduce that $\phi(x)\in S(H)$. Further, assume that $\theta(\alpha,x)=\theta(\beta,y)$ for some $(\alpha,x),(\beta,y)\in S_{\ef}\times K$. It follows that $\alpha\phi(x)=\beta\phi(y)$. If we plug there the constant function equal to one, we deduce $\alpha=\beta$, hence $\phi(x)=\phi(y)$. By Lemma~\ref{L:evaluace} we know that $\phi$ is one-to-one, hence $x=y$. This completes the argument.
  
   $(b)$ and $(c)$: It is clear that $\Phi(f)$ is a continuous affine function on $S(H)$.  Since elements of $S(H)$ are linear functionals, we easily get that $\Phi$ is linear. Since elements of $S(H)$ have norm one, we deduce that $\norm{\Phi(f)}\le\norm{f}$. Finally, for each $x\in K$ we have
   $$(\Phi(f)\circ\phi) (x)=\Phi(f)(\phi(x))=\phi(x)(f)=f(x),$$
   which proves that $\Phi(f)\circ\phi=f$ and that $\norm{\Phi(f)}\ge\norm{f}$.

    $(d)$: Assume $\Phi(g)=\overline{\Phi(f)}$. For each $x\in K$ we deduce using $(c)$ that
    $$g(x)=\Phi(g)(\phi(x))=\overline{\Phi(f)(\phi(x))}=\overline{f(x)},$$
    thus $g=\overline{f}$.

    Conversely, assume $g=\overline{f}$. Let $\varphi\in S(H)$. By the Hahn-Banach theorem there is $\widetilde{\varphi}\in C(K,\ef)^*$ extending $\varphi$ with $\norm{\widetilde{\varphi}}=1$. Hence, by the Riesz representation theorem, $\widetilde{\varphi}$ is represented by a probability measure $\mu$ on $K$. Thus
    $$\Phi(g)(\varphi)=\varphi(g)=\int g\di\mu=\int \overline{f}\di\mu=\overline{\int f\di\mu}=\overline{\varphi(f)}=\overline{\Phi(f)(\varphi)}.$$
    Therefore $\Phi(g)=\overline{\Phi(f)}$.

   $(e)$: Assume $\Phi$ is surjective. Then $H$, being isometric to the Banach space $A_c(S(H),\ef)$, is closed. By $(d)$ we deduce that $H$ is self-adjoint.

   Conversely, assume that $H$ is closed and self-adjoint. By $(d)$ we deduce that $\Phi(H)$ is a closed and self-adjoint subspace of $A_c(S(H),\ef)$ containing constant functions. Assume that $\Phi(H)\subsetneqq A_c(S(H),\ef)$. Since $\Phi(H)$ is self-adjoint, there is a real-valued function $h\in A_c(S(H),\ef)\setminus\Phi(H)$. By the Hahn-Banach theorem there is a functional $\psi\in (A_c(S(H),\ef))^*$ such that $\psi|_{\Phi(H)}=0$ and $\psi(h)=1$. Using the Hahn-Banach and Riesz theorems we find an $\ef$-valued measure $\mu$ on $S(H)$ such that
   $$\int h\di\mu=1\quad\mbox{and}\quad \int\Phi(f)\di\mu=0\mbox{ for }f\in H.$$
   Since $\Phi(H)$ is self-adjoint, we may assume that $\mu$ is a real-valued (signed) measure. Since $\int 1\di\mu=0$ (as $1\in H$), we may assume without loss of generality that $\mu^+$ and $\mu^-$ are probability measures. Then $\psi$ is the difference of two states. This is a contradiction since $\Phi(H)$ obviously separates points of $S(H)$.
 \end{proof}

The complex functionally simplicial function spaces were analyzed in \cite{fuhr-phelps}. The results are summarized in the following proposition. (We note that by $M_{\bnd}(K)$ we denote the space of all $H$-boundary measures on $K$.)

\begin{prop}\label{P:complex-1}
    Let $H$ be a complex closed function space on $K$ containing constant functions. 
    Consider the following assertions:
    \begin{enumerate}[$(1)$]
        \item $S(H)$ is a simplex.
        \item $B_{H^*}$ is a simplexoid.
        \item $H$ is functionally simplicial.
        \item $H^\perp\cap M_{\bnd}(K)=\{0\}$.
        \item $H$ is an $L^1$-predual.
    \end{enumerate}
    Then
    $$(5)\iff(4)\implies(3)\iff(2)\implies(1).$$
    The remaining implications fail in general.
    Moreover, $(4)$ implies that $H$ is self-adjoint. If $H$ is self-adjoint, then all the assertions are equivalent.
\end{prop}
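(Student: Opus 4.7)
The strategy rests on two ingredients: the Hustad-type correspondence of Lemmas~\ref{L:prenosnaK} and~\ref{L:prenoszpet} between measures on $K$ and measures on $B_{H^*}$, which is particularly clean here because $H$ contains constants and hence $\theta$ is one-to-one by Lemma~\ref{L:eval-s-c}$(a)$; and the external characterizations provided by Facts~\ref{f:simplexoid} and~\ref{f:l1pred-char}.

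For $(2)\iff(3)$: by Fact~\ref{f:simplexoid}, $(2)$ amounts to uniqueness, for each $\varphi\in B_{H^*}$ of norm one, of a maximal probability on $B_{H^*}$ representing $\varphi$. Lemmas~\ref{L:prenosnaK} and~\ref{L:prenoszpet} (the latter using the injectivity of $\theta$) furnish a bijection between $H$-boundary $\mu\in M_\varphi(H)$ with $\norm{\mu}=\norm{\varphi}$ and maximal probabilities on $B_{H^*}$ representing $\varphi$, via $\mu\leftrightarrow\widetilde{\nu_0}\leftrightarrow\theta(\widetilde{\nu_0})$; uniqueness transfers through this bijection. The implication $(4)\implies(3)$ is immediate: if $\mu,\nu\in M_\varphi(H)$ are $H$-boundary, then $\mu-\nu\in H^\perp$ is $H$-boundary as well (since $\abs{\mu-\nu}\le\abs{\mu}+\abs{\nu}$, sums of maximal positive measures are maximal, and positive measures dominated by maximal measures are maximal), forcing $\mu=\nu$. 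For $(2)\implies(1)$: the state space $S(H)=\{\varphi\in H^*\setsep\norm{\varphi}=\varphi(1)=1\}$ is a proper closed face of $B_{H^*}$; the face property follows from $\abs{\varphi(1)}\le\norm{\varphi}\le 1$ by taking real parts, and $S(H)$ is proper since $0\notin S(H)$, so being a proper face of a simplexoid it is a simplex. For $(4)\iff(5)$: I would apply Fact~\ref{f:l1pred-char}~$(1)\iff(3)$; the Hustad correspondence sends $H$-boundary measures on $K$ of norm one to $\ce$-antihomogeneous boundary measures on $B_{H^*}$ via $\hom\theta(\widetilde{\nu_0})$, and this transfer carries the uniqueness in Fact~\ref{f:l1pred-char}~$(3)$ to the condition $H^\perp\cap M_{\bnd}(K)=\{0\}$.

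The failures of $(1)\implies(3)$ and $(3)\implies(5)$ are to be witnessed by non-self-adjoint examples as in~\cite{fuhr-phelps}. The self-adjoint-case equivalence then follows from Lemma~\ref{L:eval-s-c}$(e)$: $\Phi$ is an onto isometry $H\cong A_c(S(H),\ce)$, so by \cite[\S23, Theorem 5]{lacey} one has $(1)\iff(5)$ under self-adjointness, collapsing all five conditions.

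The main obstacle is the implication $(4)\implies H$ is self-adjoint. Given $f\in H$ with $\overline{f}\notin H$, the task is to extract a nonzero element of $H^\perp\cap M_{\bnd}(K)$. A natural attempt is to exploit that $\overline{f}$ is not in $H$ in order to produce two distinct antihomogeneous boundary measures on $B_{H^*}$ representing the same functional -- for instance by coupling the Hustad construction with the conjugation $\mu\mapsto\overline{\mu}$ -- and then to contradict Fact~\ref{f:l1pred-char}. The interplay between $\ce$-linearity and antihomogeneity makes this the most subtle step, and here I would closely follow the complex-functional-space analysis of~\cite{fuhr-phelps}.
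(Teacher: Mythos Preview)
Your proposal is essentially correct, but it takes a noticeably different route from the paper's own proof. The paper disposes of the entire proposition by direct citation to \cite{fuhr-phelps}: the chain $(5)\iff(4)\iff(3)\,\&\,\text{self-adjoint}$ is \cite[Theorem~4.4]{fuhr-phelps}, the equivalence $(2)\iff(3)$ is \cite[Theorem~3.11]{fuhr-phelps}, $(3)\implies(1)$ is \cite[Proposition~2.2]{fuhr-phelps}, the self-adjoint collapse is \cite[Proposition~2.6]{fuhr-phelps}, and the counterexamples are \cite[Examples~2.4 and~5.12]{fuhr-phelps}. You instead reconstruct the implications $(2)\iff(3)$, $(4)\implies(3)$, $(2)\implies(1)$, and $(4)\iff(5)$ from the Hustad machinery of Lemmas~\ref{L:prenosnaK}--\ref{L:prenoszpet} together with Facts~\ref{f:simplexoid} and~\ref{f:l1pred-char}; in effect you are anticipating the arguments the paper later develops in the proof of Theorem~\ref{T:bez1} for general spaces with $\theta$ one-to-one, and specialising them to the constant-containing case. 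This buys a more self-contained treatment at the cost of some extra work; your $(2)\implies(1)$ via the face property of $S(H)$ and your $(4)\implies(3)$ via hereditariness of maximality are clean and correct.

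Two places deserve a little more care. First, in your bijection for $(2)\iff(3)$ you implicitly use that every maximal probability on $B_{H^*}$ is carried by $\theta(S_{\ef}\times K)$; this holds because $\theta(S_{\ef}\times K)$ is closed and contains $\ext B_{H^*}$ (Lemma~\ref{L:theta}), and maximal measures are supported on $\overline{\ext B_{H^*}}$, but you should say so. Second, your sketch for $(4)\iff(5)$ via the map $\mu\mapsto\hom\theta(\widetilde{\nu_0})$ is vague: what actually makes $(5)\implies(4)$ work is that a nonzero $\mu\in H^\perp\cap M_{\bnd}(K)$ yields $\hom\phi(\mu)\ne 0$ (this is where injectivity of $\theta$ enters, cf.\ \cite[Propositions~3.4--3.5]{phelps-complex}), contradicting Fact~\ref{f:l1pred-char}$(3)$ at $x^*=0$. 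For the genuinely delicate step $(4)\implies\text{self-adjoint}$ and for the counterexamples, you defer to \cite{fuhr-phelps} just as the paper does, so there is no daylight there.
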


\begin{proof} It follows from \cite[Theorem 4.4]{fuhr-phelps} that 
$$(5)\iff(4)\iff (3)\ \&\ H\mbox{ is self-adjoint.}$$ 
A counterexample to $(3)\implies(4)$ is provided by \cite[Example 5.12]{fuhr-phelps}.
Equivalence $(3)\iff(2)$ is proved in \cite[Theorem 3.11]{fuhr-phelps}. 
Implication $(3)\implies(1)$ follows from \cite[Proposition 2.2]{fuhr-phelps}, a counterexample to the converse
is contained in \cite[Example 2.4]{fuhr-phelps} and its validity for $H$ self-adjoint is proved in \cite[Proposition 2.6]{fuhr-phelps}. 
\end{proof}

The case of real function spaces is easier because any real space is automatically self-adjoint. Therefore we get the equivalences summarized in the following proposition.

\begin{prop}\label{P:real-1}
    Let $H$ be a real closed function space on $K$ containing constant functions. 
    The following assertions are equivalent.
    \begin{enumerate}[$(1)$]
        \item $S(H)$ is a simplex.
        \item $B_{H^*}$ is a simplexoid.
        \item $H$ is functionally simplicial.
        \item $H^\perp\cap M_{\bnd}(K)=\{0\}$.
        \item $H$ is an $L^1$-predual.
    \end{enumerate}
\end{prop}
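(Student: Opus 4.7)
The plan is to reduce everything to the ``self-adjoint'' case of Proposition~\ref{P:complex-1}. Indeed, any real function space $H\subset C(K,\er)$ is trivially self-adjoint (since $\overline{f}=f$ for every $f\in H$). The complex proposition asserts that under self-adjointness all five conditions are equivalent; thus in the real setting the five conditions are unconditionally equivalent.

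More concretely, I would trace through the implications using the same arsenal of references, noting in each case that the arguments are valid verbatim in the real setting (the group $S_\ef$ simply collapses to $\{-1,1\}$). The equivalence $(2)\iff(3)$ is Fact~\ref{f:simplexoid} combined with the Hustad correspondence $\widetilde\nu\leftrightarrow\mu$ from Lemmas~\ref{L:prenosnaK} and~\ref{L:prenoszpet}: $H$-boundary measures in $M_\varphi(H)$ correspond bijectively to maximal probabilities on $B_{H^*}$ representing $\varphi/\|\varphi\|$, so functional simpliciality is equivalent to the uniqueness described in Fact~\ref{f:simplexoid}(2). The equivalence $(4)\iff(5)$ is Fact~\ref{f:l1pred-char}(1)$\iff$(2), translated back to $K$ through the same Hustad correspondence (in the real case an ``$\ef$-antihomogeneous'' measure is just an odd measure, and $\hom\mu=\hom\nu$ means $\mu-\nu\in H^\perp$). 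The implication $(5)\implies(3)$ and the analysis of $H^\perp\cap M_{\bnd}(K)$ follow exactly the proof of \cite[Theorem 4.4]{fuhr-phelps}.

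For $(1)\iff(3)$: since $H$ is self-adjoint, Lemma~\ref{L:eval-s-c}(e) gives an isometric isomorphism $\Phi\colon H\to A_c(S(H),\er)$. Pulling back by $\Phi$, $H$ is functionally simplicial iff $A_c(S(H),\er)$ is, which (by the classical theory recalled in the introduction) holds iff the compact convex set $S(H)$ is a simplex. This closes the loop.

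The main obstacle, if any, is purely bibliographical: one must verify that the cited arguments in \cite{fuhr-phelps} are stated or transparently adaptable to the real case. If some reference is given only in the complex setting, one can argue by complexification: $H_\ce:=H+iH\subset C(K,\ce)$ is a closed self-adjoint complex function space whose state space, dual ball, boundary measures, and $L^1$-predual status are determined by those of $H$, allowing one to invoke the complex Proposition~\ref{P:complex-1} directly.
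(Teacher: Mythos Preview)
Your strategy is sound and overlaps substantially with the paper's—both complexify and appeal to \cite[Theorems~3.11 and~4.4]{fuhr-phelps}—but your argument for $(1)\Rightarrow(3)$ has a gap. You write that $A_c(S(H),\er)$ is functionally simplicial ``(by the classical theory recalled in the introduction)'' iff $S(H)$ is a simplex. The classical fact, however, is that $X$ is a simplex iff each \emph{point} of $X$ has a unique maximal representing \emph{probability}; this is \emph{simpliciality} of $A_c(X)$, not functional simpliciality. The upgrade from probabilities to arbitrary signed boundary measures in $M_\varphi$ is exactly the content of $(1)\Rightarrow(3)$, so invoking it here is circular.

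The paper fills this gap with a short direct argument: given $\varphi\in S_{H^*}$ and $H$-boundary $\mu_1,\mu_2\in M_\varphi(H)$, the relations $\mu_j^+(K)-\mu_j^-(K)=\varphi(1)$ and $\mu_j^+(K)+\mu_j^-(K)=1$ force $\mu_1^\pm(K)=\mu_2^\pm(K)$; then $\mu_1^++\mu_2^-$ and $\mu_2^++\mu_1^-$ are (multiples of) $H$-boundary probabilities whose images under $\phi$ have the same barycenter in $S(H)$, hence coincide by the simplex hypothesis, and $\mu_1=\mu_2$. Alternatively, you could close your loop non-circularly by reading the isometry $H\cong A_c(S(H),\er)$ as giving $(1)\Leftrightarrow(5)$ directly via \cite[\S19, Theorem~2]{lacey}—which is exactly what the paper cites—and then using your $(5)\Rightarrow(3)$.

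On a minor point: your opening sentence (``real spaces are trivially self-adjoint, so apply Proposition~\ref{P:complex-1}'') is imprecise, since that proposition is stated for complex spaces and a real space is not literally one; the complexification $H_\ce=H+iH$ you mention at the end is the correct bridge, and indeed the paper uses exactly this device to obtain $(1)\Leftrightarrow(4)$.
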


\begin{proof} 
   Set 
   $$H_\ce=\{f\in C(K,\ce)\setsep \Re f,\Im f\in H\}.$$
   Then $H_C$ is a self-adjoint complex closed function space on $K$ containing constants. Moreover,
   $S(H_\ce)$ is canonically affinely homeomorphic to $S(H)$, therefore $H$-boundary and $H_\ce$-boundary measures on $K$ coincide. Clearly
   $$(H_\ce)^\perp=\{\mu\in M(K,\ce)\setsep \Re\mu,\Im\mu\in H^\perp\}.$$
   Hence equivalence $(1)\iff(4)$ follows from Proposition~\ref{P:complex-1}.

   Equivalence $(2)\iff(3)$ follows from the proof of  \cite[Theorem 3.11]{fuhr-phelps}, the proof in the real case is exactly the same. 
  Equivalence $(1)\iff(5)$ follows from \cite[\S19, Theorem 2]{lacey}.
   
    $(2)\implies(1)$: This is obvious because $S(H)$ is a nontrivial face of $B_{H^*}$.
    
      $(1)\implies(3)$: Let $\varphi\in H^*$ and $\mu_1,\mu_2\in M_\varphi(H)$ be $H$-boundary. Without loss of generality
    we may assume that $\norm{\varphi}=1$. Fix $j\in\{1,2\}$. Then $\norm{\mu_j}=1$. Since
    $$\varphi(1)=\mu_j(K)=\mu_j^+(K)-\mu_j^-(K),$$
    we see that 
    $$\mu_j^+(K)=\tfrac12(1+\varphi(1))\quad\mbox{and}\quad\mu_j^-(K)=\tfrac12(1-\varphi(1)).$$
    This holds for both values of $j$, so we obtain $\mu_1^+(K)=\mu_2^+(K)$ and $\mu_1^-(K)=\mu_2^-(K)$. Further, for each $f\in H$ we have
    $$\varphi(f)=\int f\di\mu_j=\int f\di\mu_j^+-\int f\di\mu_j^-,$$
    hence
    $$\int f\di(\mu_1^++\mu_2^-)=\int f\di(\mu_2^++\mu_1^-).$$
    By our assumptions $\mu_1^++\mu_2^-$ and  $\mu_2^++\mu_1^-$ are two probability measures and they images under $\phi$ are maximal and have the same barycenter. Since $S(H)$ is a simplex, these two measures must coincide
    and therefore $\mu_1=\mu_2$. This completes the proof.
 \end{proof}

We continue by relating simpliciality of $H$ to properties of $A_c(H)$.

\begin{prop}\label{P:simplic-char}
    Let $H$ be a (real or complex) function space containing constant functions. Then the following assertions are equivalent:
    \begin{enumerate}[$(1)$]
        \item $H$ is simplicial.
        \item $A_c(H)$ is simplicial.
        \item $S(A_c(H))$ is a simplex.
        \item $B_{A_c(H)^*}$ is a simplexoid.
        \item $A_c(H)$ is functionally simplicial.
        \item $(A_c(H))^\perp\cap M_{\bnd}(K)=\{0\}$.
        \item $A_c(H)$ is an $L^1$-predual.
    \end{enumerate}
\end{prop}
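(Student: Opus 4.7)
The equivalence $(1)\iff(2)$ is the content of the preceding proposition. For the rest, set $B:=A_c(H)$. First I would verify that $B$ is closed, self-adjoint, and contains constants: closedness is by definition, constants are inherited from $H\subset B$, and for self-adjointness, note that since $H$ contains constants, $\norm{\phi(x)}=\phi(x)(\1)=1$, so every $\mu\in M_x(H)$ satisfies $|\mu(K)|=\norm{\mu}=1$, which forces $\mu$ to be a positive probability measure. Hence the defining condition $f(x)=\int f\di\mu$ persists under conjugation of $f$.

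Applying Propositions~\ref{P:real-1} and~\ref{P:complex-1} to $B$ (the latter requires the self-adjointness just established) directly yields the equivalences among $(3)$, $(4)$, $(5)$, $(6)$, $(7)$; condition $(6)$ is well-defined since $H$-boundary and $B$-boundary measures coincide by Proposition~\ref{P:prenossimpliciality}$(ii)$.

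It remains to link $(2)$ with this chain. The implication $(5)\Rightarrow(2)$ is immediate by specializing $\varphi=\phi(x)$. For the converse I would prove $(2)\Rightarrow(6)$. Let $\sigma\in B^\perp\cap M_{\bnd}(K)$; since $\1\in B$, $\sigma(K)=0$. The real-case Jordan decomposition $\sigma=\sigma^+-\sigma^-$ (or, in the complex case, its application to real and imaginary parts, using that $B^\perp$ is conjugation-invariant) gives $\sigma^+(K)=\sigma^-(K)=:t$; moreover, both $\sigma^\pm$ are maximal, because positive submeasures of the maximal measure $|\sigma|$ are maximal (Fact~\ref{f:mokobodzki}). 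If $t>0$, the normalized measures $\lambda_i:=\sigma^\pm/t$ are $B$-boundary probabilities agreeing on $B$, hence representing a common state $\psi\in S(B)$; one wants $\lambda_1=\lambda_2$, giving $\sigma=0$. The reverse $(6)\Rightarrow(2)$ is easy: for $B$-boundary $\mu_1,\mu_2\in M_x(B)$, both are probabilities with $|\mu_1-\mu_2|\le\mu_1+\mu_2$ maximal, so $\mu_1-\mu_2\in B^\perp\cap M_{\bnd}(K)=\{0\}$.

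The main technical obstacle in $(2)\Rightarrow(6)$ is the concluding step $\lambda_1=\lambda_2$, since simpliciality $(2)$ only delivers uniqueness at evaluations $\psi=\phi(x_0)$, $x_0\in K$, whereas the state $\psi$ may lie in $S(B)\setminus\phi(K)$. I would handle this by pushing $\lambda_1,\lambda_2$ forward via $\phi$ to maximal probabilities on $S(B)$ with common barycenter $\psi$, exploiting the surjective isometry $\Phi\colon B\to A_c(S(B),\ef)$ (Lemma~\ref{L:eval-s-c}) and the classical characterization of $L^1$-preduals---$A_c(X,\ef)$ is an $L^1$-predual if and only if $X$ is a simplex---together with the already-established equivalence of $(3)$--$(7)$. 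The concrete bridge would be the averaging construction assigning to any probability $\lambda$ on $K$ representing $\psi$ the boundary measure $\int_K\sigma_y\di\lambda(y)$, where $\sigma_y\in M_y(B)$ is the unique $B$-boundary measure granted by $(2)$; verifying that this canonical assignment is independent of the chosen lift $\lambda$ yields the required uniqueness and completes the proof.
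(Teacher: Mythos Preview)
Your reduction of (1)$\iff$(2) and of (3)--(7) to earlier propositions is correct and matches the paper. The self-adjointness argument for $A_c(H)$ is fine, and your direction $(6)\Rightarrow(2)$ is sound (a positive measure dominated by a maximal one is maximal, by Mokobodzki's criterion).

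The genuine gap is in $(2)\Rightarrow(6)$. You correctly isolate the difficulty: simpliciality only gives uniqueness at evaluation functionals $\phi(x)$, whereas the state $\psi$ represented by your $\lambda_1,\lambda_2$ may lie outside $\phi(K)$. But your proposed resolution is not a proof. The passage invoking ``the already-established equivalence of (3)--(7)'' is circular: none of (3)--(7) is yet available at this point, only (2). And the ``concrete bridge'' --- the dilation map $\lambda\mapsto\int_K\sigma_y\di\lambda(y)$ --- merely restates the problem. Since each $\lambda_i$ is already $B$-boundary, $\sigma_y=\ep_y$ for $\abs{\lambda_i}$-a.e.\ $y$, so the dilation returns $\lambda_i$ itself; showing the dilation is \emph{independent of the lift} is therefore exactly the assertion $\lambda_1=\lambda_2$ that you need. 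You have named the key lemma but not proved it; the actual argument (via upper envelopes and the Choquet--Meyer characterization) is the substantive content of \cite[Theorem 6.54]{lmns}.

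The paper closes the loop differently: it proves $(2)\Rightarrow(3)$ by citing \cite[Theorem 6.54]{lmns} directly in the real case, and in the complex case passes to the real part $\widetilde{H}=A_c(H)\cap C(K,\er)$, observes that $M_x(\widetilde{H})=M_x(A_c(H))$ and that $S(\widetilde{H})$ is affinely homeomorphic to $S(A_c(H))$, and then applies the real case. If you want a self-contained argument, you would need to supply the envelope-based proof that simpliciality of $B$ forces $f^*$ to be $B$-affine for every $f\in C(K,\er)$, which is what underlies independence of the dilation from the lift.
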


\begin{proof} Equivalence $(1)\iff(2)$ follows from Proposition~\ref{P:prenossimpliciality}. 
    Conditions $(3)-(7)$ are equivalent by Proposition~\ref{P:real-1} in the real case and by Proposition~\ref{P:complex-1} in the complex case (note that $A_c(H)$ is closed and self-adjoint).

    Implication $(5)\implies(2)$ is trivial.

     $(2)\implies(3)$:  If $\ef=\er$ this follows from \cite[Theorem 6.54]{lmns}. Assume $\ef=\ce$. 
     Let $\widetilde{H}=A_c(H)\cap C(K,\er)$. Then $\widetilde{H}$ is a closed real function space on $K$. Moreover, clearly $M_x(\widetilde{H})=M_x(A_c(H))$ for $x\in K$, so $A_c(\widetilde{H})=\widetilde{H}$. Since $S(\widetilde{H})$ is canonically affinely homeomorphic to $S(A_c(H))$, we deduce that $\widetilde{H}$-boundary and $A_c(H)$-boundary measures coincide, in particular, $\widetilde{H}$ is simplicial. So, $S(\widetilde{H})$ is a simplex by the real case and hence $S(A_c(H))$ is a simplex as well (being affinely homeomorphic to $S(\widetilde{H})$).   
\end{proof}

\section{More on simpliciality of function spaces without constants}\label{sec:without}

In the previous section we collected characterizations of simpliciality of function spaces with constants.
In particular, if $A_c(H)=H$, then simpliciality is equivalent to functional simpliciality and there are several more natural equivalent conditions. In case $H\subsetneqq A_c(H)$ the situation is more complicated, in particular simpliciality does not imply functional simpliciality, but there are still some characterizations. In this section we address function spaces without constants and we will see that the situation is very different. Some differences were pointed already in \cite{phelps-complex} where functional simpliciality is addressed.

We focus on simpliciality. By Proposition~\ref{P:prenossimpliciality} we know that simpliciality passes from $H$ to $A_c(H)$ as in the classical case, therefore we restrict ourselves to the case $A_c(H)=H$. 
The first difference here is that $A_c(H)$ need not be self-adjoint (this is witnessed by Example~\ref{ex:easyspaces}(2,3) if the respective parameters $\alpha,\beta$ are chosen not to be real). Moreover, the analogue of Proposition~\ref{P:simplic-char} fails dramatically
as witnessed by the following theorem.

\begin{thm}\label{T:bez1}
  Let $H$ be a (real or complex) function space on $K$ satisfying $A_c(H)=H$. Consider the following assertions:  
    \begin{enumerate}[$(I)$]
    \item The mapping $\theta$ restricted to $S_{\ef}\times\Ch_HK$ is one-to-one.
    \item $H$ is simplicial.
    \item $H$ is functionally simplicial.
    \item $H^\perp\cap M_{\bnd}(K)=\{0\}$.
    \item $B_{H^*}$ is a simplexoid.
    \item $H$ is an $L^1$-predual.
\end{enumerate}
Then
$$\begin{array}{ccccccc}
   (IV)&\implies& (III)&\implies& (II)&\implies &(I)  \\
    \Big\Downarrow & & \Big\Downarrow & & & &   \\
    (VI) & \implies& (V) & & & &  
\end{array}$$
No other implications are true, even if $K$ is metrizable.
If the mapping $\theta$ is one-to-one, then $(I)$ holds always and
$$(III)\iff(V)\quad\mbox{and}\quad(IV)\iff(VI).$$
If $K$ is metrizable, then
$$(V)\ \&\ (I)\implies (III)\quad\mbox{and}\quad(VI)\ \&\ (I)\implies (IV).$$

In general (for nonmetrizable $K$) we have
$$(VI)\ \&\ (I)\implies\hspace{-15pt}\not\hspace{15pt}(II),\quad (VI)\ \&\ (II)\implies\hspace{-15pt}\not\hspace{15pt}(III),\quad (VI)\ \&\ (III)\implies\hspace{-15pt}\not\hspace{15pt}(IV).$$
\end{thm}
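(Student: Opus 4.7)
The plan is to establish the six positive implications first by exploiting the Hustad correspondence from Lemmas~\ref{L:prenosnaK} and~\ref{L:prenoszpet}, then to derive the equivalences that hold under the additional hypotheses ($\theta$ one-to-one, or $K$ metrizable), and finally to invoke explicit later examples for the counterexamples establishing that no further implications hold.

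\textbf{Easy positive implications.} The implication $(III)\Rightarrow(II)$ is immediate by specializing $\varphi=\phi(x)$. For $(II)\Rightarrow(I)$, suppose $\theta(\alpha,x)=\theta(\beta,y)$ with $x,y\in\Ch_HK$, so that $\alpha\phi(x)=\beta\phi(y)$ and hence $\phi(x)=\overline{\alpha}\beta\,\phi(y)$. Under simpliciality, the Observation stated in the previous section gives $M_x(H)=\{\ep_x\}$, while $\overline{\alpha}\beta\,\ep_y$ plainly lies in $M_x(H)$; this forces $x=y$ and $\overline{\alpha}\beta=1$, i.e.\ $\alpha=\beta$. The implication $(VI)\Rightarrow(V)$ is immediate from Fact~\ref{f:L1-simplexoid}. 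For $(IV)\Rightarrow(III)$, if $\mu_1,\mu_2\in M_\varphi(H)$ are both $H$-boundary, then $\mu_1-\mu_2\in H^\perp$ and $|\mu_1-\mu_2|\le|\mu_1|+|\mu_2|$ with the latter being maximal (sums of maximal non-negative measures are maximal, and any non-negative measure dominated by a maximal one is maximal, both by Mokobodzki's criterion); hence $\mu_1=\mu_2$ by $(IV)$.

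\textbf{The core implications.} For $(III)\Rightarrow(V)$, by Fact~\ref{f:simplexoid} it suffices to show that every $\varphi\in B_{H^*}$ with $\norm{\varphi}=1$ is represented by a unique maximal probability on $B_{H^*}$. Given two such probabilities $\nu_1,\nu_2$, Lemma~\ref{L:prenosnaK} produces $H$-boundary measures $\mu_1,\mu_2\in M_\varphi(H)$; by $(III)$, $\mu_1=\mu_2$, and then Lemma~\ref{L:prenoszpet}(ii) forces $\widetilde{\nu_1}=\widetilde{\nu_2}$ and hence $\nu_1=\nu_2$. For $(IV)\Rightarrow(VI)$, I would invoke Fact~\ref{f:l1pred-char}(2): it suffices to prove $\hom\mu_1=\hom\mu_2$ whenever $\mu_1,\mu_2$ are maximal probabilities on $B_{H^*}$ representing the same $x^*$. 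Applying Lemma~\ref{L:prenosnaK} gives $H$-boundary $\rho_1,\rho_2\in M_{x^*}(H)$, and $(IV)$ yields $\rho_1=\rho_2$; a careful analysis of the Hustad correspondence shows that although the lift $\widetilde{\nu_i}$ of $\nu_i$ need not be unique when $\theta$ is not one-to-one, the symmetrization $\hom\nu_i$ is nevertheless uniquely determined by $\rho_i$. This last point is expected to be the main technical obstacle.

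\textbf{Equivalences and non-implications.} When $\theta$ is one-to-one on $S_\ef\times K$, property $(I)$ holds trivially; moreover, every lift $\widetilde{\nu}$ in Lemma~\ref{L:prenosnaK} is then unique, so the Hustad correspondence becomes a genuine bijection between maximal probabilities on $B_{H^*}$ and norm-one $H$-boundary measures on $K$, making the preceding implications reversible and yielding $(V)\Rightarrow(III)$ and $(VI)\Rightarrow(IV)$. In the metrizable case, maximal probabilities on $B_{H^*}$ are carried by $\ext B_{H^*}=\theta(S_\ef\times\Ch_HK)$ (by Lemma~\ref{L:theta}(b)), on which $\theta$ is injective by $(I)$; so the same bijective correspondence works on the supports of the relevant measures and gives $(V)\,\&\,(I)\Rightarrow(III)$ and $(VI)\,\&\,(I)\Rightarrow(IV)$. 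The three listed non-implications in the non-metrizable setting, together with the remaining non-implications in the metrizable case, will be established by the explicit constructions of Examples~\ref{Ex:L1prednefsimplicial} and~\ref{Ex:L1prednefsimplicial-huste} (and by small finite-dimensional examples in the metrizable range) presented later in this section; these examples are expected to require the bulk of the remaining work in the section.
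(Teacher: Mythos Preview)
Your overall plan matches the paper's, and most of the easy implications are handled the same way. Two substantive differences are worth flagging.

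\textbf{The implication $(IV)\Rightarrow(VI)$.} The paper argues by contraposition rather than directly, and this sidesteps precisely the ``main technical obstacle'' you identify. Starting from maximal $\nu_1,\nu_2$ with the same barycenter but $\hom\nu_1\ne\hom\nu_2$, it picks an $\ef$-homogeneous continuous $g$ on $B_{H^*}$ with $\int g\,d\nu_1\ne\int g\,d\nu_2$ and then uses the identity
\[
\int_K (g\circ\phi)\,d\mu_j=\int_{S_\ef\times K}\alpha\,g(\phi(x))\,d\widetilde{\nu_j}(\alpha,x)=\int_{S_\ef\times K} g(\alpha\phi(x))\,d\widetilde{\nu_j}=\int g\,d\nu_j
\]
to conclude that the Hustad images $\mu_1,\mu_2$ differ, producing a nonzero element of $H^\perp\cap M_{\bnd}(K)$. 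The same identity would in fact close your direct argument (if $\rho_1=\rho_2$ then $\int g\,d\nu_1=\int g\,d\nu_2$ for all homogeneous $g$, hence $\hom\nu_1=\hom\nu_2$), so your obstacle is smaller than you feared; but the contrapositive framing is cleaner. A minor correction: you write $\rho_1,\rho_2\in M_{x^*}(H)$, but Lemma~\ref{L:prenosnaK} only gives $\norm{x^*}\le\norm{\rho_j}\le1$, so this may fail when $\norm{x^*}<1$; however $\rho_1-\rho_2\in H^\perp\cap M_{\bnd}(K)$ still holds, which is all $(IV)$ needs.

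\textbf{The refinements and counterexamples.} For $(VI)\Rightarrow(IV)$ when $\theta$ is one-to-one, the paper again contraposes and invokes results of Phelps to show that $\hom\phi(\mu)\ne0$ for a nonzero $H$-boundary $\mu\in H^\perp$; your bijection heuristic does not obviously deliver this and would need that input. In the metrizable case the paper makes your idea concrete by defining, for each $f\in C(K,\ef)$, the function $F(\alpha\phi(x))=\alpha f(x)$ on $\ext B_{H^*}$ (well-defined by $(I)$, continuous by Lemma~\ref{L:I-homeo}) and computing $\int f\,d\mu=\int F\,d\nu$. Finally, you underestimate the counterexample side: the metrizable separations $(II)\not\Rightarrow(III)$ and $(III)\not\Rightarrow(IV)$ come from an entirely separate parametrized construction (Section~\ref{ss:metriz}, Propositions~\ref{P:metriznefs}, \ref{P:real-fs}, \ref{P:complex-fs}) rather than the two porcupine examples you cite, and the nonmetrizable $(VI)\,\&\,(I)\not\Rightarrow(II)$ requires the additional Example~\ref{ex:L1prednesimpl}.
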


The rest of this section is devoted to the proof of the above theorem. It will be done in several steps -- we first prove the positive part and collect some easy counterexamples and later we present, in two subsections, more complicated counterexamples.

We start by the following lemma showing that assertion $(I)$ automatically implies its stronger version.

\begin{lemma}\label{L:I-homeo}
     Let $H$ be a (real or complex) function space on $K$. If the mapping $\theta$ restricted to $S_{\ef}\times\Ch_HK$ is one-to-one, then it maps  $S_{\ef}\times\Ch_HK$ homeomorphically to $\ext B_{H^*}$.
\end{lemma}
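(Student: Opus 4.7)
The plan is to invoke Lemma~\ref{L:theta} to get for free that the restricted map $\theta|_{S_{\ef}\times\Ch_HK}$ is a continuous bijection onto $\ext B_{H^*}$, and then to show continuity of the inverse by a compactness-plus-subnet argument in the ambient product $S_{\ef}\times K$.

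First I would record the easy inputs. By Lemma~\ref{L:theta}$(a)$ the map $\theta\colon S_{\ef}\times K\to (B_{H^*},w^*)$ is continuous, by part $(b)$ its image of $S_{\ef}\times\Ch_HK$ equals $\ext B_{H^*}$, and by part $(c)$ we have the crucial identity $\theta^{-1}(\ext B_{H^*})=S_{\ef}\times\Ch_HK$. Combined with the standing hypothesis that $\theta$ is one-to-one on $S_{\ef}\times\Ch_HK$, this gives a continuous bijection onto $\ext B_{H^*}$; it only remains to produce continuity of $\theta^{-1}$.

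For that step I would argue via nets. Suppose $(\alpha_i,x_i)$ is a net in $S_{\ef}\times\Ch_HK$ whose images $\theta(\alpha_i,x_i)$ converge in $\ext B_{H^*}$ to some $\theta(\alpha,x)$ with $(\alpha,x)\in S_{\ef}\times\Ch_HK$. Since $S_{\ef}\times K$ is compact Hausdorff, every subnet of $(\alpha_i,x_i)$ has a further subnet converging in $S_{\ef}\times K$ to some $(\beta,y)$. By continuity of $\theta$ on the whole product, $\theta(\beta,y)=\theta(\alpha,x)\in\ext B_{H^*}$, and then Lemma~\ref{L:theta}$(c)$ forces $(\beta,y)\in S_{\ef}\times\Ch_HK$. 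Injectivity of $\theta$ on $S_{\ef}\times\Ch_HK$ now yields $(\beta,y)=(\alpha,x)$. Since every subnet has a further subnet converging to $(\alpha,x)$ and the ambient space is Hausdorff, the whole net converges to $(\alpha,x)$ in $S_{\ef}\times K$, and hence also in the subspace $S_{\ef}\times\Ch_HK$. This proves that $\theta^{-1}$ is continuous on $\ext B_{H^*}$, so $\theta$ restricted to $S_{\ef}\times\Ch_HK$ is a homeomorphism onto $\ext B_{H^*}$.

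The only subtle point is that $S_{\ef}\times\Ch_HK$ is generally not compact (because $\Ch_HK$ need not be closed in $K$), so the classical "continuous bijection between compacta is a homeomorphism" is not directly applicable. The trick is to pass to the compactification $S_{\ef}\times K$ to extract cluster points of preimages and then use part $(c)$ of Lemma~\ref{L:theta} to pull those cluster points back into $S_{\ef}\times\Ch_HK$; I expect this to be the only real content of the argument.
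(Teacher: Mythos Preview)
Your proof is correct and follows essentially the same approach as the paper: both use compactness of $S_{\ef}\times K$ to extract convergent subnets and then pull the limit point back into $S_{\ef}\times\Ch_HK$ before applying injectivity. The only cosmetic difference is that you invoke Lemma~\ref{L:theta}$(c)$ directly to place the limit in $S_{\ef}\times\Ch_HK$, whereas the paper reproves that step inline from the equation $\alpha\phi(x)=\beta\phi(y)$.
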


\begin{proof} By Lemma~\ref{L:theta} we know that $\theta$  is continuous and it maps $S_{\ef}\times\Ch_HK$ onto $\ext B_{H^*}$. It remains to prove the continuity of the inverse. To this end assume that $(\alpha_i,x_i)$ is a net in $S_{\ef}\times\Ch_HK$ and $\theta(\alpha_i,x_i)\to\theta(\alpha,x)$, where $(\alpha,x)\in S_{\ef}\times\Ch_HK$. Up to passing to a subnet we may assume that $(\alpha_i,x_i)\to(\beta,y)\in S_{\ef}\times K$. Then $\theta(\alpha_i,x_i)\to\theta(\beta,y)$. Hence $\theta(\alpha,x)=\theta(\beta,y)$, i.e., $\alpha\phi(x)=\beta\phi(y)$. Since $x\in\Ch_HK$, we have $\phi(x)\in \ext B_{H^*}$.  It follows that $\phi(y)\in \ext B_{H^*}$ as well, hence $y\in\Ch_HK$. By the assumption we conclude that $y=x$ and $\beta=\alpha$. This completes the proof.
\end{proof}

We continue by proving the positive part of the above theorem:

\begin{proof}[Proof of the positive part of Theorem~\ref{T:bez1}]
Implications $(IV)\implies(III)\implies(II)$ are trivial.

  $(II)\implies(I)$: Assume $H$ is simplicial. Let
   $x,y\in\Ch_HK$ and $\alpha,\beta\in S_{\ef}$ be such that $\theta(\alpha,x)=\theta(\beta,y)$, i.e., $\alpha\phi(x)=\beta\phi(y)$. Then $\phi(y)=\overline{\beta}\alpha\phi(x)$. Hence $M_y(H)\supset\{\ep_y,\overline{\beta}\alpha\ep_x\}$ (note that $\norm{\phi(y)}=1$). The two measures on the right-hand are $H$-boundary (as $\phi(x),\phi(y)\in\ext B_{H^*}$). By simpliciality we deduce $\ep_y=\overline{\beta}\alpha\ep_x$, hence $y=x$ and $\alpha=\beta$. This completes the argument.

    Implication $(III)\implies (V)$ follows from \cite[Theorem 3.3]{phelps-complex}.

    $(IV)\implies(VI)$: We follow the proof of the respective implication from \cite[Theorem 4.3]{fuhr-phelps} and check that it works also in our situation. Assume that $H$ is not $L^1$-predual. By Fact~\ref{f:l1pred-char} there are two maximal probabilities $\nu_1,\nu_2$ on $B_{H^*}$ with $r(\nu_1)=r(\nu_2)$ such that $\hom \nu_1\ne\hom\nu_2$. 
       Let
    $\widetilde{\nu_1}$, $\widetilde{\nu_2}$, $\mu_1$, $\mu_2$ be the measures provided by Lemma~\ref{L:prenosnaK}.
    By the quoted lemma we know that $\mu_1$ and $\mu_2$ are $H$-boundary (by assertion $(v)$) and that $\mu_1-\mu_2\in H^\perp\cap M_{\bnd}(K)    $ (by assertion $(i)$). What remains to be proved is $\mu_1\ne\mu_2$. Since $\hom\nu_1\ne\hom\nu_2$, we find $f\in C(B_{H^*})$ with $\int f\di\hom\nu_1\ne\int f\di\hom\nu_2$. Then $g=\hom f$ is an 
    $\ef$-homogeneous continuous function satisfying $\int g\di\nu_1\ne\int g\di\nu_2$. 
    For $j=1,2$ we have
    $$\begin{aligned}
        \int_K (g\circ\phi)\di\mu_j&=\int_{S_{\ef}\times K} \alpha g(\phi(x))\di\widetilde{\nu_j}(\alpha,x)
     = \int_{S_{\ef}\times K} g(\alpha\phi(x))\di\widetilde{\nu_j}(\alpha,x)\\&=\int g\circ\theta \di\widetilde{\nu_j}
    =\int g\di\nu_j,   \end{aligned}
    $$
 thus $\int_K (g\circ\phi)\di\mu_1\ne\int_K (g\circ\phi)\di\mu_2$, in particular $\mu_1\ne\mu_2$.
    This completes the argument.

   $(VI)\implies(V)$: This follows from Fact~\ref{f:L1-simplexoid}.

\smallskip

Next assume that $\theta$ is one-to-one. Then $(III)\iff(V)$ by \cite[Theorem 3.3]{phelps-complex}. Let us continue by proving $(VI)\implies(IV)$. We proceed by contraposition. Assume that $(IV)$ is not satisfied, i.e., there is some nonzero $\mu\in M_{\bnd}(H)\cap H^\perp$. Then $\phi(\mu)$ is a boundary measure on $B_{H^*}$. Moreover, $\hom\phi(\mu)\ne0$ by \cite[Proposition 3.5 and Proposition 3.4]{phelps-complex}. So, $\hom\phi(\mu)$ is a non-zero antihomogeneous boundary measure on $B_{H^*}$ and for each $h\in H$ we have
$$\begin{aligned}
    \int_{B_{H^*}} \psi(h)\di\hom \phi(\mu)(\psi)&= \int_{B_{H^*}} \psi(h)\di\phi(\mu)(\psi) = \int_{\phi(K)} \psi(h)\di\phi(\mu)(\psi) \\&=\int h\di\mu=0.\end{aligned}$$
It now follows from Fact~\ref{f:l1pred-char} that $H$ is not an $L^1$-predual, i.e., $(VI)$ is not satisfied.

\smallskip

$(V)\&(I)\implies (III)$ if $K$ is metrizable: Assume that $K$ is metrizable and that $H$ satisfies $(V)\&(I)$. We will show that $H$ is functionally simplicial. To this end it is enough to prove that $M_\varphi(H)$ contains only one $H$-boundary measure for each $\varphi\in S_{H^*}$. We will use a modification of the method of proving \cite[Theorem 3.3]{phelps-complex}:

 Let $\varphi\in S_{H^*}$ and let $\mu\in M_\varphi(H)$ be an $H$-boundary measure. Then $\mu$ is carried by $\Ch_HK$ (cf. Observation~\ref{obs:metriz}). Let $\widetilde{\nu}$ and $\nu$ be the measures provided by Lemma~\ref{L:prenoszpet}. By Lemma~\ref{L:tezistenakouli} and Lemma~\ref{L:prenosnaK}$(i)$ we deduce that $r(\nu)=\varphi$. Hence, $\nu$ is uniquely determined by $\varphi$ (due to $(V)$ and Fact~\ref{f:simplexoid}). Next we deduce that $\mu$ is uniquely determined by $\varphi$ as well:

    Fix any $f\in C(K,\ef)$. Define 
    $$F(\alpha\phi(x))=\alpha f(x), \quad \alpha\in S_{\ef}, x\in\Ch_HK.$$
    By assumption $(I)$ this function is well defined. By Lemma~\ref{L:theta} it is defined on $\ext B_{H^*}$. It is clearly homogeneous and it is a continuous function (by Lemma~\ref{L:I-homeo}). Hence
    $$\begin{aligned}
           \int_K f\di\mu&=\int_{\Ch_HK} f\di\mu=\int_{S_{\ef}\times\Ch_HK} \alpha f(x)\di\widetilde{\nu}(\alpha,x) =\int_{S_{\ef}\times\Ch_HK} F\circ\theta\di\widetilde{\nu}\\&=\int_{\ext B_{H^*}} F\di\nu. \end{aligned}$$
  Thus $\int_K f\di\mu$ depends only on $f$ and $\varphi$. Therefore $\mu$ is determined by $\varphi$. This completes the proof.

\smallskip
  
$(VI)\&(I)\implies (IV)$ if $K$ is metrizable: Assume that $K$ is metrizable and that $H$ satisfies $(VI)\&(I)$. We will proceed similarly as in the above prove of $(VI)\implies(IV)$ assuming that $\theta$ is one-to-one. 
Assume that $(IV)$ is not satisfied, i.e., there is some nonzero $\mu\in M_{\bnd}(H)\cap H^\perp$. Then $\phi(\mu)$ is a boundary measure on $B_{H^*}$. Due to the above argument it is enough to prove that $\hom\phi(\mu)\ne0$. Since $\mu\ne0$, there is some $f\in C(K,\ef)$ with $\int f\di\mu\ne0$. Similarly as above define
$$F(\alpha\phi(x))=\alpha f(x), \quad \alpha\in S_{\ef}, x\in\Ch_HK.$$
    By assumption $(I)$ this function is well defined. By Lemma~\ref{L:theta} it is defined on $\ext B_{H^*}$. It is clearly homogeneous and it is a continuous function (by Lemma~\ref{L:I-homeo}). Hence
    $$\begin{aligned}
           0\ne\int_K f\di\mu&=\int_{\Ch_HK} f\di\mu=\int_{\phi(\Ch_HK)} \varphi(f)\di\phi(\mu) =\int_{\ext B_{H^*}} F\di\phi(\mu)\\&= \int _{\ext B_{H^*}}\hom F\di\phi(\mu)=\int F\di\hom\phi(\mu). \end{aligned}$$
  Thus $\hom\phi(\mu)\ne0$. This completes the proof.
    \end{proof}

We continue by collecting the easy counterexamples:

\begin{proof}[Easy counterexamples from the proof of Theorem~\ref{T:bez1}:] \

$(VI)\implies\hspace{-15pt}\not\hspace{15pt}(I)$ even if $K$ is metrizable:
 Let $K=\{0,1\}$ and 
 $$H=\{f\in C(K,\ef)\setsep f(1)=-f(0)\}.$$ It is clear that $H$ is a closed self-adjoint subspace separating points of $K$ and not containing constants. Moreover, since $H$ is one-dimensional, it is clearly an $L^1$-predual, i.e., $(VI)$ is satisfied. 
   Note that $\Ch_HK=K$ as $\norm{\phi(0)}=\norm{\phi(1)}=1$ and all norm-one elements in a one-dimensional space are extreme points.
   Further, $-\ep_0\in M_1(H)$, hence $A_c(H)=H$. Finally, $\phi(1)=-\phi(0)$, so $(I)$ is not satisfied. 

$(I)\implies\hspace{-15pt}\not\hspace{15pt}(II)$ even if $K$ is metrizable (and $\theta$ is one-to-one):
Note that $\theta$ is one-to-one (and hence $(I)$ holds) whenever $H$ contains constants (see Lemma~\ref{L:eval-s-c}$(a)$) and there are non-simplicial spaces containing constants -- one can take, for example, $K$ to be the square and $H$ to be the space of continuous affine functions on $K$.

 $(I)\implies\hspace{-15pt}\not\hspace{15pt}(V)$ even if $K$ is metrizable (and $\theta$ is one-to-one): The same example works.

\end{proof}

\subsection{Counterexamples on metrizable compact spaces}\label{ss:metriz}

In this section we present a construction of function spaces on metrizable compact spaces distinguishing properties $(II)-(IV)$ (and something more). We present two versions of these constructions -- once we get a closed Choquet boundary, in the second case we get a dense Choquet boundary. But before coming to the construction itself let us present an easy sufficient condition for a point to be in the Choquet boundary.

\begin{lemma}\label{L:extbod}
    Let $H$ be a (real or complex) function space on a compact space $K$. Let $a,b\in K$. Assume that $K\ne\{a,b\}$ and that there is a function $f\in H$ such that $\abs{f(a)}=\abs{f(b)}$ and
    $$\forall x\in K\setminus\{a,b\}\colon \abs{f(x)}< \abs{f(a)}.$$
    Then $a,b\in\Ch_HK$.
\end{lemma}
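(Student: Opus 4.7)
I would prove $a\in\Ch_HK$; the symmetric argument yields $b\in\Ch_HK$. The goal is to show that $\phi(a)$ is an extreme point of $B_{H^*}$. First normalize: since $K\setminus\{a,b\}$ is nonempty, the strict inequality forces $\abs{f(a)}>0$, so I may replace $f$ by $g:=f/f(a)\in H$, which satisfies $g(a)=1$, $\gamma:=g(b)\in S_{\ef}$, $\norm{g}=1$, and $\abs{g(x)}<1$ for every $x\in K\setminus\{a,b\}$. In particular $\norm{\phi(a)}=1$, so extremality is the only issue.

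Now suppose $\phi(a)=\tfrac12(\psi_1+\psi_2)$ with $\psi_1,\psi_2\in B_{H^*}$. Evaluating at $g$ gives $1=\tfrac12(\psi_1(g)+\psi_2(g))$ with $\abs{\psi_j(g)}\le 1$, and since $1$ is an extreme point of the closed unit ball of $\ef$, both $\psi_j(g)=1$. Hahn--Banach together with the Riesz representation theorem allows me to lift each $\psi_j$ to a Radon measure $\mu_j\in M(K,\ef)$ with $\norm{\mu_j}=\norm{\psi_j}\le 1$, and the chain
\[
1=\abs{\int_K g\di\mu_j}\le\int_K\abs{g}\di\abs{\mu_j}\le\norm{\mu_j}\le 1
\]
collapses to equalities, forcing $\abs{g}=1$ $\abs{\mu_j}$-a.e., so $\abs{\mu_j}$ is concentrated on $\{a,b\}$. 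Thus $\mu_j=s_j\ep_a+t_j\ep_b$ with $\abs{s_j}+\abs{t_j}=1$ and $s_j+t_j\gamma=1$. Equality in the triangle inequality $\abs{s_j+t_j\gamma}=\abs{s_j}+\abs{t_j\gamma}$ then forces $s_j$ and $t_j\gamma$ to be nonnegative reals, so $\mu_j=\lambda_j\ep_a+(1-\lambda_j)\bar\gamma\ep_b$ for some $\lambda_j\in[0,1]$.

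Plugging this back into $\phi(a)=\tfrac12(\psi_1+\psi_2)$ and testing against an arbitrary $h\in H$ yields
\[
\left(1-\tfrac{\lambda_1+\lambda_2}{2}\right)(h(a)-\bar\gamma h(b))=0.
\]
Either $\lambda_1=\lambda_2=1$, in which case $\psi_j=\phi(a)$ at once, or else $h(b)=\gamma h(a)$ for every $h\in H$; in the latter case $\psi_j(h)=\lambda_j h(a)+(1-\lambda_j)\bar\gamma\gamma h(a)=h(a)$, and again $\psi_j=\phi(a)$. Either way $\phi(a)\in\ext B_{H^*}$. The only slightly delicate step I anticipate is the triangle-inequality case analysis, notably the degenerate cases $s_j=0$ or $t_j=0$; but these match the formula for $\mu_j$ with $\lambda_j\in\{0,1\}$, and everything else is a routine consequence of the fact that $g$ peaks in modulus exactly on $\{a,b\}$.
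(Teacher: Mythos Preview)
Your proof is correct. Both your argument and the paper's normalize so that $g(a)=1$, $\abs{g(b)}=1$, and then exploit the fact that any norm-one functional with $\psi(g)=1$ must be represented by a measure supported on $\{a,b\}$, hence lies on the segment $[\phi(a),\bar\gamma\phi(b)]$. The difference is only in the closing step: the paper observes that this segment equals the level set $\{\varphi\in B_{H^*}:\Re\varphi(g)=1\}$, which is automatically a face of $B_{H^*}$ (being the maximum set of a continuous affine function), and then reads off that both endpoints are extreme---getting $a$ and $b$ simultaneously. You instead verify the definition of extreme point for $\phi(a)$ directly, with an explicit case split handling the degenerate possibility $\phi(b)=\gamma\phi(a)$; this is slightly longer but makes that degenerate case fully transparent, whereas in the paper's face argument one must note that a one-point ``segment'' is still a face.
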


\begin{proof} Up to multiplying $f$ by a nonzero constant we may assume $f(a)=1$. Consider the function
$$F(\varphi)=\Re \varphi(f),\quad \varphi\in B_{H^*}.$$
Then $F$ is a real-valued continuous affine function on $B_{H^*}$. 
Moreover, fix any $\varphi\in B_{H^*}$. Then
$$F(\varphi)=\Re\varphi(f)\le\abs{\varphi(f)}\le\norm{\varphi}\cdot\norm{f}\le 1
$$
and $F(\phi(a))=1$. So, $F$ attains its maximum at $\phi(a)$.
Moreover, assume that $\varphi\in B_{H^*}$ is such that $F(\varphi)=1$.
Fix some $\mu\in M_\varphi(H)$. Then
$$1=F(\varphi)=\Re \varphi(f)=\Re \int f\di\mu\le \int \abs{f}\di\abs{\mu}\le \int 1\di\abs{\mu}=1,$$
so the equalities hold. In particular, $\abs{f}=1$ $\abs{\mu}$-a.e., so
$\mu$ is carried by $\{a,b\}$. Now it is clear that $\mu$ must belong to the segment with endpoints $\ep_a$ and $\overline{f(b)}\ep_b$, thus $\varphi$ belongs to the segment with the endpoints $\phi(a)$ and $\overline{f(b)}\phi(b)$. It follows that this segment is a face of $B_{H^*}$, thus its endpoints are extreme points of $B_{H^*}$. Now it easily follows that $a,b\in\Ch_HK$.
\end{proof}

We remark that the previous lemma applies also in the special case when $a=b$.

Now we proceed to the construction of the counterexamples.
The two underlying compact spaces will be
$$K_1=([0,1]\times \{-1,0,1\}) \cup\{a,b\}\mbox{\quad and\quad} K_2=([0,1]\times [-1,1]) \cup\{a,b\},$$
where $a,b$ are two distinct isolated points. We further fix $\alpha,\beta\in\ef\setminus\{0\}$ such that $\abs{\alpha}+\abs{\beta}<1$ and for $j=1,2$ we define function spaces
$$\begin{aligned}
    H_j=\{f\in C(K_j,\ef)\setsep f(t,0)&=\tfrac12(f(t,-1)+f(t,1))\mbox{ for }t\in[0,1]\\&\mbox{ and }f(0,0)=\alpha f(a)+\beta f(b)\}.\end{aligned}$$
Let $\phi_j:K_j\to H_j^*$ and $\theta_j:S_{\ef}\times K_j\to {H_j^*}$ be the canonical mappings.
We continue by establishing basic properties of these function spaces.

\begin{lemma}\label{L:metriz-zakl} Let $j\in\{1,2\}$. Then:
\begin{enumerate}[$(a)$]
    \item $K_j$ is a metrizable compact space.
    \item $H_j$ is a closed function space on $K_j$. If $\alpha,\beta\in\er$, then $H_j$ is self-adjoint.
    \item The mapping $\theta_j$ is one-to-one.
    \item $\norm{\phi(x)}=1$ for $x\in K_j\setminus\{(0,0)\}$, $\norm{\phi(0,0)}=\abs{\alpha}+\abs{\beta}$.
    \item $\Ch_{H_j}K_j=K_j\setminus([0,1]\times\{0\})$. In particular, $\Ch_{H_1}K_1$ is closed in $K_1$ and $\Ch_{H_2}K_2$ is dense in $K_1$.
    \item $A_c(H_j)=H_j$.
\end{enumerate}
    \end{lemma}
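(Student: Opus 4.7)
Parts (a), (b), and (d) reduce to constructive verifications. In (a) each $K_j$ is a compact metrizable space as a disjoint union of a compact metrizable piece and two isolated points. For (b), $H_j$ is closed because it is the common kernel of the continuous linear functionals $f \mapsto f(t,0) - \tfrac12(f(t,-1) + f(t,1))$ (for $t \in [0,1]$) and $f \mapsto f(0,0) - \alpha f(a) - \beta f(b)$; when $\alpha, \beta$ are real the defining identities have real coefficients, so conjugation preserves $H_j$. To verify the separation-of-points property (needed to conclude $H_j$ is a function space in the sense of the paper) and to prove the norm statements in (d), I will use explicit test functions. The bound $\|\phi_j(0,0)\| \le |\alpha|+|\beta|$ is immediate from the coupling identity, and equality is witnessed by the function with $f(a) = \overline{\alpha}/|\alpha|$, $f(b) = \overline{\beta}/|\beta|$, and $f \equiv |\alpha|+|\beta|$ on the non-isolated part (this satisfies both defining conditions and has sup-norm one thanks to $|\alpha|+|\beta|<1$). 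At every other point a local bump construction yields an element of $H_j$ of norm one that attains modulus one there.

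For (c), I will reduce using (d). If $\gamma\phi_j(x) = \delta\phi_j(y)$ with $\gamma, \delta \in S_{\ef}$, then taking norms forces either $x = y = (0,0)$ (and then $\gamma = \delta$ because $\phi_j(0,0) \ne 0$), or $\|\phi_j(x)\| = \|\phi_j(y)\| = 1$. In the latter case, assuming $x \ne y$ and writing $\phi_j(x) = \eta\phi_j(y)$ with $\eta \in S_{\ef}$, I must produce $f \in H_j$ with $f(x) \ne \eta f(y)$; this reduces to a case analysis over the positions of $x$ and $y$ among the pieces $\{a\}$, $\{b\}$ and $[0,1]\times\{-1,0,1\}$ (or the square in the $K_2$ case), but in every case the freedom in prescribing $f(a)$, $f(b)$ and the values on the grid is enough to meet the (at most two) defining constraints while prescribing independent values at $x$ and $y$. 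For (e), points in $[0,1]\times\{0\}$ are excluded from the Choquet boundary because $\phi_j(0,0)$ has norm less than one (so cannot be extreme in $B_{H_j^*}$), and for $t > 0$ the averaging identity writes $\phi_j(t,0)$ as a proper convex combination of two distinct norm-one evaluations. Every other point is handled by Lemma~\ref{L:extbod}: a local peak function in $H_j$ suffices for $a$, $b$, and $(t, \pm 1)$ with $t > 0$, while for $(0, \pm 1)$ I will invoke the two-point form of the lemma using $f(t,s) = (1-t)s$ on the non-isolated part of $K_j$, whose modulus peaks exactly at the pair $\{(0,1), (0,-1)\}$. The topological addenda are immediate since $[0,1]\times\{0\}$ is clopen in $K_1$ and nowhere dense in $K_2$.

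Finally, $H_j \subseteq A_c(H_j)$ in (f) is automatic, and for the reverse inclusion I will exhibit representing measures that extract the defining conditions: at $(t,0)$ with $t > 0$, the measure $\tfrac12(\ep_{(t,-1)} + \ep_{(t,1)})$ lies in $M_{(t,0)}(H_j)$ (both norms equal one), forcing the averaging identity for $f \in A_c(H_j)$; at $(0,0)$, the measure $\alpha\ep_a + \beta\ep_b$ lies in $M_{(0,0)}(H_j)$ (both norms equal $|\alpha|+|\beta|$), forcing the coupling identity; and the averaging identity at $t = 0$ follows by continuity of $f$ from the same identity for $t > 0$. The main obstacle I anticipate is the case enumeration in (c), especially for pairs $\{(0, 1), (0, -1)\}$ and for pairs involving $a$ or $b$, where the coupling condition most tightly links the few available values.
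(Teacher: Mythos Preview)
Your proposal is correct and follows essentially the same approach as the paper. The paper constructs explicit functions $f_1,\dots,f_4$ (and later $f_{5,s_0}$, $f_{6,t_0}$, $f_7$, $f_{8,s_0}$, $f_{9,s_0,t_0}$) and uses them to verify separation of points, injectivity of $\theta_j$, the norm formulae, and the Choquet boundary description in one pass; your outline does the same work but orders it slightly differently, first establishing the norms in (d) and then invoking them to streamline the case analysis in (c). One small inaccuracy: the phrase ``at most two defining constraints'' undercounts, since the averaging condition is a one-parameter family; what you mean (and what the paper exploits) is that once one freely prescribes continuous data on $[0,1]\times\{\pm1\}$ (or the analogous boundary in $K_2$) together with $f(a),f(b)$, the values on $[0,1]\times\{0\}$ are forced by averaging and only the single coupling identity at $(0,0)$ remains.
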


\begin{proof}
Assertion $(a)$ is obvious. It is clear that $H_j$ is a closed subspace of $C(K_j,\ef)$ not containing constants and that it is self-adjoint provided $\alpha,\beta\in\er$. 

To prove the rest of assertion $(b)$ and assertions $(c)-(e)$ assume first that $j=2$. 
So, next we show that $H_2$ separates points of $K_2$ and, moreover, the mapping $\theta_2$ is one-to-one. 

Consider the following four functions
$$ \begin{gathered}
f_1(s,t)=s\mbox{ for }(s,t)\in[0,1]\times[-1,1],\ f_1(a)=f_1(b)=0;\\
f_2(s,t)=t\mbox{ for }(s,t)\in[0,1]\times[-1,1],\ f_2(a)=f_2(b)=0;\\
f_3(s,t)=\alpha-\beta\mbox{ for }(s,t)\in[0,1]\times[-1,1],\ f_3(a)=1, f_3(b)=-1;\\
f_4(s,t)=\alpha+\beta\mbox{ for }(s,t)\in[0,1]\times[-1,1],\ f_4(a)=f_4(b)=1.
\end{gathered}$$
Clearly $f_1,f_2,f_3,f_4\in H_2$. Moreover, these four functions separate points of $K_2$ and witness that $\theta_2$ is one-to-one:

Let $x,y\in K_2$ be distinct. Let us distinguish several cases:
\begin{itemize}
    \item $x,y\in\{a,b\}$: In this case $f_3(x)\ne f_3(y)$. Moreover, functions $f_3$ and $f_4$ witness that $\phi_2(x)$ is not a multiple of $\phi_2(y)$.
    \item $x\in\{a,b\}$, $y\in [0,1]\times[-1,1]$: Then $\abs{f_4(y)}<\abs{f_4(x)}$.
    \item  $x=(s_1,t_1)$ and $y=(s_2,t_2)$ where $s_1\ne s_2$: Then $\abs{f_1(x)}\ne\abs{f_1(y)}$.
    \item $x=(s,t_1)$ and $y=(s,t_2)$ where $t_1\ne \pm t_2$: Then $\abs{f_2(x)}\ne\abs{f_2(y)}$.
    \item $x=(s,t)$ and $y=(s,-t)$ where $t\ne 0$: Then $f_2(x)= -f_2(y)\ne0$. Hence $f_2(x)\ne f_2(y)$ and, moreover, if $\phi_2(y)$ is a multiple of $\phi_2(x)$, necessarily $\phi_2(y)=-\phi_2(x)$. But this may be disproved using one of the functions $f_3$ and $f_4$ as one of the values $\alpha+\beta$, $\alpha-\beta$ is nonzero.
\end{itemize}
This completes the proof of $(b)$ and $(c)$.

$(d)$: The function $f_2$ witnesses that $\norm{\phi_2(s,\pm1)}=1$ for $s\in[0,1]$. The function $f_4$ witnesses that $\norm{\phi_2(a)}=\norm{\phi_2(b)}=1$. Further, given $s_0\in (0,1]$ fix a continuous function $g_{s_0}:[0,1]\to[0,1]$ such that $g_{s_0}(0)=0$, $g_{s_0}(s_0)=1$ and $g_{s_0}(s)<1$ for $s\ne s_0$. Then the function
$$f_{5,s_0}(s,t)= g_{s_0}(s)\mbox{ for }(s,t)\in[0,1]\times[-1,1],\ f_{5,s_0}(a)=f_{5,s_0}(b)=0$$
belongs to $H_2$ and witnesses that $\norm{\phi_2(s_0,t)}=1$ for $t\in[-1,1]$. Next we fix, for any $t_0\in(-1,0)\cup(0,1)$, a continuous function $h_{t_0}:[-1,1]\to [0,1]$ such that $h_{t_0}(t_0)=1$, $h_{t_0}(t)<1$ for $t\ne t_0$, $h_{t_0}(-1)=h_{t_0}(0)=h_{t_0}(1)=0$. Then the function
$$f_{6,t_0}(s,t)= h_{t_0}(t)\mbox{ for }(s,t)\in[0,1]\times[-1,1],\ f_{6,s_0}(a)=f_{6,s_0}(b)=0$$
belongs to $H_2$ and witnesses that $\norm{\phi_2(0,t_0)}=1$. Finally, it is clear that $\norm{\phi_2(0,0)}\le\abs{\alpha}+\abs{\beta}$ and the function
$$f_7(s,t)=\abs{\alpha}+\abs{\beta}\mbox{ for }(s,t)\in[0,1]\times[-1,1],\ f_7(a)=\tfrac{\abs{\alpha}}{\alpha}, f_7(b)=\tfrac{\abs{\beta}}{\beta}$$
belongs to $H_2$ and witnesses that $\norm{\phi_2(0,0)}=\abs{\alpha}+\abs{\beta}$.
This completes the proof of $(d)$.

$(e)$: By the definition of $H_2$ we know that $\phi_2(s,0)=\frac12(\phi_2(s,-1)+\phi_2(s,1))$ whenever $s\in[0,1]$. Since 
$\phi_2(s,-1)$ and $\phi_2(s,1)$ are distinct points of the unit sphere, we deduce that $(s,0)\notin\Ch_{H_2}K_2$. We continue by proving that any $x\in K_2\setminus ([0,1]\times\{0\})$ belongs to the Choquet boundary. 
To this end we will use Lemma~\ref{L:extbod}:

The function $f_3$ shows that $a,b\in\Ch_{H_2}K_2$. Further, given $s_0\in[0,1]$, let  $u_{s_0}:[0,1]\to [0,1]$ be a continuous function satisfying $u_{s_0}(s_0)=1$ and $u_{s_0}(s)<1$ for $s\ne s_0$.
The function
$$f_{8,s_0}(s,t)=t u_{s_0}(s)\mbox{ for }(s,t)\in [0,1]\times[-1,1],\ f_{8,s_0}(a)=f_{8,s_0}(b)=0$$
belongs to $H$ and witnesses that points $(s_0,1)$ and $(s_0,-1)$ belong to the Choquet boundary.   Finally, if $s_0\in[0,1]$ and $t_0\in (-1,0)\cup(0,1)$, then $\phi_2(s_0,t_0)\in\Ch_{H_2}K_2$  due to the function
 $$f_{9,s_0,t_0}(s,t)=h_{t_0}(t)u_{s_0}(s)\mbox{ for }(s,t)\in [0,1]\times[-1,1],\ f_9(a)=f_9(b)=0.$$   

This completes the proof of $(b)-(e)$ for $j=2$. The case $j=1$ then follows easily as $K_1\subset K_2$ and $f|_{K_1}\in H_1$ whenever $f\in H_2$. Thus it is enough to consider the restrictions of the above-defined functions to $K_1$ (and some of them are not needed).

$(f)$: Since inclusion `$\supset$' holds automatically, it is enough to prove `$\subset$'. Assume that $f\in A_c(H_j)$. Using the definition of $H_j$ and $(d)$ we see that $\alpha \ep_a+\beta\ep_b\in M_{(0,0)}(H_j)$, hence $f(0,0)=\alpha f(a)+\beta f(b)$.
Further, if $s\in(0,1]$, then $\frac12(\ep_{(s,-1)}+\ep_{(s,1)})\in M_{(s,0)}(H_j)$ (by the definition of $H_j$ and $(d)$), hence $f(s,0)=\frac12(f(s,-1)+f(s,1))$. Since $f$ is continuous, by taking the limit we get $f(0,0)=\frac12(f(0,-1)+f(0,1))$ as well. Hence $f\in H_j$.
 \end{proof}

We continue by analyzing validity of conditions from Theorem~\ref{T:bez1} for function spaces $H_j$.
The first set of results is contained in the following proposition.

\begin{prop}\label{P:metriznefs}
    Let $j\in\{1,2\}$. Then:
    \begin{enumerate}[$(a)$]
        \item $H_j$ is simplicial (i.e., it satisfies $(II)$).
        \item $(H_j)^\perp\cap M_{\bnd}(K)\ne\{0\}$ (i.e., it fails $(IV)$).
        \item $H_j$ is not an $L^1$-predual (i.e., it fails $(VI)$).
        \item If $\alpha=\beta$, then $H_j$ is not functionally simplicial (i.e., it fails $(III)$).
    \end{enumerate}
\end{prop}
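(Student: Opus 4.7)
The plan hinges on first characterizing
$$N := H_j^\perp\cap\{\sigma\in M(K_j,\ef): \abs\sigma(K_j\setminus\Ch_{H_j}K_j)=0\}$$
as one-dimensional, spanned by
$$\omega:=\alpha\ep_a+\beta\ep_b-\tfrac{1}{2}(\ep_{(0,-1)}+\ep_{(0,1)}).$$
For $j=2$ I would first use product test functions $f(s,t)=g(s)h(t)\in H_j$ with $h(-1)=h(0)=h(1)=0$ and $f(a)=f(b)=0$ to force any $\sigma\in N$ to be supported on $\{a,b\}\cup[0,1]\times\{-1,1\}$; for $j=1$ this support is automatic. Writing $\sigma=\sigma_a\ep_a+\sigma_b\ep_b+\sigma_{-1}\otimes\ep_{-1}+\sigma_1\otimes\ep_1$, the orthogonality to $H_j$ reduces to the requirement that the linear functional $(u,v,g_{-1},g_1)\mapsto u\sigma_a+v\sigma_b+\int g_{-1}\,d\sigma_{-1}+\int g_1\,d\sigma_1$ vanish on the kernel of the single linear form $\alpha u+\beta v-\tfrac12(g_{-1}(0)+g_1(0))$, so duality forces $\sigma_a=\lambda\alpha$, $\sigma_b=\lambda\beta$, $\sigma_{\pm1}=-\tfrac{\lambda}{2}\ep_0$ for some $\lambda\in\ef$, i.e.\ $\sigma=\lambda\omega$. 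This identification of $N$ is the main technical step.

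Part (a) now follows. Any $H_j$-boundary $\mu\in M_x(H_j)$ is carried by $\Ch_{H_j}K_j$ by Observation~\ref{obs:metriz} (since $K_j$ is metrizable), so $\mu-\mu^*\in N$ is a multiple $\kappa\omega$ of $\omega$, where $\mu^*:=\ep_x$ if $x\in\Ch_{H_j}K_j$, $\mu^*:=\tfrac{1}{2}(\ep_{(s,-1)}+\ep_{(s,1)})$ if $x=(s,0)$ with $s>0$, and $\mu^*:=\alpha\ep_a+\beta\ep_b$ if $x=(0,0)$. Equating $\norm\mu=\norm{\phi_j(x)}$ via Lemma~\ref{L:metriz-zakl}(d), applying the triangle inequality, and exploiting $c:=|\alpha|+|\beta|<1$ forces $\kappa=0$ in every case; the subtler case $x=(0,0)$ reduces to ruling out $|1+\kappa|=1-|\kappa|/c$, which fails since $|1+\kappa|\ge 1-|\kappa|>1-|\kappa|/c$ whenever $c<1$ and $\kappa\ne 0$.

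Parts (b) and (c) are immediate consequences: the measure $\omega$ lies in $N$, is visibly nonzero, and is $H_j$-boundary by Observation~\ref{obs:metriz}, which is (b); and since $\theta_j$ is one-to-one by Lemma~\ref{L:metriz-zakl}(c), Theorem~\ref{T:bez1} gives $(VI)\iff(IV)$ for $H_j$, so (c) follows from (b).

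For (d), with $\alpha=\beta$, I would exhibit explicitly $\mu_1:=-\alpha\ep_b+\tfrac{1}{2}\ep_{(0,-1)}$ and $\mu_2:=\alpha\ep_a-\tfrac{1}{2}\ep_{(0,1)}$. Both are $H_j$-boundary (discrete, supported in $\Ch_{H_j}K_j$), the symmetry $\alpha=\beta$ gives $\mu_2-\mu_1=\omega\in H_j^\perp$, and plainly $\norm{\mu_1}=\norm{\mu_2}=|\alpha|+\tfrac{1}{2}$. The function $f\in H_j$ defined by $f(s,t):=-t$ together with $f(a):=\overline\alpha/|\alpha|$, $f(b):=-\overline\alpha/|\alpha|$ has $\norm f=1$ and $\int f\,d\mu_1=|\alpha|+\tfrac{1}{2}$, so the common functional $\varphi\in H_j^*$ given by $\varphi(g):=\int g\,d\mu_1=\int g\,d\mu_2$ satisfies $\norm\varphi=|\alpha|+\tfrac{1}{2}=\norm{\mu_i}$, placing $\mu_1\ne\mu_2$ both in $M_\varphi(H_j)$ and witnessing the failure of $(III)$. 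The main obstacle throughout is the duality characterization of $N$ in the first paragraph; the remaining steps reduce to triangle-inequality bookkeeping combined with one symmetry-motivated explicit construction.
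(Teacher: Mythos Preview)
Your proof is correct, and your route to part~(a) differs genuinely from the paper's. The paper proves~(a) by a direct case-by-case analysis of $M_x(H_j)$: for each type of point $x$ it plugs in the explicit test functions $f_3,f_4,f_7,f_{8,s_0},f_{9,s_0,t_0},f_{10,s_0},f_{11}$ built in Lemma~\ref{L:metriz-zakl} to pin down $\mu$ directly. You instead front-load the work by identifying $N=\operatorname{span}\{\omega\}$ and then reduce~(a) to a short norm computation showing $\kappa=0$. Interestingly, the paper does prove $N=\operatorname{span}\{\omega\}$, but only \emph{after} this proposition (as Lemma~\ref{L:anihilator}), and via a different mechanism: it first proves the norm estimate Lemma~\ref{L:odhad} to force support on the four points $\{a,b,(0,\pm1)\}$, then finishes by bipolar. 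Your duality argument (annihilator of a codimension-one subspace) bypasses Lemma~\ref{L:odhad} entirely and reaches the same conclusion more cheaply; in exchange, the paper's case analysis actually computes all of $M_x(H_j)$, not just its boundary elements, which is slightly more information. Parts~(b), (c), (d) are essentially identical to the paper's arguments, including the explicit witness for~(d).
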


\begin{proof}
  Since $K_j$ is metrizable, by Observation~\ref{obs:metriz} we know that $H_j$-boundary measures on $K_j$ are exactly the measures supported by the Choquet boundary, i.e., by $K_j\setminus ([0,1]\times\{0\})$. Below we use the functions constructed within the proof of Lemma~\ref{L:metriz-zakl}.

  $(a)$:  Let us first provide the proof for $j=2$. To this end we analyze the sets $M_x(H_2)$ for $x\in H_2$ and, in particular, $H_2$-boundary measures in them.
  We distinguish several cases:
  \begin{itemize}
      \item $x=a$: Let $\mu\in M_a(H_2)$. By Lemma~\ref{L:metriz-zakl}$(d)$ we get $\norm{\mu}=1$. Further,
      $$1=f_3(a)=\int f_3\di\mu,$$
      hence $\mu$ is carried by $\{a,b\}$, i.e., $\mu=\gamma\ep_a+\delta\ep_b$, and $1=\abs{\gamma}+\abs{\delta}=\gamma-\delta$. Further,
      $$1=f_4(a)=\int f_4\di\mu=\gamma+\delta.$$
      We deduce that $\gamma=1$ and $\delta=0$, thus $\mu=\ep_a$. Therefore, $M_a(H_2)=\{\ep_a\}$.
      \item $x=b$: Similarly as in the previous case we get $M_b(H_2)=\{\ep_b\}$.
      \item $x=(s_0,1)$ for some $s_0\in [0,1]$:  Let $\mu\in M_{(s_0,1)}(H_2)$. By Lemma~\ref{L:metriz-zakl}$(d)$ we get $\norm{\mu}=1$. Further,
       $$1=f_{8,s_0}(s_0,1)=\int f_{8,s_0}\di\mu,$$
       hence $\mu$ is carried by $\{(s_0,1),(s_0,-1)\}$ i.e., $\mu=\gamma\ep_{(s_0,1)}+\delta\ep_{(s_0,-1)}$, and $1=\abs{\gamma}+\abs{\delta}=\gamma-\delta$. Further,
       $$\alpha+\beta=f_4(s_0,1)=\int f_4\di\mu=(\alpha+\beta)(\gamma+\delta)$$
       and similarly $\alpha-\beta=(\alpha-\beta)(\gamma+\delta)$. Since at least one of the numbers $\alpha+\beta,\alpha-\beta$ is nonzero, we deduce $\gamma+\delta=1$, hence $\gamma=1$ and $\delta=0$. Thus $\mu=\ep_{(s_0,1)}$. Therefore, $M_{(s_0,1)}(H_2)=\{\ep_{(s_0,1)}\}$.
       \item $x=(s_0,-1)$ for some $s_0\in [0,1]$: Similarly as in the previous case we get $M_{(s_0,-1)}(H_2)=\{\ep_{(s_0,-1)}\}$.
       \item $x=(s_0,t_0)$ for some $s_0\in[0,1]$ and $t_0\in(-1,0)\cup (0,1)$. Let $\mu\in M_{(s_0,t_0)}(H_2)$. By Lemma~\ref{L:metriz-zakl}$(d)$ we get $\norm{\mu}=1$. Further,
       $$1=f_{9,s_0,t_0}(s_0,t_0)=\int f_{9,s_0,t_0}\di\mu,$$
       hence $\mu=\ep_{(s_0,t_0)}$.
       \item $x=(s_0,0)$ for some $s_0\in(0,1]$: Let $\mu\in M_{(s_0,0)}(H_2)$ be an $H_2$-boundary measure. By Lemma~\ref{L:metriz-zakl}$(d)$ we get $\norm{\mu}=1$.
       Let $z:[-1,1]\to[0,1]$ be continuous such that $z(-1)=z(0)=z(1)=1$ and $z<1$ elsewhere. Then the function 
$$f_{10,s_0}(s,t)=g_{s_0}(t)z(t)\mbox{ for }(s,t)\in [0,1]\times[-1,1],\ f_{10,s_0}(a)=f_{10,s_0}(b)=0,$$
belongs to $H_2$ and
$$1=f_{10,s_0}(s_0,0)=\int f_{10,s_0}\di\mu,$$
so $\mu$ is carried by $\{(s_0,-1),(s_0,0),(s_0,1)\}$. Since $\mu$ is carried by the Choquet boundary, we deduce that
$\mu$ is carried by $\{(s_0,-1),(s_0,1)\}$, i.e., $\mu=\gamma\ep_{(s_0,-1)}+\delta\ep_{(s_0,1)}$. Moreover, we see that $\abs{\gamma}+\abs{\delta}=\gamma+\delta=1$. Further,
$$0=f_2(s_0,0)=\int f_2\di\mu=\delta-\gamma.$$
We deduce that $\gamma=\delta=\frac12$, i.e., $\mu=\frac12(\ep_{(s_0,1)}+\ep_{(s_0,-1)})$. We conclude that this is the unique $H_2$-boundary measure in $M_{(s_0,0)}(H_2)$.
\item $x=(0,0)$:  Let $\mu\in M_{(0,0)}(H_2)$. By Lemma~\ref{L:metriz-zakl}$(d)$ we get $\norm{\mu}=\abs{\alpha}+\abs{\beta}$. Moreover,
$$\abs{\alpha}+\abs{\beta}=f_7(0,0)=\int f_7\di\mu.$$
It follows that $\mu$ is carried by $\{a,b\}$, i.e., $\mu=\gamma \ep_a+\delta\ep_b$, and
$$\abs{\alpha}+\abs{\beta}=\abs{\gamma}+\abs{\delta}=\frac\gamma\alpha\abs{\alpha}+\frac\delta\beta\abs{\beta}.$$
Further, the function
$$f_{11}(s,t)=0 \mbox{ for }(s,t)\in[0,1]\times\{-1,0,1\},\ f_{11}(a)=\beta,f_{11}(b)=-\alpha,$$
belongs to  $H_2$ and hence 
$$0=f_{11}(0,0)=\int f_{11}\di\mu=\gamma\beta-\delta\alpha,$$ so 
necessarily $\frac\gamma\alpha=\frac\delta\beta=1$, i.e., $\mu=\alpha\ep_a+\beta\ep_b$. We conclude that $M_{(0,0)}(H_2)=\{\alpha\ep_a+\beta\ep_b\}$.
  \end{itemize}

We now conclude that $H_2$ is simplicial. The proof for $j=1$ is similar -- we only do not consider points from $[0,1]\times((-1,0)\cup(0,1))$ (they are not in $K_1$) and for the remaining points we use the restrictions of the respective functions to $K_1$.

$(b)$: The measure $\alpha\ep_a+\beta\ep_b-\tfrac12(\ep_{(0,1)}+\ep_{(0,-1)})$ is $H_j$-boundary and belongs to $(H_j)^\perp$.

$(c)$: This follows from $(b)$ using Lemma~\ref{L:metriz-zakl}$(c)$ and the already proved part of Theorem~\ref{T:bez1}. 

$(d)$: Assume $\alpha=\beta$. Set  
$$\mu_1=-\alpha\ep_b+\frac12\ep_{(0,1)},\ \mu_2= \alpha\ep_a-\frac12\ep_{(0,-1)}.$$
Then $\mu_1$ and $\mu_2$ are two distinct $H_j$-boundary measures on $K_j$ and $\norm{\mu_1}=\norm{\mu_2}=\abs{\alpha}+\frac12$. Further, set
$$\varphi(f)=\int f\di\mu_1=-\alpha f(b)+\tfrac12f(0,1),\quad f\in H_j.$$
Then $\varphi\in H_j^*$ and $\norm{\varphi}\le \norm{\mu_1}=\abs{\alpha}+\frac12$. The function
$$f_{12}(s,t)=t\mbox{ for }(s,t)\in[0,1]\times[-1,1], f_{12}(a)=\tfrac{\abs{\alpha}}{\alpha}, f_{12}(b)=-\tfrac{\abs{\alpha}}{\alpha},$$
belongs to $H_j$ and witnesses that $\norm{\varphi}\ge\abs{\alpha}+\frac12$.
Moreover, we have
$$\int f\di\mu_1=\int f\di\mu_2,\quad f\in H_j,$$
hence $\mu_1,\mu_2$ are two different $H_j$-boundary measures in $M_\varphi(H_j)$. Thus $H_j$ is not functionally
simplicial.
\end{proof}

Our next aim is to prove that under suitable assumptions (if $\alpha,\beta$ are distinct positive numbers) 
the function spaces $H_j$ are functionally simplicial. To prove that we need several lemmata.

\begin{lemma}\label{L:odhad}
Let $j\in\{1,2\}$. Assume that $\mu$ is an $H_j$-boundary measure on $K_j$. Set 
$$\varphi(f)=\int f\di\mu \mbox{\quad and\quad }\varphi_0(f)=\int_{\{a,b,(0,1),(0,-1)\}} f\di\mu\mbox{\quad for }f\in H_j.$$ Then 
$$\norm{\varphi}=\norm{\mu|_{K_j\setminus\{a,b,(0,1),(0,-1)\}}}+\norm{\varphi_0}.$$
\end{lemma}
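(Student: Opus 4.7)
The inequality $\norm\varphi \le \norm{\mu_1} + \norm{\varphi_0}$, with $\mu_1 := \mu|_{K_j\setminus\{a,b,(0,1),(0,-1)\}}$, is immediate from $\varphi(f) = \int f\,d\mu_1 + \varphi_0(f)$ and $|\int f\,d\mu_1|\le\norm{\mu_1}\norm f$. The plan for the reverse inequality is, given $\epsilon > 0$, to build $f = g + h \in H_j$ of norm at most one with $g$ capturing almost all of $\norm{\mu_1}$ through $\int g\,d\mu_1$ and $h$ realizing $\varphi_0(h)$ close to $\norm{\varphi_0}$, where $g$ and $h$ are arranged so that $|g|+|h|\le 1$ pointwise and neither interferes with the other's pairing against $\mu$.

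Fix $\epsilon > 0$. Every $f\in H_j$ forces $\alpha f(a) + \beta f(b) = \tfrac12(f(0,1)+f(0,-1))$ (both equal $f(0,0)$); pick a 4-tuple $(z_a,z_b,z_1,z_{-1})\in\ef^4$ on this hyperplane with entries of modulus at most one so that $|z_a\mu(\{a\}) + z_b\mu(\{b\}) + z_1\mu(\{(0,1)\}) + z_{-1}\mu(\{(0,-1)\})| > \norm{\varphi_0} - \epsilon$. By Radon--Nikod\'ym applied to $\mu_1$ and Lusin's theorem find $h_0\in C(K_j,\ef)$ with $\norm{h_0}\le 1$ and $\int h_0\,d\mu_1$ real positive exceeding $\norm{\mu_1}-\epsilon$; since $\mu_1$ does not charge $[0,1]\times\{0\}$, a further modification on a thin $t$-strip around this line (of $|\mu_1|$-measure less than $\epsilon$) forces $h_0(s,0) = \tfrac12(h_0(s,1)+h_0(s,-1))$ for all $s\in[0,1]$ while changing the integral by $O(\epsilon)$.

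Construct $h\in H_j$ by $h(a)=z_a$, $h(b)=z_b$, and $h(s,t) = \tilde\chi(s)\,\eta(t)$, where $\tilde\chi\in C([0,1],[0,1])$ is a bump at zero of support $[0,\delta]$ with $\tilde\chi(0)=1$, and $\eta\in C([-1,1],\ef)$ attains $z_{\pm 1}$ at $\pm 1$, $\tfrac12(z_1+z_{-1})$ at $0$, satisfies $|\eta|\le 1$, and vanishes outside three small intervals around $-1$, $0$, $1$ (for $j=1$ one just restricts $\eta$ to $\{-1,0,1\}$). A direct check using $\eta(\pm 1)=z_{\pm 1}$, $\eta(0)=\tfrac12(z_1+z_{-1})$, and $\alpha z_a+\beta z_b=\tfrac12(z_1+z_{-1})$ places $h$ in $H_j$ with $\norm h\le 1$. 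Let $\tilde\eta\in C([-1,1],[0,1])$ equal $1$ on a slight thickening of $\operatorname{supp}(\eta)$ and $0$ off a slightly larger neighborhood, and set $\chi = 1 - \tilde\chi\,\tilde\eta$ on the square part of $K_j$, $\chi(a)=\chi(b)=0$. Since $\tilde\eta(-1)=\tilde\eta(0)=\tilde\eta(1)=1$, the midpoint relation for $h_0$ is inherited by $\chi h_0$; together with $\chi(0,0)=\chi(a)=\chi(b)=0$ this places $g := \chi h_0$ in $H_j$. Moreover $|g|\le\chi$ and $|h|\le 1-\chi$ pointwise, whence $|g+h|\le 1$.

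Let $f := g + h$. Then $f\in H_j$, $\norm f\le 1$, and, because $g$ vanishes at $a,b,(0,1),(0,-1)$ by construction of $\chi$,
$$\varphi(f) = \int g\,d\mu_1 + \int h\,d\mu_1 + \varphi_0(h).$$
The first summand equals $\int h_0\,d\mu_1 - \int\tilde\chi\tilde\eta\,h_0\,d\mu_1$, the perturbation being bounded by $|\mu_1|([0,\delta]\times\operatorname{supp}(\tilde\eta))$, which is $<\epsilon$ once $\delta$ and the support of $\tilde\eta$ are chosen small, as $\mu_1$ does not charge the finite set $\{(0,-1),(0,0),(0,1)\}$. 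The second summand has modulus at most $|\mu_1|(\operatorname{supp}(h))<\epsilon$. The third has modulus at least $\norm{\varphi_0}-\epsilon$. Multiplying $g$ and $h$ independently by unimodular scalars (the defining relations of $H_j$ are homogeneous and the estimates $|g|\le\chi$, $|h|\le 1-\chi$ are unaffected) aligns the phases of $\int g\,d\mu_1$ and $\varphi_0(h)$, giving $|\varphi(f)|\ge\norm{\mu_1}+\norm{\varphi_0}-O(\epsilon)$; letting $\epsilon\to 0$ concludes. The principal obstacle is preserving membership in $H_j$ for both $g$ and $h$ while enforcing $|g+h|\le 1$; the product form $\chi = 1 - \tilde\chi\tilde\eta$ resolves this by placing $\operatorname{supp}(h)$ exactly inside the zero set of $\chi$ and matching the linear relations that define $H_j$.
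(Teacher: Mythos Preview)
Your proof is correct and follows essentially the same strategy as the paper's: build one function realizing $\norm{\mu_1}$ on the bulk, another realizing $\norm{\varphi_0}$ supported near the four special points, and add them. The main technical difference is in how you control the norm of the sum. The paper allows $\norm{f+g}\le 1+\varepsilon$ (its $f$ is merely small, not zero, where its $g$ is supported) and divides through at the end; you achieve $\norm{g+h}\le 1$ exactly via the cut-off $\chi=1-\tilde\chi\tilde\eta$, which enforces $|g|\le\chi$ and $|h|\le 1-\chi$ pointwise. Your observation that the defining relations of $H_j$ are homogeneous, so $g$ and $h$ may be rotated independently by unimodular scalars without leaving $H_j$ or violating $|g|+|h|\le 1$, is a clean way to handle phase alignment. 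One point you use implicitly is that every 4-tuple on the hyperplane $\alpha z_a+\beta z_b=\tfrac12(z_1+z_{-1})$ with all $|z_\bullet|\le 1$ is actually attained by some $f\in H_j$ of norm $\le 1$; your construction of $h$ supplies exactly this, so the supremum defining $\norm{\varphi_0}$ really is over such 4-tuples.
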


\begin{proof} Assume first that $j=2$. Inequality `$\le$' is obvious, let us prove the converse one.
Let $\varepsilon>0$ be arbitrary. Since $\mu$ is $H_2$-boundary, it is carried by the Choquet boundary, i.e.,
$\abs{\mu}([0,1]\times\{0\})=0$. Fix $c\in(0,1)$ such that $\abs{\mu}([0,1]\times(-c,c))<\ep$. 
Further,  the space 
$$\widetilde{K}=K_2\setminus([0,1]\times(-c,c)\cup\{a,b,(0,1),(0,-1)\})$$ is locally compact, hence it follows from the Riesz theorem that there is $f_0\in C_0(\widetilde{K})$ such that $\norm{f_0}=1$ and
$$\int_{\widetilde{K}} f_0\di\mu\ge \norm{\mu|_{\widetilde{K}}}-\ep>\norm{\mu|_{K_2\setminus\{a,b,(0,1),(0,-1)\}}}-2\ep$$
(in particular, the integral on the left-hand side has real value).

Define the function $f$ by setting $f(a)=f(b)=0$ and
$$f(s,t)=\begin{cases}  f_0(s,t), & (s,t)\in\widetilde{K},
\\ 0, & s=0,t\in\{-1,0,1\},\\ \frac12(f_0(s,-1)+f_0(s,1)), & s\in(0,1], t=0, 
\\ f(s,0)+\tfrac tc( f_0(s,c)-f(s,0)), & s\in [0,1], t\in (0,c), 
\\ f(s,0)-\tfrac tc( f_0(s,-c)-f(s,0)), & s\in [0,1], t\in (-c,0).
\end{cases}$$
Then $f\in H_2$, $\norm{f}=1$ and
$$\Re\varphi(f)=\int_{\widetilde{K}} f_0\di\mu+\Re\int_{[0,1]\times(-c,c)}f\di\mu\ge \norm{\mu|_{K_2\setminus\{a,b,(0,1),(0,-1)\}}}-3\ep.$$

Further, find $g_0\in H_2$ such that $\norm{g_0}=1$ and $\varphi_0(g_0)>\norm{\varphi_0}-\ep$. Fix $\delta\in(0,\frac12)$ such that $$\abs{\mu}([0,\delta]\times([-1,-1+\delta]\cup[1-\delta,1])\setminus\{(0,-1),(0,1)\})<\ep$$ and 
$$\abs{f}<\ep\quad\mbox{on}\quad[0,\delta]\times([-1,-1+\delta]\cup[-\delta,\delta]\cup[1-\delta,1]).$$ Set
$$\begin{aligned}
    g(a)&= g_0(a),\quad g(b)=g_0(b),\\ g(s,t)&= \begin{cases}
    \frac1{\delta^2} g_0(0,1) (\delta-s)(t-1+\delta), & (s,t)\in [0,\delta]\times[1-\delta,1], \\ 
    \frac1{\delta^2} g_0(0,-1) (\delta-s)(t-1+\delta), & (s,t)\in [0,\delta]\times[-1,-1+\delta], \\
    \frac1{\delta^2} g_0(0,0) (\delta-s)(\delta-\abs{t}), & (s,t)\in [0,\delta]\times[-\delta,\delta], \\
    0 & \mbox{otherwise}.
\end{cases}\end{aligned}$$
Then $g\in H_2$, $\norm{g}\le 1$ and $\varphi_0(g)=\varphi_0(g_0)$. Moreover,
$$\Re\varphi(g)=\varphi_0(g)+\Re \int_{K_2\setminus\{a,b,(0,1),(0,-1)\}}g\di\mu\ge\norm{\varphi_0}-2\ep.$$
Set $h=f+g$. Then $h\in H_2$, $\norm{h}\le 1+\ep$ and
$$\begin{aligned}
    \abs{\varphi(h)}&\ge\Re\varphi(h)=\Re\varphi(f)+\Re\varphi(g)\\&\ge 
    \norm{\mu|_{K_2\setminus\{a,b,(0,1),(0,-1)}}-3\ep 
    + \norm{\varphi_0}-2\ep,\end{aligned}$$
hence 
$$    \norm{\varphi}\ge\frac{\norm{\mu|_{K_2\setminus\{a,b,(0,1),(0,-1)}} 
    + \norm{\varphi_0}-5\ep}{1+\ep}.$$ Since $\ep>0$ is arbitrary, this completes the argument.    

The proof for $j=1$ is analogous.
\end{proof}

\begin{lemma}\label{L:anihilator}
    $H_j^\perp\cap M_{\bnd}(K_j)=\span\{\alpha\ep_a+\beta\ep_b-\frac12(\ep_{(0,1)}+\ep_{(0,-1)})\}$.
\end{lemma}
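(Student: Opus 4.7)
The inclusion ``$\supset$'' is immediate: set $\sigma=\alpha\ep_a+\beta\ep_b-\tfrac12(\ep_{(0,1)}+\ep_{(0,-1)})$. The measure $\sigma$ is supported on the Choquet boundary $K_j\setminus([0,1]\times\{0\})$ (by Lemma~\ref{L:metriz-zakl}$(e)$), hence is $H_j$-boundary by Observation~\ref{obs:metriz}. Using the defining identity $f(0,0)=\alpha f(a)+\beta f(b)$ and $f(0,0)=\tfrac12(f(0,1)+f(0,-1))$ one has $\int f\di\sigma=0$ for every $f\in H_j$, so $\sigma\in H_j^\perp$.

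For ``$\subset$'' let $\mu\in H_j^\perp\cap M_{\bnd}(K_j)$. Define $\varphi$ and $\varphi_0$ as in Lemma~\ref{L:odhad}. Since $\mu\in H_j^\perp$, the functional $\varphi$ is zero, so $\norm{\varphi}=0$; the formula of Lemma~\ref{L:odhad} then forces $\norm{\mu|_{K_j\setminus\{a,b,(0,1),(0,-1)\}}}=0$. Hence $\mu$ is supported on the four-point set $\{a,b,(0,1),(0,-1)\}$, i.e.
$$\mu=c_1\ep_a+c_2\ep_b+c_3\ep_{(0,1)}+c_4\ep_{(0,-1)}$$
for some $c_1,c_2,c_3,c_4\in\ef$.

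The remaining task is to determine all $(c_1,c_2,c_3,c_4)\in\ef^4$ for which $c_1 f(a)+c_2 f(b)+c_3 f(0,1)+c_4 f(0,-1)=0$ holds for every $f\in H_j$. To do this I would identify the linear image
$$V=\bigl\{(f(a),f(b),f(0,1),f(0,-1))\setsep f\in H_j\bigr\}\subset\ef^4.$$
By the defining relations of $H_j$ we certainly have $V\subset\{(x_1,x_2,x_3,x_4)\setsep \alpha x_1+\beta x_2=\tfrac12(x_3+x_4)\}$, which is a three-dimensional subspace of $\ef^4$. The main (yet routine) step is the opposite inclusion: given any quadruple $(x_1,x_2,x_3,x_4)$ satisfying $\alpha x_1+\beta x_2=\tfrac12(x_3+x_4)$, construct $f\in H_j$ with prescribed values at $a,b,(0,1),(0,-1)$. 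One sets $f(a)=x_1$, $f(b)=x_2$, $f(0,1)=x_3$, $f(0,-1)=x_4$, puts $f(0,0)=\alpha x_1+\beta x_2=\tfrac12(x_3+x_4)$ (both values agree thanks to the compatibility relation), extends continuously to $[0,1]\times\{-1,0,1\}$ making sure $f(s,0)=\tfrac12(f(s,-1)+f(s,1))$ for every $s$, and in the case $j=2$ extends further by Tietze to all of $K_2$.

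Consequently $V$ equals the three-dimensional subspace above, and its annihilator in $(\ef^4)^*$ is the one-dimensional subspace spanned by the linear functional $(x_1,x_2,x_3,x_4)\mapsto\alpha x_1+\beta x_2-\tfrac12 x_3-\tfrac12 x_4$. Hence $(c_1,c_2,c_3,c_4)=\lambda(\alpha,\beta,-\tfrac12,-\tfrac12)$ for some $\lambda\in\ef$, i.e. $\mu=\lambda\sigma$. The only step that requires honest work is the explicit construction of the function $f$ above (one must be careful near $s=0$ so that continuity and both defining identities hold simultaneously); everything else is bookkeeping with Lemma~\ref{L:odhad}.
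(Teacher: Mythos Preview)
Your proposal is correct and follows essentially the same route as the paper: use Lemma~\ref{L:odhad} to reduce to the four-point set $F=\{a,b,(0,1),(0,-1)\}$, identify the restriction space $\{f|_F\setsep f\in H_j\}$ with the hyperplane $\{g\in\ef^F\setsep \alpha g(a)+\beta g(b)=\tfrac12(g(0,1)+g(0,-1))\}$, and conclude by duality. The only difference is that the paper writes down an explicit extension $f(s,t)=\tfrac12(1-t)g(0,-1)+\tfrac12(1+t)g(0,1)$, which is affine in $t$ and hence works uniformly for both $j=1$ and $j=2$ with no continuity issues near $s=0$ and no need for Tietze; your worry about that step is therefore unfounded.
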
 

\begin{proof} 
Inclusion `$\supset$' is clear. To prove the converse, assume $\mu\in H_j^\perp\cap M_{\bnd}(K_j)$. By Lemma~\ref{L:odhad} we deduce that $\mu$ is carried by $F=\{a,b,(0,1),(0,-1)\}$. Note that
$$\{f|_F\setsep f\in H_j\}=\{g\in \ef^F\setsep \alpha g(a)+\beta g(b)=\tfrac12 (g(0,1)+g(0,-1))\}.$$
Indeed, inclusion `$\subset$' follows from the definition of $H_j$. To prove the converse, fix $g$ in the set on the right-hand side. Define $f(a)=g(a)$, $f(b)=g(b)$ and
$$ f(s,t)=\frac12(1-t)g(0,-1)+\frac12(1+t)g(0,1) \mbox{ for }(s,t)\in[0,1]\times[-1,1].
$$
Then $f\in H_j$ and $f|_F=g$, which completes the proof of inclusion `$\supset$'. Now the assertion easily follows from the bipolar theorem.
\end{proof}

\begin{lemma}\label{L:c1-4}
 Let $j\in\{1,2\}$. The function space $H_j$ fails to be functionally simplicial if and only if there are $c_1,c_2,c_3,c_4\in\ef$ satisfying the following two conditions:
\begin{gather}\label{eq:posun}
    \abs{c_1}+\abs{c_2}+\abs{c_3}+\abs{c_4}=\abs{c_1-\tfrac12}+\abs{c_2-\tfrac12}+\abs{c_3+\alpha}+\abs{c_4+\beta},\\ \label{eq:nabyvani}
   \begin{aligned}  
   \exists x_1,x_2,x_3,x_4\in \ef\colon &\abs{x_j}\le 1\mbox{ for }j\le 4, \tfrac12(x_1+x_2)=\alpha x_3+\beta x_4, 
   \\&
   c_1x_1+c_2x_2+c_3x_3+c_4x_4=\abs{c_1}+\abs{c_2}+\abs{c_3}+\abs{c_4}.
  \end{aligned}\end{gather}
\end{lemma}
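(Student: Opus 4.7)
The plan is to identify non-functional-simpliciality of $H_j$ with the existence of two distinct $H_j$-boundary measures in some $M_\varphi(H_j)$ differing by a nonzero multiple of $\sigma := \alpha\ep_a+\beta\ep_b-\tfrac12(\ep_{(0,1)}+\ep_{(0,-1)})$, and then to read off $c_1,c_2,c_3,c_4$ as the atomic weights of one such measure at the points $(0,1),(0,-1),a,b$ respectively. Throughout I abbreviate $F:=\{(0,1),(0,-1),a,b\}$.

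For the forward implication, suppose $H_j$ is not functionally simplicial and pick distinct $H_j$-boundary measures $\mu_1,\mu_2\in M_\varphi(H_j)$. Their difference lies in $H_j^\perp\cap M_{\bnd}(K_j)$, so by Lemma~\ref{L:anihilator} it is a nonzero scalar multiple of $\sigma$; after rescaling I may assume $\mu_2=\mu_1+\sigma$. Because $\sigma$ is concentrated on $F$, the two measures agree on $K_j\setminus F$. Lemma~\ref{L:odhad} applied to each $\mu_i$, combined with the decomposition $\|\mu_i\|=\|\mu_i|_{K_j\setminus F}\|+\|\mu_i|_F\|$ and the defining equality $\|\mu_i\|=\|\varphi\|$, yields $\|\mu_i|_F\|=\|\varphi_0\|$ for $i=1,2$. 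Writing these norms in terms of the atomic masses of $\mu_1$ (and of $\mu_2=\mu_1+\sigma$) gives \eqref{eq:posun} directly.

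Next I compute $\|\varphi_0\|$ explicitly. The functional $\varphi_0(f)=c_1 f(0,1)+c_2 f(0,-1)+c_3 f(a)+c_4 f(b)$ depends only on values of $f$ at $F$, so the key observation is that the restriction map sends the unit ball of $H_j$ onto $S:=\{(x_1,x_2,x_3,x_4)\in\ef^4 : |x_j|\le 1,\ \tfrac12(x_1+x_2)=\alpha x_3+\beta x_4\}$. The inclusion into $S$ is immediate from the definition of $H_j$; for surjectivity I extend each admissible tuple to $f\in H_j$ with $\|f\|\le 1$ by declaring $f(s,\pm 1)\equiv x_{1,2}$, $f(s,0)\equiv\tfrac12(x_1+x_2)$, interpolating linearly in $t$ on $[0,1]\times[-1,1]$ (for $K_2$; for $K_1$ only the three vertical values are required), and setting $f(a)=x_3$, $f(b)=x_4$. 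Since $S$ is compact, $\|\varphi_0\|$ is attained at some $(x_j^*)$, and since it equals $|c_1|+|c_2|+|c_3|+|c_4|$ by the previous paragraph, a phase rotation $(x_j^*)\mapsto(\lambda x_j^*)$ with $|\lambda|=1$ chosen to make the sum a nonnegative real number preserves both the unit-disc bounds and the linear constraint, producing \eqref{eq:nabyvani}.

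For the converse, given $c_i$ satisfying \eqref{eq:posun} and \eqref{eq:nabyvani}, I define $\mu_1:=c_3\ep_a+c_4\ep_b+c_1\ep_{(0,1)}+c_2\ep_{(0,-1)}$ and $\mu_2:=\mu_1+\sigma$. Both are supported on $F\subset\Ch_{H_j}K_j$, so both are $H_j$-boundary; they differ by $\sigma\in H_j^\perp$, so they represent a common $\varphi\in H_j^*$; and they are distinct because $\sigma\ne 0$. Condition \eqref{eq:nabyvani} together with the extension description of $\|\varphi\|$ forces $\|\varphi\|\ge|c_1|+|c_2|+|c_3|+|c_4|=\|\mu_1\|$, with the reverse inequality trivial, and by \eqref{eq:posun} the same holds for $\mu_2$, placing both measures in $M_\varphi(H_j)$ and witnessing the failure of functional simpliciality. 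The one nonroutine step is the extension/attainment argument in the third paragraph: one must verify that the four boundary values can be prescribed independently within the unit ball of $H_j$ in both $K_1$ and $K_2$, after which the entire lemma reduces to a finite-dimensional duality computation over $\ef^4$.
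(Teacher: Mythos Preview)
Your proof is correct and follows essentially the same approach as the paper: reduce via Lemma~\ref{L:anihilator} to measures differing by a multiple of $\sigma$, use Lemma~\ref{L:odhad} to identify $\|\mu_i|_F\|$ with $\|\varphi_0\|$, and read off the two conditions from the atomic weights. Your treatment is in fact more explicit than the paper's in two places---you spell out why the restriction map from $B_{H_j}$ surjects onto $S$ (the paper relegates the analogous computation to the proof of Lemma~\ref{L:anihilator}), and you verify the norm equalities $\|\varphi\|=\|\mu_1\|=\|\mu_2\|$ in the converse direction, which the paper leaves implicit.
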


\begin{proof}
By definitions $H_j$ fails to be functionally simplicial if and only if there is some $\varphi\in H_j^*$ and two different boundary measures $\mu_1,\mu_2\in M_\varphi(H_j)$. It means that $\mu_2-\mu_1\in (H_j)^\perp$,
so $\mu_2-\mu_1$ is a multiple of the measure from Lemma~\ref{L:anihilator}. Up to multiplying $\varphi$ by a nonzero constant we may assume that 
$$\mu_2=\mu_1+\alpha\ep_a+\beta\ep_b-\frac12(\ep_{(0,1)}+\ep_{(0,-1)}).$$
In particular, necessarily $\mu_1|_{K_j\setminus\{a,b,(0,1),(0,-1)\}}=\mu_2|_{K_j\setminus\{a,b,(0,1),(0,-1)\}}$. Set
$$c_1=\mu_1(\{(0,1)\}), c_2=\mu_1(\{(0,-1)\}), c_3=\mu_1(\{a\}), c_4=\mu_1(\{b\}).$$
Now we easily see that \eqref{eq:posun} is fulfilled. Further, let $\varphi_0$ be defined as in Lemma~\ref{L:odhad}. Since $\mu_1\in M_\varphi(H_j)$, Lemma~\ref{L:odhad} yields 
$$\norm{\varphi_0}=\abs{c_1}+\abs{c_2}+\abs{c_3}+\abs{c_4}.$$
It now easily follows that \eqref{eq:nabyvani} is valid.

Conversely, if $c_1,\dots,c_4$ satisfy \eqref{eq:posun} and \eqref{eq:nabyvani}, then the functional $\varphi(f)=c_1f(0,1)+c_2f(0,-1)+c_3f(a)+c_4f(b)$ and measures
$$\begin{aligned}
    \mu_1&=c_1\ep_{(0,1)}+c_2\ep_{(0,-1)}+c_3\ep_a+c_4\ep_b, \\
\mu_2&=(c_1-\tfrac{1}{2})\ep_{(0,1)}+(c_2-\tfrac12)\ep_{(0,-1)}+(c_3+\alpha)\ep_a+(c_4+\beta)\ep_b\end{aligned}$$
witness that $H_j$ is not functionally simplicial.
\end{proof}

We continue by formulating some easy observations on condition \eqref{eq:nabyvani}:

\begin{obs}\label{obs:znamenka}
  \begin{enumerate}[$(1)$] Let $c_1,\dots,c_4\in\ef$.
      \item  Assume that $x_1,\dots,x_4$ satisfy the conditions on the first line of \eqref{eq:nabyvani}. Then the second line is fulfilled as well if and only if
$$\forall j\le 4\colon\left(c_j=0\right) \mbox{ or } \left(c_j\ne0\ \&\ x_j=\tfrac{\abs{c_j}}{c_j}\right).$$
    \item The validity of \eqref{eq:nabyvani} depends only on signs (in the real case) or arguments (in the complex case) of numbers $c_j$.
    \item Assume that \eqref{eq:nabyvani} is valid. It remains to be valid if one of the numbers $c_1,\dots,c_4$ is replaced by $0$.
  \end{enumerate}   
\end{obs}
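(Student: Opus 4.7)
The plan is to prove $(1)$ first by the triangle inequality, and then read off $(2)$ and $(3)$ as consequences. For $(1)$, since $|x_j|\le 1$ one has $|c_j x_j|\le |c_j|$ for each $j$, and therefore
$$\Big|\sum_{j=1}^4 c_j x_j\Big|\le \sum_{j=1}^4 |c_j x_j|\le \sum_{j=1}^4 |c_j|.$$
The hypothesis forces equality in both inequalities. The outer equality means the summands $c_j x_j$ all point in a common direction; because the sum equals the nonnegative real number $\sum|c_j|$, that direction is the positive real axis. The inner equality forces $|c_j x_j|=|c_j|$ for each $j$. Together this yields $c_j x_j=|c_j|$ for every $j$, which is precisely the stated alternative: either $c_j=0$ (no constraint on $x_j$) or $c_j\ne 0$ and $x_j=|c_j|/c_j$. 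The converse direction is immediate since with such $x_j$'s every product $c_j x_j$ already equals $|c_j|$.

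Part $(2)$ will then be essentially tautological once $(1)$ is in hand. The first-line constraints of \eqref{eq:nabyvani} on $x_1,\dots,x_4$ do not involve $c_1,\dots,c_4$ at all, while by $(1)$ the second-line equality constrains $x_j$ only through the quantity $|c_j|/c_j$, i.e., through the sign of $c_j$ in the real case or its argument in the complex case. Replacing any $c_j$ by another nonzero number with the same sign/argument therefore preserves the full system of constraints on the witnesses, hence preserves the validity of \eqref{eq:nabyvani}.

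For part $(3)$, I would reuse the original witnesses. Suppose \eqref{eq:nabyvani} holds for $c_1,\dots,c_4$ with witnesses $x_1,\dots,x_4$; replace some $c_{j_0}$ by $0$. The first-line constraints on $x_1,\dots,x_4$ are unchanged, so the same $x_j$'s remain admissible there. For the second-line equality, by $(1)$ we have $c_j x_j=|c_j|$ for every $j$, so deleting the $j_0$-th term on the left drops the value by $|c_{j_0}|$, which matches the corresponding drop on the right from $|c_{j_0}|$ to $|0|=0$. Hence the same witnesses work.

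I do not expect a real obstacle: the whole statement is a packaging of the equality case in the triangle inequality $|\sum z_j|\le \sum|z_j|$, applied to $z_j=c_j x_j$. The only minor nuance is the degenerate case $\sum|c_j|=0$, where all $c_j$ vanish and every choice of $x_j$ (admissible in the first line) trivially satisfies the second line; this case needs a one-line mention but no real argument.
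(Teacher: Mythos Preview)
Your proof is correct and follows the natural approach via the equality case of the triangle inequality. The paper does not supply a proof of this observation at all (it is stated without proof as self-evident), so your argument is exactly the kind of routine verification the authors expect the reader to fill in.
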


Now we are ready to prove functional simpliciality of $H_j$ in the real setting:

\begin{prop}\label{P:real-fs}
    Assume that $\ef=\er$, $\alpha,\beta>0$ and $\alpha\ne\beta$. Then $H_j$ is functionally simplicial.
\end{prop}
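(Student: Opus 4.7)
My plan is to apply Lemma~\ref{L:c1-4} and argue by contradiction: suppose $c_1, c_2, c_3, c_4 \in \er$ satisfy both \eqref{eq:posun} and \eqref{eq:nabyvani}. Put $t_1 = t_2 = \tfrac12$, $t_3 = -\alpha$, $t_4 = -\beta$, and $M := (1+\alpha+\beta)/2$, so $\sum_i |t_i| = 2M$. I will classify each index $i \in \{1,2,3,4\}$ into one of four \emph{types}: $A$ (aligned: $c_it_i>0$ and $|c_i|\ge|t_i|$), $I$ (intermediate: $c_it_i>0$ and $0<|c_i|<|t_i|$), $O$ (opposite: $c_it_i<0$), and $Z$ (zero: $c_i=0$). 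Set $T_J := \sum_{i\in J}|t_i|$ and $C_J := \sum_{i\in J}|c_i|$.

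The first reduction is for \eqref{eq:posun}. A direct computation of $|c_i - t_i| - |c_i|$ in each type yields the values $-|t_i|$, $|t_i|-2|c_i|$, $|t_i|$, $|t_i|$ on types $A$, $I$, $O$, $Z$ respectively. Substituting into \eqref{eq:posun} and using the partition identity $T_A + T_I + T_O + T_Z = 2M$, I expect \eqref{eq:posun} to collapse to the single clean identity $T_A + C_I = M$.

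The second reduction is for \eqref{eq:nabyvani}. By Observation~\ref{obs:znamenka}(1) the value $x_i = \operatorname{sign}(c_i)\in\{-1,1\}$ is forced whenever $c_i\ne 0$, and $x_i\in[-1,1]$ is free otherwise. Rewriting the linear constraint $\tfrac12(x_1+x_2)=\alpha x_3+\beta x_4$ as $\sum_i t_i x_i = 0$, the sign rules force $t_i x_i = |t_i|$ on $A\cup I$, $t_i x_i = -|t_i|$ on $O$, and $t_i x_i \in [-|t_i|, |t_i|]$ on $Z$. Feasibility thus amounts to $|T_{AI} - T_O| \le T_Z$, which, combined with the partition identity, simplifies to $T_{AI}\le M$ (with a harmless automatic lower bound $T_{AI}\ge M - T_Z$).

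Finally I combine the two reductions. For every $i\in I$ we have $|c_i|<|t_i|$ strictly, so $C_I \le T_I$ with strict inequality whenever $I\ne\emptyset$; hence $T_{AI} = T_A + T_I \ge T_A + C_I = M$, matching $T_{AI} \le M$ only when $I = \emptyset$ and $T_A = M$. I then finish by a short enumeration showing that no subset of the multiset $\{\tfrac12, \tfrac12, \alpha, \beta\}$ sums to $M = (1+\alpha+\beta)/2$: each candidate value of $T_A$ forces one of the forbidden identities $\alpha=\beta$, $\alpha+\beta=1$, $\alpha+\beta = 0$, $1+\beta=0$, and the like, all excluded by the hypotheses $\alpha,\beta>0$, $\alpha\ne\beta$ and $\alpha+\beta<1$. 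The main bookkeeping obstacle will be organizing the four-type classification so that both conditions become tractable linear conditions on the type profile; once the two derived identities $T_A + C_I = M$ and $T_{AI}\le M$ are in hand, the strict inequality $C_I < T_I$ on any nonempty $I$ delivers the contradiction in one line.
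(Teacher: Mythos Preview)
Your proof is correct and takes a genuinely different organizational route from the paper. The paper first clamps the $c_i$ to the box $c_1,c_2\in[0,\tfrac12]$, $c_3\in[-\alpha,0]$, $c_4\in[-\beta,0]$ (using Observation~\ref{obs:znamenka}), after which \eqref{eq:posun} becomes the single linear equation $c_1+c_2-c_3-c_4=M$, and then runs a manual case split on which of the $c_i$ vanish, deriving a contradiction in each branch from the forced values of the $x_i$. Your approach skips the clamping and instead partitions the four indices by the sign and size of $c_i$ relative to $t_i$; this converts both conditions into the clean pair $T_A+C_I=M$ and $T_{A\cup I}\le M$, and the strict inequality $C_I<T_I$ on nonempty $I$ immediately forces $I=\emptyset$ and $T_A=M$, reducing everything to the subset-sum enumeration at the end. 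The gain is conceptual: your argument makes transparent why the case analysis collapses (it is really a single inequality pinching to equality), and the final enumeration is the same work the paper does scattered across its cases. The paper's approach, in turn, is slightly more hands-on and requires no preliminary type bookkeeping. Both are short; yours is arguably more reusable, since the type decomposition and the identity $T_A+C_I=M$ would carry over verbatim to variants of the problem.
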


\begin{proof}
Assume $H_j$ is not functionally simplicial. Let $c_1,\dots,c_4$ be the numbers provided by Lemma~\ref{L:c1-4}. Observe that without loss of generality we may assume that
$$c_1\in[0,\tfrac12], c_2\in [0,\tfrac12], c_3\in[-\alpha,0], c_4\in [-\beta,0].$$
Indeed, if $c_1>\frac12$, then the quadruple $\frac12,c_2,c_3,c_4$ clearly satisfy \eqref{eq:posun} and by Observation~\ref{obs:znamenka} it satisfies also \eqref{eq:nabyvani}. If $c_1<0$, the same applies to the quadruple $0,c_2,c_3,c_4$. 
 Similarly we may proceed for $c_2,c_3$ and $c_4$.

 Thus \eqref{eq:posun} implies:
 $$c_1+c_2-c_3-c_4 = \frac12-c_1+\frac12-c_2+c_3+\alpha+c_4+\beta,$$
 i.e., 
 $$c_1+c_2-c_3-c_4=\frac12(1+\alpha+\beta)$$
 
 Let us now look at \eqref{eq:nabyvani}. We distinguish several cases:

{\tt Case 1:} $c_1,c_2>0$. Then $x_1=x_2=1$ (by Observation~\ref{obs:znamenka}), hence $1=\alpha x_3+\beta x_4\le \alpha+\beta<1$, which is impossible.

{\tt Case 2:} $c_1=0$ and $c_2>0$. Then $x_2=1$ (by Observation~\ref{obs:znamenka}). Moreover, 
\begin{equation}
    \label{eq:al-be}
c_3+c_4=c_2-\frac12(1+\alpha+\beta)\le-\frac12(\alpha+\beta)<0.
\end{equation}
Thus at least one of $c_3,c_4$ is strictly negative. So, we distinguish some subcases:
\begin{itemize}
    \item $c_3<0$ and $c_4<0$. Then $x_3=x_4=-1$ (by Observation~\ref{obs:znamenka}) and hence
$$-\alpha-\beta=\alpha x_3+\beta x_4=\frac12(x_1+x_2)=\frac12(x_1+1),$$
thus $x_1=-1-2\alpha-2\beta<-1$, a contradiction.
\item  $c_3<0$ and $c_4=0$. Then $x_3=-1$, hence
$$-\alpha+\beta x_4=\frac12 (x_1+1),$$
i.e.,
$$-\frac12-\alpha=\frac12 x_1-\beta x_4\ge -\frac12-\beta,$$
so $\beta\ge\alpha$. On the other hand, from \eqref{eq:al-be} we have
$$-\alpha\le c_3\le-\frac12(\alpha+\beta),$$
hence $\beta\le \alpha$. So,  $\alpha=\beta$, which is a contradiction.
\item $c_3=0$ and $c_4<0$. This is completely analogous to the previous subcase.
\end{itemize}

{\tt  Case 3:} $c_1>0$ and $c_2=0$. This is completely analogous to Case 2.

{\tt Case 4:} $c_1=c_2=0$. Then 
$$-\alpha-\beta\le c_3+c_4=-\frac12(1+\alpha+\beta) <-\alpha-\beta,$$ a contradiction.

There is no further possibility, so this completes the proof.
\end{proof}

The previous proposition settles the real case, we proceed to the complex setting. The proof will be done by reduction to the real case using the following elementary estimate.

\begin{lemma}\label{L:prubeh}
    Let $z\in\TT$ and $\gamma\in\er\setminus\{0\}$. Then
    $$\forall t\in[0,\infty)\colon \abs{tz}-\abs{tz-\gamma}\le \gamma\Re z.$$
    Moreover, if $z\ne\pm1$, the inequality is strict.
\end{lemma}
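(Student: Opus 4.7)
The plan is to reduce the inequality to a one-line algebraic computation by writing $z=x+iy$ with $x^{2}+y^{2}=1$ (so $\Re z = x$) and squaring. First I would rewrite the desired inequality $\abs{tz}-\abs{tz-\gamma}\le\gamma\Re z$ in the equivalent form
$$t-\gamma x\le \abs{tz-\gamma},$$
using that $\abs{tz}=t$ for $t\ge 0$.

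Next, I would compute
$$\abs{tz-\gamma}^{2}=(tx-\gamma)^{2}+(ty)^{2}=t^{2}-2t\gamma x+\gamma^{2}=(t-\gamma x)^{2}+\gamma^{2}y^{2}.$$
The argument then splits into two cases. If $t-\gamma x<0$, the inequality is trivial since the right-hand side is nonnegative; moreover, it is strict unless $\abs{tz-\gamma}=0$, which would force $tz=\gamma\in\er$ and hence $z=\pm 1$. If $t-\gamma x\ge 0$, I square both sides and observe
$$\abs{tz-\gamma}^{2}-(t-\gamma x)^{2}=\gamma^{2}y^{2}\ge 0,$$
which gives the desired estimate. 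Since $\gamma\ne 0$, the last inequality is strict exactly when $y\ne 0$, i.e. when $z\ne\pm 1$, which handles the strictness clause in the second case as well.

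There is no genuine obstacle here; the statement is elementary and the only small care needed is to keep the case $t-\gamma x<0$ separate before squaring (squaring is only equivalent for nonnegative quantities) and to verify the strictness clause in that case by noting that $\abs{tz-\gamma}=0$ with $\gamma$ real forces $z\in\er$.
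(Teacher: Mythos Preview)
Your proof is correct and notably cleaner than the paper's own argument. The paper proceeds by a case analysis on $z$: for $z=\pm1$ it computes $\omega(t)=\abs{tz}-\abs{tz-\gamma}$ explicitly, and for $z\in\TT\setminus\er$ it writes $z=e^{ix}$, differentiates $\omega(t)=t-\sqrt{t^{2}-2\gamma t\cos x+\gamma^{2}}$ to show it is strictly increasing on $[0,\infty)$, and then computes $\lim_{t\to\infty}\omega(t)=\gamma\cos x$. Your approach bypasses all of this by observing directly that $\abs{tz-\gamma}^{2}-(t-\gamma x)^{2}=\gamma^{2}y^{2}$, which gives the inequality and the strictness in one stroke. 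This is both shorter and more transparent than the calculus argument.

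One small remark: in your Case~1 ($t-\gamma x<0$) you write that the inequality is strict ``unless $\abs{tz-\gamma}=0$''. In fact it is \emph{always} strict in this case, since the left side is negative and the right side is nonnegative; the hedge is unnecessary (and, as you implicitly note, $\abs{tz-\gamma}=0$ cannot occur simultaneously with $t-\gamma x<0$ anyway). This does not affect the validity of your argument.
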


\begin{proof} Let us look at the behavior of the function $\omega(t)=\abs{tz}-\abs{tz-\gamma}$, $t\ge0$. We distinguish three cases:
\begin{itemize}
    \item $z=1$: Then
    $$\omega(t)=\begin{cases}
        2t-\gamma, & t\in [0,\gamma],\\
        \gamma, & t\ge\gamma.
    \end{cases}\mbox{ if }\gamma>0\mbox{\quad and\quad }\omega(t)=\gamma\mbox{ for }t\in[0,\infty)\mbox{ if }\gamma<0.$$
    In particular, $\omega(t)\le\gamma=\gamma\Re z$.
    \item $z=-1$: Then
    $$\omega(t)=\begin{cases}
        2t+\gamma, & t\in [0,-\gamma],\\
        -\gamma, & t\ge-\gamma.
    \end{cases}\mbox{ if }\gamma<0\mbox{\quad and\quad }\omega(t)=-\gamma\mbox{ for }t\in[0,\infty)\mbox{ if }\gamma>0.$$
    In particular, $\omega(t)\le-\gamma=\gamma\Re z$.
    \item $z\in\TT\setminus\er$:  Then $z=e^{ix}$ for some $x\in(-\pi,0)\cup(0,\pi)$.
Then 
$$\omega(t)=t-\abs{(t\cos x-\gamma)+it\sin x}=t-\sqrt{t^2-2\gamma t\cos x+\gamma^2}.$$
Let us differentiate:
$$\omega'(t)=1-\frac{t-\gamma\cos x}{\sqrt{t^2-2\gamma t \cos x+\gamma^2}}.$$
Observe that
$$(t-\gamma\cos x)^2=t^2-2t\gamma\cos x+\gamma^2\cos^2 x< t^2-2t\gamma\cos x+\gamma^2$$
as $\abs{\cos x}<1$. We deduce that  $\omega'(t)>0$ for $t\in(0,\infty)$, hence $\omega$
is strictly increasing on $[0,\infty)$. Finally,
$$\lim_{t\to\infty} \omega(t)=\lim_{t\to\infty}\frac{2\gamma t\cos x-\gamma^2}{t+\sqrt{t^2-2\gamma t\cos x+\gamma^2}}=\gamma\cos x=\gamma\Re z.$$
In particular, $\omega(t)<\gamma\Re z$ for $t\in[0,\infty)$.
  \end{itemize}
\end{proof}

\begin{prop}\label{P:complex-fs}
    Assume that $\ef=\ce$, $\alpha,\beta>0$ and $\alpha\ne\beta$. Then $H_j$ is functionally simplicial.
\end{prop}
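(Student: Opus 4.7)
The plan is to reduce the complex case to the already proved real case (Proposition~\ref{P:real-fs}) via Lemma~\ref{L:prubeh}. Assume for contradiction that $H_j$ is not functionally simplicial. By Lemma~\ref{L:c1-4} we can fix $c_1,\dots,c_4\in\ce$ and auxiliary $x_1,\dots,x_4\in\ce$ with $\abs{x_j}\le 1$ witnessing the failure, i.e.\ satisfying \eqref{eq:posun} and \eqref{eq:nabyvani}. Introduce the shorthand $\gamma_1=\gamma_2=\tfrac12$, $\gamma_3=-\alpha$, $\gamma_4=-\beta$; then \eqref{eq:posun} reads $\sum_{j=1}^{4}(\abs{c_j}-\abs{c_j-\gamma_j})=0$ and the linear relation among the $x_j$'s in \eqref{eq:nabyvani} reads $\sum_{j=1}^{4}\gamma_jx_j=0$. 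Partition $\{1,2,3,4\}=J_+\cup J_0$ by whether $c_j\ne 0$ or $c_j=0$, and for $j\in J_+$ set $z_j=c_j/\abs{c_j}\in\TT$; Observation~\ref{obs:znamenka}(1) then forces $x_j=\overline{z_j}$ for $j\in J_+$.

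The core of the argument is a sandwich estimate for $\sum_{j\in J_+}\gamma_j\Re z_j$. For the lower bound, apply Lemma~\ref{L:prubeh} to each $j\in J_+$ to get $\abs{c_j}-\abs{c_j-\gamma_j}\le\gamma_j\Re z_j$, with strict inequality unless $z_j\in\{\pm 1\}$; combining this with the trivial identity $\abs{c_j}-\abs{c_j-\gamma_j}=-\abs{\gamma_j}$ for $j\in J_0$ and using \eqref{eq:posun} yields $\sum_{j\in J_0}\abs{\gamma_j}\le\sum_{j\in J_+}\gamma_j\Re z_j$. For the upper bound, rewrite $\sum_j\gamma_jx_j=0$ as $\sum_{j\in J_+}\gamma_j\overline{z_j}=-\sum_{j\in J_0}\gamma_jx_j$, take real parts on the left, and apply the triangle inequality together with $\abs{x_j}\le 1$ on the right to obtain $\sum_{j\in J_+}\gamma_j\Re z_j\le\sum_{j\in J_0}\abs{\gamma_j}$.

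The two bounds pinch to equality, which forces each Lemma~\ref{L:prubeh} inequality used above to be tight. By the strict-inequality clause of that lemma this compels $z_j\in\{\pm 1\}$ for every $j\in J_+$, so $c_j\in\er$ for every $j$. Replacing each $x_j$ by its real part (which leaves the already-real $x_j=\overline{z_j}$ unchanged for $j\in J_+$ and produces some value in $[-1,1]$ for $j\in J_0$) then supplies a real quadruple satisfying the real versions of \eqref{eq:posun} and \eqref{eq:nabyvani}, violating the conclusion of Proposition~\ref{P:real-fs} via Lemma~\ref{L:c1-4}; this is the desired contradiction.

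The delicate point is handling the indices in $J_0$, where $x_j$ is not pinned down by Observation~\ref{obs:znamenka}(1) and where Lemma~\ref{L:prubeh} carries no strictness information. The key observation is that the maximal Lemma~\ref{L:prubeh} slack $-\abs{\gamma_j}$ for such $j$ is exactly balanced by the maximal triangle-inequality contribution $\abs{\gamma_j}\abs{x_j}\le\abs{\gamma_j}$ on the barycentric side, so the two bounds share the common value $\sum_{j\in J_0}\abs{\gamma_j}$ and the pinching argument closes uniformly, regardless of which indices happen to lie in $J_0$.
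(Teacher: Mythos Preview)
Your proof is correct and uses the same key ingredients as the paper---Lemma~\ref{L:prubeh} and the reduction to Proposition~\ref{P:real-fs}---but organizes them more efficiently. The paper proceeds by an exhaustive case analysis on which of the $c_j$ vanish (Cases~1--4 with several subcases), in each case writing out the relevant instance of \eqref{eq:posun} and the barycentric constraint and applying Lemma~\ref{L:prubeh} termwise to force the arguments $z_j$ to be real. Your sandwich estimate
\[
\sum_{j\in J_0}\abs{\gamma_j}\;\le\;\sum_{j\in J_+}\gamma_j\Re z_j\;\le\;\sum_{j\in J_0}\abs{\gamma_j}
\]
packages all of these cases into a single argument: the lower bound comes from summing the Lemma~\ref{L:prubeh} inequalities over $J_+$ together with \eqref{eq:posun}, and the upper bound from taking the real part of the rewritten linear relation $\sum_{j\in J_+}\gamma_j\overline{z_j}=-\sum_{j\in J_0}\gamma_jx_j$. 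Equality then forces every Lemma~\ref{L:prubeh} inequality to be tight, hence $z_j\in\{\pm1\}$ for all $j\in J_+$, and the reduction to the real case goes through by replacing each $x_j$ with $\Re x_j$. This is a genuine streamlining: nothing is lost, and the uniform treatment of $J_0$ that you highlight in your final paragraph is exactly what lets you avoid the paper's case split. The paper's version has the minor advantage of being slightly more concrete in each case, but your argument is shorter and makes the mechanism (two matching bounds on $\sum_{j\in J_+}\gamma_j\Re z_j$) transparent.
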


\begin{proof}
    Assume $H_j$ is not functionally simplicial. Let $c_1,\dots,c_4$ be the numbers provided by Lemma~\ref{L:c1-4}. 
    If all these numbers are real, we obtain a contradiction with Proposition~\ref{P:real-fs}. So, necessarily at least one of them does not belong to $\er$. Assume $c_j=t_jz_j$ where $t_j\ge0$ and $z_j$ is a complex unit (for $j=1,\dots,4$). Let us distinguish several cases:

{\tt Case 1:} $c_1\ne 0$ and $c_2\ne 0$.  Then $x_1=\overline{z_1}$ and $x_2=\overline{z_2}$ (by Observation~\ref{obs:znamenka}). Using the first line of \eqref{eq:nabyvani} we deduce that 
$\frac12\abs{z_1+z_2}\le\alpha+\beta$. Next we distinguish several subcases:

\begin{itemize}
    \item $c_3=c_4=0$: Then \eqref{eq:posun} reads as
$$\abs{c_1}+\abs{c_2}=\abs{c_1-\tfrac12}+\abs{c_2-\tfrac12}+\alpha+\beta,$$
thus
$$\alpha+\beta =\abs{c_1}-\abs{c_1-\tfrac12}+\abs{c_2}-\abs{c_2-\tfrac12}
\le \tfrac12(\Re z_1+\Re z_2)\le \tfrac12\abs{z_1+z_2}\le\alpha+\beta,$$
where the first inequality follows from Lemma~\ref{L:prubeh}.
It follows that equalities hold, in particular $z_1,z_2\in\er$ (by Lemma~\ref{L:prubeh}). This is a contradiction
with the beginning of the proof.
\item  $c_3\ne0$ and $c_4\ne0$: Then $x_3=\overline{z_3}$ and $x_4=\overline{z_4}$ (by Observation~\ref{obs:znamenka}). We deduce that
$$\tfrac12(z_1+z_2)=\alpha z_3+\beta z_4$$
Further, by Lemma~\ref{L:prubeh} we get
$$\begin{aligned}
    0&=\abs{c_1}-\abs{c_1-\tfrac12}+\abs{c_2}-\abs{c_2-\tfrac12}+\abs{c_3}-\abs{c_3+\alpha}+\abs{c_4}-\abs{c_4+\beta}\\&
\le\frac12(\Re z_1+\Re z_2)-\alpha\Re z_3-\beta\Re z_4=0.\end{aligned}$$
Thus the equality holds, but (due to Lemma~\ref{L:prubeh}) this means that $z_1,\dots,z_4\in\er$, which is impossible by the beginning of the proof.
\item $c_3\ne0$ and $c_4=0$ or vice versa. These two cases are symmetric, so assume that the first possibility takes place.  Then $x_3=\overline{z_3}$ (by Observation~\ref{obs:znamenka}). Thus
$$\abs{\tfrac12(z_1+z_2)-\alpha z_3}\le\beta$$
and by Lemma~\ref{L:prubeh} we have
$$\begin{aligned}
    \beta&=\abs{c_1}-\abs{c_1-\tfrac12}+\abs{c_2}-\abs{c_2-\tfrac12}+\abs{c_3}-\abs{c_3+\alpha}\\&
\le\frac12(\Re z_1+\Re z_2)-\alpha\Re z_3\le\abs{\tfrac12(z_1+z_2)-\alpha z_3}\le\beta,\end{aligned}$$
so the equalities hold. By Lemma~\ref{L:prubeh} we deduce that $z_1,z_2,z_3\in\er$ which is impossible by the beginning of the proof.
\end{itemize}

{\tt Case 2:}  $c_1=c_2=0$. Then \eqref{eq:posun} and Lemma~\ref{L:prubeh} imply that
$$1=\abs{c_3}-\abs{c_3+\alpha}+\abs{c_4}-\abs{c_4+\beta}\le-\alpha\Re z_3-\beta\Re z_4\le \alpha+\beta,$$
which is impossible.

{\tt Case 3:} $c_1\ne 0$ and $c_2=0$ or vice versa. The two possibilities are symmetric, so assume that the first one takes place. By Observation~\ref{obs:znamenka} then $x_1=\overline{z_1}$. We further distinguish some subcases:

\begin{itemize}
    \item $c_3=c_4=0$. Then \eqref{eq:posun} and Lemma~\ref{L:prubeh} say that
$$\tfrac12+\alpha+\beta=\abs{c_1}-\abs{c_1-\tfrac12}\le \tfrac12\Re z_1\le\tfrac12,$$
which is impossible.
   \item $c_3\ne0$ and $c_4=0$ or vice versa. The two possibilities are symmetric, so assume that the first one takes place. By Observation~\ref{obs:znamenka} then $x_3=\overline{z_3}$. By \eqref{eq:nabyvani} we get
    $$\abs{\tfrac12 z_1-\alpha z_3}\le\tfrac12+\beta,$$
by \eqref{eq:posun} and Lemma~\ref{L:prubeh} we deduce that
$$\tfrac12+\beta=\abs{c_1}-\abs{c_1-\tfrac12}+\abs{c_3}-\abs{c_3+\alpha}
\le\tfrac12\Re z_1-\alpha\Re z_3\le \abs{\tfrac12 z_1-\alpha z_3}\le\tfrac12+\beta,$$
so the equality holds. By Lemma~\ref{L:prubeh} it follows that $z_1,z_3\in\er$ which is impossible by the beginning of the proof.
\item $c_3\ne0$ and $c_4\ne 0$. Then $x_3=\overline{z_3}$ and $x_4=\overline{z_4}$ (by Observation~\ref{obs:znamenka}). So, $\eqref{eq:nabyvani}$ yields
$$\abs{\tfrac12z_1-\alpha z_3-\beta z_4}\le\tfrac12,$$
by \eqref{eq:posun} and Lemma~\ref{L:prubeh} we get
$$\tfrac12=\abs{c_1}-\abs{c_1-\tfrac12}+\abs{c_3}-\abs{c_3+\alpha}+\abs{c_4}-\abs{c_4+\beta}\le \tfrac12\Re z_1-\alpha\Re z_3-\beta\Re z_4\le\tfrac12,$$
so the equality hold. It follows from Lemma~\ref{L:prubeh} that $z_1,z_3,z_4\in\er$ which is impossible by the beginning of the proof.
\end{itemize}
\end{proof}

Propositions~\ref{P:real-fs} and~\ref{P:complex-fs} show that $(III)\implies\hspace{-15pt}\not\hspace{15pt}(IV)$ even if $K$ is metrizable. 

\subsection{Counterexamples on non-metrizable compact spaces}\label{ss:dikous} 

In this section we provide several examples of function spaces on non-metrizable compact spaces. These function spaces have quite strange properties, they complete the picture addressed in Theorem~\ref{T:bez1} and show that the non-metrizable setting is very different from the metrizable one. In fact, these examples are very nice spaces which are just badly embedded. 

The starting point are special Choquet simplices considered by Stacey \cite{stacey}. Let us recall basic definitions and facts on them. Let $L$ be a compact space and let $A\subset L$ be a nonempty subset. Let
$$K_{L,A}=\left(L\times\{0\}\right)\cup \left(A\times\{-1,1\}\right)$$
be equipped with the porcupine topology. I.e., points of $A\times\{-1,1\}$ are isolated and a neighborhood base of a point $(t,0)\in L\times\{0\}$ is
\[
 \{(t,0)\}\cup (((U\setminus \{t\})\times \{-1,0,1\})\cap K_A),\quad U\mbox{ a neighborhood of $t$ in }L.
\]
Then $K_{L,A}$ is a compact Hausdorff space. Moreover, we set
$$H_{L,A}=\{f\in C(K_{L,A},\ef)\setsep f(t,0)=\tfrac12(f(t,-1)+f(t,1))\mbox{ for }t\in A\}.$$
In the following proposition we collect known properties of these function spaces.

\begin{prop}\label{P:dikous}
    Let $K=K_{L,A}$ and $H=H_{L,A}$ for some compact space $L$ and a subset $A\subset L$. Then:
    \begin{enumerate}[$(a)$]
        \item $H$ is a function space containing constant functions such that $A_c(H)=H$.
        \item $\Ch_HK=(L\setminus A)\times\{0\} \cup A\times\{-1,1\}$.
        \item $H$ is simplicial, in particular it is an $L^1$-predual.
        \item A measure $\mu\in M(K,\ef)$ is $H$-boundary if and only if $\mu(\{(t,0)\})=0$ for each $t\in A$.
    \end{enumerate}   
\end{prop}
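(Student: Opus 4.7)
The plan is to handle (a) and (b) by direct verification, then to compute $M_x(H)$ explicitly as a shared technical prelude, and finally to prove (d) by the Mokobodzki criterion; (c) will then drop out.

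For (a): constants satisfy the defining identity trivially; $H$ separates points because the points of $A\times\{\pm1\}$ are isolated (so bump functions at them lie in $H$), while distinct $(t,0),(t',0)$ are separated by functions of the form $(s,j)\mapsto u(s)$ with $u\in C(L)$ distinguishing $t$ from $t'$ (such functions automatically satisfy the defining identity). For $A_c(H)=H$, the nontrivial inclusion uses that $\tfrac12(\varepsilon_{(t,-1)}+\varepsilon_{(t,1)})\in M_{(t,0)}(H)$ by the defining identity, forcing any $f\in A_c(H)$ to satisfy it. For (b): the identity $\phi(t,0)=\tfrac12(\phi(t,-1)+\phi(t,1))$ immediately shows $(t,0)\notin\Ch_HK$ for $t\in A$, while Lemma~\ref{L:extbod} applied with peaked bump functions in $H$ (at isolated points of $A\times\{\pm1\}$, or via pullbacks of peaked $C(L)$-functions for $(s,0)$ with $s\in L\setminus A$) places the remaining points in $\Ch_HK$.

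As a shared prelude to (c) and (d), I describe $M_x(H)$ explicitly. For $x\in\Ch_HK$ one has $\norm{\phi(x)}=1$ and $\phi(x)\in\ext B_{H^*}$, forcing $M_x(H)=\{\varepsilon_x\}$. For $x=(t,0)$ with $t\in A$, a fiber-independent peaked function $(s,j)\mapsto u(s)$ with $u\in C(L,[0,1])$, $u(t)=1$, $u<1$ elsewhere, shows any $\mu\in M_x(H)$ concentrates on $\{(t,-1),(t,0),(t,1)\}$; then the reproducing identity $\int f\,d\mu=f(t,0)$ for $f\in H$ pins down $\mu=\lambda\varepsilon_{(t,0)}+(1-\lambda)\tfrac12(\varepsilon_{(t,-1)}+\varepsilon_{(t,1)})$ for some $\lambda\in[0,1]$.

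Part (d) forward is immediate from Observation~\ref{obs:pseudo}$(a)$ and (b). For the converse, given $|\mu|(\{(t,0)\})=0$ for each $t\in A$, I invoke the Mokobodzki criterion (Fact~\ref{f:mokobodzki}): it suffices to show $\int F\circ\phi\,d|\mu|=\int F^*\circ\phi\,d|\mu|$ for every convex continuous $F$ on $B_{H^*}$. The prelude above (together with the standard identity $F^*(x)=\sup\{\int F\,d\nu:\nu$ maximal probability representing $x\}$) gives $F^*\circ\phi=F\circ\phi$ on $\Ch_HK$ and $F^*(\phi(t,0))=\tfrac12(F(\phi(t,-1))+F(\phi(t,1)))$ for $t\in A$. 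Hence the deficit $\Delta=F^*\circ\phi-F\circ\phi$ is a nonnegative upper semicontinuous function on $K$ supported in $A\times\{0\}$. The decisive observation is that the porcupine topology gives $\phi(s,\pm1)\to\phi(t,0)$ as $s\to t$, $s\ne t$ in $L$, whence by continuity of $F$ one has $\Delta(s,0)\to 0$. Consequently, for each $\varepsilon>0$, the closed (hence compact) set $\{\Delta\geq\varepsilon\}$ is discrete as a subspace of $K$, therefore finite; the atom-free hypothesis then yields $|\mu|(\{\Delta\geq\varepsilon\})=0$, and unioning over $\varepsilon=1/n$ gives $\int\Delta\,d|\mu|=0$.

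Finally (c) follows: the description of $M_x(H)$ together with (d) show that each $M_x(H)$ contains a unique $H$-boundary measure, namely $\varepsilon_x$ for $x\in\Ch_HK$ and $\tfrac12(\varepsilon_{(t,-1)}+\varepsilon_{(t,1)})$ for $x=(t,0)$, $t\in A$; Proposition~\ref{P:simplic-char} then upgrades simpliciality to the $L^1$-predual property since $A_c(H)=H$ and $H$ contains constants. The main obstacle is the compact-discrete argument in (d) backward, which is where the porcupine topology, upper semicontinuity of $\Delta$, and Radon inner regularity must be combined; everything else is setup or a direct consequence.
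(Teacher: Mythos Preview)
The paper's own proof is entirely by citation (to \cite[Lemma 6.14]{lmns} and \cite[Lemma 14.2]{kalenda2023boundary}), so your direct argument is genuinely different and considerably more informative. Your overall strategy---compute $M_x(H)$ and then use Mokobodzki together with a compactness/discreteness argument for the level sets of the deficit $\Delta$---is the right one, and the porcupine-topology limit $\phi(s,\pm1)\to\phi(t,0)$ is indeed the key mechanism making $\{\Delta\ge\varepsilon\}$ finite.

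There are, however, two points where the argument is incomplete. First, a minor one: in $(b)$ and in the prelude you invoke peaked functions (``$u(t)=1$, $u<1$ elsewhere'') on $L$, but for a general compact Hausdorff $L$ such functions need not exist (singletons need not be $G_\delta$). The fix is routine: use Urysohn to get, for each closed $F\not\ni t$, a function $u\in C(L,[0,1])$ with $u(t)=1$ and $u|_F=0$, and intersect over all such $F$ to localize $\mu$ to the fiber over $t$.

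Second, and more substantively: your formula $F^*(\phi(t,0))=\tfrac12\bigl(F(\phi(t,-1))+F(\phi(t,1))\bigr)$ does not follow from the prelude alone. The prelude determines $M_{(t,0)}(H)$, which consists of probabilities on $K$; the envelope identity $F^*(x)=\sup\{\int F\,d\nu:\nu\text{ maximal},\ r(\nu)=x\}$ involves probabilities on $B_{H^*}$ (equivalently on $S(H)$, since $\phi(t,0)\in S(H)$). You need to bridge these: (i) every maximal probability on $S(H)$ is carried by $\phi(K)$ (the standard fact that $\phi(K)$ is a closed set with $\overline{\operatorname{co}}\,\phi(K)=S(H)$, so each $\varepsilon_\psi$ is dominated by a probability on $\phi(K)$, whence maximal measures live on $\phi(K)$---cf.\ \cite[Propositions 4.26--4.28]{lmns}); (ii) among the pushforwards $\phi(\sigma)$ with $\sigma\in M_{(t,0)}(H)$, exactly the one with $\lambda=0$ is maximal, since for $\lambda>0$ it has an atom at the non-extreme point $\phi(t,0)$, while for $\lambda=0$ it is carried by $\ext S(H)$. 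With (i) and (ii) in hand, the unique maximal representing measure is $\tfrac12(\varepsilon_{\phi(t,-1)}+\varepsilon_{\phi(t,1)})$, your formula for $F^*(\phi(t,0))$ follows, and then your limit computation $\Delta(s,0)\to 0$ (hence discreteness, hence finiteness) goes through.
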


\begin{proof}
    Assertions $(a)-(c)$ follow (for example) from \cite[Lemma 6.14]{lmns} (together with Proposition~\ref{P:simplic-char}). Assertion $(d)$ is proved (for example) in \cite[Lemma 14.2]{kalenda2023boundary}.
\end{proof}

We continue by the first example based on this class of simplices.

\begin{example}\label{Ex:L1prednefsimplicial}
 Let $L=A=[0,1]$ and $K=K_{L,A}\oplus L$. Set
$$H=\{f\in C(K,\ef)\setsep \forall t\in L\colon f(t,0)=\tfrac12(f(t,1)+f(t,-1))=-f(t)\}.$$
Then the following assertions are valid:
\begin{enumerate}[$(i)$]
    \item $H$ is a closed self-adjoint function space on $K$ not containing constant functions, 
    $\norm{\phi(x)}=1$ for each $x\in K$ and $A_c(H)=H$.
    \item $\Ch_HK=L\times\{-1,1\}$.
    \item A measure $\mu$ on $K$ is $H$-boundary if and only if $\mu(\{t\})=\mu(\{(t,0)\})=0$ for $t\in L$. In particular, there is a non-zero $H$-boundary measure $\mu$ on $K$ such that $\abs{\mu}(\overline{\Ch_HK})=0$.
    \item $H$ is simplicial but not functionally simplicial (i.e., $H$ satisfies $(II)$ but not $(III)$).
    \item $H$ is an $L^1$-predual (i.e., $H$ satisfies $(VI)$).
\end{enumerate}
   \end{example}

\begin{proof} It is clear that $H$ is a closed self-adjoint subspace of $C(K,\ef)$ not containing constant functions. Let us further observe that the restriction map $f\in C(K,\ef)\mapsto f|_{K_{L,A}}\in C(K_{L,A},\ef)$ maps $H$ isometrically onto $H_{L,A}$. We now easily get that $H$ separates points of $K$.
Fix $x,y\in K$ with $x\ne y$. We distinguish several cases:
\begin{itemize}
    \item  $x,y\in K_{L,A}$: We use the known fact that $H_{L,A}$ separates points of $K_{L,A}$.
    \item $x,y\in L$: Let $f\in H$ is such that $f(x,0)\ne f(y,0)$. Then $f(x)\ne f(y)$.
     \item $x\in K_{L,A}$ and $y\in L$ (or vice versa). Define $h=1$ on $K_{L,A}$ and $h=-1$ on $L$. Then $h\in H$ and $h(x)\ne h(y)$.
\end{itemize}
Moreover, the  function $h$ witnesses that $\norm{\phi(x)}=1$ for each $x\in K$. It follows that for each $t\in L$
$$\tfrac12(\ep_{(t,1)}+\ep_{(t,-1)})\in M_{(t,0)}(H)\mbox{ and }-\tfrac12(\ep_{(t,1)}+\ep_{(t,-1)})\in M_{t}(H),$$
so we deduce that $A_c(H)=H$ and complete the proof of $(i)$. Since $H_{L,A}$ is an $L^1$-predual and it is isometric to $H$, we conclude that $H$ is also an $L^1$-predual, so assertion $(v)$ is valid.

To prove the remaining assertions we will look in more detail at the relationship of $H$ and $H_{L,A}$.
Let $\widetilde{H}=H_{L,A}$ and $R:H\to \widetilde{H}$ be the isometry defined by the restriction. Let $\imath:K_{L,A}\to K$ be the canonical inclusion and $\phi:K\to H^*$ and $\widetilde{\phi}:K_{L,A}\to \widetilde{H}^*$ the evaluation mappings.
Then we have a commutative diagram:
$$\xymatrix{ \widetilde{H}^* \ar[r]^{R^*} & H^* \\
K_{L,A} \ar[u]^{\widetilde{\phi}} \ar[r]^{\imath} & K \ar[u]^{\phi} 
}$$
By Proposition~\ref{P:dikous}$(b)$ we know that $\Ch_{\widetilde H}K_{L,A}=L\times\{-1,1\}$. Thus $\widetilde{\phi}(t,i)\in \ext B_{\widetilde{H^*}}$ for each $t\in L$ and $i\in\{-1,1\}$. In such a case
$$\phi(t,i)=\phi(\imath(t,i))=R^*(\widetilde{\phi}(t,i))\in\ext B_{H^*}$$
as $R^*$ is a linear isometry. It follows that $(t,i)\in\Ch_HK$. The remaining points do not belong to the Choquet boundary as
$$\phi(t,0)=\tfrac12(\phi(t,1)+\phi(t,-1)) \mbox{ and }\phi(t)=\tfrac12(-\phi(t,1)+(-\phi(t,-1))).$$
This completes the proof of $(ii)$.

$(iii)$: Let $\mu$ be a measure on $K$. It is clear that it is $H$-boundary if and only if both measures $\mu|_{K_{L,A}}$ and $\mu|_L$ are $H$-boundary. So, it is enough to assume that $\mu$ is carried either by $K_{L,A}$ or by $L$. Assume first that $\mu$ is carried by $K_{L,A}$. The definitions together with the above commutative diagram imply that $\mu$ is $H$-boundary if and only if it is $\widetilde{H}$-boundary. So the assertion follows from Proposition~\ref{P:dikous}$(d)$. Next assume that $\mu$ is carried by $L$.
Let $\jmath:L\to L\times\{0\}$ be the canonical homeomorphism. Let us look at the measure $\phi(\mu)$. Given a Borel set $B\subset B_{H^*}$ we have
$$\begin{aligned}
    \phi(\mu)(B)&=\mu(\{x\in K\setsep \phi(x)\in B\})=\mu(\{t\in L\setsep \phi(t)\in B\})
    \\& =\mu(\{t\in L\setsep -\phi(t,0)\in B\})
    =\mu(\{t\in L, -\phi(\jmath(t))\in B\})
    \\&= \jmath(\mu|_L)(\{x\in K\setsep \phi(x)\in -B\})
    =\phi(\jmath(\mu|_L))(-B).
\end{aligned}$$
Thus, $\phi(\mu)$ is the image of $\phi(\jmath(\mu|_L))$ by the mapping $\varphi\mapsto-\varphi$, thus
$$\begin{aligned}
    \mu\mbox{ is $H$-boundary }&\iff\phi(\mu) \mbox{ is boundary }\iff\phi(\jmath(\mu|_L)) \mbox{ is boundary }
\\&\iff \jmath(\mu|_L)\mbox{ is $H$-boundary }.\end{aligned}$$
Indeed, the first and the third equivalences follow from the definitions and the second one follows from the obvious fact that boundary measures on $B_{H^*}$ are preserved by the mapping $\varphi\mapsto-\varphi$. So, we may conclude the characterization of $H$-boundary measures by referring to the first case.

In particular, any continuous measure carried by $L$ (for example the Lebesgue measure) is an $H$-boundary measure carried by $K\setminus\overline{\Ch_HK}$.

$(iv)$: Let us show that $H$ is simplicial. To this end fix any $x\in K$. We distinguish three cases:
\begin{itemize}
    \item $x=(t,j)$ for $t\in L$ and $j\in\{-1,1\}$. Let $\mu\in M_x(H)$. The function 
    $$g(t,1)=1,\ g(t,-1)=-1,\ g(y)=0\mbox{ otherwise}$$
    belong to $H$ and hence
    $$j=g(t,j)=\int g\di\mu=\mu(\{(t,1)\})-\mu(\{(t,-1)\}).$$
Since $\norm{\mu}=1$, we deduce $\mu=\ep_{(t,j)}$. Therefore in this case $M_x(H)=\{\ep_x\}$. 
   \item $x=(t,0)$ for some $t\in L$. Let $\mu\in M_x(H)$ be $H$-boundary.  Let $u:L\to[0,1]$ be a continuous function such that $u(t)=1$ and $u(s)<1$ for $s\ne t$. Set
$$v(y)=\begin{cases}
    u(s) & y=(s,j)\in K_{L,A},\\ - u(y) & y\in L.
\end{cases}$$
Then $v\in H$ and hence
$$1=v(t,0)=\int v\di\mu \le \int \abs{v}\di\abs{\mu}\le1.$$
Thus $\abs{v}=1$ $\abs{\mu}$-a.e., so $\mu$ is carried by $\{t,(t,-1),(t,0),(t,1)\}$. Since $\mu$ is $H$-boundary,
we deduce it is carried by $\{(t,-1),(t,1)\}$, i.e.,
$$\mu=\alpha \ep_{(t,-1)}+\beta\ep_{(t,1)}.$$
Further,
$$1=v(t,0)=\int v\di\mu= \alpha+\beta$$
and
$$0=g(t,0)=\alpha-\beta,$$
so $\alpha=\beta=\frac12$. Therefore, the unique $H$-boundary measure in $M_{(t,0)}(H)$ is $\frac12(\ep_{(t,-1)}+\ep_{(t,1)})$.
\item $x\in L$: Then $\phi(x)=-\phi(x,0)$, hence the unique $H$-boundary measure in $M_x(H)$ is $-\frac12(\ep_{(x,-1)}+\ep_{(x,1)})$.
\end{itemize}
This shows that $H$ is simplicial.

To prove that $H$ is not functionally simplicial, let $\mu_1$ be the Lebesgue measure on $L$ and $\mu_2=-\jmath(\mu_1)$. Then $\mu_1$ and $\mu_2$ are two distinct $H$-boundary measures (by $(iii)$).  Moreover, for $f\in H$ we have
$$\int f\di\mu_2=\int f\di(-\jmath(\mu_1))=-\int  f\circ\jmath\di\mu_1 =\int f \di\mu_1.$$
Finally, the functional $\psi:f\mapsto \int f\di\mu$ has norm one as witnessed by the function $h$ used in the proof of $(i)$ above. Thus $\mu_1,\mu_2$ are two distinct $H$-boundary measures from $M_\psi(H)$, so $H$ is not functionally simplicial and the proof is complete.
\end{proof}

The next example is a non-simplicial modification of the previous one.

\begin{example}\label{ex:L1prednesimpl}
    Let $K$ and $H$ be as in Example~\ref{Ex:L1prednefsimplicial}. Set $K^\prime=K\oplus\{a\}$ (hence $a$ is an isolated point) and 
    $$H^{\prime}=\left\{f\in C(K^\prime,\ef)\setsep f|_K\in H\mbox{ and }f(a)=\int_L f\di\lambda\right\},$$
    where $\lambda$ is the Lebesgue measure. Then the following assertions are valid:
 \begin{enumerate}[$(i)$]
    \item $H^{\prime}$ is a closed self-adjoint function space on $K$ not containing constant functions, 
    $\norm{\phi(x)}=1$ for each $x\in K$ and $A_c(H^{\prime})=H^{\prime}$.
    \item $\Ch_{H^{\prime}}K^{\prime}=L\times\{-1,1\}$.
    \item A measure $\mu$ on $K^{\prime}$ is $H$-boundary if and only if $\mu(\{a\})=0$ and $\mu(\{t\})=\mu(\{(t,0)\})=0$ for $t\in L$. 
    \item $H^{\prime}$ satisfies $(I)$ but it is not simplicial (i.e., $H^{\prime}$ does not satisfy $(II)$).
    \item $H^{\prime}$ is an $L^1$-predual (i.e., $H^{\prime}$ satisfies $(VI)$).
\end{enumerate}   
\end{example}

\begin{proof}
    Observe that the restriction mapping $R:f\mapsto f|_K$ maps $H^\prime$ isometrically onto $H$. Hence the validity of $(ii)$, $(iii)$ and $(v)$ may be easily deduced from the respective assertions in Example~\ref{Ex:L1prednefsimplicial}. Let us comment the remaining two assertions.

    $(i)$: It is clear that $H^\prime$ is closed and self-adjoint linear subspace of $C(K',\ef)$. To see that it separates points fix two points $x,y\in K'$ with $x\ne y$. If $x,y\in K$, they may be separated due to the
    properties of $H$. Hence, the only case to be addressed is $x=a$ and $y\in K$. Let $t\in [0,1]$ be arbitrary.
    Let $g:[0,1]\to[0,1]$ be a continuous function such that $g(t)=0$ and $g>0$ elsewhere. Let
    $$f(t)=g(t)\mbox{ for }t\in L, f(t,i)=-g(t)\mbox{ for }(t,i)\in K_{L,A}, f(a)=\int_0^1 g.$$
    Then $f\in H^\prime$, $f(t)=f(t,-1)=f(t,0)=f(t,1)=0$ and $f(a)>0$. Since $t\in [0,1]$ was arbitrary,
    the argument is complete.

    The function $h$ defined by $h=-1$ on $K_{L,A}$, $h=1$ on $L$, $h(a)=1$ shows that $\norm{\phi(x)}=1$ for each $x\in K^\prime$. Now it easily follows that $A_c(H^{\prime})=H^{\prime}$.

    $(iv)$: Since $H$ is simplicial and hence it satisfies $(I)$, we easily deduce that $H^\prime$ satisfies $(I)$ as well (by comparing the Choquet boundaries). However, $H^\prime$ is not simplicial as $\lambda$ and $-\jmath(\lambda)$ (using the notation from the proof of Example~\ref{Ex:L1prednefsimplicial}) are two distinct $H^\prime$-boundary measures in $M_a(H^\prime)$.        
\end{proof}

We continue by another variant of Example~\ref{Ex:L1prednefsimplicial} with a bit different properties.

\begin{example}\label{Ex:L1prednefsimplicial-huste}
 Let $L=A=[1,2]$ and $K=K_{L,A}\oplus [-2,-1]\times[0,1]$. Set
$$H=\{f\in C(K,\ef)\setsep \forall t\in [1,2]\colon f(t,0)=\tfrac12(f(t,1)+f(t,-1))=-f(-t,0)\}.$$
Then the following assertions are valid:
\begin{enumerate}[$(i)$]
    \item $H$ is a closed self-adjoint function space on $K$ not containing constant functions, 
    $\norm{\phi(x)}=1$ for each $x\in K$ and $A_c(H)=H$.
    \item $\Ch_HK=[1,2]\times\{-1,1\}\cup[-2,-1]\times(0,1]$, in particular, $\Ch_HK$ is dense in $K$.
    \item A measure $\mu$ on $K$ is $H$-boundary if and only if $\mu(\{(-t,0)\})=\mu(\{(t,0)\})=0$ for $t\in [1,2]$. In particular, there is a closed $G_\delta$-set $F\subset \overline{\Ch_HK}$ disjoint from $\Ch_HK$ and a non-zero $H$-boundary measure $\mu$ on $K$ carried by $F$.
    \item $H$ is simplicial but not functionally simplicial (i.e., $H$ satisfies $(II)$ but not $(III)$).
    \item $H$ is an $L^1$-predual (i.e., $H$ satisfies $(VI)$).
\end{enumerate}
   \end{example}

\begin{proof}
    Let $L^\prime=[1,2]\times[0,1]$ and $A^\prime=[1,2]\times\{0\}$. Set $K^\prime=K_{L^\prime,A^\prime}$ and 
    $H^\prime=H_{L^\prime,A^\prime}$. For $f\in H$ define
    $$ Tf((s,t),j)=\begin{cases}
        f(s,j), & s\in[1,2], t=0, j\in\{-1,0,1\},\\
        -f(-s,t), & s\in[1,2], t\in(0,1], j=0.
    \end{cases}$$
    Then $T$ is a linear isometry of $H$ onto $H^\prime$. Using this isometry the proof is completely analogous to that of Example~\ref{Ex:L1prednefsimplicial}. We only note that in $(iii)$ we may take $F=[-2,-1]\times\{0\}$ and
    $\mu$ may be the one-dimensional Lebesgue measure on $F$.
\end{proof}

\subsection{A consistent counterexample to the representation theorem}

In this section we present the promised negative consistent answer to Question~\ref{q:reprez} in the complex case. The main result of this section is the following example which is done by elaborating an idea by W.~Marciszewksi and G.~Plebanek \cite{WM-GP}.

\begin{example}\label{exam}
Under the continuum hypothesis there exists a closed complex function space $H$ on a compact space $K$ such that:
\begin{enumerate}[$(i)$]
    \item $H=A_c(H)$;
    \item $H$ does not contain constant functions;
\item $H$ is simplicial;
\item $H$ is an $L^1$-predual;
    \item there exists $\varphi\in H^*$ such that there is no measure $\mu\in M_\varphi(H)$ that is pseudosupported by $\Ch_HK$.
\end{enumerate}
    \end{example}

A key tool to the construction is a variant of the Luzin set provided by the following lemma.

\begin{lemma}\label{L:luzin}
    Under the continuum hypothesis there is a bijection $g:\TT\to\TT$ such that its graph intersects each Cantor set (i.e. a closed null-dimensional set without isolated points) contained in $\TT\times\TT$ in a countable set.
\end{lemma}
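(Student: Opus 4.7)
The plan is to build $g$ by a back-and-forth transfinite recursion of length $\omega_1$. Under CH, the family of all closed subsets of $\TT\times\TT$ has cardinality $\mathfrak{c}=\aleph_1$, so we may enumerate all Cantor subsets of $\TT\times\TT$ as $(C_\alpha)_{\alpha<\omega_1}$, and we also enumerate $\TT=\{u_\alpha\setsep\alpha<\omega_1\}=\{v_\alpha\setsep\alpha<\omega_1\}$. We will recursively produce a chain of partial injections $g_\alpha\colon\TT\to\TT$, each with countable domain and range, such that $g_\beta\subset g_\alpha$ for $\beta<\alpha$, $u_\alpha\in\dom g_\alpha$ and $v_\alpha\in\ran g_\alpha$, and such that at stage $\alpha$ every newly added pair avoids $\bigcup_{\beta\le\alpha}C_\beta$.

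The key observation making the recursion go through is a Baire-category argument on slices. A Cantor set $C\subset\TT\times\TT$ is null-dimensional; hence for every $x\in\TT$ the vertical slice $\{y\in\TT\setsep (x,y)\in C\}$ is a closed null-dimensional (in particular nowhere dense, since $\TT$ is locally connected and $1$-dimensional) subset of $\TT$. The same holds for horizontal slices. Therefore, at any stage $\alpha<\omega_1$, when trying to define $g_\alpha(u_\alpha)$ (if not yet defined), the set of forbidden values of $y$ — namely the countably many already used images together with $\bigcup_{\beta\le\alpha}\{y\setsep(u_\alpha,y)\in C_\beta\}$ — is a meager subset of $\TT$ augmented by a countable set, hence has meager complement of size $\mathfrak{c}$; we pick any allowed $y$. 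Symmetrically, if $v_\alpha$ is still missing from $\ran g_\alpha$, we pick a preimage $x\in\TT$ for it avoiding the countably many already used arguments and the (meager) union of horizontal slices of $C_\beta$, $\beta\le\alpha$, at height $v_\alpha$. This completes the stage by adding at most two pairs.

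Set $g=\bigcup_{\alpha<\omega_1}g_\alpha$. Since every $u_\alpha$ eventually lies in the domain and every $v_\alpha$ eventually lies in the range, $g$ is a bijection of $\TT$ onto $\TT$. Fix any Cantor set $C\subset\TT\times\TT$; it equals $C_{\alpha_0}$ for some $\alpha_0<\omega_1$. Any pair of $\operatorname{graph}(g)$ that lies in $C_{\alpha_0}$ must have been introduced at some stage $\alpha\le\alpha_0$, because from stage $\alpha_0+1$ on the construction explicitly forbids new pairs in $C_{\alpha_0}$. Since only countably many pairs were added up to stage $\alpha_0$, the intersection $\operatorname{graph}(g)\cap C$ is countable, as required.

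The only genuinely non-trivial ingredient is the slice lemma — that slices of a closed null-dimensional subset of $\TT\times\TT$ are nowhere dense in $\TT$ — which is where I expect the reader to pause: it uses that $\TT$ has no isolated points and is $1$-dimensional (so every nonempty open set has positive topological dimension), forcing a $0$-dimensional closed subset to have empty interior. Everything else is standard CH bookkeeping in the style of a Luzin construction.
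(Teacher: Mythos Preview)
Your proof is correct and follows essentially the same back-and-forth transfinite recursion as the paper: enumerate $\TT$ and the Cantor subsets of $\TT\times\TT$ in type $\omega_1$, and at each stage add new pairs that avoid all Cantor sets seen so far, using that vertical and horizontal slices of a zero-dimensional closed subset of $\TT\times\TT$ are nowhere dense in $\TT$. One small wording slip: ``hence has meager complement of size $\mathfrak{c}$'' should read ``hence its complement is comeager, in particular of size $\mathfrak{c}$''; and your bound ``introduced at some stage $\alpha\le\alpha_0$'' can be sharpened to $\alpha<\alpha_0$ since at stage $\alpha_0$ you already avoid $C_{\alpha_0}$---but neither affects the argument.
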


\begin{proof}
    Using the continuum hypothesis we may enumerate
    $$\TT=\{x_\alpha\setsep \alpha<\omega_1\}$$ 
    and, moreover, let
    $$C_\alpha,\alpha<\omega_1$$
    be an enumeration of all Cantor sets contained in $\TT\times\TT$.
    Next we construct by transfinite induction two indexed families of ordinals $(\beta_\xi)$ and $(\gamma_\xi)$ for $\xi<\omega_1$:

    Assume that $\alpha<\omega_1$ is even and that we have $\beta_\xi$ and $\gamma_\xi$ for $\xi<\alpha$. We define:
    \begin{itemize}
        \item $\beta_\alpha=\min [0,\omega_1)\setminus\{\beta_\xi\setsep \xi<\alpha\}$;
        \item $\gamma_\alpha=\min \{\delta \in [0,\omega_1)\setminus\{\gamma_\xi\setsep \xi<\alpha\}\setsep (x_{\beta_\alpha},x_\delta)\notin \bigcup_{\xi\le\alpha}C_\xi\}$;
        \item $\gamma_{\alpha+1}=\min [0,\omega_1)\setminus\{\gamma_\xi\setsep \xi\le\alpha\}$;
         \item $\beta_{\alpha+1}=\min \{\delta \in [0,\omega_1)\setminus\{\beta_\xi\setsep \xi\le\alpha\}\setsep (x_{\delta},x_{\gamma_{\alpha+1}})\notin \bigcup_{\xi\le\alpha+1}C_\xi\}$.
    \end{itemize}
  This inductive construction may be indeed done: It starts by $\alpha=0$ (which is an even ordinal). It is clear that $\beta_\alpha$ may be defined by the formula in the first item. Further, $\gamma_\alpha$ is well defined as well as $(\{x_{\beta_\alpha}\}\times\TT)\cap \bigcup_{\xi\le\alpha} C_\xi$ is a meager subset of $\{x_{\beta_\alpha}\}\times\TT$ and thus its complement is uncountable. The arguments for $\gamma_{\alpha+1}$ and $\beta_{\alpha+1}$ are completely analogous.

  Now we define a bijection by sending $x_{\beta_\xi}$ to $x_{\gamma_\xi}$ for each $\xi<\omega_1$. It is indeed a bijection and $(x_{\beta_\xi},g(x_{\beta_\xi}))\in C_\alpha$ only if $\xi<\alpha$. The required properties now easily follow.
\end{proof}

\begin{proof}[Proof of Example~\ref{exam}]
Let $g$ be the function from Lemma~\ref{L:luzin} and let
$K=K_{L,A}$, where $L=\TT\times\TT$ and $A$ is the graph of $g$. Moreover, set
$$\begin{aligned}
    H=\{f\in C(K,\ce)\setsep & f(t,g(t),0)=\tfrac12 (f(t,g(t),-1)+f(t,g(t),1))\mbox{ for }t\in\TT,
\\&f(t,s,0)=sf(t,1,0)\mbox{ for }s,t\in\TT\}.\end{aligned}$$
The properties of $H$ will be established in several steps.

{\tt Step 1:} $H$ is a closed function space not containing constants, $\norm{\phi(x)}=1$ for each $x\in K$, $A_c(H)=H$ and 
$\Ch_HK=\{(t,g(t),\pm1)\setsep t\in\TT\}$.

\medskip

Indeed, $H$ is clearly a closed linear subspace of $C(K)$ and $1\notin H$. We continue by observing that
it separates points of $K$. If $t_0\in \TT$, then the function
$$f_{t_0}(t,s,j)=\begin{cases} j & t=t_0, s=g(t_0), \\ 0 & \mbox{otherwise}\end{cases}$$
belongs to $H$ and separates points $(t_0,g(t_0),\pm1)$ from the rest of $K$.
Further, the function
$$ h_1(t,s,j)=s, \quad (t,s,j)\in K,$$
belongs to $H$ and separates all pair of points $(t_1,s_1,j_1),(t_2,s_2,j_2)\in K$ with $s_1\ne s_2$.
Finally, the function 
$$ h_2(t,s,j)=ts, \quad (t,s,j)\in K,$$
belongs to $H$ and separates all pair of points $(t_1,s_1,j_1),(t_2,s_2,j_2)\in K$ with $s_1=s_2$ and $t_1\ne t_2$. The function $h_1$ witnesses that $\norm{\phi(x)}=1$ for each $x\in K$. Thus
$$\tfrac12(\ep_{(t,g(t)-1)}+\ep_{(t,g(t),1)})\in M_{(t,g(t),0)}(H)\mbox{ for }t\in \TT$$
and $s\ep_{(t,1,0)}\in M_{(t,s,0)}(H)$ for $s,t\in\TT$. It easily follows that $A_c(H)=H$.
Finally, the function $f_t$ witnesses that $(t,g(t),\pm1)\in\Ch_HK$ (using Lemma~\ref{L:extbod}). Other points are not in the Choquet boundary as
$$\phi(t,s,0)=s\overline{g(t)}\phi(t,g(t),0)=\tfrac12 \left(s\overline{g(t)}\phi(t,g(t),-1)+s\overline{g(t)}\phi(t,g(t),1)\right)\notin\ext B_{H^*}.$$

\medskip

{\tt Step 2:} The mapping $\pi:K\to K_{\TT,\TT}$ defined by $\pi(t,s,j)=(t,j)$ for $(t,s,j)\in K$ is a continuous surjection.

\medskip

It is clear that $\pi$ is a surjection. The continuity at isolated points is obvious. So, fix $s,t\in\TT$ and let us prove the continuity at $(t,s,0)$. A basic neighborhood of $\pi(t,s,0)=(t,0)$ is of the form
$U=\{(t,0)\}\cup (V\setminus \{t\})\times\{-1,0,1\}$, where $V$ is an open neighborhood of $t$ in $\TT$.
Then
$$\begin{aligned}
    \pi^{-1}(U)&=\{t\}\times\TT\times\{0\}\cup ((V\setminus\{t\})\times\TT\times\{-1,0,1\})\cap K
\\&=(V\times\TT\times \{-1,0,1\})\cap K\setminus (\{t\}\times\TT\times\{-1,1\})\cap K\\&=
(V\times\TT\times \{-1,0,1\})\cap K\setminus\{(t,g(t),-1),(t,g(t),1)\},\end{aligned}$$
which is an open set in $K$. This completes the proof of continuity.

\medskip

{\tt Step 3:} Let $K^\prime= K_{\TT,\TT}$ and $H^\prime=H_{\TT,\TT}$.
For $f\in H^\prime$ we set
$$R(f)(t,s,j)=s f(t,j),\quad (t,s,j)\in K.$$
Then $R$ is a linear isometry of $H^\prime$ onto $H$.

\medskip

Note that $R(f)(t,s,j)=s f(\pi(t,s,j))$, so $R(f)\in C(K,\ce)$ for each $f\in H^\prime$. Moreover,
$$R(f)(t,s,0)=s f(t,0)=sR(f)(t,1,0)$$
for $s,t\in\TT$ and
$$\begin{aligned}
    R(f)(t,g(t),0)&=g(t)f(t,0)=g(t)\cdot\tfrac12(f(t,1)+f(t,-1))\\&=\tfrac12(R(f)(t,g(t),1)+R(f)(t,g(t),-1))\end{aligned}$$
for $t\in\TT$. We deduce that $R$ maps $H^\prime$ into $H$. It is clear that $R$ is an isometry. It remains to show that $R$ is surjective. To this end fix any $h\in H$. Define
$$f(t,j)=\overline{g(t)}h(t,g(t),j),\quad (t,j)\in K^\prime.$$
We claim that $f\in H^\prime$. Since the average condition is obvious, it is enough to prove the continuity. Note that 
$$f(t,0)=\overline{g(t)}h(t,g(t),0)=h(t,1,0)\mbox{ for }t\in\TT,$$
so $f|_{\TT\times\{0\}}$ is continuous. Moreover, for $t\in\TT$ and $j\in\{-1,1\}$ we have
$$\begin{aligned}
    \abs{f(t,j)-f(t,0)}&=\abs{\overline{g(t)}h(t,g(t),j)-\overline{g(t)}h(t,g(t),0)}\\&=\abs{h(t,g(t),j)-h(t,g(t),0)},\end{aligned}$$
so the set
$$\{t\in\TT\setsep \abs{f(t,1)-f(t,0)}>\ep\mbox{ or } \abs{f(t,-1)-f(t,0)}>\ep\}$$
is finite for each $\ep>0$. This proves the continuity of $f$ (cf. \cite{stacey}).

\medskip

{\tt Step 4:} Let $\mu\in M(K,\ce)$ be a continuous measure. If $\mu$ is $H$-boundary, then
$\mu|_{\{t\}\times\TT\times\{0\}}=0$ for each $t\in\TT$.

\medskip

It is enough to prove it for nonnegative measures. Assume that $\mu(\{t_0\}\times\TT\times \{0\})>0$ for some $t_0\in\TT$.  Let $u:\TT\to[0,1]$ be continuous such that $u(t_0)=1$ and $u(t)<1$ for $t\in\TT\setminus\{t_0\}$. The function
$$v(t,s,j)=su(t),\quad (t,s,j)\in K,$$
belongs to $H$ and hence the set
$$G=\{\varphi\in B_{H^*}\setsep \abs{\varphi(v)}=1\mbox{ and }\varphi(f_{t_0})=0\}$$
is a closed $G_\delta$-subset of $B_{H^*}$. Moreover, $G$ contains $\phi(t_0,s,0)$ for each $s\in\TT$ 
(note that $\phi(t_0,s,0)(v)=s$ and $\phi(t_0,s,0)(f_{t_0})=0$), hence $\phi(\mu)(G)>0$. On the other hand, $G\cap\ext B_{H^*}=\emptyset$. Indeed, extreme points are exactly of the form $z\phi(t,g(t),\pm1)$ for $z,t\in\TT$. Moreover, $z\phi(t_0,g(t_0),\pm1)(f_{t_0})=\pm z\ne0$ and for $t\ne t_0$
$\abs{z\phi(t,g(t),\pm1)(v)}=u(t)<1$. So, $\phi(\mu)$ is not maximal.


\medskip

{\tt Step 5:} $H$ is simplicial.

\medskip

Let $x=(t,g(t),1)$ for some $t\in\TT$ and $\mu\in M_x(H)$. Then $\norm{\mu}=1$ and the function $f_t$ from Step 1 witnesses that $\mu$ is carried by $\{(t,g(t),1),(t,g(t),-1)\}$. The function $h_1$ then shows that $\mu=\ep_x$. Similarly we proceed if $x=(t,g(t),-1)$.

Next assume that $x=(t_0,s_0,0)$ for some $s_0,t_0\in\TT$ and $\mu\in M_x(H)$ is $H$-boundary.
Let $u$ and $v$ be as in the proof of Step 4. Since $\norm{\mu}=1$, the function $v$ witnesses that $\mu$
is carried by $\{t_0\}\times \TT\times\{-1,0,1\}\cap K$. By Claim 4 we see that $\mu$ must be discrete,
hence it is carried by $\{(t_0,g(t_0),-1),(t_0,g(t_0),1)\}$. Using the function $f_{t_0}$ we now deduce that 
$$\mu=\tfrac{s_0\overline{g(t_0)}}{2}(\ep_{(t_0,g(t_0),-1)}+\ep_{(t_0,g(t_0),1)}),$$
so $\mu$ is uniquely determined.

\medskip

{\tt Step 6:} Let $\varphi(f)=\int_{\TT} f(t,1,0)\di t$, where we integrate with respect to the normalized Haar measure on $\TT$. Then no measure in $M_\varphi(H)$ is pseudosupported by the Choquet boundary.

\medskip

Clearly $\varphi\in H^*$ and $\norm{\varphi}=1$. Let $\mu\in M_\varphi(H)$ be arbitrary. 

Fix $t_0\in\TT$ arbitrary. We may find a sequence of continuous functions $u_n:\TT\to[0,1]$ such that $u_n(t_0)=0$ for each $n\in\en$ and $u_n(t)\to 1$ for $t\in\TT\setminus\{t_0\}$. For $n\in\en$ we set
$$v_n(t,s,j)=su_n(t),\quad (t,s,j)\in K.$$
Then $v_n\in H$ and
$$\varphi(v_n)=\int_{\TT} u_n(t)\di t\to1$$
by the Lebesgue dominated convergence theorem. Simultaneously,
$$\abs{\varphi(v_n)}=\abs{\int_K v_n\di\mu}\le \int_K \abs{v_n} \di\abs{\mu} 
\le \abs{\mu}(K\setminus \{t_0\}\times\TT\times\{-1,0,1\}\cap K).$$
Since $\norm{\mu}=1$, we deduce that $\abs{\mu}( \{t_0\}\times\TT\times\{-1,0,1\}\cap K)=0$.
Since $t_0\in\TT$ was arbitrary, we deduce, in particular, that $\mu$ is continuous (and hence carried by $\TT\times\TT\times\{0\}$).

If there is $s_0\in\TT$ such that $\abs{\mu}(\TT\times\{s_0\}\times \{0\})>0$, then $\mu$ is not pseudosupported by $\Ch_HK$ as $\TT\times\{s_0\}\times \{0\}$ is a closed $G_\delta$-set disjoint from $\Ch_HK$.

Assume that $\abs{\mu}(\TT\times\{s\}\times \{0\})=0$ for each $s\in\TT$. Let $S\subset \TT$ be a countable dense set. Let
$$B=(\TT\setminus S)\times (\TT\setminus S).$$
Then $\mu$ is carried by $B\times \{0\}$, hence there is a compact set $C\subset B$ with $\abs{\mu}(C\times\{0\})>0$. Since $\mu$ is continuous, without loss of generality we may assume that $C$ has no isolated points. Since $B$ is totally disconnected, $C$ is a Cantor set. Thus $C$ intersects the graph of $g$ in a countable set, so $C\times\{0\}$ is a closed $G_\delta$-set disjoint from $\Ch_HK$. 
Thus $\mu$ is not pseudosupported by $\Ch_HK$.

\medskip

{\tt Conclusion:} By Step 1 we know that $H$ is a closed function space with properties $(i)$ and $(ii)$. Property $(iii)$ follows from Step 5. Step 3 together with Proposition~\ref{P:dikous}$(c)$ imply property $(iv)$. Finally, property $(v)$ is proved in Step 6.
\end{proof}

By a minor modification of the above example we get the following one:

\begin{example}\label{ex:protipr-x}
   Under the continuum hypothesis there exists a closed complex function space $H$ on a compact space $K$ such that:
   \begin{enumerate}[$(i)$]
       \item $H$ is an $L^1$-predual.
       \item There is $x\in K$ such that $M_x(H)$ contains no measure pseudosupported by $\Ch_HK$.
   \end{enumerate}
\end{example}

\begin{proof}
    We use the same trick as in Example~\ref{ex:L1prednesimpl}: Let $K$ and $H$ be as in Example~\ref{exam}. Set
    $\widetilde{K}=K\oplus\{a\}$ (where $a$ is a new isolated point) and define
    $$\widetilde{H}=\left\{f\in C(\widetilde{K},\ce)\setsep f|_K\in H\mbox{ and }f(a)=\int_{\TT} f(t,1,0)\di t\right\}.$$
    Then it is easy to deduce that $\widetilde{H}$ has the required properties.
\end{proof}

\subsection{Overview of the counterexamples related to Theorem~\ref{T:bez1}}

Let us recall that the following implications are valid for conditions from Theorem~\ref{T:bez1}.
$$\begin{array}{ccccccc}
   (IV)&\implies& (III)&\implies& (II)&\implies &(I)  \\
    \Big\Downarrow & & \Big\Downarrow & & & &   \\
    (VI) & \implies& (V) & & & &  
\end{array}$$
The above counterexamples witness that no more implications hold, even assuming $K$ is metrizable. Let us overview the counterexamples.

Firstly, the first of the `easy counterexamples' shows that $(VI)\implies\hspace{-15pt}\not\hspace{15pt}(I)$. It follows that no condition on the last line imply any condition on the first line.

If we restrict ourselves to the case when the mapping $\theta$ is one-to-one, the last line implies  the first one,
hence we have the following implications:
$$(VI)\iff(IV)\implies (V)\iff (III)\implies (II)\implies (I)$$
and, moreover, $(I)$ is automatically satisfied. No more implications hold in this case, even if $K$ is metrizable.
Indeed, the second one of the `easy counterexamples' witnesses that $(I)\implies\hspace{-15pt}\not\hspace{15pt}(II)$.
Further, $(II)\implies\hspace{-15pt}\not\hspace{15pt}(III)$ by Proposition~\ref{P:metriznefs} and $(III)\implies\hspace{-15pt}\not\hspace{15pt}(IV)$ by Proposition~\ref{P:real-fs} in the real case and by Proposition~\ref{P:complex-fs} in the complex case. Moreover, there are two versions of these counterexamples -- one with closed Choquet boundary and another one with dense Choquet boundary.

If $K$ is metrizable, we have also the following  diagram:
$$\begin{array}{ccccccc}
   (IV)&\implies& (III)&\implies& (II)&\implies &(I)  \\
    \Big\Updownarrow & & \Big\Updownarrow & & & &   \\
    (VI)\&(I) & \implies& (V)\&(I) & & & &  
\end{array}$$
If $K$ is not metrizable, Example~\ref{ex:L1prednesimpl} show that  $(VI)\&(I)\implies\hspace{-15pt}\not\hspace{15pt}(II)$ and Examples~\ref{Ex:L1prednefsimplicial} and~\ref{Ex:L1prednefsimplicial-huste}
show that  $(VI)\&(II)\implies\hspace{-15pt}\not\hspace{15pt}(III)$. It seems not to be clear whether  $(VI)\&(III)\implies(IV)$ in general.

\section{Dirichlet problem without constants}\label{sec:dirichlet}

One of the main applications of simpliciality in the classical setting consists in solutions of various types of the Dirichlet problem. The relationship to the classical Dirichlet problem for the Laplace equation is described in \cite[Chapter 13]{lmns} (see also \cite{bliedtner-hansen}). This connection inspired investigation of the abstract Dirichlet problem: Given a compact convex set $X$ and a function on $\ext X$ (with certain properties), we search for an affine extension of this function (preserving certain properties). A weaker version asks, given a function on $X$ (with certain properties), to modify it to an affine function (preserving some of the properties) coinciding with the original one on $\ext X$.
There is a lot of results on continuous or Baire functions (see, e.g., \cite[Theorem II.4.5]{alfsen} for continuous functions, \cite{spurny-israel} for Baire functions and \cite{rondos-spurny} for the case of vector-valued functions). There are also versions for function spaces, where the set of extreme points is replaced by the Choquet boundary and $H$-affine functions are considered
(see \cite[Section 6.5]{lmns} or \cite{posta}). We will show that the situation for function spaces without constants has some common points with the classical one, but there are nontrivial differences. To formulate the results we first need to introduce and clarify some notions.

Let $H$ be a (real or complex) function space on a compact space $K$. Let $f:K\to\ef$ be a bounded universally measurable function. Then $f$ is called
\begin{itemize}
    \item \emph{$H$-affine} if $f(x)=\int f\di\mu$ whenever $x\in K$ and $\mu\in M_x(H)$;
    \item \emph{strongly $H$-affine} if $\int f\di\mu=0$ whenever $\mu\in (A_c(H))^\perp$.
\end{itemize}
The notion of $H$-affine functions is a straight generalization of the classical notion from \cite[Definition 3.8]{lmns}. By the very definition continuous $H$-affine functions are exactly the elements of $A_c(H)$. Further, it follows from Lemma~\ref{L:diagram}$(e)$ that $A_c(H)$-affine functions coincide with $H$-affine ones.

Strongly $H$-affine function coincide with \emph{completely $A_c(H)$-affine} functions in the terminology of \cite{lmnss03}. Continuous strongly $H$-affine functions are just the elements of $A_c(H)$ (by the bipolar theorem). 
Further, any strongly $H$-affine function is clearly $H$-affine (as $\ep_x-\mu\in (A_c(H))^\perp$ whenever $\mu\in M_x(H)$). The following proposition collects two important cases when the converse implication holds.

\begin{prop} 
Assume that one of the following two conditions is satisfied:
\begin{enumerate}[$(a)$]
    \item $K$ is a compact convex set and $H=A_c(K)$.
    \item $H$ is a simplicial function space containing constant functions.
\end{enumerate}
Then $H$-affine and strongly $H$-affine functions coincide.  
\end{prop}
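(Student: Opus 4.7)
The plan is to prove only the nontrivial direction, that $H$-affinity implies strong $H$-affinity; the converse is immediate because $\varepsilon_x-\mu\in A_c(H)^\perp$ whenever $\mu\in M_x(H)=M_x(A_c(H))$, by Lemma~\ref{L:diagram}$(e)$.

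\emph{Case $(a)$.} First I would reduce to real scalars: if $\nu\in A_c(K,\ce)^\perp$, then $\Re\nu$ and $\Im\nu$ both annihilate $A_c(K,\er)\subset A_c(K,\ce)$; and because $1\in A_c(K)$ forces every element of $M_x(A_c(K,\ef))$ to be a probability measure, independently of $\ef$, the real and imaginary parts of an $A_c(K,\ce)$-affine $f$ are $A_c(K,\er)$-affine. For a real signed $\nu\in A_c(K,\er)^\perp$, testing against $1$ gives $\nu^+(K)=\nu^-(K)=:c$. If $c=0$ we are done; otherwise $\mu_\pm:=\nu^\pm/c$ are probability measures that agree on every continuous affine function, so they share a common barycenter $x\in K$. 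Because $M_x(A_c(K))$ is exactly the set of probabilities representing $x$, the hypothesis on $f$ forces $\int f\,d\mu_+=f(x)=\int f\,d\mu_-$, whence $\int f\,d\nu=c\int f\,d(\mu_+-\mu_-)=0$.

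\emph{Case $(b)$.} I would reduce to case $(a)$ via the state-space simplex $X:=S(A_c(H))$, which is a Choquet simplex by Proposition~\ref{P:simplic-char}. Lemma~\ref{L:eval-s-c} supplies a homeomorphic embedding $\phi_2\colon K\hookrightarrow X$ and an isometric isomorphism $\Phi\colon A_c(H)\to A_c(X,\ef)$ with $\Phi(h)\circ\phi_2=h$ (surjectivity uses Lemma~\ref{L:eval-s-c}$(e)$, noting that $A_c(H)$ is automatically self-adjoint in the complex case by combining Proposition~\ref{P:simplic-char} with Proposition~\ref{P:complex-1}). The plan is to construct a bounded universally measurable $A_c(X)$-affine function $\widetilde f\colon X\to\ef$ satisfying $\widetilde f\circ\phi_2=f$, apply case $(a)$ to the pair $(X,A_c(X))$ to deduce strong $A_c(X)$-affinity of $\widetilde f$, and then pull back along $\phi_2$: for any $\nu\in A_c(H)^\perp$ the identity $\int\Phi(h)\,d\phi_2(\nu)=\int h\,d\nu=0$ shows $\phi_2(\nu)\in A_c(X)^\perp$, whence $\int f\,d\nu=\int\widetilde f\,d\phi_2(\nu)=0$.

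\emph{Main obstacle.} The crux is constructing the extension $\widetilde f$. For $\varphi\in X$ the simplex property yields a unique maximal probability $\sigma_\varphi$ representing $\varphi$, pseudo-supported by $\ext X=\phi_2(\Ch_HK)\subseteq\phi_2(K)$. The natural definition $\widetilde f(\varphi):=\int_X F\,d\sigma_\varphi$, for any bounded universally measurable extension $F$ of $f\circ\phi_2^{-1}$ to $X$, must be shown independent of $F$ (using $H$-affinity of $f$ along dilations of $\sigma_\varphi$ back to $K$), affine in $\varphi$ (from the linearity of $\varphi\mapsto\sigma_\varphi$ characteristic of simplices), and universally measurable. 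The delicate point is that in the non-metrizable case $\sigma_\varphi$ is only pseudo-supported, not supported, by $\phi_2(\Ch_HK)$; I would circumvent this through a monotone-class argument, verifying the construction first for continuous $f\in A_c(H)$ (where $\widetilde f=\Phi(f)\in A_c(X)$ is immediate), extending through the Baire hierarchy of $H$-affine functions by transfinite induction, and finally to bounded universally measurable $H$-affine $f$ by completion.
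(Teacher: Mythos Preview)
Your argument for case $(a)$ is essentially the paper's proof. The difference is in case $(b)$.

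Your plan for $(b)$ --- extend $f$ to an $A_c(X)$-affine function $\widetilde f$ on the simplex $X=S(A_c(H))$ and then invoke case $(a)$ --- is natural, but it has a real gap. First, your worry about pseudo-support on $\phi_2(\Ch_HK)$ is misplaced: since $\phi_2(K)$ is compact and contains $\ext X$, every maximal $\sigma_\varphi$ is genuinely carried by $\phi_2(K)$, so $\widetilde f(\varphi):=\int_K f\,\mathrm d\,\phi_2^{-1}(\sigma_\varphi)$ is well defined with no choice of extension needed. The actual obstacles are elsewhere. To apply case $(a)$ you need $\widetilde f$ to be $A_c(X)$-affine in the \emph{barycentric} sense, i.e.\ $\widetilde f(r(\tau))=\int\widetilde f\,\mathrm d\tau$ for \emph{every} probability $\tau$ on $X$; the affinity of $\varphi\mapsto\sigma_\varphi$ only gives convex-combination affinity, which is strictly weaker for discontinuous functions. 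You also need $\widetilde f$ to be universally measurable, and for a merely universally measurable $f$ the map $\varphi\mapsto\int f\,\mathrm d\,\phi_2^{-1}(\sigma_\varphi)$ has no obvious measurability. Your proposed monotone-class argument starting from $A_c(H)$ reaches at best bounded Baire $H$-affine functions, and the final ``completion'' step does not bridge the gap to arbitrary universally measurable $H$-affine $f$: approximating $f$ by a Borel $g$ that agrees $|\mu|$-a.e.\ is useless because $g$ need not be $H$-affine.

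The paper avoids all of this by never constructing an extension. It fixes $\mu\in A_c(H)^\perp$, reduces to real $\mu$ with $\mu^\pm$ probabilities, pushes forward to $X$ where $\phi(\mu^+)$ and $\phi(\mu^-)$ share a barycenter and hence (by simpliciality) a common maximal majorant $\nu_1=\nu_2$, pulled back to a single $\nu$ on $K$. The key tool is a disintegration (\cite[Proposition~3.89]{lmns}): there exist probabilities $\Lambda_1,\Lambda_2$ on $M=\{(\varepsilon_x,\sigma):x\in K,\ \sigma\in M_x(H)\}$ with barycenters $(\mu^+,\nu)$ and $(\mu^-,\nu)$. On $M$ the $H$-affinity of $f$ gives $\int f\,\mathrm d\varepsilon_x=\int f\,\mathrm d\sigma$ pointwise, so integrating against $\Lambda_j$ yields $\int f\,\mathrm d\mu^+=\int f\,\mathrm d\nu=\int f\,\mathrm d\mu^-$. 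This works uniformly for any bounded universally measurable $f$, with no induction on complexity.
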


\begin{proof}
 $(a)$: Let $f$ be an $H$-affine function. Let $\mu\in A_c(H)^\perp=H^\perp$. Since $H$ is self-adjoint,
 both $\Re\mu$ and $\Im\mu$ belong to $H^\perp$. So, without loss of generality $\mu$ is real-valued.
 Since $1\in H$, $\mu^+$ and $\mu^-$ have the same norm, without loss of generality they are probabilities.
 The assumption $\mu\in A_c(K)^\perp$ then means that $\mu^+$ and $\mu^-$ have the same barycenter $x\in K$. Then
 $$\int f\di\mu=\int f\di\mu^+-\int f\di\mu^-=f(x)-f(x)=0,$$
 which completes the proof.

 $(b)$: Assume that $H$ is simplicial and contains constants. Let $f$ be $H$-affine and $\mu\in A_c(H)^\perp$. Since $A_c(H)$ is self-adjoint and contains constants, similarly as in the proof of $(a)$ we may assume that $\mu$ is real-valued and $\mu^+$ and $\mu^-$ are probabilities. Let $\phi:K\to S(A_c(H))$ be the evaluation mapping. By Lemma~\ref{L:eval-s-c} we know that $A_c(H)$ is canonically isometric to $A_c(S(A_c(H)))$. Therefore
 the measures $\phi(\mu^+)$ and $\phi(\mu^-)$ have the same barycenter $\psi\in S(A_c(H))$. Let $\nu_1$ and $\nu_2$ be maximal probabilities on $S(A_c(H))$ such that $\phi(\mu^+)\prec \nu_1$ and $\phi_{\mu^-}\prec\nu_2$.
 Then $\psi$ is the barycenter of both $\nu_1$ and $\nu_2$. By Proposition~\ref{P:simplic-char} we know that $S(A_c(H))$ is a simplex and hence $\nu_1=\nu_2$. Set $\nu=\phi^{-1}(\nu_1)\ (=\phi^{-1}(\nu_2))$ (note that maximal measures on $S(A_c(H))$ are carried by $\phi(K)$). By \cite[Proposition  3.89]{lmns} there are probabilities $\Lambda_1,\Lambda_2$ on the set
 $$M=\{(\ep_x,\sigma)\setsep x\in K, \sigma\in M_x(H)\}\subset M_1(K)\times M_1(K)$$ 
 with barycenters $(\mu^+,\nu)$ and $(\mu^-,\nu)$, respectively. Then
 $$\int_K f\di\mu^+ =\int \left(\int_K f\di \lambda_1\right)\di\Lambda_1(\lambda_1,\lambda_2)=\int \left(\int_K f\di \lambda_2\right)\di\Lambda_1(\lambda_1,\lambda_2)=\int_K f\di\nu.$$
 Indeed, the first and the third equalities follow from \cite[Proposition 3.90]{lmns}. To prove the second equality recall that $\Lambda_1$ is carried by $M$, so  $\Lambda_1$-almost all pairs $(\lambda_1,\lambda_2)$ are of the form $(\ep_x,\sigma)$ where $x\in K$ and $\sigma\in M_x(H)$. For such pairs we have
 $$\int_K f\di \lambda_1=\int_K f\di \ep_x=f(x)=\int_K f\di\sigma =\int_K f\di\lambda_2,$$
 where the third equality follows from the assumption that $f$ is $H$-affine.

 Hence $\int_K f\di\mu^+=\int_K f\di\nu$. Similarly we get $\int_K f\di\mu^-=\int_K f\di\nu$. By subtracting
 we conclude that $\int_K f\di\mu=0$, which completes the proof.
\end{proof}

We continue by an example distinguishing $H$-affine and strongly $H$-affine functions in the classical case.

\begin{example}
    There is a countable (hence metrizable) compact space $K$, a (non-simplicial) function space $H$ on $K$ containing constants and a Baire-one function $f:K\to\er$ which is $H$-affine but not strongly $H$-affine.
\end{example}

\begin{proof}
    Let
    $$K=(\{0\}\cup\{\tfrac1n\setsep n\in\en,n\ge2\})\times\{-2,-1,0,1,2\}
    \cup\{(\tfrac1n,\tfrac1{n})\setsep n\in\en,n\ge2\}
    $$
    with the topology inherited from $\er^2$. Then $K$ is clearly a countable compact set. Further, 
    we set
    $$\begin{aligned}
           H=\{f\in C(K)\setsep f(\tfrac1n,\tfrac1{n})&=\tfrac1{2n}(f(\tfrac{1}{n},-2)+f(\tfrac{1}{n},-1))+(1-\tfrac1n)f(\tfrac1n,0)\\&=\tfrac1{2n}(f(\tfrac{1}{n},2)+f(\tfrac{1}{n},1))+(1-\tfrac1n)f(\tfrac1n,0) \\&\qquad\qquad
    \mbox{ for }n\in\en,n\ge2\}.\end{aligned}$$
    It is clear that $H$ is a closed self-adjoint subspace of $C(K)$ containing constants. We continue by showing that $H$ separates points and $$\Ch_HK=K\setminus\{(\tfrac1n,\tfrac{1}{n})\setsep n\in\en,n\ge2\}.$$ To this end we construct some functions belonging to $H$:

    Fix $n\in\en,n\ge2$. Define
    $$f_{n,2}(\tfrac1n,2)=0, f_{n,2}(\tfrac1n,1)=2, f_{n,-2}=1\mbox{ elsewhere}.$$
    Then $f_{n,2}$ belongs to $H$ and exposes points $(\tfrac1n,2)$ and $(\tfrac1n,1)$.
    Similarly we define $f_{n,-2}\in H$ exposing  $(\tfrac1n,-2)$ and $(\tfrac1n,-1)$.
    Further define
    $$f_{n,0}(\tfrac1n,0)=0, f_{n,0}(\tfrac1n,\tfrac{1}{n})=\frac1n, f_{n,0}=1\mbox{ elsewhere}.$$
    Then $f_{n,0}$ belongs to $H$, it exposes $(\tfrac1n,0)$ and separates $(\tfrac1n,\frac1n)$ from the rest of $K$.

    Further, define 
    $$g_2(t,2)=t, g_2(t,1)=2-t \mbox{ for }t\in\{0\}\cup\{\tfrac1n\setsep n\in\en, n\ge2\},
    g_2=1\mbox{ elsewhere}.$$
    Then $g_2\in H$ and it exposes points $(0,2)$ and $(0,1)$. Similarly we may define $g_{-2}\in H$ exposing $(0,-2)$ and $(0,-1)$. Finally, the function
    $$g_0(0,0)=0, g_0(\tfrac1n,0)=\tfrac1n, g_0(\tfrac1n,\tfrac1n)=\tfrac2n-\tfrac1{n^2}\mbox{ for }n\in\en,n\ge2, f=1\mbox{ elsewhere}$$
    belongs to $H$ and exposes $(0,0)$. This completes the proof that $H$ is a function space containing constants and the description of the Choquet boundary (points $(\frac1n,\tfrac1n)$ for $n\ge2$ obviously do not belong to the Choquet boundary). Further, it easily follows from the definition of $H$ that $A_c(H)=H$.

    Next we observe that for each $n\in\en$, $n\ge 2$, $M_{(\frac1n,\frac1n)}(H)$ is the convex hull of  measures
    $$\ep_{(\frac1n,\frac1n)}, \tfrac1{2n}(\ep_{(\frac1n,-2)}+\ep_{(\frac1n,-1)})+(1-\tfrac{1}{n})\ep_{(\frac1n,0)},  \tfrac1{2n}(\ep_{(\frac1n,2)}+\ep_{(\frac1n,1)})+(1-\tfrac{1}{n})\ep_{(\frac1n,0)}.$$
    Indeed, let $\mu\in M_{(\frac1n,\frac1n)}(H)$. The function $h_n$ which equals $0$ on 
    $\{\tfrac1n\}\times\{(-2,-1,0,\tfrac1n,1,2)\}$ and $1$ elsewhere belongs to $H$ and witnesses that $\mu$ is carried by this set, i.e.,
    $$\mu=\alpha\ep_{(\frac1n,-2)}+\beta\ep_{(\frac1n,-1)}+\gamma\ep_{(\frac1n,0)}+\delta\ep_{(\frac1n,\frac1n)}+\epsilon\ep_{(\frac1n,1)}+\zeta\ep_{(\frac1n,2)}$$
    for some nonnegative numbers $\alpha,\beta,\gamma,\delta,\ep,\zeta$
    satisfying  $\alpha+\beta+\gamma+\delta+\epsilon+\zeta=1$.
    Functions $f_{n,2}$ and $f_{n,-2}$ considered above witness that $\alpha=\beta$ and $\epsilon=\zeta$. Further, function $f_{n,0}$ witnesses that
    $\tfrac1n=2(\alpha+\epsilon)+\tfrac1n\delta$  and hence
    $$\gamma=1-\delta-2(\alpha+\epsilon)=2(n-1)(\alpha+\epsilon),$$
    which completes the proof.

    In view of this description $H$-affine functions are exactly bounded universally measurable functions $f:K\to\ef$ satisfying
     $$\begin{aligned}
          f(\tfrac1n,\tfrac1{n})&=\tfrac1{2n}(f(\tfrac{1}{n},-2)+f(\tfrac{1}{n},-1))+(1-\tfrac1n)f(\tfrac1n,0)\\&=\tfrac1{2n}(f(\tfrac{1}{n},2)+f(\tfrac{1}{n},1))+(1-\tfrac1n)f(\tfrac1n,0)
    \mbox{\qquad for }n\in\en,n\ge2.\end{aligned}$$

In particular, $u=\chi_{\{(0,-2),(0,-1)\}}$ is an $H$-affine function of the first Baire class. To complete the proof we observe that this function is not strongly $H$-affine. Indeed, for each $n\in\en,n\ge2$ we have
$$\mu_n=\ep_{(\frac1n,-2)}+\ep_{(\frac1n,-1)}-\ep_{(\frac1n,1)}-\ep_{(\frac1n,2)}\in H^\perp=A_c(H)^\perp$$
and this sequence weak$^*$-converges to
$\mu=\ep_{(0,-2)}+\ep_{(0,-1)}-\ep_{(0,1)}-\ep_{(0,2)}$, hence $\mu\in A_c(H)^\perp$ as well. However, $\int u\di\mu=2\ne0$.
\end{proof}

The previous proposition and example clarify the relationship of $H$-affine and strongly $H$-affine functions for function spaces containing constants. The situation for function spaces not containing constants is different -- $H$-affine functions need not be strongly $H$-affine even if $H$ is simplicial. This is witnessed by examples from Section~\ref{ss:metriz} as we explain below. Let us now turn to simplicial spaces and to variants of the Dirichlet problem.

In the rest of this section we assume that $K$ is a fixed compact space and $H$ is a simplicial function space on $K$ (real or complex, with or without constants). We will also assume that $A_c(H)=H$, which makes no loss of generality, in view of Proposition~\ref{P:prenossimpliciality} and the problems addressed. Given $x\in K$, the unique $H$-boundary measure in $M_x(H)$ will be denoted by $\delta_x$. Moreover, given a bounded universally measurable function $f:K\to\ef$ we define its dilation by
$$Df(x)=\int f\di\delta_x,\quad x\in K.$$
This operation has been thoroughly studied for function spaces containing constants. Let us collect some of the known results in order to compare them with the results on spaces without constants.

\begin{fact}\label{f:dirichlet}
    Assume that $H$ contains constants. Then the following assertions are valid:
    \begin{enumerate}[$(a)$]
        \item If $f:K\to\ef$ is continuous (or just a bounded Baire function), then $Df$ is a Borel function.
        \item If $f:K\to\ef$ is a bounded Borel function, $Df$ may be highly non-measurable.
        \item Given a measure $\mu\in M(K,\ef)$, the formula
        $$D\mu(f)=\int Df\di\mu, f\in C(K,\ef),$$
        defines a continuous linear functional on $C(K,\ef)$. Its representing measure (also denoted by $D\mu$) is $H$-boundary.
        \item If $f:K\to\ef$ is a bounded Baire function, then $Df$ is strongly $H$-affine.
        
    \end{enumerate}
\end{fact}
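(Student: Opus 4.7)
The plan is to reduce each assertion to the classical theory on compact simplices, collected in~\cite[Chapter 6]{lmns}. Since $H$ contains constants and is simplicial with $A_c(H)=H$, Lemma~\ref{L:eval-s-c} together with Proposition~\ref{P:simplic-char} identifies $H$ isometrically with $A_c(S(H),\ef)$ where the state space $S(H)$ is a Choquet simplex, the evaluation $\phi$ embeds $K$ homeomorphically into $S(H)$ (sending $K$ onto a set containing $\ext S(H)$), and $\delta_x$ corresponds, under push-forward by $\phi$, to the unique maximal probability on $S(H)$ representing $\phi(x)$. Under this identification, Borel measurability, upper envelopes, strong affinity, and the Choquet ordering all match the classical notions on $S(H)$.

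For $(a)$, the assignment $x\mapsto\phi(\delta_x)$ is Borel measurable from $K$ into $(M_1(S(H)),w^*)$; this is a well-known consequence of simpliciality and the unique selection of maximal representatives (cf.~\cite[Section 6.2]{lmns}). Composing with the continuous functional $\mu\mapsto\int f\di\mu$ yields Borel measurability of $Df$ for continuous $f$, and a monotone class argument extends this to bounded Baire functions. Assertion $(b)$ is a classical counterexample which I would cite directly from~\cite[Chapter 6]{lmns}.

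For $(c)$, the estimate $\abs{Df(x)}\le\norm{f}_\infty$ is immediate since each $\delta_x$ is a probability; combined with $(a)$ this shows $D\mu$ extends to a continuous linear functional on $C(K,\ef)$ of norm at most $\norm{\mu}$, hence is represented by a Radon measure which we still denote $D\mu$. To verify that $D\mu$ is $H$-boundary I would apply Mokobodzki's criterion in the form of Fact~\ref{f:mokobodzki}$(b)$: given a convex continuous $F$ on $B_{H^*}$, the function $g(x)=\int(F\circ\phi)\di\delta_x$ equals $(F\circ\phi)^*(x)$ because $\phi(\delta_x)$ is maximal on $S(H)$ (which is a face of $B_{H^*}$), and Fubini then yields $\int F\di\phi(D\mu)=\int F^*\di\phi(D\mu)$.

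For $(d)$, the crucial reduction is that $\int Df\di\mu=0$ for every $\mu\in A_c(H)^\perp$ and every continuous $f$. Given such $\mu$, decompose $\mu=\mu^+-\mu^-$; since $1\in H$ we have $\mu^+(K)=\mu^-(K)$, and since $\mu$ annihilates $H=A_c(H)$, after normalization the two probabilities have identical action on $A_c(H)$, hence identical barycenters in $S(H)$. Simpliciality of $S(H)$ forces the corresponding maximal probabilities to coincide, so $D\mu^+=D\mu^-$ as measures on $K$, and therefore $\int Df\di\mu=\int f\di(D\mu^+-D\mu^-)=0$. The extension from continuous to bounded Baire $f$ is by a standard monotone class argument, using $(a)$ to ensure that $Df$ remains Borel throughout. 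The main obstacle in carrying out the program is the verification of $(c)$: showing that the pushforward $\phi(D\mu)$ satisfies the Mokobodzki equality uses an interchange of integrals with the Borel (but not necessarily continuous) function $x\mapsto(F\circ\phi)^*(x)$, which requires the measurability established in $(a)$.
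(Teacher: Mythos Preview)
The paper treats this statement as a known fact: assertions $(a)$ and $(d)$ are cited from \cite[Theorem 6.8]{lmns}, assertion $(c)$ from \cite[Theorem 6.11]{lmns}, and for $(b)$ a concrete example is given using the Stacey spaces of Section~\ref{ss:dikous} (take $L=A=[0,1]$, $K=K_{L,A}$, $H=H_{L,A}$, and $f=\chi_{B\times\{1\}}$ for $B\subset[0,1]$ non-measurable). Your proposal instead sketches direct arguments, which is reasonable and largely aligns with the classical theory behind those citations.

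Two remarks on your sketch. First, in $(d)$ your decomposition $\mu=\mu^+-\mu^-$ presupposes $\mu$ is real-valued; in the complex case you must first split $\mu$ into real and imaginary parts, which is legitimate because $A_c(H)$ is self-adjoint when $H$ contains constants (each $\mu\in M_x(H)$ is then a probability, so $f\in A_c(H)\Rightarrow\overline{f}\in A_c(H)$). Second, your route to $(d)$---showing $D\mu^+=D\mu^-$ via uniqueness of maximal representatives on the simplex $S(H)$---is a clean alternative to the disintegration argument the paper uses elsewhere (see the proof of Proposition~\ref{P:prenossimpliciality} and the argument for Proposition preceding Example~\ref{ex:dirichlet}, which invoke \cite[Propositions 3.89 and 3.90]{lmns}); your version is more direct but relies on $(c)$ being already in hand, as you note. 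For $(b)$ you should supply the explicit example rather than deferring to a citation, since the point of the assertion is precisely that such examples exist and are easy.
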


\begin{proof}
    Assertions $(a)$ and $(d)$ are proved in \cite[Theorem 6.8]{lmns}. Assertion $(c)$ follows from \cite[Theorem 6.11]{lmns}. An example witnessing validity of $(b)$ may be constructed easily for the Stacey function spaces mentioned in Section~\ref{ss:dikous}: Take $L=A=[0,1]$, $K=K_{L,A}$, $H=H_{L,A}$ and $f=\chi_{B\times\{1\}}$ where $B\subset [0,1]$ is a non-measurable set.
\end{proof}

 The situation for spaces without constants is more complicated and we do not know whether analogous statements holds in full generality. In the sequel we will analyze this situation. We first point out that for simplicial functions spaces without constants we have one more natural operator in addition to $D$:  If $f:K\to\ef$ is a bounded universally measurable function, we define
$$\widetilde{D}f(x)=\int_K f\di\abs{\delta_x}, \quad x\in K.$$
Note that in case $H$ contains constants, measures $\delta_x$ are positive, so $D=\widetilde{D}$, but if $H$ does not contain constants, the two operators may differ. 

We continue by collecting basic properties of operators $D$ and $\widetilde{D}$.

\begin{obs}\label{obs:dirichlet}
    Let $f:K\to\ef$ be a bounded universally measurable function. Then the following assertions are valid.
    \begin{enumerate}[$(i)$]
        \item $Df$ and $\widetilde{D}f$ are bounded functions and $\norm{Df}_\infty\le\norm{f}_\infty$ and
        $\norm{\widetilde{D}f}_\infty\le\norm{f}_\infty$.
        \item $Df(x)=\widetilde{D}f(x)=f(x)$ for $x\in\Ch_HK$.
        \item If $f\ge 0$, then $\abs{Df}\le \widetilde{D}f$.
        \item If $(f_n)$ is a bounded sequence of universally measurable functions pointwise converging to $f$, then $Df_n\to Df$ and $\widetilde{D}f_n\to\widetilde{D}f$ pointwise.
    \end{enumerate}
\end{obs}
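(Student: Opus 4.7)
The plan is to verify all four parts of Observation~\ref{obs:dirichlet} by unwinding the definitions and applying elementary estimates for integrals of bounded functions with respect to measures of controlled total variation. None of the assertions looks deep; the main conceptual step is to invoke the observation (proved earlier in Section~5) that for a simplicial space $H$ with $A_c(H)=H$ and $x\in\Ch_HK$ we have $M_x(H)=\{\ep_x\}$, so that $\delta_x=\ep_x$.

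For $(i)$, I would fix $x\in K$ and observe that, since $\delta_x\in M_x(H)$ we have $\norm{\delta_x}=\norm{\phi(x)}\le 1$, hence also $\abs{\delta_x}(K)=\norm{\delta_x}\le 1$. Therefore
\[\abs{Df(x)}=\Betr{\int f\di\delta_x}\le \norm{f}_\infty\norm{\delta_x}\le\norm{f}_\infty,\qquad \abs{\widetilde{D}f(x)}\le\int\abs{f}\di\abs{\delta_x}\le\norm{f}_\infty.\]
For $(ii)$, I would quote the above-mentioned observation from the beginning of Section~\ref{sec:without}: if $x\in\Ch_HK$ then $\delta_x=\ep_x$ (as $\ep_x$ is the unique element of $M_x(H)$ and is automatically $H$-boundary), so $\abs{\delta_x}=\ep_x$ as well; both $Df(x)$ and $\widetilde{D}f(x)$ then reduce to $f(x)$.

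For $(iii)$, with $f\ge 0$ I would just write
\[\abs{Df(x)}=\Betr{\int f\di\delta_x}\le \int f\di\abs{\delta_x}=\widetilde{D}f(x),\]
the inequality being the standard estimate $|\int g\di\mu|\le\int|g|\di|\mu|$ applied to the nonnegative function~$f$.

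For $(iv)$, given a bounded sequence $(f_n)$, say $\norm{f_n}_\infty\le M$, converging pointwise to $f$, I would fix $x\in K$ and apply Lebesgue's dominated convergence theorem to the sequence $(f_n)$ with respect to the finite measures $\delta_x$ and $\abs{\delta_x}$ (the constant $M$ serves as an integrable dominant since these measures are finite). This yields
\[Df_n(x)=\int f_n\di\delta_x\to\int f\di\delta_x=Df(x),\qquad \widetilde{D}f_n(x)=\int f_n\di\abs{\delta_x}\to\int f\di\abs{\delta_x}=\widetilde{D}f(x),\]
which is the desired pointwise convergence. The only mild obstacle is ensuring that all integrals make sense for universally measurable~$f$, but this is automatic since $\delta_x$ and $\abs{\delta_x}$ are Radon measures and any bounded universally measurable function is integrable with respect to any finite Radon measure.
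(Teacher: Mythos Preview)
Your proposal is correct and matches the paper's proof essentially line for line: part $(i)$ via $\norm{\delta_x}=\norm{\phi(x)}\le 1$, part $(ii)$ via $\delta_x=\ep_x$ for $x\in\Ch_HK$, part $(iii)$ via the standard estimate $\bigl|\int f\di\delta_x\bigr|\le\int f\di\abs{\delta_x}$, and part $(iv)$ via dominated convergence. One small remark: for $(ii)$ you do not need the full strength of the observation that $M_x(H)=\{\ep_x\}$; it suffices that $\ep_x$ is $H$-boundary (since $\phi(x)\in\ext B_{H^*}$) and hence equals $\delta_x$ by simpliciality, which is exactly what the paper uses.
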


\begin{proof}
    $(i)$: This follows from the fact that $\norm{\delta_x}=\norm{\phi(x)}\le 1$ for each $x\in K$. 

    $(ii)$: Observe that $\delta_x=\ep_x$ for $x\in\Ch_HK$.

    $(iii)$: If $f\ge 0$ and $x\in K$, then
    $$\abs{Df(x)}=\abs{\int f\di\delta_x}\le \int \abs{f}\di\abs{\delta_x}= \int f\di\abs{\delta_x}=\widetilde{D}f(x).$$

    $(iv)$: This follows from the Lebesgue dominated convergence theorem.
\end{proof}

The previous easy observation implies, in particular, that both $D$ and $\widetilde{D}$ are linear operators of norm one (from the space of bounded universally measurable functions to the space of bounded functions) and the operator $\widetilde{D}$ is moreover positive. However, the resulting function may be highly non-measurable (even in case $H$ contains constants, cf. Fact~\ref{f:dirichlet}$(b)$). Let us analyse these operators on continuous functions (and hence on bounded Baire functions).

Let us first look at the case of metrizable $K$. A key ingredient is the following lemma which is known for function spaces containing constants (in this case it follows immediately from \cite[Theorem 11.41]{lmns}).

\begin{lemma}\label{L:metr-borel}
    Assume that $K$ is metrizable. Then the mappings $x\mapsto\delta_x$ and $x\mapsto\abs{\delta_x}$ are Borel maps of $K$ into $M(K,\ef)$.
\end{lemma}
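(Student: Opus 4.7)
The plan is to reduce the assertion, via a measurable selection in the metrizable compact $B_{H^*}$, to the Hustad construction of Lemma~\ref{L:prenosnaK}. Since $H$ is separable as a subspace of the separable $C(K,\ef)$, bounded sets in $M(K,\ef)$ are Polish under the weak$^*$ topology, so to check Borel measurability of $x\mapsto\delta_x$ and $x\mapsto|\delta_x|$ it suffices to show that $x\mapsto\int f\,d\delta_x$ and $x\mapsto\int f\,d|\delta_x|$ are Borel for every $f\in C(K,\ef)$. All of $B_{H^*}$, $S_\ef\times K$ and the spaces of Radon probabilities on them are metrizable compacta, so the classical machinery of Choquet theory is available.

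The central object is the set
$$\Omega=\bigl\{(x,\widetilde\nu)\in K_1\times M_1(S_\ef\times K)\setsep \theta(\widetilde\nu)\text{ is maximal on }B_{H^*}\text{ and }\norm{\phi(x)}r(\theta(\widetilde\nu))=\phi(x)\bigr\},$$
where $K_1=\{x\in K:\phi(x)\ne 0\}$ is Borel and on its complement one simply puts $\delta_x=0$. The barycenter condition is Borel by Lemma~\ref{L:tezistenakouli}, the continuity of $\theta$ and $\phi$, and the lower semicontinuity of $\norm{\phi(\cdot)}$; maximality of $\theta(\widetilde\nu)$ is a $G_\delta$ condition by Fact~\ref{f:mokobodzki} tested against a countable dense family of convex continuous functions on the metrizable $B_{H^*}$. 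Sections of $\Omega$ are nonempty: the Choquet--Bishop--de~Leeuw theorem produces a maximal probability representing $\phi(x)/\norm{\phi(x)}$, it is carried by $\overline{\ext B_{H^*}}\subset\theta(S_\ef\times K)$ by Lemma~\ref{L:theta} together with compactness, and it lifts to some $\widetilde\nu\in M_1(S_\ef\times K)$ as in the proof of Proposition~\ref{P:exist-bd}. A standard Borel selection theorem then yields a Borel map $x\mapsto\widetilde\nu_x$ on $K_1$.

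Next, the Hustad assignment $\widetilde\nu\mapsto\mu_{\widetilde\nu}$ with $\mu_{\widetilde\nu}(A)=\int_{S_\ef\times A}\alpha\,d\widetilde\nu(\alpha,y)$ and the projection $\widetilde\nu\mapsto\pi(\widetilde\nu)$ onto $K$ are both weak$^*$-continuous from $M_1(S_\ef\times K)$ into $M(K,\ef)$, because for $f\in C(K,\ef)$ both $\int f\,d\mu_{\widetilde\nu}$ and $\int f\,d\pi(\widetilde\nu)$ are integrals of fixed continuous functions on $S_\ef\times K$ against $\widetilde\nu$. By Lemma~\ref{L:prenosnaK} the measure $\norm{\phi(x)}\mu_{\widetilde\nu_x}$ is an $H$-boundary element of $M_x(H)$, so simpliciality of $H$ forces $\delta_x=\norm{\phi(x)}\mu_{\widetilde\nu_x}$ independently of the section chosen, and Lemma~\ref{L:prenosnaK}$(iv)$ identifies $|\delta_x|=\norm{\phi(x)}\pi(\widetilde\nu_x)$ on $K_1$. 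The principal technical hurdle is the measurable-selection step—verifying that $\Omega$ is Borel and extracting a Borel section—and this is precisely where the metrizability of $K$ is used, mirroring the role of \cite[Theorem~11.41]{lmns} in the classical setting with constants.
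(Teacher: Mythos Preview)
Your approach is essentially the same as the paper's: both reduce to a measurable selection of maximal representing measures and then push through the Hustad construction of Lemma~\ref{L:prenosnaK}. The paper, however, organizes the selection differently and thereby avoids a gap in your argument. It first invokes \cite[Theorem~11.41]{lmns} to obtain a Borel (in fact first Baire class) map $T\colon B_{H^*}\to M_1(B_{H^*})$ assigning to each $\varphi$ a maximal representing probability; then, crucially, it uses that simpliciality implies condition~$(I)$ of Theorem~\ref{T:bez1}, so by Lemma~\ref{L:I-homeo} the map $\theta$ restricted to $S_\ef\times\Ch_HK$ is a \emph{homeomorphism} onto $\ext B_{H^*}$. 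Consequently $\nu\mapsto\theta^{-1}(\nu)$ is a homeomorphism from maximal probabilities on $B_{H^*}$ to probabilities on $S_\ef\times\Ch_HK$, and the lift $\widetilde\nu$ is canonical rather than chosen. The explicit formula $\delta_x=\norm{\phi(x)}\cdot S\bigl(\theta^{-1}(T(\phi(x)/\norm{\phi(x)}))\bigr)$ is then a composition of Borel and continuous maps.

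Your version hides this homeomorphism inside a ``standard Borel selection theorem'' applied to $\Omega$, and that step is not quite justified as written: for a Borel subset of a product of Polish spaces with merely nonempty (even $G_\delta$) sections, one is only guaranteed a universally measurable uniformization (Jankov--von~Neumann), not a Borel one. The clean fix is exactly the paper's observation: once you know $\theta|_{S_\ef\times\Ch_HK}$ is a homeomorphism onto $\ext B_{H^*}$, the sections of your $\Omega$ are singletons, the graph is Borel, and the induced function is automatically Borel. So your outline is correct in spirit, but the paper's explicit use of Lemma~\ref{L:I-homeo} is what makes the Borel measurability go through without appealing to an unspecified selection principle.
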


\begin{proof}
    These mappings can be expressed as compositions of several maps. Let us first analyze the individual maps from the compositions.

    A key ingredient is the existence of a mapping $T:B_{H^*}\to M_1(B_{H^*})$ which is Borel (in fact of the first Baire class) and assigns to each $\varphi\in B_{H^*}$ a maximal probability measure with barycenter $\varphi$. Since $B_{H^*}$ is metrizable, its existence follows from \cite[Theorem 11.41]{lmns}.
     
    Another consequence of metrizability of $B_{H^*}$ is the fact that maximal measures are exactly the measures carried by $\ext B_{H^*}$.  By Theorem~\ref{T:bez1} and Lemma~\ref{L:I-homeo} we know that the mapping $\theta$ maps $S_{\ef}\times\Ch_HK$ homeomorphically onto $\ext B_{H^*}$. Therefore the mapping
    $$\mu\mapsto \theta^{-1}(\mu),\qquad\mu\in M_1(\ext B_{H^*})$$
    is a homeomorphism of maximal probabilities on $B_{H^*}$ onto probabilities on $S_{\ef}\times\Ch_HK$.

    Further, consider the mapping $S:M_1(S_{\ef}\times K)\to M(K,\ef)$ defined by
    $$S(\mu)(A)=\int_{S_{\ef}\times A} \alpha\di\mu(\alpha,y),\quad A\subset K\mbox{ Borel},$$
    see Lemma~\ref{L:prenosnaK}. This mapping is continuous as, given any $f\in C(K)$, we have
    $$\int_K f\di S(\mu)= \int_{S_{\ef}\times K}\alpha f(y)\di\mu(\alpha,y)$$
    and hence the mapping $\mu\mapsto\int_K f\di S(\mu)$ is continuous.

    It follows from Lemma~\ref{L:prenosnaK} that 
    $$\delta_x=\begin{cases}
        0, & \phi(x)= 0, \\
        \norm{\phi(x)}\cdot S\left( \theta^{-1}\left( T\left( \frac{\phi(x)}{\norm{\phi(x)}}\right)\right)\right), & \phi(x)\ne 0.
    \end{cases}$$
    Since $\phi$ is continuous and the norm is on $H^*$ is weak$^*$-lower semicontinuous, we deduce that the mapping $x\mapsto\delta_x$ is Borel. A bit more careful analysis yields that it is of the second Baire class.

    Further, let $\pi$ denote the projection of $S_{\ef}\times K$ onto $K$. The mapping $\mu\mapsto \pi(\mu)$ is clearly continuous as a mapping $M_1(S_{\ef}\times K)\to M_1(K)$ and Lemma~\ref{L:prenosnaK} also implies that  $$\abs{\delta_x}=\begin{cases}
        0, & \phi(x)= 0, \\
        \norm{\phi(x)}\cdot \pi\left( \theta^{-1}\left( T\left( \frac{\phi(x)}{\norm{\phi(x)}}\right)\right)\right), & \phi(x)\ne 0.
    \end{cases}$$
    So, similarly as above we deduce that the mapping $x\mapsto\abs{\delta_x}$ is Borel.
\end{proof}

Now we are ready to collect properties of the operators $D$ and $\widetilde{D}$ in the metrizable case.

\begin{thm}\label{T:dirichlet-metr} Assume that $K$ is metrizable.
\begin{enumerate}[$(1)$]
    \item Let $f:K\to \ef$ be a bounded Borel function. Then $Df$ and $\widetilde{D}f$ are Borel functions.
    \item Given $\mu\in M(K,\ef)$, there are unique measures $D\mu,\widetilde{D}\mu\in M(K,\ef)$  satisfying
    $$\int f\di D\mu=\int Df\di\mu \mbox{ and } \int f\di \widetilde{D}\mu=\int \widetilde{D}f\di\mu ,\quad f\in C(K,\ef).$$
    Moreover, $\norm{D\mu}\le\norm{\mu}$ and $\norm{\widetilde{D}\mu}\le\norm{\mu}$ and the above equalities also hold for any bounded Borel function $f:K\to\ef$.
    \item $\abs{D\mu}\le \widetilde{D}\abs{\mu}$, in particular $\widetilde{D}\mu\ge0$ whenever $\mu\ge0$.
    \item $D\mu$ and $\widetilde{D}\mu$ are $H$-boundary and $\mu-D\mu\in A_c(H)^\perp$.
    \item $D\mu=\delta_x$ whenever $\mu\in M_x(H)$.
    \item $Df$ is $H$-affine whenever $f:K\to\ef$ is a bounded Borel function.
    \item Let $\mu\in M(K,\ef)$. Then $\mu$ is $H$-boundary $\iff$ $D\mu=\mu$ $\iff$ $\widetilde{D}\mu=\mu$.
\end{enumerate}  
\end{thm}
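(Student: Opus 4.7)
The plan is to deduce all seven assertions from Lemma~\ref{L:metr-borel}, which provides Borel measurability of $x\mapsto\delta_x$ and $x\mapsto\abs{\delta_x}$, together with a Fubini--Mokobodzki mechanism. For (1), I start with continuous $f$: the map $\mu\mapsto\int f\di\mu$ is weak$^*$-continuous on $M(K,\ef)$, so composing with the Borel map $x\mapsto\delta_x$ shows $Df$ is Borel, and similarly for $\widetilde{D}f$. To reach all bounded Borel $f$, let $\F_0$ denote the class of bounded Borel $f$ for which $Df$ is Borel; Observation~\ref{obs:dirichlet}(iv) makes $\F_0$ closed under bounded pointwise limits, so since $K$ is metrizable (hence Borel equals Baire), $\F_0$ contains every bounded Borel function. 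The same argument covers $\widetilde{D}$.

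For (2), I define $D\mu$ as the Riesz representative of the continuous linear functional $f\mapsto\int Df\di\mu$ on $C(K,\ef)$; linearity of $D$ and the bound $\norm{Df}_\infty\le\norm{f}_\infty$ from Observation~\ref{obs:dirichlet}(i) yield $\norm{D\mu}\le\norm{\mu}$. Extension of $\int f\di D\mu=\int Df\di\mu$ to bounded Borel $f$ follows by a monotone-class argument using dominated convergence on both sides, justified by Observation~\ref{obs:dirichlet}(iv); the construction of $\widetilde{D}\mu$ is parallel. Part (3) then follows by testing on characteristic functions: for a Borel $A\subset K$, $\abs{D\mu(A)}=\abs{\int D\chi_A\di\mu}\le\int\widetilde{D}\chi_A\di\abs{\mu}=\widetilde{D}\abs{\mu}(A)$ by Observation~\ref{obs:dirichlet}(iii), and passing to suprema over Borel partitions yields $\abs{D\mu}\le\widetilde{D}\abs{\mu}$; positivity of $\widetilde{D}\mu$ for $\mu\ge0$ is immediate from the defining formula.

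Part (4) is the main obstacle. The relation $\mu-D\mu\in A_c(H)^\perp$ is easy: for $f\in A_c(H)=H$, $Df(x)=\int f\di\delta_x=f(x)$ because $\delta_x\in M_x(A_c(H))$, so $\int f\di D\mu=\int f\di\mu$. Showing the $H$-boundary property is harder. I first verify it for $\widetilde{D}\nu$ when $\nu\ge0$ via Mokobodzki's criterion applied to $\phi(\widetilde{D}\nu)$: for any convex continuous $F:B_{H^*}\to\er$, the extended formula from (2) together with Fubini gives $\int F\di\phi(\widetilde{D}\nu)=\int\int F\di\phi(\abs{\delta_x})\di\nu(x)$, and since each $\phi(\abs{\delta_x})$ is maximal the inner integral is unchanged when $F$ is replaced by $F^*$; the Fubini swap back is legal because $F^*\circ\phi$ is bounded Borel and (2) applies to such functions. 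Applying this with $\nu=\abs{\mu}$ yields that $\widetilde{D}\abs{\mu}$ is $H$-boundary; by part (3) $\abs{D\mu}\le\widetilde{D}\abs{\mu}$, so $\phi(\abs{D\mu})\le\phi(\widetilde{D}\abs{\mu})$, and since in the metrizable case a positive measure on $B_{H^*}$ is maximal iff it is concentrated on the Borel set $\ext B_{H^*}$, a positive measure dominated by a maximal one is maximal. Hence both $D\mu$ and $\widetilde{D}\mu$ are $H$-boundary.

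With (4) in hand the remainder is routine. For (5), if $\mu\in M_x(H)$ then $\mu-D\mu\in A_c(H)^\perp$ forces $D\mu$ to induce the same functional on $H$ as $\mu$, and combining $\norm{D\mu}\le\norm{\mu}=\norm{\phi(x)}$ with $\norm{D\mu}\ge\norm{D\mu|_H}=\norm{\phi(x)}$ gives $D\mu\in M_x(H)$; since $D\mu$ is also $H$-boundary, simpliciality of $H$ yields $D\mu=\delta_x$. For (6), given $x\in K$ and $\mu\in M_x(H)$, the Borel extension of (2) and (5) give $\int Df\di\mu=\int f\di D\mu=\int f\di\delta_x=Df(x)$, so $Df$ is $H$-affine. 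For (7), if $\mu$ is $H$-boundary then Observation~\ref{obs:metriz} places $\mu$ on $\Ch_HK$, where $Df=\widetilde{D}f=f$ by Observation~\ref{obs:dirichlet}(ii); integrating against $\mu$ gives $D\mu=\widetilde{D}\mu=\mu$, and both converse directions follow immediately from (4).
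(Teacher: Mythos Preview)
Your proof is correct and follows the paper's argument closely for parts (1)--(3) and (5)--(7). The only substantive divergence is in part~(4).

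For the $H$-boundary property of $\widetilde{D}\mu$, the paper exploits metrizability much more directly: since $K$ is metrizable, Observation~\ref{obs:metriz} says $H$-boundary is equivalent to being carried by $\Ch_HK$. So for a Borel set $B\subset K\setminus\Ch_HK$ one simply computes $\widetilde{D}\mu(B)=\int\widetilde{D}\chi_B\di\mu=\int\abs{\delta_x}(B)\di\mu(x)=0$, as each $\abs{\delta_x}$ is carried by $\Ch_HK$. Your Mokobodzki--Fubini argument is valid (the swap back through $F^*\circ\phi$ works because $F^*$ is upper semicontinuous, hence $F^*\circ\phi$ is bounded Borel and the extended formula from~(2) applies), but it is considerably heavier than needed here. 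Interestingly, your approach is closer in spirit to what the paper does later in Theorem~\ref{T:dir-proste} for the non-metrizable case with $\theta$ one-to-one, where the direct ``carried by $\Ch_HK$'' characterization is unavailable. One small point you leave implicit: to conclude that $\widetilde{D}\mu$ (for general $\mu$) is $H$-boundary, you need $\abs{\widetilde{D}\mu}\le\widetilde{D}\abs{\mu}$, which follows just as in your proof of~(3) since $\widetilde{D}\chi_A\ge0$.
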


\begin{proof}
    $(1)$: Let $f\in C(K,\ef)$. Then $\mu\mapsto \int f\di\mu$ is a weak$^*$-continuous linear functional on $M(K,\ef)$. By Lemma~\ref{L:metr-borel} we know that $x\mapsto\delta_x$ and $x\mapsto \abs{\delta_x}$ are a Borel mappings of $K$ into $M(K,\ef)$. Hence, the mappings $Df$ and $\widetilde{D}f$ are Borel, being compositions of the respective mappings.   Since Borel functions on $K$ coincide with Baire functions, using Observation~\ref{obs:dirichlet}$(iv)$ we complete the proof of $(1)$.
    
    $(2)$: Using $(1)$ and Observation~\ref{obs:dirichlet}$(i)$ we deduce that
 $$f\mapsto \int Df\di\mu\quad\mbox{and}\quad f\mapsto \int \widetilde{D}f\di\mu$$
     are continuous linear functionals on $C(K,\ef)$ of norm at most $\norm{\mu}$. The existence and uniqueness of measures $D\mu$ and $\widetilde{D}\mu$ together with the estimates of norms follow by the Riesz representation theorem. The last statement follows from Observation~\ref{obs:dirichlet}$(iv)$ and the Lebesgue dominated convergence theorem (using the coincidence of Borel and Baire functions).

 $(3)$: Fix a Borel set $B\subset K$. Then
$$\begin{aligned}
    \abs{D\mu(B)}&=\abs{\int \chi_B \di D\mu}=\abs{\int D(\chi_B)\di\mu}\le \int \abs{D\chi_B}\di\abs{\mu} \le 
\int \widetilde{D}\chi_B \di\abs{\mu}\\&=\int \chi_B \di\widetilde{D}\abs{\mu}=\widetilde{D}\abs{\mu}(B),\end{aligned}$$
  where we used $(2)$ and Observation~\ref{obs:dirichlet}$(iii)$. Now the definition of the absolute variation of a measure easily implies $\abs{D\mu}\le\widetilde{D}\mu$.   
    
  $(4)$: If $f\in A_c(H)$, then
     $$\int f\di D\mu=\int Df\di\mu=\int (\int f\di\delta_x)\di\mu(x)=\int f(x)\di\mu(x),$$
     hence $\mu-D\mu\in A_c(H)^\perp$.

    Moreover, if $B\subset K\setminus\Ch_HK$ is a Borel set, using $(2)$ we get
     $$\widetilde{D}\mu(B)=\int \chi_B\di\widetilde{D}\mu=\int \widetilde{D}\chi_B\di\mu=\int \abs{\delta_x}(B)\di\mu(x)=0$$
     as $\abs{\delta_x}$ is carried by $\Ch_HK$ (see Observation~\ref{obs:metriz}). This shows that $\widetilde{D}\mu$ is carried by $\Ch_HK$, so it is an $H$-boundary measure. By $(3)$ we deduce that $D\mu$ is a boundary measure as well.

     $(5)$: Let $\mu\in M_x(H)$. By $(4)$ and $(2)$ we deduce that $D\mu$ is an $H$-boundary measure in $M_x(H)$, i.e., $D\mu=\delta_x$.

    $(6)$: Let $f$ be a bounded Borel function. Let $\mu\in M_x(H)$. Then
     $$\int Df\di\mu=\int f\di D\mu=\int f\di\delta_x=Df(x),$$
     where we used $(2)$ and $(5)$.      This completes the proof.

     $(7)$: If $D\mu=\mu$ or $\widetilde{D}\mu=\mu$, then $\mu$ is $H$-boundary by $(4)$. Conversely, assume $\mu$ is $H$-boundary. By Observation~\ref{obs:metriz} we know that $\mu$ is carried by $\Ch_HK$, so equalities $D\mu=\widetilde{D}\mu=\mu$ follow from the definitions in $(2)$ and Observation~\ref{obs:dirichlet}$(ii)$.
\end{proof}

We continue by the promised example showing a further difference between function spaces with and without constants.

\begin{example}\label{ex:dirichlet}
    It may happen that $K$ is metrizable, $\Ch_HK$ is closed and there is $f\in C(K,\ef)$ such that $Df$ is neither continuous nor strongly $H$-affine.
\end{example}

\begin{proof}
    Let $K=K_1$ and $H=H_1$, where $K_1$ and $H_1$ are as in Section~\ref{ss:metriz}. Then $K$ is metrizable and $\Ch_HK$ is closed (by Lemma~\ref{L:metriz-zakl}) and $H$ is simplicial (by Proposition~\ref{P:metriznefs}).
    Let $f$ the the constant function equal to $1$. It follows from the proof of Proposition~\ref{P:metriznefs} that
    $$Df(x)=\begin{cases}
        \alpha+\beta, & x=(0,0),\\ 1 &\mbox{otherwise}.
    \end{cases}$$
    So, $Df$ is not continuous. Moreover, $Df$ is not strongly $H$-affine because $\mu=\ep_{(0,0)}-\frac12(\ep_{(0,-1)}+\ep_{(0,1)})\in A_c(H)^\perp$, but $\int Df\di\mu=\alpha+\beta-1\ne0$.
\end{proof}

Note that Fact~\ref{f:dirichlet} deals with arbitrary function space containing constants, but Theorem~\ref{T:dirichlet-metr} requires metrizability of the compact in question. The metrizable case uses special tools (in particular a selection result) which are not available in the general case. The methods of proving Fact~\ref{f:dirichlet} cannot be  transferred to the general case, but there is another special case to which they may be adapted. It is the content of the following theorem.

\begin{thm}\label{T:dir-proste}
Assume that the mapping $\theta$ is one-to-one. Then the following assertions are valid:
\begin{enumerate}[$(a)$]
    \item Assume that $f$ is a bounded Baire function on $K$. Then:
    \begin{enumerate}[$(i)$]
        \item $Df$ and $\widetilde{D}f$ are bounded Borel functions. 
        \item  $DDf=Df$ and $\widetilde{D}\widetilde{D}f=\widetilde{D}f$.
    \end{enumerate} 
   
    \item  Given $\mu\in M(K,\ef)$, there are unique measures $D\mu,\widetilde{D}\mu\in M(K,\ef)$  satisfying
    $$\int f\di D\mu=\int Df\di\mu \mbox{ and } \int f\di \widetilde{D}\mu=\int \widetilde{D}f\di\mu ,\quad f\in C(K,\ef).$$
    Moreover, $\norm{D\mu}\le\norm{\mu}$ and $\norm{\widetilde{D}\mu}\le\norm{\mu}$ and the above equalities also hold for any bounded Baire function $f:K\to\ef$.
    \item $\abs{D\mu}\le \widetilde{D}\abs{\mu}$, in particular $\widetilde{D}\mu\ge0$ whenever $\mu\ge0$.
    \item $D\mu$ and $\widetilde{D}\mu$ are $H$-boundary and $\mu-D\mu\in A_c(H)^\perp$.
    \item $D\mu=\delta_x$ whenever $\mu\in M_x(H)$.
    \item $Df$ is $H$-affine whenever $f:K\to\ef$ is a bounded Baire function.
    \item Let $\mu\in M(K,\ef)$. Then $\mu$ is $H$-boundary $\iff$ $D\mu=\mu$ $\iff$ $\widetilde{D}\mu=\mu$.
\end{enumerate}
\end{thm}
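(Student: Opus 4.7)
The proof will parallel Theorem~\ref{T:dirichlet-metr}, with the metrizability-dependent Lemma~\ref{L:metr-borel} replaced by an upper-envelope computation on $B_{H^*}$ made possible by the injectivity of $\theta$. Since $\theta$ is a continuous bijection from the compact $S_\ef\times K$ onto $\widetilde K:=\theta(S_\ef\times K)\subset B_{H^*}$, it is a homeomorphism; moreover $\theta$-injectivity forces $\phi(x)\ne 0$ for every $x\in K$, so the normalized measures $\delta_x/\|\phi(x)\|$ are well-defined probabilities and the quotient $\phi(x)/\|\phi(x)\|$ makes sense.

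The key step is $(a)(i)$. For $f\in C(K,\ef)$ I would introduce the continuous functions $f^\sharp,f^\flat\in C(\widetilde K,\ef)$ by $f^\sharp(\theta(\alpha,x))=\alpha f(x)$ and $f^\flat(\theta(\alpha,x))=f(x)$, and fix Tietze extensions $\widetilde f^\sharp,\widetilde f^\flat\in C(B_{H^*},\ef)$ (applied to real and imaginary parts). By Lemmas~\ref{L:prenosnaK} and~\ref{L:prenoszpet} the unique $H$-boundary measure $\delta_x$ is the Hustad image of the uniquely determined lift $\widetilde\nu_x\in M_1(S_\ef\times K)$, whence $\nu_x:=\theta(\widetilde\nu_x)$ is a maximal probability on $B_{H^*}$, carried by $\widetilde K$, representing $\phi(x)/\|\phi(x)\|$; direct computation then gives $Df(x)=\|\phi(x)\|\int f^\sharp\,d\nu_x$ and $\widetilde Df(x)=\|\phi(x)\|\int f^\flat\,d\nu_x$. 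Combining the simpliciality of $H$ with the bijection of Lemma~\ref{L:prenoszpet} (enabled by $\theta$ injective) and the fact that every maximal measure on $B_{H^*}$ is pseudo-supported by $\ext B_{H^*}=\theta(S_\ef\times\Ch_HK)\subset\widetilde K$, the measure $\nu_x$ is (modulo the pseudo-support technicality discussed below) the unique maximal probability on $B_{H^*}$ representing $\phi(x)/\|\phi(x)\|$, so Alfsen's Corollary I.3.6 yields
\[Df(x)=\|\phi(x)\|\cdot(\widetilde f^\sharp)^{*}\!\bigl(\phi(x)/\|\phi(x)\|\bigr), \qquad \widetilde Df(x)=\|\phi(x)\|\cdot(\widetilde f^\flat)^{*}\!\bigl(\phi(x)/\|\phi(x)\|\bigr).\]
Since the upper envelopes $(\widetilde f^\sharp)^{*},(\widetilde f^\flat)^{*}$ are upper semicontinuous and hence Borel on $B_{H^*}$, $\|\phi(\cdot)\|$ is weak$^*$-lower semicontinuous on $K$, and $y\mapsto y/\|y\|$ is Borel on $\{y\ne 0\}$, both $Df$ and $\widetilde Df$ are Borel. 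The extension from continuous to bounded Baire $f$ is by Observation~\ref{obs:dirichlet}$(iv)$ and a standard monotone-class argument.

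The remaining assertions follow the template of Theorem~\ref{T:dirichlet-metr}. For $(b)$, the Riesz representation theorem applied to the continuous linear functionals $f\mapsto\int Df\,d\mu$ and $f\mapsto\int\widetilde Df\,d\mu$ on $C(K,\ef)$ yields the measures $D\mu,\widetilde D\mu$, with the identity extending to Baire $f$ by dominated convergence. Assertion $(c)$ is the direct estimate $|D\mu(B)|=|\int D\chi_B\,d\mu|\le\int\widetilde D\chi_B\,d|\mu|=\widetilde D|\mu|(B)$ via Observation~\ref{obs:dirichlet}$(iii)$. For $(e)$, note that $Dh=h$ for $h\in H$, so $D\mu$ agrees with $\mu$ on $H$; the Hahn--Banach estimate forces $\|D\mu\|=\|\phi(x)\|$, whence $D\mu\in M_x(H)$, and simpliciality combined with $(d)$ gives $D\mu=\delta_x$. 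Assertion $(f)$ follows from $(b)$ and $(e)$ via $\int Df\,d\mu=\int f\,dD\mu=\int f\,d\delta_x=Df(x)$ for $\mu\in M_x(H)$, and $(a)(ii)$ is the instance of $(f)$ at $\mu=\delta_x$. For $(d)$, apply Fact~\ref{f:mokobodzki}$(c)$: for an $S_\ef$-invariant convex continuous $F$ on $B_{H^*}$ compute $\int F\,d\phi(D\mu)=\int D(F\circ\phi)\,d\mu=\int D(F^{*}\circ\phi)\,d\mu=\int F^{*}\,d\phi(D\mu)$, the middle equality using that each $\phi(\delta_x)$ is maximal. Finally $(g)$ follows from $(d)$ and, for the converse, the fact that for $H$-boundary $\mu$ one has $D\mu=\widetilde D\mu=\mu$ by pointwise inheritance from $\delta_x=\ep_x$ on $\Ch_HK$ (Obs.~\ref{obs:pseudo}).

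The principal obstacle is the uniqueness statement used in the envelope formula: the bijection of Lemma~\ref{L:prenoszpet} only gives uniqueness of $\nu_x$ among maximal probabilities on $B_{H^*}$ carried by $\widetilde K$, and upgrading this to global uniqueness—required to apply Alfsen's Corollary I.3.6—rests on the delicate step of passing from ``pseudo-supported by $\ext B_{H^*}$'' to ``carried by $\widetilde K$'' in the non-metrizable setting, exploiting the compactness of $\widetilde K$ together with the fact that the integrands $\widetilde f^\sharp,\widetilde f^\flat$ depend only on their values on $\widetilde K$. Technicalities in $(d)$ with the Borel function $F^{*}\circ\phi$ (which is only universally measurable) and in $(g)$ with the pseudo-support of $H$-boundary measures will require further care.
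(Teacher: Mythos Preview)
Your overall strategy---extending functions to $B_{H^*}$ via the homeomorphism $\theta$ and using upper envelopes---matches the paper's, but the execution of the key step in $(a)(i)$ contains an error. You write
\[
Df(x)=\|\phi(x)\|\cdot(\widetilde f^\sharp)^{*}\bigl(\phi(x)/\|\phi(x)\|\bigr),
\]
invoking Alfsen's Corollary~I.3.6 together with the uniqueness of $\nu_x$. But Alfsen's formula says $G^{*}(y)=\sup\{\int G\,d\nu: r(\nu)=y\}$, and this supremum equals $\int G\,d\nu_x$ only when $G$ is \emph{convex} (so that $\nu\mapsto\int G\,d\nu$ is monotone in the Choquet order and the unique maximal $\nu_x$ dominates). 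For a general continuous extension $\widetilde f^\sharp$ the identity is false: e.g.\ if $\widetilde f^\sharp=-G$ with $G$ strictly convex, then $(\widetilde f^\sharp)^{*}(y)=-G(y)$ while $\int\widetilde f^\sharp\,d\nu_x=-G^{*}(y)$. The paper avoids this by proving the envelope formula \emph{only} for convex $F$, deducing that $F\mapsto\int F\,d\nu_x$ is Borel in $x$ for convex $F$, and then passing to arbitrary continuous $F$ by Stone--Weierstrass (real parts of continuous functions are uniform limits of differences of convex continuous functions). Your argument needs this intermediate step.

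Two further points. First, deriving $(a)(ii)$ from $(f)$ gives only $DDf=Df$; the identity $\widetilde D\widetilde Df=\widetilde Df$ does not follow from $H$-affinity of $Df$ and needs a separate argument (the paper proves both by showing $D(D_1F)=D(F\circ\phi)$ and $\widetilde D(D_1F)=\widetilde D(F\circ\phi)$ for convex $F$, then Stone--Weierstrass). Second, your sketch of $(d)$ relies on applying the duality in $(b)$ to $F^{*}\circ\phi$; but $(b)$ is established only for \emph{Baire} functions, and in the non-metrizable setting $F^{*}\circ\phi$ is Borel but need not be Baire (nor a sequential limit of continuous functions). The paper circumvents this with a substantially longer argument: it introduces the averaging operator $\Inv$, constructs an auxiliary measure $\nu$ on $B_{H^*}$, shows $\Inv\nu\prec\Inv\phi(\widetilde D\mu)$, and then uses a disintegration via \cite[Proposition~3.89]{lmns} to conclude maximality. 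Likewise your approach to $(g)$ via Observation~\ref{obs:pseudo} does not work non-metrizably (pseudosupport does not give ``carried by $\Ch_HK$''); the paper instead returns to the envelope formula for convex $F$ combined with Lemma~\ref{L:maximalnasfere}.
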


\begin{proof}
$(a)$:  By Observation~\ref{obs:dirichlet}$(i)$ we know that $Df$ and $\widetilde{D}f$ are bounded functions. It remains to prove Borel measurability and assertion $(ii)$. 
Due to the Lebesgue dominated convergence theorem it is enough to prove it for continuous functions. 

Let us first analyze in a bit more detail the construction from Lemmata~\ref{L:prenosnaK} and~\ref{L:prenoszpet}.
Fix $x\in K$ such that $\phi(x)\ne0$. Let $\nu$ be any maximal probability on $B_{H^*}$ with barycenter $\frac{\phi(x)}{\norm{\phi(x)}}$.  Let $\widetilde{\nu}$ and $\mu$ be the respective measures  provided by Lemma~\ref{L:prenosnaK}. The quoted lemma also implies  that $\delta_x=\norm{\phi(x)}\mu$. It follows from Lemma~\ref{L:prenoszpet} that $\nu$ may be reconstructed from $\mu$ (hence from $\delta_x$). We deduce that $\nu$ is uniquely determined by $x$, so we shall denote it by $\nu_x$. 

For any continuous function $F:B_{H^*}\to\ef$ we set
$$D_0F(x)=\int F\di\nu_x,\quad x\in K, \phi(x)\ne0.$$
Next observe that for each $F:B_{H^*}\to\er$ convex continuous we have
\begin{equation}
    \label{eq:convex}
D_0F(x)=F^*(\tfrac{\phi(x)}{\norm{\phi(x)}}),\quad x\in K, \phi(x)\ne0.\end{equation}
Indeed, 
by \cite[Corollary I.3.6]{alfsen} we deduce that
$$\begin{aligned}
    F^*(\tfrac{\phi(x)}{\norm{\phi(x)}})& = \sup\left\{\int F\di\nu\setsep \nu\in M_1(B_{X^*}), r(\nu)=\tfrac{\phi(x)}{\norm{\phi(x)}}\right\}\\&= \sup\left\{\int F\di\nu\setsep \nu\in M_1(B_{X^*})\mbox{ maximal}, r(\nu)=\tfrac{\phi(x)}{\norm{\phi(x)}}\right\}=\int F\di\nu_x.\end{aligned}$$
 The first equality follows from the quoted result of \cite{alfsen}, the second one follows from the convexity of $F$ and the last one from the uniqueness of $\nu_x$.

Since $\phi$ is continuous, the norm on $H^*$ is weak$^*$-lower semicontinous and $F^*$ is upper semicontinuous, we deduce that the mapping $D_0(F)$ is Borel measurable whenever $F$ is convex and continuous. Using the Stone-Weierstrass theorem and Lebesgue dominated convergence theorem we deduce that the mapping $D_0(F)$ is Borel measurable whenever $F$ is continuous.

We continue by proving $(i)$. Let $f\in C(K,\ef)$. Define two functions on $S_{\ef}\times K$ by
$$ \widetilde{f}_1(\alpha,x)=\alpha f(x) \mbox{ and }\widetilde{f}_2(\alpha,x)=f(x),\quad (\alpha,x)\in S_{\ef}\times K.$$
These two functions are continuous. Since $\theta$ is one-to-one, by the Tietze theorem we may find continuous functions $F_1$ and $F_2$ on $B_{H^*}$ such that $F_j\circ \theta=\widetilde{f}_j$ (for $j=1,2$). If $x\in K$ is such that $\phi(x)\ne0$, we get
$$\begin{aligned}
    Df(x)&=\int f\di\delta_x=\norm{\phi(x)}\int \alpha f(y)\di\widetilde{\nu_x}(\alpha,y)=\norm{\phi(x)}\int \widetilde{f}_1\di\widetilde{\nu_x}\\&=\norm{\phi(x)}\int F_1\di\nu_x=\norm{\phi(x)}D_0F_1(x),\end{aligned}$$
hence $Df$ is a Borel function. Similarly,
$$\begin{aligned}    
\widetilde{D}f(x)&=\int f\di\abs{\delta_x}=\norm{\phi(x)}\int f(y)\di\widetilde{\nu_x}(\alpha,y)=\norm{\phi(x)}\int \widetilde{f}_2\di\widetilde{\nu_x}\\&=\norm{\phi(x)}\int F_2\di\nu_x=\norm{\phi(x)}D_0F_2(x),\end{aligned}$$
hence $\widetilde{D}f$ is also a Borel function.

To prove $(ii)$ first denote $D_1F(x)=\norm{\phi(x)}D_0F(x)$ for any continuous $F:B_{H^*}\to\ef$. If $F$ is convex and continuous, then
$$\begin{aligned}
D (D_1F)(x)&=\int_K D_1F \di\delta_x = \int_K \norm{\phi(y)}F^*(\tfrac{\phi(y)}{\norm{\phi(y)}})\di\delta_x(y)
\\&= \int_{\{y\in K\setsep \norm{\phi(y)}=1\}} \norm{\phi(y)}F^*(\tfrac{\phi(y)}{\norm{\phi(y)}})\di\delta_x(y)
 \\&=\int_{\{y\in K\setsep \norm{\phi(y)}=1\}} F^*(\phi(y))\di\delta_x(y)= \int_K F^*\circ\phi\di\delta_x
 \\&=\int_{B_{H^*}} F^* \di\phi(\delta_x)=\int_{B_{H^*}} F \di\phi(\delta_x) = \int_K F\circ\phi \di\delta_x 
 =D(F\circ\phi)(x).
\end{aligned}$$
Indeed, the first equality follows from the definition of operator $D$, the second one follows from the definition of $D_1$ together with \eqref{eq:convex}. The third one follows from Lemma~\ref{L:maximalnasfere} (as $\phi(\delta_x)$ is a boundary measure). The fourth equality is obvious and the fifth one follows again from  Lemma~\ref{L:maximalnasfere}. The sixth one follows by integration with respect to the image of a measure, the seventh one follows from the Mokobodzki test of maximality (cf. Fact~\ref{f:mokobodzki}; recall that $\phi(\delta_x)$ is a boundary measure), the eighth one is again an application of integration with respect to the image of a measure. The last equality follows from the definition of $D$.

So, we have proved $D(D_1F)=D(F\circ\phi)$. In the same way we may prove that $\widetilde{D}(D_1F)=\widetilde{D}(F\circ \phi)$ (we just replace $\delta_x$ by $\abs{\delta_x}$). These equalities hold for any continuous convex function $F$, so by the Stone-Weierstrass theorem we easily deduce that they hold for any $F$ continuous. 

We proceed with the proof of $(ii)$ itself. Let $f\in C(K,\ef)$. Let $F_1$ and $F_2$ be as above. By the computation in the proof of $(i)$ we get
$$DDf=D(D_1F_1)=D(F_1\circ\phi)=Df$$
and similarly
$$\widetilde{D}\widetilde{D}f=\widetilde{D}(D_1F_2)=\widetilde{D}(F_2\circ\phi)=\widetilde{D}f,$$
which completes the argument.

$(b)$: This is completely analogous to assertion $(2)$ of Theorem~\ref{T:dirichlet-metr} and the proof is the same, just using assertion $(a)$.

$(c)$: Using $(b)$ we may prove, in the same way as in the proof of assertion $(3)$ of Theorem~\ref{T:dirichlet-metr} that $\abs{D\mu(B)}\le \widetilde{D}\abs{\mu}(B)$ for any Baire set $B\subset K$. By regularity of the included measures we may extend this inequality first to any compact set $B\subset K$ and consequently to any Borel set $B\subset K$. We conclude by using the definition of the absolute variation of a measure.

$(d)$: Let us first prove that $\widetilde{D}\mu$ is boundary. Without loss of generality we may assume that $\mu\ge0$. We may further assume that $\phi(\mu)(\{0\})=0$. Indeed, there is at most one point $x\in K$ with $\phi(x)=0$. If there is no such point, equality $\phi(\mu)(\{0\})=0$ holds automatically. Next assume that there is such a point $x$. Then for each $f\in C(K,\ef)$ we have
$$\int f\di\widetilde{D}\ep_x =\int \widetilde{D}f \di\ep_x=\widetilde{D}f(x)=\int f\di\delta_x=0$$
as $\delta_x=0$. Thus $\widetilde{D}\ep_x=0$ and therefore $\widetilde{D}\mu=\widetilde{D}(\mu-\mu(x)\ep_x)$.

 Let the operator $\Inv\colon C(B_{H^*},\ef)\to C(B_{H^*},\ef)$ be defined as
 \[
 (\Inv f)(s)=\int_{S_{\ef}} f(\alpha s)\di\alpha,\quad s\in B_{H^*}, f\in C(B_{H^*}, \ef),
 \]
where $\di\alpha$ denotes the probability Haar measure on $S_{\ef}$. Then $\Inv f$ is an $\ef$-invariant continuous function on $B_{H^*}$ for each $f\in C(B_{H^*}, \ef)$. This operator was used for example in \cite{effros} and it is a counterpart of the operator $\hom$ defined in Section~\ref{sec:prel}. Similarly as in the mentioned case we denote again by $\Inv$ the adjoint operator to $\Inv $ on $M(B_{H^*},\ef)$, i.e., 
\[
\int f\di \Inv \mu=\int \Inv f\di\mu,\quad f\in C(B_{H^*},\ef),\mu\in M(B_{H^*},\ef).
\]
We claim that
\begin{equation}\label{eq:l*}
\int l^*\di \Inv \nu=\int l^*\di\nu
\end{equation}
 for any $\nu\in M_+(B_{H^*})$ and  $S_\ef$-invariant convex continuous function $l$ on $B_{H^*}$.
Indeed, first we realize that given such $\nu$ and $l$ as above, $s\in B_{H^*}$ and $\ep>0$, there exist affine continuous functions $g_1,\dots, g_n\in C(B_{H^*},\er)$ such that $g=\min\{g_1,\cdots, g_n\}$ satisfies $g\ge l^*$ and $g(\alpha s)<l^*(\alpha s)+\ep=l^*(s)+\ep$ for $\alpha\in S_\ef$. (It is enough to use the definition of $l^*$ and the compactness of $S_\ef$.)
 Then $\Inv g\ge l^*$ and $l^*(s)\le \Inv g(s)+\ep$. Hence
\[
l^*=\inf\{\Inv g\setsep g\in C(B_{H^*},\er) \mbox{ concave}, g\ge l^*\},
\]
and the latter family of functions is downward directed.
Hence we obtain
\[
\begin{aligned}
\int l^*\di\Inv \nu&=\int \inf\{\Inv g\setsep g\in C(B_{H^*},\er)\mbox{ concave}, g\ge l^*\}\di\Inv \nu\\
&=\inf\{\int \Inv g\di \Inv \nu\setsep g\in C(B_{H^*},\er)\mbox{ concave}, g\ge l^*\}\\
&=\inf\{\int \Inv g\di\nu\setsep g\in C(B_{H^*},\er)\mbox{ concave}, g\ge l^*\}\\
&=\int\inf\{\Inv g\setsep g\in C(B_{H^*},\er)\mbox{ concave}, g\ge l^*\}\di\nu\\
&=\int l^*\di\nu.
\end{aligned}
\]
Here the first and the last equalities follow from the above formula for $l^*$. The second and the fourth equalities follow from Lemma~\ref{L:monotone nets}. The remaining equality follows from the definition of $\Inv \nu$ (as $\Inv\Inv g=\Inv g$).

Next we are going to define a specific measure $\nu$ on $B_{H^*}$. The first step is to define $\mu^\prime\in M_+(K)$ by
$$\mu^\prime(B)=\int_B \norm{\phi(x)}\di\mu(x),\quad B\subset K\text{ Borel},$$
i.e., $\mu^\prime$ is the measure with density $x\mapsto\norm{\phi(x)}$ with respect to $\mu$. 
We further define the mapping $\eta\colon K\to B_{H^*}$ by
$$\eta(x)=\begin{cases}
 \frac{\phi(x)}{\norm{\phi(x)}}, & \phi(x)\ne0,\\ 0, & \phi(x)=0.   
\end{cases}.$$
Then $\eta$ is clearly a Borel-measurable mapping. Let $\nu=\eta(\mu^\prime)$ be the image of $\mu^\prime$ under $\eta$. Then $\nu$ is a Borel measure on $B_{H^*}$. Moreover, $\eta$ is regular. Indeed, let $B\subset B_{H^*}$ be a Borel set and let $\ep>0$. Then $\eta^{-1}(B)$ is a Borel subset of $K$ and hence, by regularity of $\mu^\prime$,
there is a compact set $L_1\subset \eta^{-1}(B)$ such that $$\mu^\prime(L_1)>\mu^\prime(\eta^{-1}(B))-\tfrac{\ep}{2}=\nu(B)-\tfrac{\ep}{2}.$$
Further, the function $\chi:K\to[0,\infty)$ defined by 
$$\chi(x)=\begin{cases}
 \frac{1}{\norm{\phi(x)}}, & \phi(x)\ne0,\\ 0, & \phi(x)=0.   
\end{cases}$$
is Borel measurable and hence, by the Luzin theorem there is $L_2\subset L_1$ compact such that
$\mu^\prime(L_1\setminus L_2)<\frac\ep2$ and $\chi$ is continuous on $L_2$. Then $\eta$ is also continuous on $L_2$ and hence $\eta(L_2)$ is a compact subset of $B$. Moreover,
$$\nu(\eta(L_2))=\mu^\prime(\eta^{-1}(\eta(L_2)))\ge\mu^\prime(\eta(L_2))>\mu^\prime(L_1)-\tfrac\ep2>\nu(B)-\ep.$$
This completes the proof of the regularity of $\nu$.

We point out that by the construction of $\nu$ we have
 \begin{equation}\label{eq:obrazmu'}
 \int f\di\nu=\int_K \norm{\phi(x)}f\left(\tfrac{\phi(x)}{\norm{\phi(x)}}\right)\di\mu(x)\mbox{\quad for } f:B_{H^*}\to\ef\mbox{ bounded Borel}.
\end{equation}
 Let $\omega=\phi(\widetilde{D}\mu)$. Then $\Inv \nu\prec \Inv\omega$. Indeed, if $k$ is a convex continuous function on $B_{H^*}$, then writing $l=\Inv k$ we have
 \[
 \begin{aligned}
\int k\di \Inv\omega&=\int l\di\omega=\int l\circ \phi \di \widetilde{D}\mu=\int \widetilde{D}(l\circ \phi)\di\mu\\
 &=
 \int_K \norm{\phi(x)}l^*\left(\tfrac{\phi(x)}{\norm{\phi(x)}}\right)\di\mu(x)\ge 
  \int_K \norm{\phi(x)}l\left(\tfrac{\phi(x)}{\norm{\phi(x)}}\right)\di\mu(x)\\&=\int l\di\nu=\int \Inv k\di\nu=\int k\di\Inv \nu.
 \end{aligned}
 \]
Indeed, the first three equalities follow from definitions. The fourth one follows from the proof of $(a)$ (note that $l$ is convex, continuous and $\ef$-invariant, so $\widetilde{D}(l\circ\phi)(x)=\norm{\phi(x)}D_0l (x)$ and we may use \eqref{eq:convex}). The inequality is obvious as $l^*\ge l$ and the remaining equalities follow from definitions.

     Thus by \cite[Proposition 3.89]{lmns} there exists a measure $\Lambda\in M_1(M)$, where
\[
M=\left\{(\ep_s,\lambda)\in M_1(B_{H^*})\times M_1(B_{H^*})\setsep s=r(\lambda)\right\}\subset M_1(B_{H^*})\times M_1(B_{H^*}),     
\]
such that the barycenter of $\Lambda$ is $(\Inv \nu,\Inv \omega)$.
Then for any pair $(f_1,f_2)$ of bounded universally measurable functions on $B_{H^*}$ we have (by \cite[Proposition 3.90]{lmns})
\[
\int f_1\di \Inv \nu+\int f_2 \di \Inv \omega=\int_M (\ep_s(f_1)+\lambda(f_2))\di\Lambda(\ep_s,\lambda).
\]

Let now $l$ be an $S_\ef$-invariant convex continuous function on $B_{H^*}$. Then $l^*$ is also $S_\ef$-invariant. Then we compute using the last formula for the pairs $(l^*, 0)$ and $(0,l^*)$
\[
\begin{aligned}
\int l\di \Inv \omega&=
\int_K \norm{\phi(x)}l^*\left(\tfrac{\phi(x)}{\norm{\phi(x)}}\right)\di\mu(x)=\int l^*\di\nu =\int l^*\di \Inv \nu\\&=\int_M \ep_s(l^*)\di\Lambda(\ep_s,\lambda)\ge \int_M \lambda(l^*)\di\Lambda(\ep_s,\lambda)=\int l^*\di \Inv \omega\\
&\ge \int l\di \Inv \omega.
\end{aligned}
\]
Here the first equality follows from the above computation (in the proof of $\Inv \nu\prec\Inv\omega$). The second one follows from \eqref{eq:obrazmu'} (note that $l^*$ is Borel). 
The third equality follows by \eqref{eq:l*}. The next equality follows from the choice of $\Lambda$. The following inequality follows using the fact that $l^*$ is concave and upper semicontinuous and $s=r(\lambda)$ for $(\ep_s,\lambda)\in M$ (cf. \cite[Proposition 4.7]{lmns}). The next equality follows again from the choice of $\Lambda$ and the last inequality is obvious.

We deduce that $\int l\di\Inv\omega=\int l^*\di\Inv\omega$. By \eqref{eq:l*} we conclude that $\int l\di\omega=\int l^*\di\omega$ Since $l$ was an arbitrary $\ef$-invariant convex continuous function, $\omega$ is a maximal measure on $B_{H^*}$ (by Fact~\ref{f:mokobodzki}). Hence $\widetilde D\mu$ is $H$-boundary.

By $(c)$ we know that $D\mu$ is absolutely continuous with respect to $\widetilde{D}\abs{\mu}$, so it is $H$-boundary as well. The remaining part of $(d)$ and assertions $(e)$ and $(f)$ are completely analogous to the corresponding assertions from Theorem~\ref{T:dirichlet-metr} and may be proved in the same way.

$(g)$: If $D\mu=\mu$ or $\widetilde{D}\mu=\mu$, then $\mu$ is $H$-boundary by $(d)$.
Conversely, assume that $\mu$ is $H$-boundary. To prove that $D\mu=\widetilde{D}\mu=\mu$ it is enough to show that $Df=\widetilde{D}f=f$ $\abs{\mu}$-a.e. for each $f\in C(K)$. Due to the formulas established in the proof of part $(i)$ of $(a)$ it is enough to observe that
$$\forall F\in C(B_{H^*},\ef)\colon\norm{\phi(x)}D_0F(x)=F(\phi(x)) \mbox{ for $\abs{\mu}$-almost all }x\in K.$$
By the Stone-Weierstrass theorem it is enough to prove it for $F$ convex continuous. Due to \eqref{eq:convex} this is equivalent to say that for any $F$ convex continuous
$$\norm{\phi(x)}F^*(\tfrac{\phi(x)}{\norm{\phi(x)}})=F(\phi(x)) \mbox{ for $\abs{\mu}$-almost all }x\in K.$$
By Lemma~\ref{L:maximalnasfere} we know that $\norm{\phi(x)}=1$ $\abs{\mu}$-a.e. and by Fact~\ref{f:mokobodzki} we have $F\circ\phi=F^*\circ\phi$ $\abs{\mu}$-a.e., hence the assertion follows.
\end{proof}

Theorems~\ref{T:dirichlet-metr} and~\ref{T:dir-proste} provide partial extension of Fact~\ref{f:dirichlet} to spaces without constants. However, they do not provide a complete analogue.
 In fact, the complete analogue fails as witnessed by the following example.

\begin{example}\label{ex:nonborel}
    Under the continuum hypothesis there is a complex simplicial function space $H$ on a compact space $K$ such that the function $D1$ is not Borel.
\end{example}

\begin{proof}
    Let $K$ and $H$ be as in Example~\ref{exam}. Using the notation from this example and its proof, we have (by Step 5 of the proof)
    $$\delta_{(t,s,0)}=\tfrac{s\overline{g(t)}}{2}(\ep_{(t,g(t),-1)}+\ep_{(t,g(t),1)}) \mbox{ for }s,t\in\TT.$$
    Therefore
    $D1(t,s,0)=s\overline{g(t)}$ for $s,t\in\TT$. In particular, $D1(t,1,0)=\overline{g(t)}$ for $t\in \TT$ which is not a Borel function. (Otherwise, $g$ would be a Borel function, so its graph, being an uncountable Borel subset of $\TT\times\TT$, would contain a Cantor set (by \cite[Theorem 13.6]{kechris}), in contradiction with Lemma~\ref{L:luzin}).
    \end{proof}

Anyway, the following question remains open.

\begin{ques}
    Let $H$ be a simplicial function space not containing constants on a (non-metrizable) compact space $K$. Let $f:K\to\ef$ be continuous. 
    \begin{itemize}
        \item Is the function $\widetilde{D}f$ Borel?
        \item Assume $\ef=\er$. Is the function $Df$ Borel? Is it $H$-affine?
        \end{itemize}
\end{ques}

\section{Final remarks}

Let us now overview common points and differences of the classical function spaces and spaces without constants. We further point out some open problems.

Already the representation theorems of Choquet-Bishop-de Leeuw type reveal nontrivial differences.
The classical version says that, given a real function space $H$ on a compact $K$ which contains constants, for each $x\in K$ there is an $H$-maximal measure $\mu\in M_x(H)$ and, moreover, all $H$-maximal measures are pseudosupported by the Choquet boundary (i.e., supported by any $F_\sigma$-set containing $\Ch_HK$). The same method easily yields that for any $\varphi\in H^*$ there is an $H$-boundary measure in $M_\varphi(H)$. An important feature in this case is the fact that $H$-maximal measures defined using $H$-convex functions (cf. \cite[Definition 3.57]{lmns}) coincide with the $H$-maximal measures defined using the evaluation mapping $\phi$ (by \cite[Proposition 4.28(d)]{lmns}).

An extension to the complex case was initiated by \cite{hustad71} by proving that, given a complex function space $H$ on a compact $K$ which contains constants, for each $\varphi\in H^*$ there is a measure $\mu\in M_\varphi(H)$ which is pseudosupported by the Choquet boundary. In \cite{hirsberg72} it was observed that the proof of \cite{hustad71} in fact produces an $H$-boundary measure in $M_\varphi(H)$. 

The representation theorem for (complex) spaces without constants which we reproduce in Proposition~\ref{P:exist-bd} was established in \cite{fuhr-phelps}. It states that for any $\varphi\in H^*$ there is some $H$-boundary measure in $M_\varphi(H)$. In \cite{fuhr-phelps} the question whether such a measure must be pseudosupported by the Choquet boundary is not addressed at all. In fact, in this case $H$-boundary measures must be defined using the evaluation map $\phi$ (as we do above) since we do not have any analogue of $H$-convex functions. Moreover, maximal measures need not be pseudosupported by the Choquet boundary (see Examples~\ref{Ex:L1prednefsimplicial}, \ref{ex:L1prednesimpl} or~\ref{Ex:L1prednefsimplicial-huste}). And even, in the complex case, it may happen (under the continuum hypothesis) no measure in $M_\varphi(H)$ (or even in $M_x(H)$) is pseudosupported by the Choquet boundary (see Examples~\ref{exam} and~\ref{ex:protipr-x}). We do not know whether the continuum hypothesis (or some other additional axiom) is necessary. It is also not clear whether a similar example may be constructed in the real case. On the other hand, such an example requires non-uniqueness of representing $H$-boundary measures (see Corollary~\ref{cor:uniqueness}).

Simpliciality of real function spaces with constants is a well-understood feature (see, e.g., \cite[Chapter 6]{lmns}). The complex theory is essentially the same (see Proposition~\ref{P:simplic-char}) since it is enough to look at the self-adjoint space $A_c(H)$. Simpliciality of spaces without constants shares some properties with the classical case, in particular, it is again enough to look at $A_c(H)$ (see Proposition~\ref{P:prenossimpliciality}). But there are many differences. In particular, the space $A_c(H)$ need not be self-adjoint and, moreover, the conditions which are equivalent in the classical case become different (compare Theorem~\ref{T:bez1} with Proposition~\ref{P:simplic-char}). In particular, for function spaces containing constants the simpliciality is an isometric Banach-space property of $A_c(H)$. Indeed, $H$ is simplicial if and only if $A_c(H)$ is an $L^1$-predual, and also if and only if $B_{A_c(H)^*}$ is a simplexoid. For spaces without constants not only the mentioned notions differ, but simpliciality is not a Banach-space property of $A_c(H)$. This is witnessed by the counterexample to implication $(VI)\implies(I)$ from Theorem~\ref{T:bez1}. Another (less trivial) example is the non-simplicial space $H$ from Example~\ref{ex:L1prednesimpl} which satisfies $A_c(H)=H$ and is isometric to a simplicial space $H^\prime$ satisfying $A_c(H^\prime)=H^\prime$. In fact, neither functional simpliciality is an isometric property. Indeed, let $H$ be the function space from
Example~\ref{Ex:L1prednefsimplicial}. Then $A_c(H)=H$ and $H$ is not functionally simplicial, but it is isometric to $H_{L,A}$ which is functionally simplicial (by Proposition~\ref{P:dikous} and Proposition~\ref{P:simplic-char}). The seemingly strange behavior of these examples is related to the fact that the respective mapping $\theta$ need not be one-to-one. If $\theta$ is one-to-one, the situation is a bit simpler. In this case Theorem~\ref{T:bez1} says that we have (for spaces satisfying $H=A_c(H)$) `three levels of simpliciality' -- the simpliciality itself (condition $(II)$), the functional simpliciality (condition $(III)$) and condition $(IV)$ (trivial intersection of $H^\perp$ with $H$-boundary measures) -- and the two `higher levels' are Banach-space properties of $H$ (conditions $(V)$ and $(VI)$, respectively). However, the first level -- mere simpliciality -- is not an isometric property even in this case. It is witnessed the following example which may be easily derived from Proposition~\ref{P:metriznefs} (in the same way as Example~\ref{ex:L1prednesimpl} is derived from Example~\ref{Ex:L1prednefsimplicial}):

\begin{example}
    Let $K_1$ and $H_1$ be as in Section~\ref{ss:metriz}. Assume $\alpha=\beta$. Set
    $K^\prime=K_1\oplus\{p\}$, where $p$ is a new isolated point and 
    $$H^{\prime}=\{f\in C(K,\ef)\setsep f|_{K_1}\in H_1\mbox{ and }f(p)=-\alpha f(b)+\tfrac12 f(0,1)\}.$$
    Then $A_c(H^\prime)=H^\prime$, $H^\prime$ is not simplicial and $H^\prime$ is linearly isometric to $H_1$.
\end{example}

A further difference of the classical case and the spaces without constants concerns the abstract Dirichlet problem. Firstly, instead of one mapping $D$ we have two natural mappings -- $D$ and $\widetilde{D}$ (see Section~\ref{sec:dirichlet}). Secondly, even $D1$ may be non-Borel (in the complex case under the continuum hypothesis, see Example~\ref{ex:nonborel}). Results similar to the classical case hold in two cases -- if the underlying compact space is metrizable or if the mapping $\theta$ is one-to-one (see Theorems~\ref{T:dirichlet-metr} and~\ref{T:dir-proste}).

Summarizing the results, we see that there are two distinguished classes of function spaces without constants which have somewhat similar behaviour to the classical case -- spaces on metrizable compact spaces and spaces with one-to-one mapping $\theta$. So, it seems to be natural and interesting to try to find a common roof for these two classes. Another natural task is to analyze in more detail real function spaces without constants. The reason is that the results presented in this paper are parallel for the real and the complex versions, but the methods are essentially complex. Further, there are some counterexamples (Example~\ref{exam} and the derived ones) which work only in the complex setting and this seems to be important.

\section*{Acknowledgment}

We are grateful to Witold Marciszewski and Grzegorz Plebanek for indicating us in \cite{WM-GP} an idea which helped us to find Example~\ref{exam}.

\bibliographystyle{acm}
\bibliography{simplic}

\begin{thebibliography}{10}

\bibitem{alfsen}
{\sc Alfsen, E.}
\newblock {\em Compact convex sets and boundary integrals}.
\newblock Springer-Verlag, New York, 1971.
\newblock Ergebnisse der Mathematik und ihrer Grenzgebiete, Band 57.

\bibitem{bliedtner-hansen}
{\sc Bliedtner, J., and Hansen, W.}
\newblock {\em Potential theory}.
\newblock Universitext. Springer-Verlag, Berlin, 1986.
\newblock An analytic and probabilistic approach to balayage.

\bibitem{effros}
{\sc Effros, E.}
\newblock On a class of complex {B}anach spaces.
\newblock {\em Illinois J. Math. 18\/} (1974), 48--59.

\bibitem{effros-kazdan}
{\sc Effros, E.~G., and Kazdan, J.~L.}
\newblock Applications of {C}hoquet simplexes to elliptic and parabolic
  boundary value problems.
\newblock {\em J. Differential Equations 8\/} (1970), 95--134.

\bibitem{engelking}
{\sc Engelking, R.}
\newblock {\em General topology}.
\newblock PWN---Polish Scientific Publishers, Warsaw, 1977.
\newblock Translated from the Polish by the author, Monografie Matematyczne,
  Tom 60. [Mathematical Monographs, Vol. 60].

\bibitem{fonf}
{\sc Fonf, V.~P., Lindenstrauss, J., and Phelps, R.~R.}
\newblock Infinite dimensional convexity.
\newblock In {\em Handbook of the geometry of {B}anach spaces, {V}ol. {I}}.
  North-Holland, Amsterdam, 2001, pp.~599--670.

\bibitem{fremlin4}
{\sc Fremlin, D.~H.}
\newblock {\em Measure theory. {V}ol. 4}.
\newblock Torres Fremlin, Colchester, 2006.
\newblock Topological measure spaces. Part I, II, Corrected second printing of
  the 2003 original.

\bibitem{fuhr-phelps}
{\sc Fuhr, R., and Phelps, R.~R.}
\newblock Uniqueness of complex representing measures on the {C}hoquet
  boundary.
\newblock {\em J. Functional Analysis 14\/} (1973), 1--27.

\bibitem{hirsberg72}
{\sc Hirsberg, B.}
\newblock Repr\'{e}sentations int\'{e}grales des formes lin\'{e}aires
  complexes.
\newblock {\em C. R. Acad. Sci. Paris S\'{e}r. A-B 274\/} (1972), A1222--A1224.

\bibitem{hustad71}
{\sc Hustad, O.}
\newblock A norm preserving complex {C}hoquet theorem.
\newblock {\em Math. Scand. 29\/} (1971), 272--278.

\bibitem{kalenda2023boundary}
{\sc Kalenda, O. F.~K., Rondoš, J., and Spurný, J.}
\newblock Boundary integral representation of multipliers of fragmented affine
  functions and other intermediate function spaces.
\newblock arXiv:2305.16920.

\bibitem{l1pred}
{\sc Kalenda, O. F.~K., and Spurn\'{y}, J.}
\newblock Baire classes of affine vector-valued functions.
\newblock {\em Studia Math. 233}, 3 (2016), 227--277.

\bibitem{kechris}
{\sc Kechris, A.~S.}
\newblock {\em Classical descriptive set theory}, vol.~156 of {\em Graduate
  Texts in Mathematics}.
\newblock Springer-Verlag, New York, 1995.

\bibitem{lacey}
{\sc Lacey, H.}
\newblock {\em The isometric theory of classical {B}anach spaces}.
\newblock Springer-Verlag, New York, 1974.
\newblock Die Grundlehren der mathematischen Wissenschaften, Band 208.

\bibitem{lazar}
{\sc Lazar, A.}
\newblock The unit ball in conjugate {$L_1$} spaces.
\newblock {\em Duke Mathematical Journal 39}, 1 (1972), 1--8.

\bibitem{lmnss03}
{\sc Luke{\v{s}}, J., Mal{\'y}, J., Netuka, I., Smr{\v{c}}ka, M., and
  Spurn{\'y}, J.}
\newblock On approximation of affine {B}aire-one functions.
\newblock {\em Israel J. Math. 134\/} (2003), 255--287.

\bibitem{lmns}
{\sc Luke{\v{s}}, J., Mal{\'y}, J., Netuka, I., and Spurn{\'y}, J.}
\newblock {\em Integral representation theory}, vol.~35 of {\em de Gruyter
  Studies in Mathematics}.
\newblock Walter de Gruyter \& Co., Berlin, 2010.
\newblock Applications to convexity, Banach spaces and potential theory.

\bibitem{WM-GP}
{\sc Marciszewski, W., and Plebanek, G.}
\newblock personal communication, January 2024.

\bibitem{phelps-complex}
{\sc Phelps, R.~R.}
\newblock The {C}hoquet representation in the complex case.
\newblock {\em Bull. Amer. Math. Soc. 83}, 3 (1977), 299--312.

\bibitem{phelps-choquet}
{\sc Phelps, R.~R.}
\newblock {\em Lectures on {C}hoquet's theorem}, second~ed., vol.~1757 of {\em
  Lecture Notes in Mathematics}.
\newblock Springer-Verlag, Berlin, 2001.

\bibitem{posta}
{\sc Po\v{s}ta, P.}
\newblock Dirichlet problem and subclasses of {B}aire-one functions.
\newblock {\em Israel J. Math. 226}, 1 (2018), 177--188.

\bibitem{rondos-spurny}
{\sc Rondo\v{s}, J., and Spurn\'{y}, J.}
\newblock The {D}irichlet problem on compact convex sets.
\newblock {\em J. Funct. Anal. 281}, 12 (2021), Paper No. 109251, 20.

\bibitem{spurny-israel}
{\sc Spurn\'{y}, J.}
\newblock On the {D}irichlet problem of extreme points for non-continuous
  functions.
\newblock {\em Israel J. Math. 173\/} (2009), 403--419.

\bibitem{stacey}
{\sc Stacey, P.~J.}
\newblock Choquet simplices with prescribed extreme and \v {S}ilov boundaries.
\newblock {\em Quart. J. Math. Oxford Ser. (2) 30}, 120 (1979), 469--482.

\end{thebibliography}

\end{document}